\def\myround#1{\num{\fp_eval:n {round(#1, 2)}}}
\newcommand{\LP}{{LP}\xspace}
\newcommand{\MIP}{{MIP}\xspace}
\newcommand{\MINLP}{{MINLP}\xspace}
\newcommand{\MILP}{{MILP}\xspace}
\DeclareMathOperator{\conv}{conv}
\DeclareMathOperator*{\argmin}{argmin}
\DeclareMathOperator*{\argmax}{argmax}
\newcommand{\MIR}{\text{MIR}\xspace}
\newcommand{\Z}{\mathbb{Z}}
\newcommand{\R}{\mathbb{R}}
\newcommand{\Fset}{\mathcal{X}}
\newcommand{\Kset}{\mathcal{X}^{\text{K}}}
\newcommand{\Tset}{\mathcal{X}^{\text{T}}}
\newcommand{\Cset}{\mathcal{X}^{\text{C}}}
\newcommand{\FsetS}{\Fset^{\text{S}}}
\newcommand{\FsetB}{\Fset^{\text{B}}}
\newcommand{\settingO}{\textbf{OA}\xspace}
\newcommand{\settingS}{$+$\textbf{Single-phase}\xspace}
\newcommand{\settingT}{$+$\textbf{Two-phase}\xspace}
\newcommand{\settingC}{$+$\textbf{Combination}\xspace}
\newcommand{\floor}[1]{\lfloor #1 \rfloor}
\newcommand{\ceil}[1]{\lceil #1 \rceil}
\providecommand{\keywords}[1]{\small \quad \quad \textbf{Keywords:} #1}
\theoremstyle{plain}
\newtheorem{theorem}{Theorem}[section]
\newtheorem{corollary}[theorem]{Corollary}
\newtheorem{remark}[theorem]{Remark}
\newtheorem{observation}[theorem]{Observation}
\newtheorem{proposition}[theorem]{Proposition}
\newtheorem{example}[theorem]{Example}
\newtheorem{lemma}[theorem]{Lemma}
\newtheorem{claim}[theorem]{Claim}
\newtheorem{fact}{Fact}
\newtheorem*{claim*}{Claim}
\crefname{figure}{Figure}{Figure}
\crefname{observation}{Observation}{Observations}
\crefname{fact}{Fact}{Facts}
\setlist[itemize]{leftmargin=3.45ex}
\setlist[itemize,1]{label=$-$,itemsep=0ex,topsep=0.9ex}
\setlist[itemize,2]{label=$\cdot$,topsep=0.5ex,leftmargin=2.75ex}
\setlist[enumerate]{leftmargin=3ex,itemsep=0.1ex,parsep=1ex,topsep=0.9ex}
\patchcmd{\ttlh@hang}{\parindent\z@}{\parindent\z@\leavevmode}{}{}
\patchcmd{\ttlh@hang}{\noindent}{}{}{}
\titleformat{\paragraph}[runin]
{\normalfont\normalsize\itshape}{\theparagraph}{1em}{}
\titleformat{\subparagraph}[runin]
{\normalfont\normalsize\itshape}{\theparagraph}{1em}{}
\titlespacing*{\section}     {0pt}{21dd plus 8pt minus 4pt}{10.5dd}
\titlespacing*{\subsection}   {0pt}{21dd plus 8pt minus 4pt}{10.5dd}
\titlespacing*{\subsubsection}{0pt}{19dd plus 8pt minus 4pt}{10.5dd}
\titlespacing*{\paragraph}   {0pt}{13pt plus 8pt minus 4pt}{1em}
\titlespacing*{\subparagraph}   {0pt}{13pt plus 8pt minus 4pt}{1em}
\providecommand{\fnm}[1]{\leavevmode\hbox{#1}}
\providecommand{\sur}[1]{\unskip~\nobreak\leavevmode\hbox{#1}}
\providecommand{\orgdiv}[1]{#1}
\providecommand{\orgname}[1]{#1}
\providecommand{\orgaddress}[1]{#1}
\providecommand{\postcode}[1]{#1}
\providecommand{\city}[1]{#1}
\providecommand{\country}[1]{#1}
\let\SN@orcidlink@raw\orcidlink
\renewcommand{\orcidlink}[1]{%
	\raisebox{-0ex}{\scalebox{0.90}{\SN@orcidlink@raw{#1}}}%
}
\newcommand\SN@authors{}
\newcommand\SN@affils{}
\newcommand\SN@corrEmails{}
\newcommand\SN@contribEmails{}
\newcommand\SN@authsep{}
\newif\ifSN@lastcorr
\def\SN@mailto#1{\href{mailto:#1}{\textcolor{blue}{#1}}}
\RenewDocumentCommand{\author}{s o m}{%
	\ifnum\SN@aucount>0
	\g@addto@macro\SN@authors{,\ }%
	\fi
	\advance\SN@aucount by 1\relax
	\g@addto@macro\SN@authors{%
		#3%
		\IfValueT{#2}{\textsuperscript{#2}}%
		\IfBooleanT{#1}{\textsuperscript{*}}%
	}%
	\IfBooleanTF{#1}{\SN@lastcorrtrue}{\SN@lastcorrfalse}%
}
\NewDocumentCommand{\email}{m}{%
	\ifSN@lastcorr
	\g@addto@macro\SN@corrEmails{\SN@mailto{#1};\ }%
	\else
	\g@addto@macro\SN@contribEmails{\SN@mailto{#1};\ }%
	\fi
}
\NewDocumentCommand{\affil}{s o m}{%
	\g@addto@macro\SN@affils{%
		\par\noindent
		\IfValueT{#2}{\textsuperscript{#2\IfBooleanT{#1}{*}}}%
		#3.\par
	}%
}
\newcommand\SN@pacsTitle{}
\newcommand\SN@pacsBody{}
\NewDocumentCommand{\pacs}{O{PAC Codes} m}{%
	\gdef\SN@pacsTitle{#1}%
	\gdef\SN@pacsBody{#2}%
}
\newcommand{\printpacs}{%
	\par\medskip
	\noindent\textbf{\SN@pacsTitle:}~\SN@pacsBody\par
	\medskip
}
\newcommand{\makefrontmatter}{%
	\begin{center}
		{\LARGE \@title \par}
		\vspace{0.9em}
		{\large \SN@authors \par}
		\vspace{0.9em}
		{\large \SN@affils \par}
		\vspace{1.4em}
		\begin{minipage}{0.9\linewidth}
			\centering
			\ifx\SN@corrEmails\@empty\else
			*Corresponding author(s). E-mail(s): \SN@corrEmails\par
			\fi
			\ifx\SN@contribEmails\@empty\else
			Contributing authors: \SN@contribEmails\par
			\fi
		\end{minipage}
	\end{center}
}
\RenewDocumentCommand{\pacs}{O{PAC Codes} m}{%
	\small \quad \quad \textbf{#1:} #2%
}
\begin{document}

\title{On strong valid inequalities for a class of mixed-integer nonlinear sets with box constraints}

\author[1]{\fnm{Keyan} \sur{Li}}\email{likeyan@bit.edu.cn}
\author[1]{\fnm{Yan-Ru} \sur{Wang}}\email{wangyanru@bit.edu.cn}
\author*[1]{\fnm{Wei-Kun} \sur{Chen}\orcidlink{0000-0003-4147-1346}}\email{chenweikun@bit.edu.cn}
\author[2,3]{\fnm{Yu-Hong} \sur{Dai}\orcidlink{0000-0002-6932-9512}}\email{dyh@lsec.cc.ac.cn}

\affil*[1]{\orgdiv{School of Mathematics and Statistics}, \orgname{Beijing Institute of Technology}, \orgaddress{\city{Beijing}, \postcode{100081}, \country{China}}}
\affil[2]{\orgdiv{State Key Laboratory of Mathematical Sciences, Academy of Mathematics and Systems Science}, \orgname{Chinese Academy of Sciences}, \orgaddress{\city{Beijing}, \postcode{100190}, \country{China}}}
\affil[3]{\orgdiv{School of Mathematical Sciences}, \orgname{University of Chinese Academy of Sciences}, \orgaddress{\city{Beijing}, \postcode{100049}, \country{China}}}

\newgeometry{left=38mm,right=38mm,top=35mm}

\makefrontmatter

 \begin{abstract}
 	In this paper, 
 	we investigate the mixed-integer nonlinear set with box constraints $\Fset = \left\{ (w,x) \in \R \times \Z^n \,:\, w \leq f(a^\top x), ~0 \leq x \leq \mu \right\}$, 
 	where $f$ is a univariate concave function, $a \in \mathbb{R}^n$, and $\mu \in \Z^n_{++}$.
 	This set arises as a substructure in many mixed-integer nonlinear optimization models and encompasses, as special cases, several previously investigated mixed-integer sets---namely the submodular maximization set, the mixed-integer knapsack set, and the mixed-integer polyhedral conic set.
 	We present the first comprehensive polyhedral study of $\conv(\Fset)$.
 	In particular, we derive a class of seed inequalities for a two-dimensional restriction of $\Fset$, obtained by fixing all but one of the $x$ variables to their bounds in $\Fset$, and develop two lifting procedures to obtain strong valid inequalities for $\conv(\Fset)$.
 	In the first lifting procedure, we derive a subadditive approximation for the exact lifting function of the seed inequalities, and lift all fixed variables in a single phase.
	In the second lifting procedure, we first lift variables fixed at their lower bounds before those at their upper bounds (and vice versa), using subadditive exact and approximation lifting functions, respectively.
	The derived single- and two-phase lifted inequalities are shown to be facet-defining for $\conv(\Fset)$ under mild conditions.
	Moreover, for the aforementioned special cases of $\conv(\Fset)$, we show that the proposed lifted inequalities can either unify existing strong valid inequalities or yield {new} facet-defining inequalities.
 	Finally, extensive computational experiments on expected utility maximization and weapon target assignment problems demonstrate that the proposed lifted inequalities can substantially strengthen the continuous relaxations and significantly improve the overall computational performance of the branch-and-cut algorithms.
 	\vspace{8pt} \\
 	\keywords{Polyhedral study $\cdot$ Sequence-independent lifting $\cdot$ Mixed-integer nonlinear programming $\cdot$ Branch-and-cut} 
 	\vspace{8pt}\\
 	\noindent \pacs[Mathematics Subject Classification]{90C11 $\cdot$ 90C57}
 \end{abstract}

\section{Introduction}\label{sec:introduction}
In this paper, we provide a polyhedral study of the mixed-integer nonlinear set defined by a nonlinear constraint and box constraints:
\begin{equation}\label{Fset}
	\Fset : = \left\{ (w,x) \in \R \times \Z^n \,:\, w \leq f(a^\top x), ~ 
	0 \leq x_i \leq \mu_i, \ \forall \ i \in [n] \right\},
\end{equation}
where $f:\, \R \rightarrow \R$ is a concave function, $a \in \R^n$, and $\mu \in \Z_{++}^n$.
For simplicity, we assume without loss of generality that $a \in \R_+^n$ as otherwise, 
we can complement $x_i:= \mu_i - \bar{x}_i$ for  $i \in [n]$ with $a_i < 0$ (throughout, for a nonnegative integer $\tau$, we denote $[\tau] := \{1,\ldots,\tau\}$, with the convention that $[0] = \varnothing$).
The set $\Fset$ appears frequently as a substructure in many mixed-integer nonlinear optimization models, including expected utility maximization \cite{Ahmed2011,Li2019,Paul1982}, competitive facility location \cite{Berman1998,Aboolian2007,Ljubic2018}, combinatorial auctions \cite{Feige2011,Lehmann2006}, data summarization \cite{Dolhansky2016,El-Arini2009,Tschiatschek2014,Kai2015}, weapon target assignment \cite{Manne1958,Ahuja2007,Andersen2022,Lu20211,Bertsimas2025}, and discrete nonlinear nonseparable knapsack \cite{Klastorin1990,Sharkey2011}, among others.

In many practical applications, the decision vector $x$ in substructure $\Fset$ is binary (i.e., $\mu_i = 1$ for all $i \in [n]$).
In this case, $\Fset$ reduces to 
\begin{equation}\label{FsetB}
	\FsetB : = \left\{ (w,x) \in \R \times \{0, 1\}^n \,:\, 
	w \leq f( a^\top x) \right\}.
\end{equation}
It is known that the composition of a nonnegative linear function and a concave function is submodular \cite{Ahmed2011}, and therefore $\FsetB$ is a special case of the submodular maximization set: $\mathcal{Y} := \left\{ (w,x) \in \R \times \{0, 1\}^n \,:\, w \leq F(x) \right\}$.
Here, a function $F:\,\{0, 1\}^n \rightarrow \R$ is called \emph{submodular} if for any $x, y \in \{0, 1\}^n$ with $x \leq y$ (where the ``$\leq$'' is component-wise) {and any $i \in [n]$ with $x_i = y_i = 0$}, 
it follows $F(x + \mathbf{e}_i) - F(x) \geq F(y + \mathbf{e}_i) - F(y)$, where $\mathbf{e}_i$ is the $i$-th standard unit vector.
In the seminal work, \citet{Nemhauser1988} investigated the generic submodular maximization set $\mathcal{Y}$, 
arising in submodular maximization problems, 
and developed a class of submodular inequalities that is able to provide a linear characterization of the submodular maximization set $\mathcal{Y}$, 
thereby enabling mixed-integer linear programming (\MILP)-based approaches for solving related submodular maximization problems.
For the special case of $\mathcal{Y}$ where the submodular function $F(x)$ is of the form $f(a^\top x)$ (i.e., $\FsetB$),
\citet{Ahmed2011} strengthened the submodular inequalities via a sequence-independent lifting procedure
and provided sufficient conditions for the derived inequalities to be facet-defining for $\conv(\FsetB)$.
\citet{Shi2022} further showed that the exact lifting functions in \citet{Ahmed2011} are naturally subadditive (a function $h$ is subadditive on $\mathcal{D}$ if $h(\delta_1) + h(\delta_2) \geq h(\delta_1+\delta_2)$ holds for all $\delta_1, \delta_2, \delta_1+\delta_2 \in \mathcal{D}$), 
which enables to  develop two families of facet-defining inequalities for $\conv(\FsetB)$ that are stronger than those in \cite{Ahmed2011}.
Computational evidence in \cite{Ahmed2011,Shi2022} demonstrated that the enhanced submodular inequalities are effective in strengthening the continuous relaxations and improving the performance of \MILP-based approaches for submodular maximization problems. 
For $\FsetB$  with knapsack or cardinality constraints,
\citet{Yu2017} and \citet{Shi2022} exploited these constraints and derived strong valid inequalities using sequence-independent lifting procedures.
Another important and closely related variant is the submodular minimization set 
$\mathcal{Y}^{\geq} :=  \left\{ (w,x) \in \R \times \{0, 1\}^n \,:\, w \geq F(x) \right\}$, where $F$ is a submodular function.
\citet{Edmonds2003} showed that $\conv(\mathcal{Y}^{\geq})$ admits an explicit linear description via the so-called extended polymatroid inequalities; 
subsequently, \citet{Yu2017B} and \citet{Yu2023} strengthened 
these inequalities for $\mathcal{Y}^{\geq}$  with an additional cardinality constraint.
\citet{Atamturk2009} investigated the submodular knapsack set $\mathcal{Y}^{\geq} \cap \{w = c \}$ with $c \in \R$, and developed cover-based inequalities.

Beyond the binary case, the general integrality of the decision vector $x$ broadens the applicability of $\Fset$, and therefore related polyhedral investigations (for some special cases) were also provided in the literature.
In particular, when  $f(z) = \min\{0, b - z\}$ with $b \in \mathbb{R}$ is a piecewise linear concave function, $\Fset$ reduces to the well-known mixed-integer knapsack set:
\begin{equation}\label{Kset}
\begin{aligned}
	\Kset & : = \left\{ (w, x) \in \R \times \Z^n \,:\, w \leq \min\{0, b-a^\top x\} , ~ 
	0 \leq x_i \leq \mu_i, ~ \forall ~ i \in [n] \right\} \\
	& = \left\{ (w, x) \in \R \times \Z^n \,:\, w \leq 0, ~w \leq b-a^\top x , ~ 
	0 \leq x_i \leq \mu_i, ~ \forall ~ i \in [n] \right\}.
\end{aligned}
\end{equation}
This set arises as a fundamental substructure in \MILP problems---every constraint of an \MILP formulation can define a mixed-integer knapsack relaxation of the form \(\Kset\).
Consequently, strong valid inequalities for $\conv(\Kset)$ can be used as cutting planes for improving the computational performance of solving \MILP problems in a branch-and-cut framework.
A substantial body of literature develops strong valid inequalities for $\conv(\Kset)$, including the mixed-integer rounding (\MIR) \cite{Marchand2001,Nemhauser1990}, 2-{step} \MIR \cite{Dash2006}, $n$-{step} \MIR \cite{Kianfar2009}, lifted two-integer knapsack inequalities \cite{Agra2007}, mingling \cite{Atamturk2010a}, $n$-{step} mingling \cite{Atamturk2012},
and mixed-integer knapsack cover and pack inequalities \cite{Atamturk2003,Atamturk2005}. 
{For computational evidence of the efficacy of inequalities derived from $\Kset$ in solving \MILP problems, see \cite{Marchand2001,Agra2007,Atamturk2003,Dash2010,Kaparis2010,Fukasawa2011,Sanjeevi2012} and references therein.}
Another special case of $\Fset$ is the mixed-integer polyhedral conic set \cite{Atamturk2010b} (where $f(z) = -|b - z|$ with $b \in \R$ is a concave function):
\begin{equation}\label{Cset}
	\Cset := \left\{(w, x) \in \R \times \Z^n \,:\, w \leq - |b - a^\top x|, \ 0 \leq x_i \leq \mu_i, \ \forall \ i \in [n]\right\}.
\end{equation}
This set arises as a substructure in the polyhedral conic reformulation of mixed-integer second-order conic programs \cite{Atamturk2010b}.
\citet{Atamturk2010b} proposed conic \MIR inequalities for $\Cset$ and demonstrated that the derived inequalities can effectively reduce the integrality gap of continuous relaxations of mixed-integer second-order conic programs, thereby improving the overall computational efficiency of branch-and-cut approaches.

Similar to the special cases $\conv(\FsetB)$, $\conv(\Kset)$, and $\conv(\Cset)$, the general case $\conv(\Fset)$ is also polyhedral as 
$\Fset$ is the union of a finite set of rays with the same direction.
In addition to the three special cases $\FsetB$, $\Kset$, and $\Cset$,
the set $\Fset$ also arises as a substructure in many other practical applications. 
These applications include (i) the weapon target assignment problem \cite{Manne1958,Ahuja2007,Andersen2022,Lu20211,Bertsimas2025}, 
discrete nonlinear nonseparable knapsack problem \cite{Klastorin1990,Sharkey2011}, 
and multiple gradual cover location problem \cite{Berman2019}
in which $f(a^\top x) = 1 - \exp(-a^\top x)$,
(ii) competitive facility location and design \cite{Aboolian2007B} 
in which $f(a^\top x) = \left(1-\exp(-(a^\top x + b)) \right) \cdot \left( \frac{a^\top x}{a^\top x + b} \right)$ with $b \in \R$,
and (iii) resource allocation \cite{Moriguchi2011}, equilibrium analysis of network congestion games \cite{Fujishige2015}, and inventory and portfolio management \cite{Chen2021} in which $f(a^\top x) = f(\mathbf{1}^\top x)$, i.e., $a = \mathbf{1}$ is an all-ones vector.
Despite such a wide range of real-world applications, a comprehensive polyhedral study of $\conv(\Fset)$ and its application in a branch-and-cut framework to solve related problems is still missing in the literature.

Analyzing the polyhedral structure of the general case $\conv(\Fset)$, however, is much more challenging than those of the special cases $\conv(\FsetB)$, $\conv(\Kset)$, and $\conv(\Cset)$, 
because $\Fset$ simultaneously involves general integrality and nonlinearity.
On one hand, due to the general integrality, mixed-integer polyhedra are significantly harder to study than mixed 0-1 polyhedra. 
For instance, combinatorial arguments that yield strong valid inequalities for mixed 0-1 polyhedra often fail to provide high-dimensional faces for mixed-integer polyhedra  \cite{Atamturk2003,Atamturk2005}.
As noted in \cite{Atamturk2003,Atamturk2005}, one possible explanation for this difficulty is that, for mixed 0-1 sets, all feasible solutions lie on the ``surface'' of the continuous relaxation (obtained by removing the integrality constraints from the {mixed} 0-1 sets),
whereas for mixed-integer sets,  some feasible points generally lie deep in the continuous relaxation (i.e., not necessarily on some face of the continuous relaxation).
On the other hand, the nonlinearity $w\le f(a^\top x)$ makes it much more difficult to derive strong valid inequalities for $\conv(\Fset)$ than for its linear counterpart $\conv(\Kset)$ (or the nearly linear counterpart $\conv(\Cset)$).
In particular, in the context of deriving  lifted inequalities, 
the associated lifting functions for $\Kset$ and  $\Cset$ are defined by solving \MILP problems, making it easier to derive closed formulas or subadditive approximations  \cite{Atamturk2003,Atamturk2004} (and obtain strong valid inequalities).  
In sharp contrast, in the same context, the related lifting functions for $\Fset$ are defined by solving relatively ``harder''  {mixed-integer nonlinear programming (\MINLP)} problems, rendering it difficult to derive {closed formulas or subadditive approximations}.

\subsection{Main contributions}
The main motivation of this paper is to fill the research gap by providing an  in-depth polyhedral study of $\conv(\Fset)$.
In particular, we develop several classes of strong valid inequalities for  polyhedron $\conv(\Fset)$ by applying sequence-independent lifting techniques \cite{Wolsey1977,Richard2011} to the seed inequalities (valid for a two-dimensional restriction of $\Fset$), and apply the resultant inequalities as cutting planes to improve the computational performance of solving related problems.
To the best of our knowledge, this is the first time that the polyhedral structure of $\conv(\Fset)$ with an arbitrary concave function $f$ is investigated.
The main contributions are summarized as follows.
\begin{itemize}
	\item [$\bullet$] 
	We consider a two-dimensional restriction of $\Fset$, obtained by fixing all but one of the integer variables in $\Fset$ to their lower or upper bounds, and derive a class of valid inequalities. 
	These valid inequalities are able to not only provide a complete linear characterization of the convex hull of the two-dimensional restriction but also serve as a building block in developing strong valid inequalities for $\conv(\Fset)$ 
	(by leveraging lifting techniques). 
	\item [$\bullet$] 
	We derive a closed formula for the exact lifting function associated with the valid inequalities for the two-dimensional restriction of $\Fset$.
	The exact lifting function is, however, not subadditive on $\R$ in general.
	To bypass the difficulty, we construct a subadditive approximation for the exact lifting function, which allows for
	simultaneously lifting all fixed variables in a single phase, thereby obtaining a family of strong valid inequalities for $\conv(\Fset)$.
	We show that the single-phase lifted inequalities are facet-defining for $\conv(\Fset)$ under mild conditions.	
	\item [$\bullet$] 
	Although the exact lifting function is not subadditive on $\R$, it is subadditive on $\R_+$ and on $\R_-$ separately, which opens up the possibility to obtain other strong valid inequalities using a two-phase procedure. 
	Specifically, we can derive another two classes of strong valid inequalities (i) by first lifting the variables fixed at their lower bounds (using the exact lifting function) and then lifting those fixed at their upper bounds,
	and (ii) by first lifting the variables fixed at their upper bounds (using the exact lifting function) and then lifting those fixed at their lower bounds.
	We provide subadditive lower and upper bound approximations for the second-phase lifting functions. 
	With this result, we are able to not only efficiently compute  the two-phase lifted inequalities but also provide mild conditions for them to be facet-defining for $\conv(\Fset)$.
	As a byproduct of analysis, we develop two classes of  subadditive functions that encompass existing subadditive functions proposed in \cite{Atamturk2003,Shi2022} as special cases.
	\item[$\bullet$] 
	When applying the proposed single- and two-phase lifted inequalities to the three special cases $\conv(\FsetB)$, $\conv(\Kset)$, and $\conv(\Cset)$, our results show that
	the proposed inequalities can either unify existing ones in the literature or provide new facet-defining inequalities.
	In particular, for $\conv(\FsetB)$, the single-phase lifted inequalities can provide  new   facet-defining inequalities, and 
	the two-phase lifted ones are equivalent to the lifted inequalities in \citet{Shi2022}.
	For $\conv(\Kset)$, the single-phase lifted inequalities are equivalent to the well-known \MIR inequalities in \cite{Nemhauser1990,Marchand2001}, and the two-phase lifted ones are equivalent to the mixed-integer knapsack cover and pack inequalities in \cite{Atamturk2003,Atamturk2005}.
	For $\conv(\Cset)$, the single-phase lifted inequalities are equivalent to conic \MIR inequalities in \cite{Atamturk2010b} and the two-phase lifted ones can provide new facet-defining inequalities.
	It is worth noting that using a variable transformation technique, the proposed  single- and two-phase inequalities can also be applied to  the two-row mixed-integer set \cite{Nemhauser1990,Bodur2017}.
\end{itemize}

We apply the proposed single- and two-phase lifted inequalities as cutting planes within a branch-and-cut framework for solving the expected utility maximization \cite{Ahmed2011,Li2019,Paul1982} and weapon target assignment problems \cite{Manne1958,Ahuja2007,Andersen2022,Lu20211,Bertsimas2025}.
Extensive computational experiments demonstrate that the proposed single- and two-phase lifted inequalities can substantially strengthen the continuous relaxations and significantly improve the overall computational performance of the branch-and-cut framework to solve the two problems.

\subsection{Outline}
The remainder of this paper is organized as follows.
\cref{sec:preliminaries} presents some basic properties of concave functions
and proposes a family of facet-defining inequalities that can characterize the convex hull of a two-dimensional restriction of $\Fset$ (i.e., $\conv(\Fset)$ with $n = 1$).
\cref{sec:lift-one-phase,sec:twosteplifting} develop sequence-independent lifting procedures to derive the single- and two-phase lifted inequalities for $\conv(\Fset)$, respectively.
\cref{sec:special-case} analyzes the connections between the proposed inequalities and previously derived inequalities for some special cases of $\Fset$ in the literature.
\cref{sec:computational-result} presents the computational results.
Finally, \cref{sec:conslusion} concludes the paper.
\section{Preliminaries}\label{sec:preliminaries}
In this section, we first present some basic properties of concave functions that will be utilized throughout this paper. 
Then, we investigate the polyhedral structure of $\conv(\Fset)$ for the special case $n=1$. 
In particular, we provide
a family of valid inequalities that can characterize $\conv(\Fset)$ with $n=1$ and will serve as a building block in deriving strong valid inequalities for $\conv(\Fset)$ with an arbitrary positive integer $n$ in \cref{sec:lift-one-phase,sec:twosteplifting}.

\subsection{Basic properties of concave functions}\label{subsec:concave-func}\label{sect:basicpro}
\cref{lem:slope,cor:sum-pariwise-comparison} summarize some properties of concave functions.

\begin{lemma} \label{lem:slope}
	Let $f: \R \rightarrow \R$ be a concave function. 
	Then for any $a_1,a_2, b_1, b_2 \in \R$ 
	with $a_1 \leq b_1$, $a_2 \leq b_2$, $a_1 \leq a_2$, and $b_1 \le b_2$, the following inequality holds:
	\begin{equation}\label{ineq:concavity-slope}
		(b_2 - a_2)(f(b_1) - f(a_1)) \geq(b_1 - a_1) (f(b_2) - f(a_2)).
	\end{equation}
	In particular, if $b_1 - a_1 = b_2 - a_2$, then $f(b_1) - f(a_1) \geq f(b_2) - f(a_2)$.
\end{lemma}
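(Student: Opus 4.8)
The plan is to reduce the claimed inequality \eqref{ineq:concavity-slope} to the classical monotonicity of secant slopes of a concave function, and to obtain that monotonicity directly from the definition of concavity. For distinct points I write $S(x,y) := \frac{f(y)-f(x)}{y-x}$. The key tool I would first establish is the three-chord inequality: for any $u<v<w$,
\[
	S(u,v) \;\geq\; S(u,w) \;\geq\; S(v,w).
\]
This follows by writing $v=\lambda u+(1-\lambda)w$ with $\lambda=\frac{w-v}{w-u}\in(0,1)$, invoking $f(v)\geq \lambda f(u)+(1-\lambda)f(w)$, and rearranging (using $v-u=(1-\lambda)(w-u)$ and $w-v=\lambda(w-u)$). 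From the two outer comparisons I would extract two monotonicity statements: (i) for a fixed right endpoint, $u_1\leq u_2<w$ implies $S(u_1,w)\geq S(u_2,w)$; and (ii) for a fixed left endpoint, $u<w_1\leq w_2$ implies $S(u,w_1)\geq S(u,w_2)$. Each is immediate from the three-chord inequality (with equality in the trivial case that the two left, respectively right, endpoints coincide).

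Next I would dispose of the degenerate cases. If $b_1=a_1$ or $b_2=a_2$, then one factor on each side of \eqref{ineq:concavity-slope} vanishes, so both sides equal $0$ and the inequality holds with equality. In the remaining case $b_1>a_1$ and $b_2>a_2$, both quotients $S(a_1,b_1)$ and $S(a_2,b_2)$ are well defined, and dividing \eqref{ineq:concavity-slope} by the positive quantity $(b_1-a_1)(b_2-a_2)$ shows it is equivalent to the single slope comparison $S(a_1,b_1)\geq S(a_2,b_2)$. Rather than splitting into subcases according to whether the intervals $[a_1,b_1]$ and $[a_2,b_2]$ overlap, I would route the comparison through the auxiliary secant from $a_1$ to $b_2$: since $a_1<b_1\leq b_2$, property (ii) gives $S(a_1,b_1)\geq S(a_1,b_2)$; and since $a_1\leq a_2<b_2$, property (i) gives $S(a_1,b_2)\geq S(a_2,b_2)$. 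Chaining the two yields $S(a_1,b_1)\geq S(a_2,b_2)$, and multiplying back by $(b_1-a_1)(b_2-a_2)$ recovers \eqref{ineq:concavity-slope}. Here $S(a_1,b_2)$ is well defined because $a_1<b_1\leq b_2$ forces $a_1<b_2$.

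Finally, the special case is immediate: when $b_1-a_1=b_2-a_2=:\ell$, either $\ell=0$, in which case both sides of \eqref{ineq:concavity-slope} vanish as above, or $\ell>0$, in which case dividing the main inequality by $\ell>0$ leaves $f(b_1)-f(a_1)\geq f(b_2)-f(a_2)$. I expect the only genuine care to be needed in handling the degenerate intervals, where the difference quotients are undefined and one must argue directly that both sides vanish; the choice of the pivot point $(a_1,b_2)$ is what lets me sidestep a separate analysis of overlapping versus disjoint intervals, so that the nondegenerate part reduces to just two applications of the monotonicity of the secant slope.
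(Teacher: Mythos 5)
Your proof is correct: the three-chord inequality, the two secant-slope monotonicity properties, the reduction of the degenerate cases $b_1=a_1$ or $b_2=a_2$ to the vanishing of both sides, and the chaining $S(a_1,b_1)\geq S(a_1,b_2)\geq S(a_2,b_2)$ through the pivot secant are all valid, and together they establish \eqref{ineq:concavity-slope} under exactly the stated hypotheses. The paper proves this lemma by a one-line citation to Remark 1.4.1 of \cite{Niculescu2025}, which is precisely the secant-slope monotonicity for concave/convex functions that you derive from scratch, so your argument is a correct self-contained expansion of the same underlying approach rather than a genuinely different one.
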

\begin{proof}
	The proof follows from Remark 1.4.1 of \cite{Niculescu2025}.
\end{proof}

\begin{lemma}\label{cor:sum-pariwise-comparison}
	Let $f: \R \rightarrow \R$ be a concave function and $H = \sum_{i=1}^m \tau_i f(c_i)$ with $\tau, c \in \mathbb{R}^m$ satisfying  
	\begin{equation}\label{ccondition}
		\sum_{i=1}^m \tau_i c_i=0.
	\end{equation}
	If $H$ can be regrouped into the form of
	\begin{equation}\label{Hdef}
	H = \sum_{i\in I^+} k_i (f(b_i) - f(a_i))-   \sum_{i\in I^-} k_i (f(b_i) - f(a_i))
	\end{equation}
	with
	(i) $k_i \geq 0$ and $a_i \leq b_i $ for all $i \in I^+\cup I^-$ and (ii) $\max_{i\in I^+} b_i \le \min_{i\in I^-} b_i$ and
	$\max_{i\in I^+} a_i \le \min_{i\in I^-} a_i$,
	then $H \ge 0$.
\end{lemma}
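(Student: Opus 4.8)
The plan is to reduce the claimed inequality $H \ge 0$ to a single aggregated application of \cref{lem:slope} over all cross pairs between $I^+$ and $I^-$. First I would introduce the weighted total lengths $L^+ := \sum_{i \in I^+} k_i(b_i - a_i)$ and $L^- := \sum_{i \in I^-} k_i(b_i - a_i)$, both nonnegative by hypothesis (i). The first key observation is that the balance condition~\eqref{ccondition} forces $L^+ = L^-$: since the regrouping in~\eqref{Hdef} is a formal rearrangement of $\sum_i \tau_i f(c_i)$, matching coefficients and arguments term by term shows that $\sum_i \tau_i c_i$ equals $\sum_{i \in I^+} k_i(b_i - a_i) - \sum_{i \in I^-} k_i(b_i - a_i) = L^+ - L^-$; hence~\eqref{ccondition} yields $L^+ = L^-$, which I denote by $L$.

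Next, for every pair $(i,j)$ with $i \in I^+$ and $j \in I^-$, I would invoke \cref{lem:slope} with $(a_1, b_1) = (a_i, b_i)$ and $(a_2, b_2) = (a_j, b_j)$. The four hypotheses $a_1 \le b_1$, $a_2 \le b_2$, $a_1 \le a_2$, $b_1 \le b_2$ are exactly guaranteed by (i) together with condition (ii) (which gives $a_i \le a_j$ and $b_i \le b_j$ for all such pairs), so \cref{lem:slope} delivers
\[
(b_j - a_j)\bigl(f(b_i) - f(a_i)\bigr) \ge (b_i - a_i)\bigl(f(b_j) - f(a_j)\bigr).
\]
The crucial step is then to multiply this pairwise inequality by $k_i k_j \ge 0$ and sum over all $i \in I^+$ and $j \in I^-$. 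The double sum factors: the left-hand side becomes $L^- \sum_{i \in I^+} k_i(f(b_i) - f(a_i))$ and the right-hand side becomes $L^+ \sum_{j \in I^-} k_j(f(b_j) - f(a_j))$. Using $L^+ = L^- = L$, this reads $L \cdot \bigl(\sum_{i \in I^+} k_i(f(b_i)-f(a_i)) - \sum_{j \in I^-} k_j(f(b_j)-f(a_j))\bigr) \ge 0$, i.e.\ $L \cdot H \ge 0$.

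Finally I would close the argument by dividing by $L$ when $L > 0$ to conclude $H \ge 0$, and handle the degenerate case $L = 0$ separately: since $L^+ = \sum_{i \in I^+} k_i(b_i - a_i) = 0$ with every summand nonnegative, each term has either $k_i = 0$ or $a_i = b_i$, so $k_i(f(b_i) - f(a_i)) = 0$; the same holds for $I^-$, whence $H = 0 \ge 0$. This degenerate case also subsumes the situation where $I^+$ or $I^-$ is empty.

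I expect the main obstacle to be the first step, namely pinning down precisely how the balance condition~\eqref{ccondition} translates into the length identity $L^+ = L^-$. This hinges on reading~\eqref{Hdef} as a formal identity of coefficient--argument pairs, so that evaluating the ``linear part'' (formally replacing $f$ by the identity) is legitimate rather than a numerical coincidence for the particular $f$. Once $L^+ = L^-$ is in hand, the tensor-style aggregation of the pairwise slope inequalities is the decisive trick, and the remaining manipulations are routine bookkeeping.
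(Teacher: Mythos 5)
Your proof is correct and follows essentially the same route as the paper's: both hinge on extracting the length identity $\sum_{i\in I^+}k_i(b_i-a_i)=\sum_{i\in I^-}k_i(b_i-a_i)$ from \eqref{ccondition} by reading \eqref{Hdef} formally, and both rest on the cross-pair inequalities supplied by \cref{lem:slope}. The only difference is the final aggregation: the paper divides by the lengths $b_i-a_i$, takes the minimum slope $\lambda$ over $I^+$, and bounds $H\ge\lambda\bigl(\sum_{i\in I^+}k_i(b_i-a_i)-\sum_{i\in I^-}k_i(b_i-a_i)\bigr)=0$, whereas you tensor the undivided pairwise inequalities with weights $k_ik_j$ and factor to get $L\cdot H\ge 0$; your separate treatment of the degenerate case $L=0$ plays the role of the paper's initial normalization discarding terms with $k_i=0$ or $a_i=b_i$.
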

\begin{proof}
	Without loss of generality, we assume that $k_i > 0$ and $a_i < b_i$ for all $i \in I^+ \cup I^-$ (as otherwise, we can remove the corresponding term $k_i (f(b_i) - f(a_i))$ from \eqref{Hdef}).
	If $I^+\cup I^- = \varnothing$, then $H=0$ and thus the statement follows. 
	Otherwise, it follows from \eqref{ccondition} that
	$\sum_{i \in I^+} k_i (b_i - a_i) - \sum_{i \in I^-} k_i (b_i - a_i) = 0$, 
	and thus  $I^+ \neq \varnothing$ and $I^- \neq \varnothing$ must hold.
	Combining  $a_i < b_i$ for all $i \in I^+ \cup I^-$, condition (ii), and \cref{lem:slope}, we obtain 
	\begin{equation}\label{tmpeq}
		\frac{f(b_i) - f(a_i)}{b_i - a_i} \ge \frac{f(b_j) - f(a_j)}{b_j - a_j}, \ \forall \ i \in I^+, \ j \in I^-.
	\end{equation}
	Letting $\lambda = \min_{i \in I^+} \frac{f(b_i) - f(a_i)}{b_i - a_i}$, then 
	by \eqref{ccondition} and \eqref{tmpeq}, we have
	\begin{align*}
		H 
		& = \sum_{i\in I^+} k_i \cdot \frac{f(b_i) - f(a_i)}{b_i - a_i} \cdot (b_i - a_i) 
		- \sum_{i\in I^-} k_i \cdot \frac{f(b_i) - f(a_i)}{b_i - a_i} \cdot (b_i - a_i)\\
		& \ge \lambda\left(\sum_{i \in I^+} k_i (b_i - a_i) - \sum_{i \in I^-} k_i (b_i - a_i)\right) = \lambda \sum_{i = 1}^m \tau_i c_i = 0.  \qedhere
	\end{align*}
\end{proof}
Observe that if $I^+ =\{1\}$, $I^-=\{2\}$, $k_1 =b_2 -a_2$, and $k_2 = b_1-a_1$, then \eqref{Hdef} reduces to 
\begin{equation*}
	H=(b_2 - a_2)(f(b_1) - f(a_1)) -(b_1 - a_1) (f(b_2) - f(a_2)). 
\end{equation*}
Therefore, the result in \cref{cor:sum-pariwise-comparison} generalizes that in \cref{lem:slope}.

\subsection{Polyhedral structure of $\conv(\Fset)$ with $n=1$} \label{subsec:convhull-F1}
Next, we consider the special case of $\Fset$ with $n=1$:
\begin{equation}\label{F1}
	\Fset_1 = \left\{ (w, x) \in \R \times \Z \,:\,
	w \leq f(a x), \ 0 \leq x \leq \mu \right\},
\end{equation}
and investigate the polyhedral structure of its convex hull.
Here, $a>0$ and $\mu \in {\Z_{++}}$. 
We first present a family of facet-defining inequalities for $\conv(\Fset_1)$.
\begin{proposition}\label{prop:2dim-facet}
	For any $k \in [\mu]$, inequality 
	\begin{equation}
		\label{eq:seedInq1}
		w \leq [f(k a) - f((k - 1) a)] (x - k) + f(k a)
	\end{equation}
	is facet-defining for $\conv(\Fset_1)$.
\end{proposition}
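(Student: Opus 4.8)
The plan is to exploit that $\conv(\Fset_1)$ lives in the $(w,x)$-plane, so a facet is just an edge: a proper face spanned by two affinely independent points. I would therefore (i) check that $\conv(\Fset_1)$ is full-dimensional, (ii) prove validity of \eqref{eq:seedInq1}, and (iii) exhibit two feasible points on the face it induces.

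Full-dimensionality is immediate: the points $(f(0),0)$, $(f(0)-1,0)$, and $(f(a),1)$ all lie in $\Fset_1$ and are affinely independent, so $\dim(\conv(\Fset_1))=2$ and it remains to produce a one-dimensional proper face. For validity, since the right-hand side of \eqref{eq:seedInq1} does not depend on $w$ and $w\le f(ax)$ on $\Fset_1$, it suffices to verify the inequality at the ``top'' points $(f(ja),j)$, i.e.\ to show
\[
f(ja)-f(ka)\le \bigl(f(ka)-f((k-1)a)\bigr)(j-k),\qquad j\in\{0,\dots,\mu\}.
\]
I would prove this by writing $f(ja)-f(ka)$ as a telescoping sum of unit-step increments $f(ia)-f((i-1)a)$ and invoking the ``in particular'' clause of \cref{lem:slope}, which gives that these increments are non-increasing in $i$. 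For $j>k$ this bounds each of the $j-k$ increments above by $f(ka)-f((k-1)a)$, and for $j<k$ it bounds each of the $k-j$ increments below by the same quantity; both cases yield the claim. (This is also a direct instance of \cref{cor:sum-pariwise-comparison}.)

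For the facet claim, note that $k\in[\mu]$ forces $0\le k-1<k\le\mu$, so the two points $(f((k-1)a),k-1)$ and $(f(ka),k)$ belong to $\Fset_1$; substituting $x=k-1$ and $x=k$ shows the right-hand side of \eqref{eq:seedInq1} equals $f((k-1)a)$ and $f(ka)$ respectively, so both points satisfy \eqref{eq:seedInq1} with equality. Having distinct $x$-coordinates, they are affinely independent, so the induced face has dimension at least one; being contained in a line of $\R^2$, it has dimension exactly $1=\dim(\conv(\Fset_1))-1$, proving the inequality is facet-defining. The argument is elementary in two dimensions; the only step requiring care is validity, where the telescoping bound must be split according to whether $j$ lies above or below $k$, each handled by the monotonicity of the unit-step increments guaranteed by concavity.
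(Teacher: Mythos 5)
Your proposal is correct and follows essentially the same route as the paper: validity is established by comparing unit-step increments of the concave function $f$ at integer points (the paper invokes \cref{cor:sum-pariwise-comparison} directly, which is exactly the packaged form of your telescoping argument via \cref{lem:slope}), and the facet claim rests on the same two tight points $(f((k-1)a),k-1)$ and $(f(ka),k)$. Your explicit verification of full-dimensionality is a small completeness bonus over the paper's one-line assertion, but the substance is identical.
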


\begin{proof}
	Let $(w,x)$ be an arbitrary point in $\Fset_1$. 
	Then $w \leq f(a x)$ holds. 
	To prove the validity of \eqref{eq:seedInq1}, it suffices to show that
	\begin{equation*}
		H : = [f(k a) - f((k - 1) a)] (x - k) + f(k a) - f(a x)
		\geq 0.
	\end{equation*}
	Below we use \cref{cor:sum-pariwise-comparison} to show $H \geq 0$.
	First, observe that $ka(x - k)- (k-1)a(x-k) + k a - a x = 0$, and thus condition \eqref{ccondition} holds.
	If $x \geq k$, then
	\begin{equation*}
		\begin{aligned}
			H = (x - k) [f(k a) - f((k - 1) a)] - [f(a x) - f(k a)] \stackrel{(a)}{\geq} 0,
		\end{aligned}
	\end{equation*}
	where (a) follows from $ax \geq k a > (k - 1) a$ and \cref{cor:sum-pariwise-comparison}.
	Otherwise, $x \leq k - 1$ holds (as $x \in \Z$ and $x < k$) and 
	\begin{equation*}
		\begin{aligned}
			H = [f(k a) - f(a x)] - (k - x) [f(k a) - f((k - 1) a)] \stackrel{(a)}{\geq} 0,
		\end{aligned}
	\end{equation*}
	where (a) follows from $ax \leq (k - 1) a < k a$  and \cref{cor:sum-pariwise-comparison}.
	
	As the dimension of $\conv(\Fset_1)$ equals $2$ and $(f((k-1) a), k - 1)$ and $(f(k a), k)$ 
	are affinely independent points in $\Fset_1$ satisfying \eqref{eq:seedInq1} at equality, \eqref{eq:seedInq1} must be facet-defining for $\conv(\Fset_1)$.
\end{proof}

\cref{fig:convF1} illustrates inequalities \eqref{eq:seedInq1} for the two special cases $\{ (w, x)\in   \R \times \Z\, :\, w \leq -{(x-1.3)}^2,~0 \leq x \leq 3\}$ and $\{ (w, x) \in \R \times \Z \, :\, w \leq \min\{0, 1.5-x\},~0\leq x\leq 3\}$, respectively.
For these two special cases, inequalities \eqref{eq:seedInq1}, together with the box constraints $0 \leq x \leq 3$,
are sufficient to give a complete description of $\conv(\Fset_1)$.
This statement, however, also holds true for the general case, as detailed in the following proposition. 
\begin{figure}[htbp]
	\centering
	\begin{tikzpicture}[scale=\textwidth/9.8cm]
		\centering

		\draw[->, >=latex, line width=0.05mm] (-0.2, 0) -- (3.5, 0) node[below] {$x$};
		\draw[->, >=latex, line width=0.05mm] (0, -3.2) -- (0, 0.5) node[left] {$w$};
		
		\fill[red!30] (0, -0.845) -- (1, -0.045) -- (2, -0.245) -- (3, -1.45) -- (3, -2.9) -- (0, -2.9) -- cycle;
		
		\def\fx(#1){-0.5*(#1-1.3)^2}
		
		\fill[
		pattern={Lines[angle=60, distance=6pt, line width=0.3mm]},
		pattern color=gray!75
		]
		plot[domain=0:3, samples=200,line width=0.5mm]
		(\x,{\fx(\x)})
		-- (0,-0.845) -- (0,-2.9) -- (3, -2.9)  -- (3, -1.45) -- cycle;
		\draw[line width=0.5mm, gray, domain=-0.1:3.1, samples=100] plot (\x, {-0.5*(\x - 1.3)^2});	
		
		\draw[line width=0.5mm] (0, -0.845) -- (0, -3);
		\draw[line width=0.5mm] (1, -0.045) -- (1, -3);
		\draw[line width=0.5mm] (2, -0.245) -- (2, -3);
		\draw[line width=0.5mm] (3, -1.45) -- (3, -3);
		
		\draw[line width=0.05mm] (0, -0.05) -- (0, 0.05);
		\draw[line width=0.05mm] (1, -0.05) -- (1, 0.05);
		\draw[line width=0.05mm] (2, -0.05) -- (2, 0.05);
		\draw[line width=0.05mm] (3, -0.05) -- (3, 0.05);
		
		\draw[line width=0.5mm, blue] (-0.1, -0.925) -- (1.1, 0.035);
		\draw[line width=0.5mm,  blue] (0.9, -0.025) -- (2.1, -0.265);
		\draw[line width=0.5mm, blue] (1.9, -0.12636) -- (3.1, -1.55);

		\draw[line width=0.5mm, blue, -stealth] (0.45, -0.485) -- (0.65, -0.735);
		\draw[line width=0.5mm, blue, -stealth] (1.5, -0.145) -- (1.437212201087252, -0.45893899456374054);
		\draw[line width=0.5mm, blue,-stealth] (2.5, -0.8381818181818181) -- (2.255206242604927, -1.044521383802186);

		\node[above right] at (0, 0) {\small $0$};
		\node[above] at (1, 0) {\small $1$};
		\node[above] at (2, 0) {\small $2$};
		\node[above] at (3, 0) {\small $3$};		

		\draw[->, >=latex, line width=0.05mm] (5.25, 0) -- (8.95, 0) node[below] {$x$};
		\draw[->, >=latex, line width=0.05mm] (5.45, -3.2) -- (5.45, 0.5) node[left] {$w$};
		
		\fill[red!30] (5.45, 0) -- (6.45, 0) -- (7.45, -0.5) -- (8.45, -1.5) -- (8.45, -2.9) -- (5.45, -2.9) -- cycle;

		\fill[
		pattern={Lines[angle=60, distance=6pt, line width=0.3mm]},
		pattern color=gray!75
		]
		(5.45, 0) -- (6.95, 0) -- (7.45, -0.5) -- (8.45, -1.5) -- (8.45, -2.9) -- (5.45, -2.9) -- cycle;
		\draw[line width=0.5mm,gray] (5.45,0) -- (6.96, 0);
		\draw[line width=0.5mm,gray] (6.95, 0) -- (7.45, -0.5);
		
		\draw[line width=0.5mm] (5.45, 0) -- (5.45, -3);
		\draw[line width=0.5mm] (6.45, 0) -- (6.45, -3);
		\draw[line width=0.5mm] (7.45, -0.5) -- (7.45, -3);
		\draw[line width=0.5mm] (8.45, -1.5) -- (8.45, -3);
		
		\draw[line width=0.05mm] (5.45, 0) -- (5.45, 0.05);
		\draw[line width=0.05mm] (6.45, -0.05) -- (6.45, 0.05);
		\draw[line width=0.05mm] (7.45, -0.05) -- (7.45, 0.05);
		\draw[line width=0.05mm] (8.45, -0.05) -- (8.45, 0.05);
		
		\draw[line width=0.5mm, blue] (5.35, 0) -- (6.55, 0);
		\draw[line width=0.5mm, blue] (6.35, 0.05) -- (7.55, -0.55);
		\draw[line width=0.5mm, blue] (7.35, -0.4) -- (8.55, -1.6);

		\draw[line width=0.5mm, blue, -stealth] (5.95, 0) -- (5.95, -0.32015621187164245);
		\draw[line width=0.5mm, blue, -stealth] (6.95, -0.25) -- (6.819296773822016, -0.5114064523559687);
		\draw[line width=0.5mm, blue, -stealth] (7.95, -1) -- (7.723615371546565, -1.2263846284534354);

		\node[above right] at (5.45, 0) {\small $0$};
		\node[above] at (6.45, 0) {\small $1$};

		\node[above] at (7.45, 0) {\small $2$};
		\node[above] at (8.45, 0) {\small $3$};

		\draw[line width=0.5mm] (3.2,-1) -- (3.6,-1);
		\node[right] at (3.7,-1) {\footnotesize Feasible points};
		
		\draw[line width=0.5mm, blue] (3.2,-1.5) -- (3.6,-1.5);
		\node[right] at (3.7,-1.5) {\footnotesize Inequalities \eqref{eq:seedInq1}};
		
		\draw[fill=red!30] (3.2,-2.1) rectangle (3.6,-1.9);
		\node[right] at (3.7,-2) {\footnotesize Convex hull};

		\filldraw[
		pattern={Lines[angle=60, distance=3pt, line width=0.3mm]},
		pattern color=gray!75] (3.2,-2.6) rectangle (3.6,-2.4);
		\node[right] at (3.7,-2.5) {\footnotesize Continuous};
		\node[right] at (3.7,-2.8) {\footnotesize relaxation};
		
		\node[below] at (1.7, -3.2) {\footnotesize $\{ (w, x)\in \R \times \Z\, :\, w \leq {-{(x-1.3)}^2},~0 \leq x \leq 3\}$};
		
		\node[below] at (6.9, -3.225) {\footnotesize $\{ (w, x) \in \R \times \Z \, :\, w \leq \min\{0, 1.5-x\},~0 \leq x \leq 3\}$};
	\end{tikzpicture}
	\caption{Feasible points, inequalities \eqref{eq:seedInq1}, the convex hull, and the continuous relaxation of $\Fset_1$ for two special cases.}
	\label{fig:convF1}
\end{figure}

\begin{proposition} 
	Letting
	\begin{equation*}
		\begin{aligned}
			Q = \left\{
			(w, x) \in \R^2 \,:\, 
			w \leq 
			[f(k a) - f((k - 1) a)] (x - k) + f(k a), 
			\ \forall \ k \in [\mu], \ 0 \leq x \leq \mu
			\right\},
		\end{aligned}
	\end{equation*}
	then $\conv(\Fset_1) = Q$.
\end{proposition}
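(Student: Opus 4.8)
The plan is to establish the two inclusions $\conv(\Fset_1) \subseteq Q$ and $Q \subseteq \conv(\Fset_1)$ separately. The first is immediate from work already done: every inequality defining $Q$ is valid for $\Fset_1$ — the seed inequalities \eqref{eq:seedInq1} by \cref{prop:2dim-facet}, and the box constraints $0 \le x \le \mu$ trivially — so $\Fset_1 \subseteq Q$; since $Q$ is a polyhedron and hence convex, we obtain $\conv(\Fset_1) \subseteq Q$.

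For the reverse inclusion I would exploit the simple geometry of $\Fset_1$. Because $x$ is integer with $0 \le x \le \mu$, the set $\Fset_1$ is the union of the $\mu + 1$ parallel downward rays $R_j := \{(w,j) : w \le f(ja)\}$ for $j \in \{0,1,\ldots,\mu\}$, each with apex $(f(ja), j)$. Given an arbitrary $(w,x) \in Q$, I would locate the integer $j := \lfloor x \rfloor$ when $x < \mu$ and set $j := \mu - 1$ when $x = \mu$, so that $j \in \{0,\ldots,\mu-1\}$ and $x \in [j, j+1]$; writing $\lambda := x - j \in [0,1]$ gives $x = (1-\lambda) j + \lambda (j+1)$. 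The key observation is that the seed inequality \eqref{eq:seedInq1} with index $k = j+1 \in [\mu]$, restricted to $[j, j+1]$, is exactly the chord joining the two apices: its right-hand side evaluates to $f(ja)$ at $x = j$ and to $f((j+1)a)$ at $x = j+1$. Consequently, membership of $(w,x)$ in $Q$ forces $w \le (1-\lambda) f(ja) + \lambda f((j+1)a)$.

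With this bound in hand, it remains only to exhibit $(w,x)$ as a convex combination of two points of $\Fset_1$. I would pick $w_1 \le f(ja)$ and $w_2 \le f((j+1)a)$ with $w = (1-\lambda) w_1 + \lambda w_2$ — for instance taking $w_1 = f(ja)$ and solving for $w_2$ when $\lambda \in (0,1)$, and treating $\lambda \in \{0,1\}$ (the cases $x = j$ or $x = \mu$, where $(w,x)$ already lies on a single ray) directly. Then $(w_1, j) \in R_j \subseteq \Fset_1$, $(w_2, j+1) \in R_{j+1} \subseteq \Fset_1$, and $(w,x) = (1-\lambda)(w_1, j) + \lambda (w_2, j+1) \in \conv(\Fset_1)$, which finishes the argument. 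I expect the only delicate points to be the bookkeeping at the boundary values $x = 0$, $x = \mu$, and integer $x$, together with the verification that the $k = j+1$ inequality reproduces the chord on $[j, j+1]$; the remainder is a routine convex-combination argument. Notably, the other seed inequalities need not be examined for this direction, since bounding $w$ by a single valid inequality already suffices.
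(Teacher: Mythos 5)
Your proposal is correct and follows essentially the same route as the paper: the forward inclusion from validity of the seed inequalities, and the reverse inclusion by observing that the inequality with $k = \lceil x\rceil$ (your $k = j+1$) is exactly the chord between the apices $(f(ja), j)$ and $(f((j+1)a), j+1)$, which bounds $w$ by the chord value and lets you write $(w,x)$ as a convex combination of two points of $\Fset_1$. The only cosmetic difference is how the slack below the chord is apportioned between the two endpoints (you put it all on $w_2$; the paper subtracts it from both), which changes nothing.
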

\begin{proof}
	By \cref{prop:2dim-facet}, $\conv(\Fset_1) \subseteq Q$.
	To establish the reverse inclusion, we will show that every point $(\bar{w}, \bar{x}) \in Q$ with $\bar{x} \notin \mathbb{Z}$
	can be represented as a convex combination of two points in $\Fset_1$.
	As $(\bar{w}, \bar{x}) \in Q$, inequality \eqref{eq:seedInq1} with $k= \ceil{\bar{x}}$ must hold, that is
	\begin{equation*}
		\begin{aligned}
			\bar{w} \leq [f(\ceil{\bar{x}} a) - f(\floor{\bar{x}} a)] (\bar{x} - \ceil{\bar{x}}) + f(\ceil{\bar{x}} a) 
			= (\ceil{\bar{x}} - \bar{x}) f(\floor{\bar{x}} a) + (\bar{x} - \floor{\bar{x}}) f(\ceil{\bar{x}} a).
		\end{aligned}
	\end{equation*}
	Let 
	$\hat{w} = (\ceil{\bar{x}} - \bar{x}) f(\floor{\bar{x}} a) + (\bar{x} - \floor{\bar{x}}) f(\ceil{\bar{x}} a)$ and $\lambda = \bar{x} - \floor{\bar{x}}$.
	It follows that $\bar{w} \leq \hat{w}$, $\lambda \in (0, 1)$, and
	$(\hat{w}, \bar{x}) = (1 - \lambda) (f(\floor{\bar{x}} a), \floor{\bar{x}}) + \lambda (f(\ceil{\bar{x}} a), \ceil{\bar{x}})$.
	Therefore, if $\bar{w} = \hat{w}$, then $(\bar{w}, \bar{x})$ is a convex combination of two points $(f(\floor{\bar{x}} a), \floor{\bar{x}})$ and $(f(\ceil{\bar{x}} a), \ceil{\bar{x}})$ in $\Fset_1$ and the statement follows.
	Otherwise, $\bar{w} < \hat{w}$. 
	The statement also follows since $(\bar{w}, \bar{x}) = (\hat{w} - (\hat{w} - \bar{w}), \bar{x})
	= (1 - \lambda) (f(\floor{\bar{x}} a) - (\hat{w} - \bar{w}), \floor{\bar{x}}) + \lambda (f(\ceil{\bar{x}} a) - (\hat{w} - \bar{w}), \ceil{\bar{x}})$
	and $(f(\floor{\bar{x}} a) - (\hat{w} - \bar{w}), \floor{\bar{x}})$, 
	$(f(\ceil{\bar{x}} a) - (\hat{w} - \bar{w}), \ceil{\bar{x}})$ are both in $\Fset_1$.
\end{proof}

In the next two sections, we will use inequalities \eqref{eq:seedInq1} to develop strong valid inequalities for $\conv(\Fset)$ with an arbitrary positive integer $n$.

\section{Single-phase lifted inequalities for $\conv(\Fset)$}\label{sec:lift-one-phase}
Let $s \in [n]$, and $S_0$ and $S_1$ be disjoint subsets of $[n] \backslash s$ such that $s \cup S_0 \cup S_1 = [n]$. 
By fixing $x_i = 0$ for $i \in S_0$ and $x_i = \mu_i$ for $ i \in S_1$, 
we obtain the following two-dimensional restriction of $\Fset$:
\begin{equation*}
\Fset_s(S_0, S_1) : = \left\{ (w,x_s) \in \R \times \Z \,:\, w \leq g \left(a_sx_s\right),  ~ 0 \leq x_s \leq \mu_s \right\},
\end{equation*}
where $g(z) := f\left(z+\sum_{i \in S_1}\mu_i a_i\right)$ is concave (as $f$ is concave).
The set $\Fset_s(S_0, S_1)$ is a form of $\Fset_1$ in \eqref{F1}, and thus 
by \cref{prop:2dim-facet},
the following inequality is facet-defining for $\conv(\Fset_s(S_0, S_1))$:
\begin{equation}\label{eq:seedInq}
		w \leq \rho_s(k)(x_s - k) 
		+ g \left(ka_s\right),
	\end{equation} 
where $k \in [\mu_s]$ and $\rho_s(k) := g(k a_s) - g((k - 1) a_s)$.
We refer to \eqref{eq:seedInq} as the \emph{seed inequality}.
For notational simplicity, we suppress the dependence of $g$ and $\rho_s(k)$ on $S_1$ throughout the paper.

To lift variables $\{x_i\}_{i \in [n] \backslash s}$ into the seed inequality \eqref{eq:seedInq} and obtain strong valid inequalities for $\conv(\Fset)$, 
we consider the following lifting function:
\begin{equation}
	\label{prob:first-part-lifting}
	\begin{aligned}
		\zeta(\delta) : = \max_{w,\, x_s}  \quad 
		& w - \rho_s(k) (x_s - k) - g (ka_s) \\
		\text{s.t.} \quad & w \leq 
		g\left(\delta+a_s x_s\right), \\
		& 0 \leq x_s \le \mu_s, \\
		& w \in \R, \ x_s \in \Z,
	\end{aligned}
\end{equation}
where $\delta \in \R$.
In the following,  we will provide a closed formula for the lifting function $\zeta$ and derive a subadditive approximation.
The latter enables to develop a family of strong valid inequalities for $\conv(\Fset)$ using a single-phase (sequence-independent) lifting procedure.

\subsection{A closed formula for the lifting function $\zeta$}
Observe that for any optimal solution $(\bar{w}, \bar{x}_s)$ of the above lifting problem, 
$\bar{w} = g\left(\delta + a_s \bar{x}_s\right)$ must hold.
Thus, the lifting problem \eqref{prob:first-part-lifting} is equivalent to 
\begin{equation} 
	\tag{\ref{prob:first-part-lifting}'}
	\label{prob:first-part-lifting2}
	\zeta(\delta) = \max_{x_s \in \Z} \left\{
	p(\delta, x_s) \,:\, 0 \leq x_s \leq \mu_s
	\right\},
\end{equation}
where $p(\delta, x_s) := g(\delta + a_s x_s) - \rho_s(k)(x_s - k) - g\left(ka_s\right)$.
In what follows, we also refer to problem \eqref{prob:first-part-lifting2} as the lifting problem.

The following lemma shows that for a fixed $\delta$, $p(\delta, x_s)$ first increases with $x_s$ and then decreases with $x_s$.
\begin{lemma} 
	\label{lemma1}
	For $\delta \in \R$ and $j \in \Z$,
	\begin{itemize}
		\item [(i)] if $\delta \le (k - j - 1)a_s$, then $p(\delta, j+1) \geq p(\delta, j)$;  
		\item [(ii)] if $\delta \ge (k - j - 1)a_s$, then $p(\delta, j+1) \leq p(\delta, j)$.
	\end{itemize}
\end{lemma}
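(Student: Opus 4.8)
The plan is to reduce both parts to a single comparison of equal-length increments of the concave function $g$, and then invoke the ``equal-length'' special case of \cref{lem:slope}.

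First I would form the forward difference $p(\delta, j+1) - p(\delta, j)$ directly from the definition $p(\delta, x_s) = g(\delta + a_s x_s) - \rho_s(k)(x_s - k) - g(k a_s)$. Since $(j+1-k) - (j-k) = 1$, the linear terms contribute exactly $-\rho_s(k)$, and the constant $g(k a_s)$ cancels, so the difference collapses to
\begin{equation*}
p(\delta, j+1) - p(\delta, j) = \bigl[g(\delta + a_s j + a_s) - g(\delta + a_s j)\bigr] - \rho_s(k).
\end{equation*}
Thus each part of the claim amounts to determining the sign of this expression under the stated hypothesis.

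Next I would rewrite $\rho_s(k)$ as an increment of $g$ over an interval of the same length as the bracketed term. By definition $\rho_s(k) = g(k a_s) - g((k-1) a_s)$, i.e. the increment of $g$ over the interval $[(k-1)a_s,\, k a_s]$, whereas the bracket is the increment of $g$ over $[\delta + a_s j,\, \delta + a_s j + a_s]$; both intervals have length exactly $a_s$. Because $g$ is concave and $a_s > 0$, the ``in particular'' conclusion of \cref{lem:slope} applies: over two intervals of equal length, the increment over the interval with the smaller left endpoint is at least the increment over the one with the larger left endpoint.

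Finally I would match the hypotheses to the ordering of the two left endpoints $\delta + a_s j$ and $(k-1)a_s$. Since $a_s > 0$,
\begin{equation*}
\delta + a_s j \le (k-1)a_s \iff \delta \le (k-j-1)a_s.
\end{equation*}
Under (i) the left endpoint $\delta + a_s j$ is the smaller one, so the bracketed increment dominates $\rho_s(k)$ and the forward difference is nonnegative, yielding $p(\delta, j+1) \ge p(\delta, j)$; under (ii) the ordering of endpoints reverses, giving $p(\delta, j+1) \le p(\delta, j)$. At the boundary $\delta = (k-j-1)a_s$ the two intervals coincide and the difference is zero, which is why this endpoint is consistently included in both parts. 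I do not anticipate a genuine obstacle; the only point requiring care is orienting the inequality from \cref{lem:slope} correctly, that is, tracking which interval has the smaller left endpoint and confirming that the equality case at $\delta = (k-j-1)a_s$ is compatible with both statements.
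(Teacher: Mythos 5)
Your proof is correct and follows essentially the same route as the paper's: both compute the forward difference $p(\delta,j+1)-p(\delta,j)$ as a difference of two equal-length increments of $g$ and apply the ``in particular'' case of \cref{lem:slope}, with the only cosmetic difference being that you compare left endpoints where the paper compares right endpoints (an equivalent condition since both intervals have length $a_s$).
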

\begin{proof}
	By the definitions of $p$ and $\rho_s(k)$, we have
	\begin{subequations}
		\begin{align*}
			p(\delta, j+1) - p(\delta, j)
			& = g\left(\delta + (j+1)a_s\right) - g\left(\delta + j a_s\right) - \rho_s(k) \\
			&= [g\left(\delta + (j+1)a_s\right) - g\left(\delta + j a_s\right)] 
			- [g(ka_s) - g((k-1)a_s)].
		\end{align*}
	\end{subequations}
	By \cref{lem:slope}, it follows that
	\begin{itemize}
		\item [(a)] if $\delta + (j+1) a_s \leq ka_s$, then $p(\delta, j+1) \geq p(\delta, j)$;
		\item [(b)] if $\delta + (j+1) a_s \geq ka_s$, then $p(\delta, j+1) \leq p(\delta, j)$.
	\end{itemize}
	Thus, statements (i) and (ii) hold.
\end{proof}

Using \cref{lemma1} and the fact that $x_s \in \Z$ and $0 \leq x_s \leq \mu_s$, an optimal solution 
of the lifting problem \eqref{prob:first-part-lifting2} is given by:
\begin{equation*} 
	x_s =  \left\{
	\begin{aligned}
		&\mu_s, && ~\text{if}~\delta < (k-\mu_s-1)a_s,\\
		&k - \ell - 1, && ~\text{if}~\ell a_s \le \delta < (\ell + 1) a_s, \\
		&&& ~~ \ell = k - \mu_s-1,\ldots, k - 1, \\
		&0, && ~\text{if}~\delta \ge ka_s, 
	\end{aligned}
	\right.
\end{equation*}
and hence a closed formula for $\zeta$ is given by:
\begin{equation}\label{calc:zeta}
	\begin{small}
		\zeta(\delta) =  \left\{
		\begin{aligned}
			&g(\delta + \mu_sa_s)+(k-\mu_s)\rho_s(k)-g(ka_s), &&\text{if}~\delta < (k-\mu_s  - 1)a_s,\\
			&g \left(\delta + (k - \ell - 1) a_s \right) + (\ell + 1) \rho_s(k)-g(ka_s), &&\text{if}~\ell a_s \le \delta < (\ell + 1) a_s, \\
			& && ~ \ell = k - \mu_s-1,\ldots, k - 1, \\
			&g \left(\delta\right) + k\rho_s(k)-g(ka_s), &&\text{if}~\delta \ge ka_s.
		\end{aligned}
		\right.
	\end{small}
\end{equation}
By simple computations, $\zeta$ is continuous on $\R$.

\subsection{Subadditive approximation for the lifting function $\zeta$}
The exact lifting function $\zeta$ is, however, not subadditive on $\R$ in general (an illustrative example will be provided in \cref{ex1} later in this section).
To construct a subadditive lifting function \cite{Atamturk2004,Richard2011,Wolsey1977} (that enables the sequence-independent lifting of the fixed variables $\{x_i\}_{i \in [n]\backslash s}$), 
we drop the box constraint $0\leq x_s \leq \mu_s$ from the lifting problem \eqref{prob:first-part-lifting2} and consider the following relaxation:
\begin{equation}\label{prob:first-part-lifting-ub}
	Z(\delta) : = \max_{x_s \in \Z} \ p(\delta, x_s).
\end{equation}
By \cref{lemma1} and $x_s \in \Z$ in \eqref{prob:first-part-lifting-ub}, 
we can give a closed formula for $Z(\delta)$:
\begin{equation}\label{funZ}
	Z(\delta) =  g(\delta + (k - \ell - 1) a_s) + (\ell + 1) \rho_s(k) - g(ka_s),
	\ \text{if} \ \ell a_s \le \delta < (\ell + 1)a_s, \ \ell \in \Z.
\end{equation}
The following proposition establishes the subadditivity of function $Z$.
\begin{proposition}
	\label{lem:Z-subadditive-on-R}
	Function $Z$ is subadditive on $\R$; that is, $	Z(\delta_1) + Z(\delta_2) \geq Z(\delta_1 + \delta_2)$ holds for all $\delta_1, \delta_2 \in \R$.
\end{proposition}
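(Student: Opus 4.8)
The plan is to work directly from the closed formula \eqref{funZ}, exploiting that it separates into an ``integer part'' and a ``residual part''. Assuming $a_s>0$ (the degenerate case $a_s=0$ gives $Z(\delta)=g(\delta)-g(0)$, for which subadditivity is immediately the concavity inequality $g(\delta_1)+g(\delta_2)\ge g(0)+g(\delta_1+\delta_2)$), I would write any $\delta\in\R$ as $\delta=\floor{\delta/a_s}\,a_s+r$ with residual $r\in[0,a_s)$. Setting $c:=(k-1)a_s$, formula \eqref{funZ} then reads
\[
	Z(\delta)=g(r+c)+\left(\floor{\delta/a_s}+1\right)\rho_s(k)-g(ka_s),
\]
so that $Z$ is the sum of a term $g(r+c)$ depending only on the residual (with $r+c\in[(k-1)a_s,ka_s)$) and a term linear in the integer part. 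Since the linear part is additive, the whole question will collapse onto a single concavity inequality for $g$ on the residuals.

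Concretely, I would fix $\delta_1,\delta_2\in\R$ and write $\delta_i=\ell_i a_s+r_i$ with $\ell_i\in\Z$ and $r_i\in[0,a_s)$. Because $r_1+r_2\in[0,2a_s)$, the integer part of $\delta_1+\delta_2$ equals $\ell_1+\ell_2$ when $r_1+r_2<a_s$ (residual $r_1+r_2$) and $\ell_1+\ell_2+1$ when $r_1+r_2\ge a_s$ (residual $r_1+r_2-a_s$). Substituting into the displayed formula, the integer-part contributions together with the constants cancel exactly in $Z(\delta_1)+Z(\delta_2)-Z(\delta_1+\delta_2)$, and in both cases the target inequality reduces to showing that the two ``inner'' residual points dominate the two ``outer'' ones: namely $g(r_1+c)+g(r_2+c)\ge g(c)+g(r_1+r_2+c)$ when $r_1+r_2<a_s$, and $g(r_1+c)+g(r_2+c)\ge g(r_1+r_2-a_s+c)+g(a_s+c)$ when $r_1+r_2\ge a_s$.

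Each of these is an instance of the concavity pattern already packaged in \cref{cor:sum-pariwise-comparison}. In both cases the four arguments satisfy the balance condition \eqref{ccondition} (the argument sums on the two sides agree), the two subtracted terms pair with the two added terms so that the gaps $b_i-a_i$ are equal and nonnegative, and the ordering hypothesis (ii) holds because the inner residual points lie between the outer ones. Hence \cref{cor:sum-pariwise-comparison} gives $H\ge 0$ in each case, which is precisely the desired inequality. Equivalently, one can invoke the ``in particular'' clause of \cref{lem:slope} directly, since each pair has equal gaps.

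I expect the only real work to be bookkeeping rather than conceptual: carefully tracking the floor/residual decomposition across the two cases, confirming that the integer-part and constant terms cancel, and checking the ordering hypotheses of \cref{cor:sum-pariwise-comparison} on the residuals (for instance $r_1+r_2-a_s\le\min\{r_1,r_2\}$ and $\max\{r_1,r_2\}\le a_s$ in the second case). No single step is delicate, but getting the case boundary $r_1+r_2=a_s$ and the continuity of $Z$ there to line up is the place where an error is most likely to creep in.
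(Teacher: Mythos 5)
Your proposal is correct and follows essentially the same route as the paper: both split on whether the floor of $(\delta_1+\delta_2)/a_s$ equals $\ell_1+\ell_2$ or $\ell_1+\ell_2+1$ (your residual condition $r_1+r_2<a_s$ versus $r_1+r_2\ge a_s$ is exactly this dichotomy), and both reduce each case to the same two equal-gap concavity inequalities for $g$, settled by \cref{lem:slope}. Your explicit rewriting of $Z$ as a residual term plus a term linear in $\floor{\delta/a_s}$ only makes the cancellation slightly more transparent; the underlying argument is identical to the paper's.
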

\begin{proof}
	Given $\delta_1, \delta_2 \in \R$, let $\ell_1, \ell_2, \ell \in \Z$ 
	be such that $\ell_1 a_s \leq \delta_1 < (\ell_1 + 1)a_s$, $\ell_2 a_s \leq \delta_2 < (\ell_2 + 1)a_s$, and 
	$\ell a_s \leq \delta_1 + \delta_2 < (\ell+1)a_s$.
	It follows that $\ell \in \{ \ell_1 + \ell_2, \ell_1+\ell_2+1 \}$.
	We prove that $Z(\delta_1) + Z(\delta_2) - Z(\delta_1 + \delta_2) \geq 0$ by considering the two cases (i) $\ell = \ell_1 + \ell_2$ and (ii) $\ell = \ell_1 + \ell_2 + 1$, separately.
	\begin{itemize}
		\item [(i)] {$\ell = \ell_1 + \ell_2$}. 
		From the closed formula for $Z$ in \eqref{funZ}, we have
		\begin{equation*}
			\begin{aligned}
				& Z(\delta_1) + Z(\delta_2) - Z(\delta_1 + \delta_2) \\
				& \quad 
				= g(\delta_1+(k - \ell_1 - 1) a_s) 
				+ g(\delta_2+(k - \ell_2 - 1) a_s) \\
				& \quad\quad 
				- g(\delta_1 + \delta_2 + (k - \ell_1-\ell_2 - 1) a_s) 
				+ \rho_s(k) - g(k a_s) \\
				& \quad \stackrel{(a)}{=} g(\delta_1 + (k - \ell_1 - 1) a_s) 
				+ g(\delta_2 + (k - \ell_2 - 1) a_s) \\
				& \quad\quad 
				- g(\delta_1 + \delta_2 + (k -  \ell_1-\ell_2 - 1) a_s) 
				- g((k-1)a_s) \\
				& \quad = [g(\delta_1 - \ell_1 a_s + (k-1)a_s) - g((k-1)a_s)] \\
				& \quad \quad 
				- \left[ g(\delta_1 - \ell_1 a_s + \delta_2 - \ell_2  a_s + (k-1)a_s) 
				- g(\delta_2 - \ell_2  a_s + (k-1)a_s)
				\right]
				\stackrel{(b)}{\geq} 0,
			\end{aligned}
		\end{equation*}
		where (a) follows from $\rho_s(k) = g(k a_s) - g((k - 1) a_s)$,
		and (b) follows from $\delta_1 - \ell_1 a_s \ge 0$,  $\delta_2 - \ell_2 a_s \ge 0$, and 
		\cref{lem:slope}.
		\item [(ii)] {$\ell = \ell_1 + \ell_2 + 1$}. 
		From the closed formula for $Z$ in \eqref{funZ}, we have
		\begin{equation*}
			\begin{aligned}
				& Z(\delta_1) + Z(\delta_2) - Z(\delta_1 + \delta_2) \\
				& \quad = g(\delta_1 + (k - \ell_1 - 1) a_s) 
				+ g(\delta_2 + (k - \ell_2 - 1) a_s) \\
				& \quad\quad 
				- g(\delta_1 + \delta_2 + (k - \ell_1-\ell_2 - 1 - 1) a_s) 
				- g(ka_s) \\
				& \quad = \left[
				g(ka_s+\delta_1 - (\ell_1 + 1) a_s )
				- g( ka_s+ \delta_1 - (\ell_1 + 1) a_s + \delta_2 - (\ell_2+1) a_s ) 
				\right] \\
				& \quad\quad - \left[
				g(ka_s)
				- g(ka_s+\delta_2 - (\ell_2 + 1) a_s )
				\right]
				\stackrel{(a)}{\geq} 0,
			\end{aligned}
		\end{equation*}
		where (a) follows from $\delta_1 - (\ell_1 + 1) a_s \le 0$, $\delta_2 - (\ell_2 + 1) a_s \le 0$, and \cref{lem:slope}.\qedhere 
	\end{itemize}
\end{proof}

\subsection{Single-phase lifted inequalities}
Since problem \eqref{prob:first-part-lifting-ub} is a relaxation of the lifting problem \eqref{prob:first-part-lifting2}, $\zeta(\delta)\leq Z(\delta)$ must hold for $\delta \in \R$.
This, together with \cref{lem:Z-subadditive-on-R}, implies that
\begin{equation*}
	\begin{aligned}
		\zeta\left( 
		\sum_{i\in S_0} a_i x_i 
		+ \sum_{i\in S_1} - a_i (\mu_i - x_i) 
		\right)
		& \le 
		Z \left( 	\sum_{i\in S_0} a_i x_i 
		+ \sum_{i\in S_1} - a_i (\mu_i - x_i) 
		\right)\\
		& \le \sum_{i\in S_0} Z(a_i) x_i + \sum_{i \in S_1} Z(-a_i) (\mu_i - x_i).
	\end{aligned}
\end{equation*}
Therefore, we obtain the single-phase lifted inequality
\begin{equation}\label{eq:single-phase}
	w \leq \sum_{i \in S_0}Z(a_i)x_i + \sum_{ i \in S_1}Z(-a_i)(\mu_i - x_i)
	+ \rho_s(k) (x_s - k) + g(k a_s),
\end{equation}
which is valid for $\conv(\Fset)$.
The following classic result illustrates the strength of the above single-phase lifted inequalities.
\begin{theorem}{\cite[Theorem 5]{Atamturk2004}}
	\label{thm:subadd}
	If $Z(a_i) = \zeta(a_i)$ for all $i \in S_0$ and $Z(-a_i) = \zeta(-a_i)$ for all $i \in S_1$, then
	inequality \eqref{eq:single-phase} is facet-defining for $\conv(\Fset)$.
\end{theorem}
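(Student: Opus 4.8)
The plan is to recognize this result as a direct instance of the sequence-independent lifting framework of \cite{Atamturk2004}, so that the proof reduces to checking the hypotheses of that framework and, if a self-contained argument is preferred, exhibiting the requisite affinely independent tight points. The ingredients are already in place: the seed inequality \eqref{eq:seedInq} is facet-defining for the two-dimensional restriction $\Fset_s(S_0,S_1)$ by \cref{prop:2dim-facet}; the approximation $Z$ is subadditive on $\R$ by \cref{lem:Z-subadditive-on-R}; $Z(\delta)\ge\zeta(\delta)$ for all $\delta$ because \eqref{prob:first-part-lifting-ub} relaxes the box constraint of \eqref{prob:first-part-lifting2}; and $Z(0)=0$ from the closed form \eqref{funZ} (take $\ell=0$). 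The tightness conditions $Z(a_i)=\zeta(a_i)$ for $i\in S_0$ and $Z(-a_i)=\zeta(-a_i)$ for $i\in S_1$ are exactly the hypothesis. With these four properties, \cite[Theorem 5]{Atamturk2004} gives that \eqref{eq:single-phase} is facet-defining for $\conv(\Fset)$.

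To make the argument self-contained, and to confirm that the nonlinear set $\Fset$ genuinely fits the framework, I would first record that $\conv(\Fset)$ is full-dimensional: the $n+2$ points $(f(0),\mathbf{0})$, $(f(0)-1,\mathbf{0})$, and $(f(a_i),\mathbf{e}_i)$ for $i\in[n]$ are affinely independent, so $\dim\conv(\Fset)=n+1$. I would then build $n+1$ affinely independent points of $\Fset$ on the face defined by \eqref{eq:single-phase}. Two of them come from the seed facet: fixing $x_i=0$ for $i\in S_0$ and $x_i=\mu_i$ for $i\in S_1$, the points $B_0$ with $(w,x_s)=(g(ka_s),k)$ and $B_1$ with $(w,x_s)=(g((k-1)a_s),k-1)$ attain \eqref{eq:single-phase} at equality. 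For each $i\in S_0$ I would take the point $P_i$ with $x_i=1$, the remaining lifted variables at their base bounds, $x_s$ equal to a maximizer $x_s^{*}$ of \eqref{prob:first-part-lifting2} at $\delta=a_i$, and $w=g(a_i+a_sx_s^{*})$; the identity $\zeta(a_i)=p(a_i,x_s^{*})$ together with $Z(a_i)=\zeta(a_i)$ shows that $P_i$ attains \eqref{eq:single-phase} at equality. Symmetrically, for each $i\in S_1$ I would set $x_i=\mu_i-1$ and use $\delta=-a_i$ with $Z(-a_i)=\zeta(-a_i)$.

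Affine independence of these $2+|S_0|+|S_1|=n+1$ points is the remaining step, and it follows from a triangular structure: after translating by $-B_0$, the vector $B_1-B_0$ is supported on the $(w,x_s)$ coordinates only, whereas each $P_i-B_0$ has a $\pm 1$ entry in its own coordinate $x_i$ and a zero in every other lifted coordinate. Hence, in any vanishing linear combination, reading off coordinate $x_i$ forces the coefficient of $P_i-B_0$ to vanish for every $i\in[n]\setminus s$, after which the coefficient of $B_1-B_0$ must vanish as well. Combined with full-dimensionality, this certifies that \eqref{eq:single-phase} defines a facet.

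The main obstacle is conceptual rather than computational: one must be certain that tightness of $Z$ at the single per-unit values $a_i$ and $-a_i$ (rather than at all integer multiples $ta_i$) is enough. This is exactly what the construction exploits, since each lifted variable is perturbed by only one unit away from its bound, so agreement at $\pm a_i$ suffices; the subadditivity of $Z$ (via $Z(ta_i)\le tZ(a_i)$) is what supplies validity for the larger perturbations that never appear among the tight points. Verifying that $\Fset$, despite its nonlinear defining constraint, is a finite union of parallel rays and therefore lies within the scope of \cite[Theorem 5]{Atamturk2004} is the one place where the nonlinearity must be handled with care.
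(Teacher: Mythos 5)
Your proposal is correct. Note that the paper offers no proof of this statement at all: it is invoked verbatim as \cite[Theorem 5]{Atamturk2004}, so the paper's "approach" is pure citation. Your first paragraph matches that route exactly — the hypotheses you check (facetness of the seed inequality from \cref{prop:2dim-facet}, subadditivity of $Z$ from \cref{lem:Z-subadditive-on-R}, $Z \geq \zeta$ from the relaxation, $Z(0)=0$ from \eqref{funZ}, and tightness at $\pm a_i$) are precisely the hypotheses of the cited framework. Your self-contained construction goes beyond what the paper records, and it is sound: the points $B_0$, $B_1$, and $P_i$ are feasible and tight (tightness of each $P_i$ uses exactly $Z(\pm a_i)=\zeta(\pm a_i)$ together with $w=g(\pm a_i+a_sx_s^*)$ for a maximizer $x_s^*$ of \eqref{prob:first-part-lifting2}), the triangular structure gives affine independence of the $n+1$ points, and full-dimensionality of $\conv(\Fset)$ plus the unit coefficient on $w$ caps the face dimension at $n$. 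The one thing worth adding if you write this out is an explicit sentence that validity of \eqref{eq:single-phase} (established in the display preceding it via subadditivity of $Z$) is part of the facet-defining claim; you use it implicitly but never state it.
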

\noindent By \eqref{calc:zeta} and \eqref{funZ}, we have
$Z(\delta) = \zeta(\delta)$ for $\delta$ with $(k - \mu_s-1) a_s \le \delta \le ka_s$.
Thus,
\begin{corollary}\label{cor:facet-cond} 
	If $0 \le a_i \le ka_s$ for all $i \in S_0$ and $ (k - \mu_s-1) a_s \le - a_i \le 0$ for all $ i \in S_1$, 
	then
	inequality \eqref{eq:single-phase} is facet-defining for $\conv(\Fset)$.
\end{corollary}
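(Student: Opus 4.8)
The plan is to invoke Theorem~\ref{thm:subadd} directly. That result reduces the facet-defining claim to verifying the two families of equalities $Z(a_i)=\zeta(a_i)$ for every $i\in S_0$ and $Z(-a_i)=\zeta(-a_i)$ for every $i\in S_1$. Hence the entire task collapses to showing that, under the stated hypotheses, the arguments $a_i$ (for $i\in S_0$) and $-a_i$ (for $i\in S_1$) lie inside the interval $[(k-\mu_s-1)a_s,\,ka_s]$, on which—as already observed from comparing the closed formulas \eqref{calc:zeta} and \eqref{funZ}—the relaxation value $Z$ coincides with the exact lifting value $\zeta$.

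First I would record the elementary sign facts forced by $k\in[\mu_s]$ and $a_s>0$: since $1\le k\le\mu_s$, we have $k-\mu_s-1\le -1<0$, so $(k-\mu_s-1)a_s<0$, and likewise $ka_s>0$. These two inequalities pin down the endpoints of the relevant interval relative to $0$, and they are exactly what is needed to close the one-sided bounds in the hypotheses into two-sided interval membership.

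Second, for $i\in S_0$ the hypothesis gives $0\le a_i\le ka_s$; combined with $(k-\mu_s-1)a_s<0\le a_i$, this yields $(k-\mu_s-1)a_s\le a_i\le ka_s$, so $Z(a_i)=\zeta(a_i)$. For $i\in S_1$ the hypothesis gives $(k-\mu_s-1)a_s\le -a_i\le 0$; combined with $-a_i\le 0<ka_s$, this yields $(k-\mu_s-1)a_s\le -a_i\le ka_s$, so $Z(-a_i)=\zeta(-a_i)$. With both families of equalities in hand, Theorem~\ref{thm:subadd} applies and delivers the conclusion that \eqref{eq:single-phase} is facet-defining for $\conv(\Fset)$.

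There is essentially no analytic obstacle here; the corollary is a direct specialization of Theorem~\ref{thm:subadd} through the interval on which $Z$ and $\zeta$ agree. The only point requiring care is the bookkeeping at the interval endpoints—confirming that the hypotheses genuinely place $a_i$ and $-a_i$ within $[(k-\mu_s-1)a_s,\,ka_s]$ rather than merely within the one-sided bounds literally stated—which is precisely why the sign facts $(k-\mu_s-1)a_s<0<ka_s$ from the first step are invoked to complete the two remaining inequalities.
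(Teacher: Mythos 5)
Your proposal is correct and follows exactly the paper's route: observe from \eqref{calc:zeta} and \eqref{funZ} that $Z(\delta)=\zeta(\delta)$ on $[(k-\mu_s-1)a_s,\,ka_s]$, check that the hypotheses place each $a_i$ ($i\in S_0$) and $-a_i$ ($i\in S_1$) in that interval, and apply Theorem~\ref{thm:subadd}. Your extra care with the sign facts $(k-\mu_s-1)a_s<0<ka_s$ just makes explicit a step the paper leaves implicit.
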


The following example shows that inequality \eqref{eq:single-phase} can still be facet-defining even if {the conditions in \cref{cor:facet-cond} do not hold. }
\begin{example}\label{ex1}
	Consider 
	$\Fset = \{(w,x) \in \R \times \{0,1\}^4 \,:\,
	w \leq f(x_1+2x_2+2x_3+3x_4)\}$,
	where $f$ is a concave function. 
	Let $s = 1$, $S_0 = \{2, 3\}$, $S_1 = \{4\}$, and $k=1$. Then inequality \eqref{eq:seedInq} reads
	\begin{equation*}
		w \leq \rho_1(1) x_1 + g(0),
	\end{equation*}
	where $g(z) = f\left(z + \sum_{i \in S_1} a_i \mu_i\right) = f(z + 3)$
	and $\rho_1(1) = g(1) - g(0)$.
	The exact lifting function $\zeta$ in \eqref{calc:zeta} reduces to 
	\begin{equation}\label{zeta-example}
		\zeta(\delta) =  \left\{
		\begin{aligned}
			&g(\delta + 1)-g(1), &&\text{if}~\delta <  -1,\\
			&g \left( \delta - \ell \right) +\ell (g(1)-g(0)) - g(0), 
			&&\text{if}~\ell \le \delta < (\ell + 1), ~ \ell = -1,0, \\
			&g \left(\delta\right)  - g(0), &&\text{if}~\delta \ge 1.
		\end{aligned}
		\right.
	\end{equation}
	Note that $\zeta$ is not subadditive on $\R$ in general. 
	For instance, if $f(z) = -e^{-(z-3)}$, then $g(z) = -e^{-z}$  
	and $\zeta(2) + \zeta(-1) = g(2) - g(1) = e^{-1}- e^{-2}<  1 - e^{-1}=g(1) - g(0) = \zeta(1)$.
	
	The subadditive approximation $Z$ in \eqref{funZ} reads
	$$Z(\delta) = g(\delta - \ell) + \ell (g(1)-g(0)) - g(0),
	\ \text{if} \ \ell \leq \delta < (\ell + 1),~\ell \in \mathbb{Z}.$$
	By $Z(\ell) = \ell (g(1) - g(0)) $ for $\ell \in \mathbb{Z}$,
	the single-phase lifted inequality \eqref{eq:single-phase} can be written as
	\begin{equation}\label{eq:facet-ex}
		w \leq g(0) + (g(1)-g(0)) \left(
		x_1 + 2x_2 + 2x_3 - 3 (1 - x_4)
		\right).
	\end{equation}
	{Since $a_i = 2 > 1 = k a_s$ for $i \in S_0 = \{2 ,3\}$, the conditions in \cref{cor:facet-cond} are not satisfied.}
	However, it can be verified that there are five affinely independent points that lie on the face defined by \eqref{eq:facet-ex}:
	\begin{align*}
		& (g(0), 0, 0, 0, 1), ~(g(1), 1, 0, 0, 1),~ (g(0), 1, 1, 0, 0) \\
		&  (g(0), 1, 0, 1, 0),~ (g(1), 0, 1, 1, 0).
	\end{align*}
	Thus, \eqref{eq:facet-ex} defines a facet of $\conv(\Fset)$.
\end{example}

\section{Two-phase lifted inequalities for $\conv(\Fset)$}\label{sec:twosteplifting}
In this section, we develop two more families of strong valid inequalities for $\conv(\Fset)$.
The motivation is that although the exact lifting function $\zeta$ is not subadditive on $\R$,
it is subadditive on $\R_+$ and on $\R_-$ separately.
\begin{proposition}
	\label{liftzeta-new}
	The function $\zeta$ is subadditive on $\R_+$ and on $\R_-$ separately.
\end{proposition}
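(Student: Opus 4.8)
The plan is to prove the two statements separately, exploiting three facts already recorded for the pair $(\zeta, Z)$: that $\zeta(\delta) \le Z(\delta)$ for all $\delta \in \R$ (since \eqref{prob:first-part-lifting-ub} relaxes \eqref{prob:first-part-lifting2}); that $\zeta(\delta) = Z(\delta)$ holds on the middle band $(k - \mu_s - 1)a_s \le \delta \le k a_s$; and that on the two outer regions the closed formula \eqref{calc:zeta} gives $\zeta(\delta) = g(\delta) + k\rho_s(k) - g(k a_s)$ for $\delta \ge k a_s$ and $\zeta(\delta) = g(\delta + \mu_s a_s) + (k - \mu_s)\rho_s(k) - g(k a_s)$ for $\delta < (k - \mu_s - 1)a_s$. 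In each region I will split the verification of $\zeta(\delta_1) + \zeta(\delta_2) \ge \zeta(\delta_1 + \delta_2)$ into a \emph{near} case, where both arguments land in the band and the claim is inherited from the subadditivity of $Z$ (\cref{lem:Z-subadditive-on-R}) combined with $\zeta \le Z$, and a \emph{far} case, where one argument leaves the band and a direct concavity estimate is required. The recurring device will be the lower bound $\zeta(\delta) \ge p(\delta, k) = g(\delta + k a_s) - g(k a_s)$, valid because $x_s = k$ is feasible in \eqref{prob:first-part-lifting2} (as $0 \le k \le \mu_s$) and because this choice annihilates the $\rho_s(k)$ term.

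For $\delta_1, \delta_2 \in \R_+$, assume without loss of generality $\delta_1 \ge \delta_2 \ge 0$. If $\delta_1 \le k a_s$, then both arguments lie in the band, so $\zeta(\delta_i) = Z(\delta_i)$ and $\zeta(\delta_1) + \zeta(\delta_2) = Z(\delta_1) + Z(\delta_2) \ge Z(\delta_1 + \delta_2) \ge \zeta(\delta_1 + \delta_2)$. If instead $\delta_1 > k a_s$, then $\delta_1 + \delta_2 \ge \delta_1 > k a_s$ as well, so both $\zeta(\delta_1)$ and $\zeta(\delta_1 + \delta_2)$ are given by the outer formula, whence $\zeta(\delta_1 + \delta_2) - \zeta(\delta_1) = g(\delta_1 + \delta_2) - g(\delta_1)$, and it remains to show $\zeta(\delta_2) \ge g(\delta_1 + \delta_2) - g(\delta_1)$. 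Here I bound $\zeta(\delta_2) \ge g(\delta_2 + k a_s) - g(k a_s)$ and invoke \cref{lem:slope} on the equal-length intervals $[k a_s, k a_s + \delta_2]$ and $[\delta_1, \delta_1 + \delta_2]$, using $k a_s \le \delta_1$, to conclude $g(\delta_2 + k a_s) - g(k a_s) \ge g(\delta_1 + \delta_2) - g(\delta_1)$.

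The case $\delta_1, \delta_2 \in \R_-$ is symmetric; assume $\delta_1 \le \delta_2 \le 0$. If $\delta_1 \ge (k - \mu_s - 1)a_s$, both arguments lie in the band and the claim again follows from \cref{lem:Z-subadditive-on-R} together with $\zeta \le Z$. If $\delta_1 < (k - \mu_s - 1)a_s$, then $\delta_1 + \delta_2 \le \delta_1$ also lies in the left outer region, so $\zeta(\delta_1 + \delta_2) - \zeta(\delta_1) = g(\delta_1 + \delta_2 + \mu_s a_s) - g(\delta_1 + \mu_s a_s)$, and it remains to show $\zeta(\delta_2) \ge g(\delta_1 + \delta_2 + \mu_s a_s) - g(\delta_1 + \mu_s a_s)$. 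The same feasible point $x_s = k$ serves: I bound $\zeta(\delta_2) \ge g(\delta_2 + k a_s) - g(k a_s)$ and apply \cref{lem:slope} to the equal-length (length $-\delta_2$) intervals $[\delta_1 + \delta_2 + \mu_s a_s, \delta_1 + \mu_s a_s]$ and $[k a_s + \delta_2, k a_s]$; the required hypothesis reduces to $\delta_1 + \mu_s a_s \le k a_s$, i.e.\ $\delta_1 \le (k - \mu_s)a_s$, which holds automatically since $\delta_1 < (k - \mu_s - 1)a_s$.

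I expect the main obstacle to be locating, in each far case, a single feasible value of $x_s$ whose objective $p(\delta_2, x_s)$ dominates the required $g$-increment: naive choices such as $x_s = 0$ or the unconstrained maximizer fail, since they orient the concavity inequality the wrong way. The key realization is that $x_s = k$ works in both outer regions, and that the geometric condition needed for \cref{lem:slope} coincides exactly with the defining inequality of the far case. Everything else is the bookkeeping of matching interval endpoints and checking the four monotonicity hypotheses of \cref{lem:slope}.
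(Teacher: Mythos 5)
Your proof is correct, and while it shares the paper's overall skeleton, it handles the hard cases by a genuinely different and arguably cleaner device. The paper's proof of \cref{liftzeta-new} also splits each sign into a ``band'' case (where $\zeta=Z$ and \cref{lem:Z-subadditive-on-R} finishes) and outer cases, but it treats the outer region with \emph{two} further subcases per sign (e.g.\ $0<\delta_1<ka_s\le\delta_2$ versus $ka_s\le\delta_1\le\delta_2$), writes out $\zeta(\delta_1)+\zeta(\delta_2)-\zeta(\delta_1+\delta_2)$ explicitly from the closed formula \eqref{calc:zeta} in each subcase, and verifies nonnegativity by regrouping into the form required by \cref{cor:sum-pariwise-comparison} and checking its ordering conditions. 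You instead avoid the closed formula for $\zeta(\delta_2)$ altogether in the far case: the single lower bound $\zeta(\delta_2)\ge p(\delta_2,k)=g(\delta_2+ka_s)-g(ka_s)$, coming from feasibility of $x_s=k$ in \eqref{prob:first-part-lifting2}, works uniformly wherever $\delta_2$ sits, which collapses the paper's two outer subcases into one and requires only the two-interval slope inequality of \cref{lem:slope} rather than the multi-term \cref{cor:sum-pariwise-comparison}. I checked the details: in the positive far case the intervals $[ka_s,\,ka_s+\delta_2]$ and $[\delta_1,\,\delta_1+\delta_2]$ satisfy the hypotheses of \cref{lem:slope} precisely because $\delta_1>ka_s$, and in the negative far case the condition $\delta_1+\mu_sa_s\le ka_s$ indeed follows from $\delta_1<(k-\mu_s-1)a_s$ and $a_s>0$; the identification of which branch of \eqref{calc:zeta} applies to $\delta_1$ and $\delta_1+\delta_2$ is also correct in both signs. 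What your route buys is brevity and a reusable principle (lower-bound an exact lifting function by any feasible point); what the paper's route buys is that the same regrouping-plus-\cref{cor:sum-pariwise-comparison} template is reused verbatim throughout Sections 3--4, so its proof of this proposition is stylistically uniform with the rest of the development.
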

\begin{proof}
	The proof can be found in \cref{proof_4.1}.
\end{proof}
\noindent \cref{liftzeta-new} enables to develop strong valid inequalities for $\conv(\Fset)$ by lifting variables $\{x_i\}_{i \in [n] \backslash s}$ into the seed inequality \eqref{eq:seedInq} in a two-phase manner where in the first phase, variables $\{x_i\}_{i \in S_0}$ or variables $\{x_i\}_{i \in S_1}$ are lifted using the exact lifting function $\zeta$ and in the second phase, the remaining variables are lifted using some subadditive lifting functions.
In \cref{subsec:ineq-class-1}, we present a family of two-phase lifted inequalities by lifting the seed inequality \eqref{eq:seedInq} first with variables $\{x_i\}_{i \in S_0}$ and then with variables $\{x_i\}_{i \in S_1}$, while in \cref{subsec:ineq-class-2}, we present another family of inequalities where the variables are lifted in the reversed manner.

\subsection{Two-phase lifted inequalities: Type I }\label{subsec:ineq-class-1}
Using \cref{liftzeta-new}, we can first lift variables $\{x_i\}_{i \in S_0}$ into the seed inequality  \eqref{eq:seedInq}, and obtain the 
following inequality:
\begin{equation}\label{eq:low-dim-facet-1-new}
	w \leq \sum_{i \in S_0}\zeta(a_i) x_i + \rho_s(k) (x_s - k) + g(ka_s).
\end{equation}
As inequality \eqref{eq:seedInq} is facet-defining for $\conv(\Fset_s(S_0, S_1))$, we obtain that
\begin{proposition}\label{prop:low-dim-facet-new}
	Inequality \eqref{eq:low-dim-facet-1-new} is facet-defining for $\conv(\Fset_{s}(\varnothing, S_1))$, where 
	\begin{equation*}
		\Fset_s(\varnothing, S_1) := \left\{
		(w, x) \in \R \times \Z^{|S_0 \cup s|} \,:\, 
		w \leq g\left( \sum_{i \in S_0 \cup s} a_i x_i
		\right),
		\ 0 \leq x_i \leq \mu_i, \ \forall \ i \in S_0 \cup s 
		\right\}.
	\end{equation*}
\end{proposition}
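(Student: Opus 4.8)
The plan is to treat this as a single round of exact sequence-independent lifting and to certify the facet claim by exhibiting $|S_0|+2$ affinely independent points of $\Fset_s(\varnothing,S_1)$ that satisfy \eqref{eq:low-dim-facet-1-new} at equality. Observe first that $\conv(\Fset_s(\varnothing,S_1))$ is full-dimensional in $\R^{|S_0|+2}$: the coordinate $w$ is free downward, and each of the $|S_0|+1$ integer variables $x_s,\{x_i\}_{i\in S_0}$ takes at least two values since $\mu_i\ge 1$. Hence a facet requires exactly $|S_0|+2$ affinely independent tight points.

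The conceptual core is that the function $\zeta$ in \eqref{prob:first-part-lifting}--\eqref{prob:first-part-lifting2} is precisely the \emph{exact} lifting function for introducing $\{x_i\}_{i\in S_0}$ into the seed inequality \eqref{eq:seedInq} over $\Fset_s(\varnothing,S_1)$: validity of $w\le\sum_{i\in S_0}\alpha_i x_i+\rho_s(k)(x_s-k)+g(ka_s)$ is equivalent to $\sum_{i\in S_0}\alpha_i x_i\ge\zeta\!\left(\sum_{i\in S_0}a_i x_i\right)$ for every feasible integer point. Since all lifting arguments $a_i$ ($i\in S_0$) are nonnegative and $\zeta$ is subadditive on $\R_+$ by \cref{liftzeta-new}, subadditivity gives $\sum_{i\in S_0}\zeta(a_i)x_i\ge\zeta\!\left(\sum_{i\in S_0}a_i x_i\right)$, so the choice $\alpha_i=\zeta(a_i)$ yields \eqref{eq:low-dim-facet-1-new} and establishes validity. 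This is exactly the mechanism behind \cref{thm:subadd}, now invoked for the restricted set $\Fset_s(\varnothing,S_1)$ with $\zeta$ itself in the role of the subadditive lifting function; because the lifting uses the exact lifting function, the same classical theory \cite{Atamturk2004,Wolsey1977} delivers the facet-defining property, and it remains only to exhibit the tight points.

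For the explicit certificate I would take the two seed-facet points with all lifted variables at zero, namely $\bigl(g((k-1)a_s),\,k-1,\,\mathbf 0\bigr)$ and $\bigl(g(ka_s),\,k,\,\mathbf 0\bigr)$, both tight in \eqref{eq:low-dim-facet-1-new} by the computation in \cref{prop:2dim-facet}. For each $j\in S_0$, letting $x_s^{\ast}$ attain the maximum defining $\zeta(a_j)$ in \eqref{prob:first-part-lifting2}, the point with $x_j=1$, all other lifted variables zero, $x_s=x_s^{\ast}$, and $w=g\!\left(a_j+a_s x_s^{\ast}\right)$ lies in $\Fset_s(\varnothing,S_1)$ and meets \eqref{eq:low-dim-facet-1-new} with equality by the very definition of $\zeta(a_j)$. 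Differencing these $|S_0|+2$ points against $\bigl(g(ka_s),k,\mathbf 0\bigr)$, the $|S_0|$ vectors indexed by $S_0$ form an identity pattern in the $\{x_i\}_{i\in S_0}$ coordinates, while the remaining difference is nonzero only in the $(w,x_s)$ block; hence no nontrivial affine dependence exists and the points are affinely independent.

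The genuinely delicate step is not computational but a matter of transfer: one must confirm that the classical exact-lifting theorem applies verbatim to the restricted set $\Fset_s(\varnothing,S_1)$ rather than to the full $\Fset$ of \cref{thm:subadd}. This is legitimate because $\Fset_s(\varnothing,S_1)$ has exactly the structure of $\Fset$ with the $S_1$-variables fixed at their upper bounds absorbed into $g$, the seed \eqref{eq:seedInq} is facet-defining for its two-dimensional restriction by \cref{prop:2dim-facet}, and $\zeta$ is the associated exact lifting function that is subadditive on the relevant nonnegative arguments $\{a_i\}_{i\in S_0}$ by \cref{liftzeta-new}. Once this identification is made, the tightness and affine-independence verifications above are routine.
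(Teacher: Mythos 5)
Your proposal is correct and follows essentially the same route as the paper, which justifies \cref{prop:low-dim-facet-new} in one line by noting that $\zeta$ is the exact lifting function, it is subadditive on $\R_+$ by \cref{liftzeta-new}, and all lifting arguments $a_i$ ($i\in S_0$) are nonnegative, so the classical sequence-independent lifting theorem (\cref{thm:subadd}) applies. Your explicit certificate of $|S_0|+2$ affinely independent tight points (the two seed points and, for each $j\in S_0$, the point realized by a maximizer $x_s^{\ast}$ of $\zeta(a_j)$) is a correct, self-contained verification of what the paper leaves implicit.
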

	
To lift inequality \eqref{eq:low-dim-facet-1-new} with variables $\{x_i\}_{i \in S_1}$, we consider the corresponding lifting function:
\begin{align*}
	\eta(\delta) : = \max_{w, \, x_{S_0 \cup s}} \quad 
	& w - \sum_{i \in S_0}\zeta(a_i)x_i + (k - x_s) \rho_s(k) - g(ka_s) \\
	\text{s.t.} \quad 
	& w \leq g\left(\delta + \sum_{i \in S_0 \cup s}a_ix_i\right), \\
	& 0 \leq x_i \le \mu_i, \ \forall \ i \in S_0 \cup s, \label{cons:bound-xs}\\
	& w \in \R, \ x_i \in \Z, \ \forall \ i \in S_0 \cup s,
\end{align*}
where $\delta \in \R_-$.
This problem  is equivalent to the following \MINLP problem: 
\begin{equation}\label{prob:twophase1}
	\begin{aligned}
		\eta(\delta) = \max_{x_{S_0 \cup s}} \quad & g\left(\delta + \sum_{i \in S_0 \cup s}a_ix_i\right) - \sum_{i \in S_0}\zeta(a_i)x_i + (k - x_s) \rho_s(k) - g(ka_s) \\
		\text{s.t.} \quad
		& 0 \leq x_i \le \mu_i, \ \forall \ i \in S_0 \cup s, \\
		& x_i \in \Z, \ \forall \ i \in S_0 \cup s.
	\end{aligned}
\end{equation}
Giving an explicit description of $\eta$ is challenging, 
even for the special case that $f(z)= \min\{0, b-z\}$ (i.e., $g(z)= \min\{ 0, b - z - \sum_{i \in S_1} \mu_i a_i\}$) \cite{Atamturk2003}.
To bypass this difficulty, we attempt to derive lower and upper subadditive approximations for $\eta$, thus enabling to develop strong valid inequalities  for $\conv(\Fset)$.
The approach we take is as follows.
We first consider a restriction of problem \eqref{prob:twophase1}
obtained by setting $x_i = 0$ for all $i \in S_0$ with $a_i < ka_s$, and provide a closed-form optimal solution for this restriction, thereby obtaining 
a lower bound for $\eta$.
This lower bound enables to provide a closed formula for $\eta$ for some special cases.
For the general case, we solve a relaxation of problem \eqref{prob:twophase1} (obtained by removing $x_s \leq \mu_s $ from the problem) to derive an upper bound for $\eta$.
We show that the proposed lower and upper bounds for $\eta$ are subadditive on $\R_-$.
Using this result, we can develop two families of sequence-independent lifted inequalities for $\conv(\Fset)$ and establish sufficient conditions under which  they are facet-defining for $\conv(\Fset)$.

\subsubsection{A lower bound for $\eta$}\label{subsubsec:S0-equals-S0+}
By setting $x_i = 0$ for all $i \in S_0$ with $a_i < k a_s$, we obtain the following restriction of problem \eqref{prob:twophase1}:
\begin{equation}\label{prob:restriction1}
	\begin{aligned}
		\eta^L(\delta) : = \max_{x_{S_0^+ \cup s}} \quad & g\left(\delta + \sum_{i \in S_0^+ \cup s}a_ix_i\right) - \sum_{i \in S_0^+}\zeta(a_i)x_i + (k - x_s) \rho_s(k) - g(ka_s) \\
		\text{s.t.} \quad
		& 0 \leq x_i \le \mu_i, \ \forall \ i \in S_0^+ \cup s, \\
		& x_i \in \Z, \ \forall \ i \in S_0^+ \cup s,
	\end{aligned}
\end{equation}
where we let $S_0^+ := \{ i \in S_0 \,:\, a_i \geq ka_s\}$.
Solving problem \eqref{prob:restriction1} provides a lower bound for problem \eqref{prob:twophase1}.
In order to derive a closed-form optimal solution of problem \eqref{prob:restriction1},
we first consider a special case in which $\mu_i = 1$ for all $i \in S_0^+$. 
For any subset $\Lambda \subseteq S_0^+$ and $\gamma\in \mathbb{Z}_+$, let $a(\Lambda) = \sum_{ i \in \Lambda} a_i$
and $h(\Lambda, \gamma, \delta)$ denote the objective value of problem \eqref{prob:restriction1} at point $x_{S^+_0 \cup s}$ 
where $x_s = \gamma$, $x_i = 1$ for $i \in \Lambda$, and $x_i = 0$ for $i \in S_0^+ \backslash \Lambda$, i.e., 
\begin{equation}\label{hdef}
	\begin{small}
		\begin{aligned}
			h(\Lambda, \gamma, \delta) & = g(\delta + a(\Lambda) + \gamma a_s) - \sum_{i \in \Lambda}\zeta(a_i) + (k - \gamma) \rho_s(k) - g(ka_s)\\
			& =g(\delta + a(\Lambda) + \gamma a_s) - \sum_{i \in \Lambda}\zeta(a_i) + (k- \gamma)[g(ka_s)- g((k-1)a_s)] - g(ka_s).
		\end{aligned}
	\end{small}
\end{equation}
Here, $\rho_s(k) = g(ka_s) - g((k-1)a_s)$.
Thus, problem \eqref{prob:restriction1} with $\mu_i = 1$ for all $i \in S_0^+$ is equivalent to 
\begin{equation}\label{setopt1}
	\eta^L(\delta) =\max_{\Lambda, \,\gamma} \left\{ h(\Lambda, \gamma, \delta) \,:\, \Lambda \subseteq S_0^+, \ 0 \leq \gamma \leq \mu_s, \ \gamma \in \Z \right\}.	
\end{equation}
The following lemma states that if $\Lambda \subseteq S_0^+$ is fixed, maximizing $h(\Lambda, \gamma, \delta)$ over $\gamma \in \mathbb{Z}\cap [0,\mu_s]$ is easy.
\begin{lemma}\label{lem:gamma-choose}
	Let $\delta \in \R_-$ and $\Lambda \subseteq S_0^+$. 
	Then $\gamma_{\Lambda, \delta}$ is an optimal solution of 
	\begin{equation*}
		\max_{\gamma} \left\{ h(\Lambda, \gamma, \delta) \,:\, 0 \leq \gamma \leq \mu_s, \ \gamma \in \Z \right\},
	\end{equation*}
	where
	\begin{equation}\label{def-gamma}
		\gamma_{\Lambda, \delta} = \left\{
		\begin{aligned}
			& 0, && ~\text{if}~ k a_s - a(\Lambda) < \delta, \\
			& {\ell}, && ~\text{if}~ (k - {\ell} - 1)a_s - a(\Lambda) < \delta \leq ( k - {\ell})a_s - a(\Lambda), \ {\ell} = 0, \ldots, \mu_s, \\
			& \mu_s, && ~\text{if}~ \delta \leq (k - \mu_s - 1)a_s - a(\Lambda).
		\end{aligned}
		\right.
	\end{equation}
\end{lemma}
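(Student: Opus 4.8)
The plan is to reduce the seemingly two-parameter maximization over $\gamma$ to the one-dimensional lifting problem \eqref{prob:first-part-lifting2} that has already been solved. First I would observe that, for fixed $\Lambda$ and $\delta$, only the terms $g(\delta + a(\Lambda) + \gamma a_s)$ and $(k-\gamma)\rho_s(k)$ in \eqref{hdef} depend on $\gamma$, whereas $-\sum_{i\in\Lambda}\zeta(a_i)$ and $-g(ka_s)$ are constants. Writing $\delta' := \delta + a(\Lambda)$ and recalling the definition $p(\delta',x_s) = g(\delta'+a_s x_s) - \rho_s(k)(x_s-k) - g(ka_s)$ from \eqref{prob:first-part-lifting2}, a direct term-by-term comparison (using $(k-\gamma)\rho_s(k) = k\rho_s(k) - \gamma\rho_s(k)$) yields $h(\Lambda,\gamma,\delta) = p(\delta',\gamma) - \sum_{i\in\Lambda}\zeta(a_i)$. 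Since the subtracted sum does not involve $\gamma$, maximizing $h(\Lambda,\cdot,\delta)$ over $\gamma \in \Z \cap [0,\mu_s]$ is equivalent to maximizing $p(\delta',\cdot)$ over the same index set, which is precisely the lifting problem \eqref{prob:first-part-lifting2} with $\delta$ replaced by $\delta'$.

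Next I would invoke the unimodality already established in \cref{lemma1}: applying it with argument $\delta'$ shows that $p(\delta',\gamma+1) \ge p(\delta',\gamma)$ whenever $\delta' \le (k-\gamma-1)a_s$, and the reverse inequality whenever $\delta' \ge (k-\gamma-1)a_s$. Substituting $\delta' = \delta + a(\Lambda)$ turns these thresholds into $\delta \le (k-\gamma-1)a_s - a(\Lambda)$ and $\delta \ge (k-\gamma-1)a_s - a(\Lambda)$, respectively, so the discrete map $\gamma \mapsto p(\delta',\gamma)$ is nondecreasing up to a threshold index and nonincreasing thereafter. Its maximizer over the integers is therefore obtained by clamping the unconstrained maximizer to $[0,\mu_s]$. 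The remaining step is bookkeeping: the optimal integer solution of \eqref{prob:first-part-lifting2} has already been recorded (in the display preceding \eqref{calc:zeta}) as $\mu_s$, $k-\ell-1$, or $0$ according to the band into which the argument falls; substituting $\delta' = \delta + a(\Lambda)$ and reindexing via $\gamma = k-\ell-1$ rewrites those bands exactly as the half-open intervals appearing in \eqref{def-gamma}, with the two extreme cases corresponding to clamping at $\gamma = 0$ and at $\gamma = \mu_s$. I would also confirm that the intervals in \eqref{def-gamma} partition $\R$ and that, at an endpoint where two consecutive values of $\gamma$ tie, either value is optimal, so the stated representative is admissible.

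I do not expect a genuinely hard step here; the crux is recognizing the reduction to the previously solved univariate problem. The one point requiring care is the boundary handling: since $\delta \in \R_-$ while $a(\Lambda) \ge 0$ and $k \ge 1$, the unconstrained maximizer of $p(\delta',\cdot)$ may well lie outside $[0,\mu_s]$, and I must verify explicitly that the clamped values $0$ (when $\delta > (k-1)a_s - a(\Lambda)$) and $\mu_s$ (when $\delta \le (k-\mu_s)a_s - a(\Lambda)$) are indeed optimal on the constrained range. This follows directly from the monotonicity of the discrete differences $p(\delta',\gamma+1) - p(\delta',\gamma)$ established via \cref{lemma1}, but should be checked at both ends to ensure the clamping is valid.
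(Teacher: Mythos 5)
Your proof is correct and follows essentially the same route as the paper: the paper directly computes the discrete difference $h(\Lambda,\gamma+1,\delta)-h(\Lambda,\gamma,\delta)$ and applies \cref{lem:slope} to get unimodality in $\gamma$, which is exactly the content of your reduction $h(\Lambda,\gamma,\delta)=p(\delta+a(\Lambda),\gamma)-\sum_{i\in\Lambda}\zeta(a_i)$ followed by an appeal to \cref{lemma1}. Your observation that $h$ is just a shifted-and-translated copy of $p$ is a slight economy over the paper's recomputation, but the underlying argument and the bookkeeping of the bands in \eqref{def-gamma} are identical.
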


\begin{proof}
	By the definition of $h$ in \eqref{hdef}, we have
	\begin{equation*}
		\begin{aligned}
			h(\Lambda, \gamma + 1, \delta) - h(\Lambda, \gamma, \delta)
			& =  [g(\delta + a(\Lambda) + (\gamma + 1) a_s) -  g(\delta + a(\Lambda) + \gamma a_s)] \\
			& \quad - [g(ka_s) - g((k-1)a_s)].
		\end{aligned}
	\end{equation*}
	By \cref{lem:slope}, it follows that 
	\begin{itemize}
		\item [(a)] if $\delta + a(\Lambda) + \gamma a_s \leq (k - 1) a_s$, then $h(\Lambda, \gamma + 1, \delta) \geq h(\Lambda, \gamma, \delta)$;
		\item [(b)] if $\delta + a(\Lambda) + \gamma a_s \geq (k - 1) a_s$, then $h(\Lambda, \gamma + 1, \delta) \leq h(\Lambda, \gamma, \delta)$.
	\end{itemize}
	This, together with $0 \leq \gamma \leq \mu_s$ and $\gamma \in \Z$, yields the desired result.
\end{proof}
\noindent The following two observations follow from the definition of $\gamma_{\Lambda,\delta}$ in \eqref{def-gamma}.
\begin{observation}\label{obs:gamma-range-sum}
	Let $\delta \in \R_-$ and $\Lambda \subseteq S_0^+$.
	Then the following statements are true: 
	(i) if $\gamma_{\Lambda, \delta} \ge 1$, then $\delta + a(\Lambda) + \gamma_{\Lambda, \delta} a_s  \le  k a_s$;
	and (ii) if $\gamma_{\Lambda, \delta} \le \mu_s - 1$, 
	then $\delta + a(\Lambda) + \gamma_{\Lambda, \delta} a_s > (k-1)a_s$.
\end{observation}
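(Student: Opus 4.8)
The plan is to read both statements directly off the three-piece definition of $\gamma_{\Lambda,\delta}$ in \eqref{def-gamma}; no machinery beyond $a_s>0$ is needed, so the argument is a short case analysis rather than a genuine obstacle.

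For part (i), I would first observe that the hypothesis $\gamma_{\Lambda,\delta}\ge 1$ excludes the top piece of \eqref{def-gamma} (which returns $0$), leaving two possibilities. If $\gamma_{\Lambda,\delta}=\ell$ comes from the middle piece with $\ell\ge 1$, then its defining upper bound $\delta\le (k-\ell)a_s-a(\Lambda)$ rearranges term-by-term into $\delta+a(\Lambda)+\ell a_s\le ka_s$, which is exactly the claim. If instead $\gamma_{\Lambda,\delta}=\mu_s$ comes from the bottom piece, then $\delta\le (k-\mu_s-1)a_s-a(\Lambda)$ together with $a_s>0$ gives $\delta+a(\Lambda)+\mu_s a_s\le (k-1)a_s< ka_s$, so the inequality again holds (in fact strictly).

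For part (ii), the hypothesis $\gamma_{\Lambda,\delta}\le \mu_s-1$ symmetrically excludes the bottom piece. If $\gamma_{\Lambda,\delta}=0$ comes from the top piece, its condition $ka_s-a(\Lambda)<\delta$ gives $\delta+a(\Lambda)>ka_s>(k-1)a_s$, which is stronger than needed. If $\gamma_{\Lambda,\delta}=\ell$ comes from the middle piece with $\ell\le\mu_s-1$, then its defining lower bound $(k-\ell-1)a_s-a(\Lambda)<\delta$ rearranges into $\delta+a(\Lambda)+\ell a_s>(k-1)a_s$, as claimed.

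The only point requiring a moment of care is the boundaries between the middle piece (at $\ell=\mu_s$) and the bottom piece, and between the middle piece (at $\ell=0$) and the top piece; I would check that the strict/non-strict inequalities in \eqref{def-gamma} make the three ranges disjoint and exhaustive, so that exactly one piece applies to any given $\delta$. Since every sub-case reduces to a one-line rearrangement, there is no substantive difficulty; the \emph{hard part}, such as it is, is merely confirming that the case split induced by $\gamma_{\Lambda,\delta}\ge 1$ (respectively $\gamma_{\Lambda,\delta}\le\mu_s-1$) cleanly removes the piece that would violate the desired bound.
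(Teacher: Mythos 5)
Your case analysis is correct and is precisely the verification the paper has in mind: the paper offers no explicit proof, stating only that the observation ``follows from the definition of $\gamma_{\Lambda,\delta}$ in \eqref{def-gamma}'', and each of your sub-cases is the intended one-line rearrangement of the relevant defining inequality. Your boundary check that the three pieces of \eqref{def-gamma} are disjoint and exhaustive is a sensible precaution but raises no issue here.
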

\begin{observation}\label{obs:gamma}
	For any $\Lambda_1, \Lambda_2 \subseteq S_0^+$ with $a(\Lambda_1) \leq a(\Lambda_2)$,
	it follows that $\gamma_{\Lambda_1, \delta} \geq \gamma_{\Lambda_2, \delta}$.
\end{observation}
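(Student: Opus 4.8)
The plan is to prove the claimed monotonicity directly from the closed-form definition of $\gamma_{\Lambda,\delta}$ in \eqref{def-gamma}, exploiting the fact that $\gamma_{\Lambda,\delta}$ depends on $\Lambda$ only through the scalar $a(\Lambda)$. The whole statement then reduces to showing that the map sending $a(\Lambda)$ to the integer $\gamma_{\Lambda,\delta}$ is nonincreasing.

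First I would collapse the three branches of \eqref{def-gamma} into a single threshold description in terms of $t := \delta + a(\Lambda)$. The interior branch $\gamma_{\Lambda,\delta} = \ell$ holds exactly when $(k-\ell-1)a_s < t \leq (k-\ell)a_s$ for $\ell \in \{0,\ldots,\mu_s\}$, while the two boundary branches correspond to $t > k a_s$ (yielding $\gamma_{\Lambda,\delta}=0$) and $t \leq (k-\mu_s-1)a_s$ (yielding $\gamma_{\Lambda,\delta}=\mu_s$). Because $a_s > 0$, the endpoints $(k-\ell)a_s$ form a strictly decreasing arithmetic sequence in $\ell$, so these half-open intervals partition $\R$ and are ordered so that larger values of $t$ land in intervals indexed by smaller $\ell$. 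This establishes that $t \mapsto \gamma_{\Lambda,\delta}$ is nonincreasing. Then, given $\Lambda_1,\Lambda_2 \subseteq S_0^+$ with $a(\Lambda_1)\leq a(\Lambda_2)$, setting $t_1 = \delta + a(\Lambda_1) \leq \delta + a(\Lambda_2) = t_2$ immediately gives $\gamma_{\Lambda_1,\delta} \geq \gamma_{\Lambda_2,\delta}$, as required.

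I do not expect a genuine obstacle: the statement is essentially the remark that a step function built from a monotone ladder of thresholds is itself monotone. The only point needing a moment of care is verifying that the two endpoint branches glue consistently with the interior intervals---e.g., that the region $t > k a_s$ sits immediately above the $\ell=0$ interval $(k-1)a_s < t \leq k a_s$, and symmetrically at the top---but this is automatic from the arithmetic form of the thresholds. Should one prefer to avoid unwinding \eqref{def-gamma}, an alternative route is to invoke \cref{lem:gamma-choose} together with a discrete interchange argument on $h(\Lambda,\gamma,\delta)$; however, the direct threshold computation above is shorter and fully self-contained.
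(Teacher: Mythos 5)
Your proof is correct and follows the same route the paper intends: the paper states \cref{obs:gamma} without proof, noting only that it ``follows from the definition of $\gamma_{\Lambda,\delta}$ in \eqref{def-gamma}'', and your argument simply fills in that one-line observation by noting that $\gamma_{\Lambda,\delta}$ is a nonincreasing step function of $\delta + a(\Lambda)$ since the thresholds $(k-\ell)a_s$ decrease in $\ell$ (using $a_s>0$). No gap; this is exactly the intended verification.
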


Using \cref{lem:gamma-choose}, we can equivalently simplify problem \eqref{setopt1} as 
\begin{equation}\label{setopt2}
	\eta^L(\delta) =\max_{\Lambda \subseteq S_0^+}  h(\Lambda, \gamma_{\Lambda, \delta},\delta).
\end{equation}
\cref{lem:remove-element,lem:alternate-element,lem:add-element} below characterize structural properties of optimal solutions for problem \eqref{setopt2}.
Specifically, they provide sufficient conditions under which a given solution $\Lambda\subseteq S_0^+ $ can potentially be improved through a removal, swap, or addition operation.
These results will enable to provide a closed-form optimal solution for problem \eqref{setopt2} (and thus also for problem \eqref{prob:restriction1}).

\begin{lemma}\label{lem:remove-element}
	Let $\delta \in \R_-$, $\Lambda \subseteq S_0^+$, $i \in \Lambda$,  and $\bar{\Lambda} = \Lambda \backslash i$. 
	If $\delta + a(\Lambda) + \gamma_{\Lambda, \delta} a_s > a_i$, 
	then $h(\bar{\Lambda}, \gamma_{\bar{\Lambda}, \delta}, \delta) \geq h(\Lambda, \gamma_{\Lambda, \delta}, \delta)$.
\end{lemma}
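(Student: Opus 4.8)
The plan is to express the change in objective value caused by the removal operation as a single difference of $g$-values and to certify its sign via \cref{cor:sum-pariwise-comparison}. Write $\gamma := \gamma_{\Lambda, \delta}$ and $\bar\gamma := \gamma_{\bar{\Lambda}, \delta}$. The first step is to pin down $\gamma$: since $i \in \Lambda \subseteq S_0^+$ we have $a_i \ge k a_s$, so the hypothesis $\delta + a(\Lambda) + \gamma a_s > a_i \ge k a_s$ is incompatible with $\gamma \ge 1$, because \cref{obs:gamma-range-sum}(i) would then force $\delta + a(\Lambda) + \gamma a_s \le k a_s$. Hence $\gamma = 0$, and setting $u := \delta + a(\Lambda)$ and $v := \delta + a(\bar{\Lambda}) = u - a_i$, the hypothesis becomes $u > a_i$, i.e. $v > 0$.

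Next I would record the admissible range of the re-optimized multiplier $\bar\gamma$. Since $a(\bar{\Lambda}) \le a(\Lambda)$, \cref{obs:gamma} gives $\bar\gamma \ge \gamma = 0$. Moreover $\bar\gamma \neq \mu_s$: by \eqref{def-gamma}, $\bar\gamma = \mu_s$ would force $v \le (k - \mu_s - 1) a_s$, which is strictly negative because $k \le \mu_s$, contradicting $v > 0$. Thus $0 \le \bar\gamma \le \mu_s - 1$, and \cref{obs:gamma-range-sum}(ii) yields $v + \bar\gamma a_s > (k-1) a_s$; if in addition $\bar\gamma \ge 1$, then \cref{obs:gamma-range-sum}(i) gives $v + \bar\gamma a_s \le k a_s$, whence $\bar\gamma a_s < k a_s$ and therefore $\bar\gamma \le k-1$ in every case.

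Using the closed form \eqref{hdef}, the identity $\sum_{j \in \Lambda} \zeta(a_j) - \sum_{j \in \bar{\Lambda}} \zeta(a_j) = \zeta(a_i)$, and the evaluation $\zeta(a_i) = g(a_i) + k\rho_s(k) - g(k a_s)$ (valid since $a_i \ge k a_s$, from \eqref{calc:zeta}), the improvement reduces to a pure difference of $g$-values:
\begin{equation*}
	h(\bar{\Lambda}, \bar\gamma, \delta) - h(\Lambda, \gamma, \delta) = g(v + \bar\gamma a_s) - g(v + a_i) + g(a_i) + (k - \bar\gamma - 1) g(k a_s) - (k - \bar\gamma) g((k-1) a_s).
\end{equation*}
A direct computation confirms that the coefficients $\tau_i$ of the five $g$-terms satisfy $\sum_i \tau_i c_i = 0$, so condition \eqref{ccondition} holds. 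I would then cast this expression in the form \eqref{Hdef} by placing in $I^+$ the $k - \bar\gamma - 1 \ge 0$ copies of $g(k a_s) - g((k-1) a_s)$ together with the single term $g(v + \bar\gamma a_s) - g((k-1) a_s)$, and placing in $I^-$ the single term $g(v + a_i) - g(a_i)$.

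The crux is verifying the ordering hypotheses (i)--(ii) of \cref{cor:sum-pariwise-comparison} for this grouping, and this is exactly where the bounds on $\bar\gamma$ assembled above are needed: $v + \bar\gamma a_s > (k-1) a_s$ makes each $I^+$ term a genuine nonnegative-length increment; $\bar\gamma \le k-1$ guarantees the multiplicity $k - \bar\gamma - 1$ is nonnegative and that $\bar\gamma a_s \le (k-1) a_s < k a_s \le a_i$; and $v > 0$ with $a_i \ge k a_s$ yields $\max\{k a_s,\, v + \bar\gamma a_s\} \le v + a_i$ and $(k-1) a_s < a_i$, which are precisely the requirements $\max_{i \in I^+} b_i \le \min_{i \in I^-} b_i$ and $\max_{i \in I^+} a_i \le \min_{i \in I^-} a_i$. \cref{cor:sum-pariwise-comparison} then certifies nonnegativity, giving $h(\bar{\Lambda}, \gamma_{\bar{\Lambda}, \delta}, \delta) \ge h(\Lambda, \gamma_{\Lambda, \delta}, \delta)$. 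The main obstacle is not the algebra but discovering the single partition into $I^+$ and $I^-$ that meets the monotone-ordering conditions simultaneously; the earlier observations on $\gamma_{\Lambda, \delta}$ and $\gamma_{\bar{\Lambda}, \delta}$ are what allow one grouping to work uniformly across all admissible $\bar\gamma$, sidestepping the fact that $g$ need not be monotone.
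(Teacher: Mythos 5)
Your proof is correct and follows essentially the same route as the paper's: both arguments first force $\gamma_{\Lambda,\delta}=0$ from the hypothesis, establish $\gamma_{\bar{\Lambda},\delta}\le k-1$, and then certify $H\ge 0$ via \cref{cor:sum-pariwise-comparison}, differing only in the cosmetic choice of how the five $g$-terms are paired into $I^+$ and $I^-$ (the paper uses $g(a_i)-g((k-1)a_s)$ in $I^+$ and $g(\delta+a(\Lambda))-g(\delta+a(\bar{\Lambda})+\gamma_{\bar{\Lambda},\delta}a_s)$ in $I^-$, where you use $g(v+\bar{\gamma}a_s)-g((k-1)a_s)$ and $g(v+a_i)-g(a_i)$; both pairings satisfy the ordering hypotheses). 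One trivial slip: in ruling out $\bar{\gamma}=\mu_s$ you cite only the last branch of \eqref{def-gamma}, whereas the middle branch with $\ell=\mu_s$ also yields $\gamma_{\bar{\Lambda},\delta}=\mu_s$ when $v\le(k-\mu_s)a_s$ --- but since $k\le\mu_s$ this bound is nonpositive and still contradicts $v>0$, so the conclusion stands.
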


\begin{proof}
	Since $\delta + a(\Lambda) + \gamma_{\Lambda, \delta} a_s > a_i $ and  $a_i \geq ka_s$ (as $i \in \Lambda \subseteq S_0^+$), we have $\delta + a(\Lambda) + \gamma_{\Lambda, \delta} a_s > ka_s$, 
	which together with \cref{obs:gamma-range-sum} (i), implies that $\gamma_{\Lambda, \delta} = 0$. 
	By the definition of $\zeta$ in \eqref{calc:zeta}, $a_i \geq ka_s$, and $\rho_s(k) = g(k a_s) - g((k - 1) a_s)$,
	we have $\zeta(a_i)=g(a_i) + k \rho_s(k)-g(ka_s)=g(a_i) + k [g(ka_s) - g((k-1)a_s) ]- g(k a_s)$. 
	It then follows from the definition of $h$ in \eqref{hdef} and $\gamma_{\Lambda, \delta} = 0$ that
	\begin{equation} \label{obj-diff3}
		\begin{aligned}
			H & := h(\bar{\Lambda}, \gamma_{\bar{\Lambda}, \delta}, \delta) - h(\Lambda, \gamma_{\Lambda, \delta}, \delta)\\
			& = h(\bar{\Lambda}, \gamma_{\bar{\Lambda}, \delta}, \delta) - h(\Lambda, 0, \delta)\\
			& = g(\delta + a(\bar{\Lambda}) + \gamma_{\bar{\Lambda}, \delta}a_s ) - g(\delta + a(\Lambda)) + \zeta(a_i) 
			-\gamma_{\bar{\Lambda}, \delta} (g(ka_s) - g((k-1)a_s)) \\
			&  = [g(a_i)  - g((k-1)a_s)]
			+ [(k - 1 - \gamma_{\bar{\Lambda}, \delta})(g(ka_s) - g((k-1)a_s))] \\
			& \quad
			- [g(\delta + a(\Lambda))
			- g(\delta + a(\bar{\Lambda}) + \gamma_{\bar{\Lambda}, \delta}a_s)].
		\end{aligned}
	\end{equation}
	We shall complete the proof by using \cref{cor:sum-pariwise-comparison} to prove $H\geq 0$.
	Observe that 
	\begin{equation*}
		a_i - (k-1)a_s + (k-1 - \gamma_{\bar{\Lambda},\delta}) a_s - [(\delta + a(\Lambda)) -(\delta + a(\bar{\Lambda}) + \gamma_{\bar{\Lambda}, \delta}a_s)]=0
	\end{equation*}
	and thus condition \eqref{ccondition} in \cref{cor:sum-pariwise-comparison} holds.
	We next show 
	\begin{equation}
		\label{tmpineq2-1}
		\gamma_{\bar{\Lambda}, \delta} \le k - 1.
	\end{equation}
	Indeed, if $\gamma_{\bar{\Lambda}, \delta}=0$, \eqref{tmpineq2-1} holds naturally as $k\geq 1$; otherwise,  from \cref{obs:gamma-range-sum} (i), we have 
	$\delta + a(\bar{\Lambda}) + \gamma_{\bar{\Lambda}, \delta} a_s \leq ka_s$, which
	together with $a_i < \delta + a(\Lambda) + \gamma_{\Lambda, \delta} a_s  = \delta + a(\Lambda)$ (i.e., $\delta +  a(\bar{\Lambda}) >0$) 
	and $ \gamma_{\bar{\Lambda}, \delta}  \in \mathbb{Z}$, implies \eqref{tmpineq2-1}. 
	It follows that $\gamma_{\bar{\Lambda}, \delta} \le k - 1 \leq \mu_s - 1$;
	thus, by \cref{obs:gamma-range-sum} (ii), we have
	\begin{equation}\label{ineq-ii-1}
		\delta + a(\bar{\Lambda}) + \gamma_{\bar{\Lambda}, \delta} a_s > (k-1)a_s.
	\end{equation}
	From \eqref{tmpineq2-1} and $a_i \geq ka_s > (k-1)a_s$, we obtain 
	\begin{equation}
		\label{tmpineq2-2}
		\delta + a(\bar{\Lambda}) + \gamma_{\bar{\Lambda}, \delta} a_s 
		\leq \delta + a(\bar{\Lambda}) +(k-1)a_s 
		< \delta + a(\bar{\Lambda}) + a_i=  \delta + a(\Lambda).
	\end{equation}
	From \eqref{tmpineq2-1}, \eqref{ineq-ii-1}, \eqref{tmpineq2-2}, 
	and $\delta + a(\Lambda) 
	= \delta + a(\Lambda) + \gamma_{\Lambda, \delta} a_s 
	> a_i \geq ka_s$, conditions (i) and (ii) in \cref{cor:sum-pariwise-comparison} must hold.
	Therefore, $H \geq 0$. \qedhere
\end{proof}

\begin{lemma}\label{lem:alternate-element}
	Let $\delta \in \R_-$, $\Lambda \subseteq S_0^+$, $i \in \Lambda$, $j \in S_0^+ \backslash \Lambda$ with $a_i \leq a_j$, and 
	$\bar{\Lambda} = \Lambda \cup j \backslash i$. 
	If $\delta + a(\Lambda) + \gamma_{\Lambda, \delta} a_s \leq a_i$, then $h(\bar{\Lambda}, \gamma_{\bar{\Lambda}, \delta}, \delta) \geq h(\Lambda, \gamma_{\Lambda, \delta}, \delta)$.
\end{lemma}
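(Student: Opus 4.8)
The plan is to avoid comparing the two genuinely optimal configurations $(\Lambda,\gamma_{\Lambda,\delta})$ and $(\bar{\Lambda},\gamma_{\bar{\Lambda},\delta})$ head-on (which would force me to track how the optimal value of $\gamma$ shifts under the swap, as happens in the proof of \cref{lem:remove-element}), and instead to decouple the choice of $\gamma$ from the choice of the index set. First I would record the only structural fact about $\zeta$ needed here: since $i,j\in S_0^+$ we have $a_i,a_j\ge ka_s$, so the last branch of the closed formula \eqref{calc:zeta} gives $\zeta(a_i)=g(a_i)+k\rho_s(k)-g(ka_s)$ and likewise for $a_j$; hence the lifting-coefficient contribution of the swap collapses neatly to $\zeta(a_j)-\zeta(a_i)=g(a_j)-g(a_i)$.

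Next, because $\gamma_{\bar{\Lambda},\delta}$ maximizes $h(\bar{\Lambda},\cdot,\delta)$ over $\{0,1,\dots,\mu_s\}$ (\cref{lem:gamma-choose}) and $\gamma_{\Lambda,\delta}$ is itself a feasible point in that range, I have $h(\bar{\Lambda},\gamma_{\bar{\Lambda},\delta},\delta)\ge h(\bar{\Lambda},\gamma_{\Lambda,\delta},\delta)$. It therefore suffices to prove the stronger \emph{same-}$\gamma$ inequality $h(\bar{\Lambda},\gamma_{\Lambda,\delta},\delta)\ge h(\Lambda,\gamma_{\Lambda,\delta},\delta)$. Using the definition \eqref{hdef}, the $(k-\gamma)\rho_s(k)$ and $-g(ka_s)$ terms cancel since both sides share the same $\gamma=\gamma_{\Lambda,\delta}$, and with $A:=\delta+a(\Lambda)+\gamma_{\Lambda,\delta}a_s$ together with $a(\bar{\Lambda})=a(\Lambda)+a_j-a_i$, this difference reduces to
\[
	g\bigl(A+(a_j-a_i)\bigr)-g(A)-\bigl(g(a_j)-g(a_i)\bigr).
\]

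Finally I would show this quantity is nonnegative purely by concavity. The two increments of $g$ are taken over the intervals $[A,\,A+(a_j-a_i)]$ and $[a_i,\,a_j]$, which share the common length $a_j-a_i\ge 0$; the hypothesis $\delta+a(\Lambda)+\gamma_{\Lambda,\delta}a_s\le a_i$, i.e.\ $A\le a_i$, places the first interval weakly to the left of the second. The ``in particular'' (equal-length) clause of \cref{lem:slope}, applied with $a_1=A$, $b_1=A+(a_j-a_i)$, $a_2=a_i$, $b_2=a_j$ (where $a_j\ge a_i$ supplies $a_1\le b_1$ and $a_2\le b_2$, while the two order conditions $a_1\le a_2$ and $b_1\le b_2$ both amount to $A\le a_i$), then yields $g(A+(a_j-a_i))-g(A)\ge g(a_j)-g(a_i)$, which completes the argument.

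I expect the main obstacle to be recognizing that the $\gamma$-optimization can be decoupled: the natural but clumsy route compares the two genuinely optimal configurations and gets entangled with how $\gamma_{\bar{\Lambda},\delta}$ relates to $\gamma_{\Lambda,\delta}$ (requiring \cref{obs:gamma} and a case analysis in the style of \cref{lem:remove-element}), whereas inserting the intermediate point $(\bar{\Lambda},\gamma_{\Lambda,\delta})$ collapses everything to a one-line application of \cref{lem:slope}. A secondary point to get right is the \emph{direction} of the concavity inequality—that the larger increment belongs to the left-positioned interval—which is exactly where the hypothesis $A\le a_i$, combined with $a_j\ge a_i$, enters.
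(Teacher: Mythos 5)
Your proof is correct, but it takes a genuinely different and shorter route than the paper's. The paper compares the two fully optimized configurations $(\Lambda,\gamma_{\Lambda,\delta})$ and $(\bar{\Lambda},\gamma_{\bar{\Lambda},\delta})$ directly: it first invokes \cref{obs:gamma} to get $\gamma_{\Lambda,\delta}\ge\gamma_{\bar{\Lambda},\delta}$, then splits into the cases $\gamma_{\Lambda,\delta}=\gamma_{\bar{\Lambda},\delta}$ and $\gamma_{\Lambda,\delta}\ge\gamma_{\bar{\Lambda},\delta}+1$, the second of which requires \cref{obs:gamma-range-sum} to bound $\delta+a(\Lambda)+\gamma_{\Lambda,\delta}a_s$ and $\delta+a(\bar{\Lambda})+\gamma_{\bar{\Lambda},\delta}a_s$ against $ka_s$ and $(k-1)a_s$, a regrouping of the difference $H$ into four increments, and an application of the general \cref{cor:sum-pariwise-comparison}. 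You instead insert the intermediate point $(\bar{\Lambda},\gamma_{\Lambda,\delta})$: since $\gamma_{\Lambda,\delta}\in\{0,\dots,\mu_s\}$ is feasible and $\gamma_{\bar{\Lambda},\delta}$ is optimal for $\bar{\Lambda}$ by \cref{lem:gamma-choose}, the first inequality $h(\bar{\Lambda},\gamma_{\bar{\Lambda},\delta},\delta)\ge h(\bar{\Lambda},\gamma_{\Lambda,\delta},\delta)$ is free, and the remaining same-$\gamma$ comparison collapses (after the shared terms cancel and $\zeta(a_j)-\zeta(a_i)=g(a_j)-g(a_i)$ is substituted) to $g(A+(a_j-a_i))-g(A)\ge g(a_j)-g(a_i)$ with $A=\delta+a(\Lambda)+\gamma_{\Lambda,\delta}a_s\le a_i$, which is exactly the equal-length clause of \cref{lem:slope} with the verification of $a_1\le a_2$ and $b_1\le b_2$ both reducing to $A\le a_i$. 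I checked the application of \cref{lem:slope} and the feasibility of $\gamma_{\Lambda,\delta}$ in the maximization over $\gamma$ for $\bar{\Lambda}$; both are sound, so your argument is complete. What your approach buys is the elimination of the entire case analysis on $\gamma_{\Lambda,\delta}-\gamma_{\bar{\Lambda},\delta}$ and any appeal to \cref{obs:gamma}, \cref{obs:gamma-range-sum}, or \cref{cor:sum-pariwise-comparison}; your same-$\gamma$ step is in fact exactly the paper's case (i), and the observation that the other case can be absorbed by optimality of $\gamma_{\bar{\Lambda},\delta}$ is the simplification the paper misses here. What the paper's heavier machinery buys elsewhere is uniformity: in \cref{lem:remove-element} and \cref{lem:add-element} the swapped/removed element changes $\zeta$-contributions that do not cancel against a single $g$-increment, so the intermediate-point trick does not collapse those proofs the same way, and the authors presumably preferred one template for all three lemmas.
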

\begin{proof}
	By the definition of $\zeta$ in \eqref{calc:zeta} and $a_i, a_j \ge ka_s$ (as $i, j \in S_0^+$),
	we have $\zeta(a_j) - \zeta(a_i) = g(a_j) - g(a_i)$,
	which together with the definition of $h$ in \eqref{hdef}, implies 
	\begin{equation}\label{obj-diff-2}
		\begin{aligned}
			H & := h(\bar{\Lambda}, \gamma_{\bar{\Lambda}, \delta}, \delta) - h(\Lambda, \gamma_{\Lambda, \delta}, \delta) \\
			& = 
			g\left( \delta + a(\bar{\Lambda}) + \gamma_{\bar{\Lambda}, \delta} a_s \right)
			- g\left( \delta + a(\Lambda) + \gamma_{\Lambda, \delta} a_s \right)
			- \zeta(a_j) + \zeta(a_i) \\
			& \quad - (\gamma_{\bar{\Lambda}, \delta} - \gamma_{{\Lambda}, \delta})
			(g(ka_s) - g((k-1)a_s)) \\
			& = 
			g\left( \delta + a(\bar{\Lambda}) + \gamma_{\bar{\Lambda}, \delta} a_s \right)
			- g\left( \delta + a(\Lambda) + \gamma_{\Lambda, \delta} a_s \right)
			- g(a_j) + g(a_i) \\
			& \quad - (\gamma_{\bar{\Lambda}, \delta} - \gamma_{{\Lambda}, \delta})
			(g(ka_s) - g((k-1)a_s)).
		\end{aligned}
	\end{equation}
	From $a_i \leq a_j$, we have $a(\Lambda) \leq a(\Lambda) + a_j - a_i = a(\bar{\Lambda})$. 
	By  \cref{obs:gamma},
	it follows that $\gamma_{\Lambda, \delta} \geq \gamma_{\bar{\Lambda}, \delta} $.
	Below we will prove $H\geq 0$ by considering the two cases (i) $\gamma_{\Lambda, \delta} = \gamma_{\bar{\Lambda}, \delta}$ and (ii) $\gamma_{\Lambda, \delta} \ge \gamma_{\bar{\Lambda}, \delta} + 1$, separately.
	\begin{itemize}
		\item [(i)] $\gamma_{\Lambda, \delta} = \gamma_{\bar{\Lambda}, \delta}$. 
		Then $\Delta :=
		\delta + a(\bar{\Lambda}) + \gamma_{\bar{\Lambda}, \delta} a_s
		- (\delta + a(\Lambda) + \gamma_{\Lambda, \delta} a_s) 
		= a_j - a_i \ge 0$.
		Thus,
		\begin{equation*}
			H =  [g\left( \delta + a(\Lambda) + \gamma_{\Lambda, \delta} a_s + \Delta \right)  
			- g(\delta +  a(\Lambda) + \gamma_{\Lambda, \delta} a_s)]
			- [g(a_i + \Delta) - g(a_i)] 
			\stackrel{(a)}{\geq} 0,
		\end{equation*}
		where (a) follows from $\delta + a(\Lambda) + \gamma_{\Lambda, \delta} a_s \le a_i$ and \cref{lem:slope}.
		\item [(ii)] $\gamma_{\Lambda, \delta} \ge \gamma_{\bar{\Lambda}, \delta} + 1$.
		Then $\gamma_{\Lambda, \delta} \geq 1$ and $\gamma_{\bar{\Lambda}, \delta} \leq \mu_s - 1$.
		By \cref{obs:gamma-range-sum}, we obtain 
		\begin{equation}\label{lb-bar-Lambda}
			\delta + a(\Lambda) + \gamma_{\Lambda, \delta} a_s \leq ka_s
			~ \text{and} ~
			\delta + a(\bar{\Lambda}) + \gamma_{\bar{\Lambda}, \delta} a_s > (k-1) a_s.
		\end{equation}
		Regrouping \eqref{obj-diff-2}, we obtain 
		\begin{equation*}
			\begin{aligned}
				H = & [(\gamma_{\Lambda, \delta} - \gamma_{\bar{\Lambda}, \delta} - 1)(g(ka_s) - g((k-1)a_s))]
				+ [g(ka_s) - g\left( \delta + a(\Lambda) + \gamma_{\Lambda, \delta} a_s \right)] \\
				&  + [g(\delta + a(\bar{\Lambda}) + \gamma_{\bar{\Lambda}, \delta} a_s ) - g((k-1)a_s)]
				- [g(a_j) - g(a_i)].
			\end{aligned}
		\end{equation*}
		Below we use \cref{cor:sum-pariwise-comparison} to show $H \geq 0$.
		Observe that 
		\begin{small}
			\begin{equation*}
				(\gamma_{\Lambda, \delta} - \gamma_{\bar{\Lambda}, \delta} - 1) a_s 
				+ [k a_s - (\delta + a(\Lambda) + \gamma_{\Lambda, \delta} a_s) ]
				+ [\delta +  a(\bar{\Lambda}) + \gamma_{\bar{\Lambda}, \delta} a_s - (k - 1) a_s ]
				- ( a_j - a_i ) = 0,
			\end{equation*}
		\end{small}
		and thus condition \eqref{ccondition} in \cref{cor:sum-pariwise-comparison} holds.
		From $\delta + a(\Lambda) + \gamma_{\Lambda, \delta} a_s \le a_i$ and {$\gamma_{\Lambda, \delta} \ge \gamma_{\bar{\Lambda}, \delta}+1$}, we have 
		\begin{equation}\label{ineq-lem3.7-i-1}
			\delta + a(\bar{\Lambda}) + \gamma_{\bar{\Lambda}, \delta} a_s 
			=  a_j + (\delta + a(\Lambda) + \gamma_{\Lambda, \delta} a_s
			- a_i) - (\gamma_{\Lambda, \delta} - \gamma_{\bar{\Lambda}, \delta}) a_s \leq a_j.
		\end{equation}
		From \eqref{lb-bar-Lambda}, \eqref{ineq-lem3.7-i-1}, and $a_j \ge a_i \ge ka_s \ge (k-1)a_s$, 
		conditions (i) and (ii) in \cref{cor:sum-pariwise-comparison} must hold. 
		Therefore, $H \ge 0$. \qedhere
	\end{itemize}
\end{proof}

\begin{lemma}\label{lem:add-element}
	Let $\delta \in \R_-$, $\Lambda \subseteq S_0^+$, $i \in S_0^+ \backslash \Lambda$,  and $\bar{\Lambda} = \Lambda \cup i$. 
	If $\delta + a(\bar{\Lambda}) + \gamma_{\bar{\Lambda}, \delta} a_s \leq a_i$, 
	then $h(\bar{\Lambda}, \gamma_{\bar{\Lambda}, \delta}, \delta) \geq h(\Lambda, \gamma_{\Lambda, \delta}, \delta)$.
\end{lemma}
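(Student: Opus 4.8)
The plan is to follow the template of \cref{lem:remove-element,lem:alternate-element}: write the objective gap $H := h(\bar{\Lambda}, \gamma_{\bar{\Lambda}, \delta}, \delta) - h(\Lambda, \gamma_{\Lambda, \delta}, \delta)$ as a signed sum of values of $g$ whose weighted arguments sum to zero, and then invoke \cref{cor:sum-pariwise-comparison}. Abbreviating $\gamma := \gamma_{\Lambda,\delta}$, $\gamma' := \gamma_{\bar{\Lambda},\delta}$, $d := \gamma - \gamma'$, $A := \delta + a(\bar{\Lambda}) + \gamma' a_s$, and $B := \delta + a(\Lambda) + \gamma a_s$, and using $a(\bar{\Lambda}) = a(\Lambda) + a_i$ together with the closed form $\zeta(a_i) = g(a_i) + k\rho_s(k) - g(ka_s)$ from \eqref{calc:zeta} (valid since $i \in S_0^+$ forces $a_i \ge ka_s$), the definition \eqref{hdef} of $h$ gives
\[
H = g(A) - g(B) - g(a_i) + g(ka_s) + (d-k)\,\rho_s(k).
\]
I would first record three facts. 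The hypothesis $\delta + a(\bar{\Lambda}) + \gamma_{\bar{\Lambda},\delta}a_s \le a_i$ is exactly $A \le a_i$ (equivalently $\delta + a(\Lambda) + \gamma' a_s \le 0$); \cref{obs:gamma} applied to $a(\Lambda) \le a(\bar{\Lambda})$ gives $\gamma \ge \gamma'$, so $d \ge 0$; and a direct substitution using $\rho_s(k) = g(ka_s)-g((k-1)a_s)$ and $A - B = a_i - d a_s$ verifies condition \eqref{ccondition}.

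The substance is then to regroup $H$ into the form demanded by \cref{cor:sum-pariwise-comparison}, with every positive difference weakly to the left of every negative difference. Mirroring \cref{lem:alternate-element}, I would split on $\gamma$. When $\gamma = \gamma'$ (so $d=0$ and $B = A - a_i \le 0$), $H = [g(ka_s)-g(B)] - [g(a_i)-g(A)] - k\rho_s(k)$; taking $I^+=\{g(ka_s)-g(B)\}$ and the rest in $I^-$, the ordering conditions $\max_{I^+}b \le \min_{I^-}b$ and $\max_{I^+}a \le \min_{I^-}a$ follow from $B \le 0 \le (k-1)a_s$, $B \le A$, and $a_i \ge ka_s$. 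When $\gamma \ge \gamma'+1$, I would use \cref{obs:gamma-range-sum} to pin down the arguments: $\gamma \ge 1$ gives $B \le ka_s$, and $\gamma' \le \mu_s-1$ gives $A > (k-1)a_s$.

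The delicate point — and the step I expect to be the main obstacle — is this second case, since both the sign of $d-k$ and the position of $B$ relative to $A$ vary. If $d a_s \le a_i$, then $B \le A$ and $H = [g(ka_s)-g(B)] - [g(a_i)-g(A)] + (d-k)\rho_s(k)$ is grouped directly, distributing the $|d-k|$ copies of $\rho_s(k)=g(ka_s)-g((k-1)a_s)$ into $I^+$ (if $d \ge k$) or into $I^-$ (if $d < k$); the ordering conditions then hold because $A > (k-1)a_s$, and because one checks that $d < k$ forces $B \le (k-1)a_s$ (otherwise $B > (k-1)a_s$ would, via $B = A - a_i + d a_s$ and $A \le a_i$, yield $d \ge k$, a contradiction). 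If instead $d a_s > a_i$, then $B > A$ (and necessarily $d \ge k+1$), so the negative difference $g(a_i)-g(A)$ would contain the positive difference $g(ka_s)-g(B)$ nested inside it and the hypotheses of \cref{cor:sum-pariwise-comparison} fail outright. To repair this I would split one of the $d-k$ copies of $\rho_s(k)$ at the intermediate points $A$ and $B$, so that the interior piece $g(B)-g(A)$ cancels the nesting, leaving $I^+ = \{\,g(A)-g((k-1)a_s),\ g(ka_s)-g(B),\ (d-k-1)\text{ copies of }\rho_s(k)\,\}$ and $I^- = \{\,g(a_i)-g(ka_s)\,\}$; here the ordering conditions reduce to $A > (k-1)a_s$, $B \le ka_s$, and $a_i \ge ka_s$, all already available. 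In every case \cref{cor:sum-pariwise-comparison} yields $H \ge 0$, which is the claim. The bulk of the care lies in this regrouping and in the bookkeeping of the boundary values of $\gamma_{\Lambda,\delta}$ and $\gamma_{\bar{\Lambda},\delta}$ through \cref{obs:gamma-range-sum,obs:gamma}.
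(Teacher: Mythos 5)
Your proof is correct, and it runs on the same machinery as the paper's own argument: the identity $H=g(A)-g(B)-g(a_i)+g(ka_s)+(d-k)\rho_s(k)$ (the paper's \eqref{obj-diff4}), \cref{obs:gamma} to get $d:=\gamma_{\Lambda,\delta}-\gamma_{\bar{\Lambda},\delta}\ge 0$, \cref{obs:gamma-range-sum} to locate $A$ and $B$ relative to $(k-1)a_s$ and $ka_s$, and \cref{cor:sum-pariwise-comparison} applied to a suitable regrouping. What differs is the case decomposition and which regrouping is used where. The paper splits on $d$ versus $k$ (cases $d=k$, $d\ge k+1$, $d\le k-1$), whereas you split on $d=0$ versus $d\ge 1$ and then on the sign of $A-B=a_i-da_s$. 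Concretely: for $d\ge k+1$ with $B\le A$ the paper always uses the three-positive-term grouping $[(d-k-1)\rho_s(k)]+[g(ka_s)-g(B)]+[g(A)-g((k-1)a_s)]-[g(a_i)-g(ka_s)]$, while you reserve that ``split'' grouping for the genuinely nested situation $B>A$ and otherwise use the leaner $[g(ka_s)-g(B)]+(d-k)\rho_s(k)-[g(a_i)-g(A)]$; for $1\le d\le k-1$ the paper first deduces $\gamma_{\Lambda,\delta}=\mu_s$ (contrapositive of \cref{obs:gamma-range-sum}~(ii)) and hence $A\le ka_s$ before grouping around $g(A)-g(B)$, whereas your grouping around $g(ka_s)-g(B)$ needs only $B\le ka_s$, $B\le A$, and $B\le(k-1)a_s$, all of which you correctly derive, so you bypass the $\gamma_{\Lambda,\delta}=\mu_s$ detour; and your standalone $d=0$ case (using $B=A-a_i\le 0$) is sound, the paper absorbing it into its $d\le k-1$ case. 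I checked the zero-sum condition \eqref{ccondition} and the ordering conditions of \cref{cor:sum-pariwise-comparison} in each of your groupings and they all hold, so both routes are complete; yours is slightly more economical in the small-$d$ regime, the paper's is more uniform for large $d$.
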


\begin{proof}
	By the definition of $\zeta$ in \eqref{calc:zeta}, $a_i \geq ka_s$ (as $i \in S_0^+$), and $\rho_s(k) = g(k a_s) - g((k - 1) a_s)$, 
	we have $\zeta(a_i)=g(a_i) + k \rho_s(k)-g(ka_s)=g(a_i) + k [g(ka_s) - g((k-1)a_s) ]- g(k a_s)$, which together with the definition of $h$ in \eqref{hdef}, implies
	\begin{equation} \label{obj-diff4}
		\begin{aligned}
			H & :=  h(\bar{\Lambda}, \gamma_{\bar{\Lambda}, \delta}, \delta) - h(\Lambda, \gamma_{\Lambda, \delta}, \delta) \\
			& = g(\delta + a(\bar{\Lambda}) + \gamma_{\bar{\Lambda}, \delta}a_s)
			- g(\delta + a(\Lambda) + \gamma_{\Lambda, \delta}a_s) -  \zeta(a_i) \\
			& \quad -(\gamma_{\bar{\Lambda}, \delta} - \gamma_{\Lambda, \delta})(g(ka_s) - g((k-1)a_s)) \\
			& =g(\delta + a(\bar{\Lambda}) + \gamma_{\bar{\Lambda}, \delta}a_s)
			- g(\delta + a(\Lambda) + \gamma_{\Lambda, \delta}a_s)- g(a_i)  + g(ka_s) \\
			& \quad 
			+ (\gamma_{\Lambda, \delta} - \gamma_{\bar{\Lambda}, \delta} - k)(g(ka_s) - g((k-1)a_s)).
		\end{aligned}
	\end{equation}
	Below we will prove $H \geq 0$ by considering the two cases (i) $\gamma_{\Lambda, \delta} - \gamma_{\bar{\Lambda}, \delta} = k$
	and (ii) $\gamma_{\Lambda, \delta} - \gamma_{\bar{\Lambda}, \delta} \neq k$, separately.
	\begin{itemize}
		\item [(i)]$\gamma_{\Lambda, \delta} - \gamma_{\bar{\Lambda}, \delta} = k$. 
		Then $ \Delta :=  (\delta + a(\bar{\Lambda}) + \gamma_{\bar{\Lambda}, \delta}a_s) - (\delta + a(\Lambda) + \gamma_{\Lambda, \delta}a_s) =  a_i - ka_s \geq 0$.
		Thus,
		\begin{equation*}
			H =[g(\delta + a(\bar{\Lambda}) + \gamma_{\bar{\Lambda}, \delta}a_s )
			-g(\delta + a(\bar{\Lambda}) + \gamma_{\bar{\Lambda}, \delta}a_s - \Delta)]-  [g(a_i)  - g(a_i - \Delta)]  
			\stackrel{(a)}{\geq} 0, 
		\end{equation*}
		where (a) follows from $\delta + a(\bar{\Lambda}) + \gamma_{\bar{\Lambda}, \delta} a_s \le a_i$ and \cref{lem:slope}.
		\item [(ii)] $\gamma_{\Lambda, \delta} - \gamma_{\bar{\Lambda}, \delta} \neq k$.
		We shall use  \cref{cor:sum-pariwise-comparison} to prove $H \geq 0$.
		Observe that 
		\begin{equation*}
			\delta + a(\bar{\Lambda}) + \gamma_{\bar{\Lambda}, \delta}a_s
			- (\delta + a(\Lambda) + \gamma_{\Lambda, \delta}a_s)
			- a_i + ka_s  
			+ (\gamma_{\Lambda, \delta} - \gamma_{\bar{\Lambda}, \delta} - k)a_s= 0,
		\end{equation*}
		and thus condition \eqref{ccondition} in \cref{cor:sum-pariwise-comparison} holds.
		We further consider the following two cases.
		\begin{itemize}
			\item [(ii.1)] $\gamma_{\Lambda, \delta} - \gamma_{\bar{\Lambda}, \delta} \geq k + 1$.
			Then $\gamma_{\Lambda, \delta} \geq 1$ and $\gamma_{\bar{\Lambda}, \delta} \leq \mu_s - 1$.
			By \cref{obs:gamma-range-sum}, we obtain
			\begin{equation}\label{cond-4.8}
				\delta + a(\Lambda) + \gamma_{\Lambda, \delta}a_s \leq ka_s
				~\text{and}~
				\delta + a(\bar{\Lambda}) + \gamma_{\bar{\Lambda}, \delta}a_s > (k - 1) a_s.
			\end{equation}
			Regrouping \eqref{obj-diff4}, we obtain 
			\begin{equation*}
				\small
				\begin{aligned}
					& H = 
					[(\gamma_{\Lambda, \delta} - \gamma_{\bar{\Lambda}, \delta} - (k+1))
					(g(ka_s) - g((k-1)a_s)) ] 
					+ [g(ka_s) -g(\delta + a(\Lambda)+ \gamma_{\Lambda, \delta}a_s)] \\
					& \qquad
					+ [g(\delta + a(\bar{\Lambda}) + \gamma_{\bar{\Lambda}, \delta}a_s) - g((k-1)a_s)] - [g(a_i)  - g(ka_s)].
				\end{aligned}
			\end{equation*}
			Together with $\gamma_{\Lambda, \delta} - \gamma_{\bar{\Lambda}, \delta} \geq k + 1$, 
			$a_i \geq \delta + a(\bar{\Lambda}) + \gamma_{\bar{\Lambda}, \delta}$, $a_i \geq k a_s$, and \eqref{cond-4.8},
			conditions (i) and (ii) in \cref{cor:sum-pariwise-comparison} must hold.
			Therefore, $H \geq 0$.
			\item [(ii.2)] $\gamma_{\Lambda, \delta} - \gamma_{\bar{\Lambda}, \delta} \le k - 1$.
			Then
			\begin{equation}
				\label{tmpineq1-1}
				(\delta + a(\bar{\Lambda}) + \gamma_{\bar{\Lambda}, \delta}a_s) 
				- (\delta + a(\Lambda) + \gamma_{\Lambda, \delta}a_s) 
				= a_i -  (\gamma_{\Lambda, \delta} - \gamma_{\bar{\Lambda}, \delta}) a_s\geq a_i -(k-1)a_s.
			\end{equation}
			Together with $\delta + 	a(\bar{\Lambda}) + \gamma_{\bar{\Lambda}, \delta} \le a_i$, we have
			\begin{equation}
				\label{tmpineq1-2}
				\delta + a(\Lambda) + \gamma_{\Lambda, \delta}a_s \leq (k-1)a_s.
			\end{equation}
			From \cref{obs:gamma-range-sum} (ii),  $\gamma_{\Lambda, \delta} = \mu_s$ must hold, and thus $\gamma_{\bar{\Lambda}, \delta} \ge \gamma_{\Lambda, \delta} - (k-1) = (\mu_s - k) + 1 \ge 1$.
			Applying \cref{obs:gamma-range-sum} (i) yields 
			\begin{equation}\label{ineq-Delta-neg-lt-k-2}
				\delta + a({\bar{\Lambda}}) + \gamma_{{\bar{\Lambda}}, \delta} a_s \le ka_s.
			\end{equation}
			We now regroup \eqref{obj-diff4} as 
			\begin{equation*}
				\begin{aligned}
					& H = [g(\delta + a(\bar{\Lambda})+ \gamma_{\bar{\Lambda}, \delta}a_s)
					- g(\delta + a(\Lambda) + \gamma_{\Lambda, \delta}a_s) ]  \\
					& \qquad 
					-[g(a_i)  - g((k-1)a_s)] 
					-(k - 1 -  (\gamma_{\Lambda, \delta} -  \gamma_{\bar{\Lambda}, \delta}) )[ g(ka_s) - g((k-1)a_s)) ].
				\end{aligned}
			\end{equation*}
			Combining $a_i \geq ka_s > (k-1)a_s$, \eqref{tmpineq1-1}, \eqref{tmpineq1-2}, and \eqref{ineq-Delta-neg-lt-k-2}, conditions (i) and (ii) in \cref{cor:sum-pariwise-comparison} must hold.
			Therefore, $H \geq 0$. \qedhere
		\end{itemize}
	\end{itemize}
\end{proof}

We are now ready to provide a closed-form optimal solution of the problem \eqref{setopt2}.
Without loss of generality, we assume $S_0^+ =[r]$ (where $r \in \mathbb{Z}_+$) with $a_1 \geq a_2 \geq \cdots \geq a_r$.
Define the partial sums $A_0 = 0$ and $A_{i} = \sum_{j =1}^{i} a_{j}$ for $ i \in [r]$.
\begin{theorem}\label{thm:special-case-optsol}
	For $\delta \in \R_-$, define 
	\begin{equation}\label{def-t-xs}
		t = \left\{ \
		\begin{aligned}
			& i + 1 , && ~\text{if}~ - A_{i + 1} < \delta \leq -A_{i}, \ 
			i = 0 ,\ldots, r -1, \\
			& r, && ~\text{if}~
			\delta \leq -A_{r}.
		\end{aligned}
		\right.
	\end{equation}
	Then $[t] \subseteq S_0^+ $ is an optimal solution of problem \eqref{setopt2}.
\end{theorem}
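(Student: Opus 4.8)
The plan is to prove optimality of $[t]$ by an exchange argument: starting from an arbitrary feasible $\Lambda \subseteq S_0^+$, I would transform it into the initial segment $[t]$ through a finite sequence of removal, swap, and addition operations, each of which leaves the objective $h(\cdot,\gamma_{\cdot,\delta},\delta)$ non-decreasing by \cref{lem:remove-element,lem:alternate-element,lem:add-element}. Since $[t]\subseteq S_0^+$ is feasible (as $t\le r$), this yields $h([t],\gamma_{[t],\delta},\delta)\ge h(\Lambda,\gamma_{\Lambda,\delta},\delta)$ for every $\Lambda$, establishing the claim. The argument is organized around the quantity $\sigma(\Lambda):=\delta+a(\Lambda)+\gamma_{\Lambda,\delta}a_s$, the argument of $g$ at the optimal $\gamma$, and the partial sums $B_m:=\delta+A_m$. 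A first observation I would record is that, by \eqref{def-t-xs}, $t$ is exactly the first index at which $B_m$ turns positive: $B_{t-1}\le 0<B_t$ in the first case, and $B_m\le 0$ for all $m\le r$ with $t=r$ in the second. Since $a_m>0$, the sequence $B_m$ is strictly increasing, so this sign pattern is monotone in $m$.

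\emph{Reduction to an initial segment.} First I would reduce to the case $\Lambda=[m]$. If $\Lambda$ is not an initial segment, take $i=\max(\Lambda)$ and let $j$ be the smallest index outside $\Lambda$; then $j<i$, so $a_j\ge a_i$ (recall $a_1\ge\cdots\ge a_r$), with $i\in\Lambda$ and $j\notin\Lambda$. Exactly one of two cases holds: if $\sigma(\Lambda)>a_i$, then \cref{lem:remove-element} lets me delete $i$; if $\sigma(\Lambda)\le a_i$, then \cref{lem:alternate-element} (applicable since $a_j\ge a_i$) lets me replace $i$ by $j$. Both moves keep $h$ non-decreasing. For termination I would track the lexicographic potential $(|\Lambda|,\sum_{\ell\in\Lambda}\ell)$: a removal strictly decreases the cardinality, while a swap preserves the cardinality but strictly decreases the index sum. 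As this potential lives in $\Z_+\times\Z_+$, the process halts, and it can only halt when no such pair $(i,j)$ exists, i.e. when $\Lambda=[m]$ for some $m\le r$.

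\emph{Adjusting $m$ to $t$.} Given $[m]$, I would drive $m$ to $t$, the key point being that the lemma preconditions specialize to sign conditions on $B$. For an addition $[m]\to[m+1]$ with $m<t$, \cref{lem:add-element} requires $\sigma([m+1])\le a_{m+1}$; I would split on $\gamma_{[m+1],\delta}$: if $\gamma_{[m+1],\delta}\ge 1$, then \cref{obs:gamma-range-sum}(i) gives $\sigma([m+1])\le k a_s\le a_{m+1}$ (using $a_{m+1}\ge ka_s$ on $S_0^+$), while if $\gamma_{[m+1],\delta}=0$, then $\sigma([m+1])=B_{m+1}$ and the inequality reduces to $B_m\le 0$, which holds since $m\le t-1$. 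For a removal $[m]\to[m-1]$ with $m>t$, \cref{lem:remove-element} requires $\sigma([m])>a_m$, which rearranges to $B_{m-1}+\gamma_{[m],\delta}a_s>0$; since $m-1\ge t$ forces $B_{m-1}\ge B_t>0$ and $\gamma_{[m],\delta}a_s\ge 0$, this holds. Iterating additions when $m<t$ or removals when $m>t$ converts $[m]$ into $[t]$ without decreasing $h$, which completes the proof.

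The main obstacle, and the step I would treat most carefully, is the precondition verification in the second phase: the subtlety is that $\gamma_{[m],\delta}$ need not vanish, so $\sigma([m])$ is not simply $B_m$, and one must use the range property \cref{obs:gamma-range-sum} (that the optimal $\gamma$ pins $\sigma$ into $((k-1)a_s,ka_s]$ whenever it is interior) together with the hypothesis $a_i\ge ka_s$ on $S_0^+$ to bound $\sigma$. A secondary technical concern is that the reduction phase is well-defined and terminating, which the lexicographic potential handles cleanly; \cref{obs:gamma} can be invoked to keep the induced $\gamma$ values consistent under the exchange moves.
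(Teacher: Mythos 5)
Your proof is correct and follows essentially the same route as the paper's: the same exchange argument built on \cref{lem:remove-element,lem:alternate-element,lem:add-element}, first reducing to an initial segment $[m]$ and then adjusting $m$ to $t$, with the same split on whether $\gamma$ vanishes when verifying the addition precondition. The only organizational differences are that you make termination explicit via a lexicographic potential and handle $m>t$ by direct removals (checking $B_{m-1}\ge B_t>0$, which is vacuous in the case $t=r$), whereas the paper rules out $i_{\max}\ge t+1$ by contradiction using the invariant $\delta+a(\Lambda)+\gamma_{\Lambda,\delta}a_s\le a_{i_{\max}}$ carried through the reduction; both are valid.
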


\begin{proof}
	Let $\Lambda \subseteq S_0^+ $ be an optimal solution of  problem \eqref{setopt2} 
	and
	\begin{equation}
		\label{eq:imaxdef}
		i_{\max} = \left\{\begin{array}{ll}
			\max_{i \in \Lambda} i,& \text{if}~\Lambda \neq \varnothing, \\
			0, & \text{otherwise}.
		\end{array}\right.
	\end{equation}
	If $i_{\max} = 0$, it follows that $\Lambda =\varnothing$ and we let $a_{i_{\max}}=+\infty$.
	\begin{itemize}
		\item [(i)] If $\delta + a(\Lambda) + \gamma_{\Lambda, \delta} a_s > a_{i_{\max}}$, 
		then from \cref{lem:remove-element}, it follows that
		$h(\bar{\Lambda}, \gamma_{\bar{\Lambda}, \delta}, \delta) \ge h(\Lambda, \gamma_{\Lambda, \delta}, \delta)$ where $\bar{\Lambda} = \Lambda \backslash i_{\max}$. 
		This implies that $\bar{\Lambda}$ is also an optimal solution.
		Therefore, we can assume without loss of generality that $\delta + a(\Lambda) + \gamma_{\Lambda, \delta} a_s \le a_{i_{\max}}$ in the following. 
		\item [(ii)]  If $\Lambda \subsetneq [i_{\max}]$, then $i_{\max} \geq 1$ and we let $j \in [i_{\max}] \backslash \Lambda$.
		By $a_1 \geq \cdots \geq a_r $ and \eqref{eq:imaxdef}, we have $a_j \ge a_{i_{\max}}$.
		Using step (i), it follows that $\delta + a(\Lambda) + \gamma_{\Lambda, \delta} a_s \leq a_{i_{\max}}$, which together with  \cref{lem:alternate-element}, 
		indicates that $h(\bar{\Lambda}, \gamma_{\bar{\Lambda}, \delta}, \delta) \ge h(\Lambda, \gamma_{\Lambda, \delta}, \delta)$, where  $\bar{\Lambda} = \Lambda \cup j \backslash i_{\max}$.
		Therefore, $\bar{\Lambda}$ is also an optimal solution.
	\end{itemize}
	Recursively applying the above two steps, 
	we can obtain an optimal solution $\Lambda \subseteq S_0^+ = [r]$ satisfying $\Lambda = [i_{\max}]$ 
	and $\delta + a(\Lambda) + \gamma_{\Lambda, \delta} a_s \leq a_{i_{\max}}$.
	
	Now, if $i_{\max}=t$, we are done.
	Otherwise, if $i_{\max} \ge t+1$, then $t \leq i_{\max}-1 \leq r - 1$. 
	From the definition of $t$ in \eqref{def-t-xs}, we have $\delta > - A_t$, 
	which, together with $a(\Lambda) = A_{i_{\max}}$ and $\gamma_{\Lambda, \delta} \geq 0$, indicates 
	$\delta + a(\Lambda) + \gamma_{\Lambda, \delta} a_s 
	> - A_t + a(\Lambda) + \gamma_{\Lambda, \delta} a_s 
	\geq - A_t + A_{i_{\max}} = \sum_{i = t + 1}^{i_{\max}} a_i \ge a_{i_{\max}}$, 
	contradicting with $\delta + a(\Lambda) + \gamma_{\Lambda, \delta} a_s \leq a_{i_{\max}}$.
	Thus, $i_{\max}\leq t-1$ must hold.
	Let $\bar{\Lambda} = \Lambda \cup \{i_{\max}+ 1\}$. 
	Then, the following inequality
	\begin{equation}\label{tmp-ineq}
		\delta + a(\bar{\Lambda}) + \gamma_{\bar{\Lambda}, \delta}a_s \leq a_{i_{\max}+1}
	\end{equation}
	must hold.
	Indeed, if $\gamma_{\bar{\Lambda}, \delta} \ge 1$, \eqref{tmp-ineq} follows from \cref{obs:gamma-range-sum} (i) and $ka_s \leq a_{i_{\max}+1}$ (as $i_{\max}+1 \in S_0^+$).
	Otherwise, $\gamma_{\bar{\Lambda}, \delta}=0$.
	From the definition of $t$ in \eqref{def-t-xs}, we have $\delta \le- A_{t-1}$.
	Together with ${i_{\max}} \le {t-1}$, we have 
	$\delta + a(\bar{\Lambda}) + \gamma_{\bar{\Lambda}, \delta} a_s 
	=\delta + a(\bar{\Lambda}) 
	\le - A_{t-1} + a(\bar{\Lambda}) 
	= - A_{t-1} + A_{i_{\max}} + a_{i_{\max} + 1} \le a_{i_{\max} + 1}$.
	From \cref{lem:add-element} and \eqref{tmp-ineq}, we obtain 
	$h(\bar{\Lambda}, \gamma_{\bar{\Lambda}, \delta}, \delta) \geq h(\Lambda, \gamma_{\Lambda, \delta}, \delta)$, which implies 
	that $\bar{\Lambda}$ is also an optimal solution of problem \eqref{setopt2}.
	Recursively applying the above argument, we can show that $[t]$ is an optimal solution of problem \eqref{setopt2}.	
\end{proof}

For the special case where $\mu_i = 1$ for all $i \in S_0^+$, \cref{thm:special-case-optsol} yields
\begin{equation*}
	\eta^L(\delta) = h\left([t], \gamma_{[t], \delta}, \delta\right),
\end{equation*}
where $t$ is defined in \eqref{def-t-xs}.
Using the definitions of $h$ and $\gamma_{\Lambda, \delta}$ in  \eqref{hdef} and \eqref{def-gamma}, respectively, we can derive the following closed formula for $\eta^L$:
\begin{equation}
	\label{eta-explicit}
	\begin{footnotesize}
		\eta^L(\delta) = \left\{
		\begin{aligned}
			& g(\delta+A_{i + 1}) - \sum_{j = 1}^{i + 1}\zeta(a_j) + k\rho_s(k) - g(ka_s), \\
			& \hspace{4.5cm} ~\text{if}~ k a_s - A_{i + 1} < \delta \leq - A_{i}, \\
			& \hspace{4.5cm} ~~ i = 0,\ldots,r - 1, \\
			& g(\delta+A_{i+1} + \ell a_s) - \sum_{j = 1}^{i+1}\zeta(a_j) + (k - \ell) \rho_s(k) - g(ka_s), \\
			& \hspace{4.5cm} ~\text{if}~ (k - \ell - 1) a_s - A_{i+1} < \delta \leq (k - \ell)a_s - A_{i+1}, \\
			& \hspace{4.5cm} ~~ i = 0,\ldots,r - 1 \ \text{and} \ \ell = 0,\ldots, k-1, \\
			& \hspace{4.5cm} ~~ \text{or} \ i = r-1 \ \text{and} \ \ell = k,\ldots, \mu_s, \\
			& g(\delta+A_r + \mu_s a_s) - \sum_{j = 1}^{r}\zeta(a_j) + (k - \mu_s) \rho_s(k) - g(ka_s), \\
			& \hspace{4.5cm} ~\text{if}~ \delta \leq (k - \mu_s - 1)a_s - A_r. \\
		\end{aligned}
		\right.
	\end{footnotesize}
\end{equation}
We now consider the general case where $\mu_i > 1$ may hold for some $i \in S_0^+$. 
In such a case, we can binarize the general integer variables and also derive a closed formula for $\eta^L$.
In particular, for each $i \in S_0^+$, let $x_i = x_{i1} + \cdots + x_{i \mu_i}$ where $x_{i q}\leq1$ for  $q = 1, \ldots, \mu_i$.
Define $A_{i q} = \sum_{j=1}^{i-1}\mu_j a_j + q a_i$ for $i \in [r]$ and $q \in \{0,1,\ldots,\mu_i\}$.
Then, the closed formula for $\eta^L$  with arbitrary positive integers $\{\mu_i\}_{i \in [n]}$ is given by:
\begin{equation}
	\label{etaL}
	\begin{footnotesize}
		\eta^L(\delta) = \left\{
		\begin{aligned}
			&g(\delta+A_{(i + 1) q}) - \sum_{j=1}^{i}\mu_j \zeta(a_j) 
			- q \zeta(a_{i + 1}) + k\rho_s(k) - g(ka_s), \\
			& \hspace{3.5cm}
			~\text{if}~ k a_s - A_{(i + 1) q} < \delta \leq -A_{(i + 1) (q-1)}, \\
			& \hspace{3.5cm} 
			~~ i = 0, \ldots, r - 1 \ \text{and} \ q = 1, \ldots, \mu_{i+1}, \\
			&g(\delta+A_{(i + 1) q} + \ell a_s) - \sum_{j=1}^{i}\mu_j \zeta(a_j) 
			- q \zeta(a_{i + 1})+ (k - \ell) \rho_s(k) - g(ka_s), \\
			& \hspace{3.5cm}
			~\text{if}~ (k-\ell - 1)a_s - A_{(i + 1) q} < \delta \leq 
			(k-\ell )a_s -A_{(i + 1) q}, \\
			& \hspace{3.5cm}
			~~ i = 0, \ldots, r - 1, \ q = 1, \ldots, \mu_{i+1}, \ \text{and} \ \ell = 0,\ldots, k-1,\\
			& \hspace{3.5cm}
			~~ \text{or} \ i = r - 1, \ q = \mu_{r}, \ \text{and} \ \ell = k,\ldots, \mu_s,\\
			&g(\delta+A_{r\mu_r} + \mu_s a_s) - \sum_{j=1}^{r}\mu_j \zeta(a_j) 
			+ (k - \mu_s) \rho_s(k) - g(ka_s), \\
			& \hspace{3.5cm}
			~\text{if}~ \delta \leq 
			(k - \mu_s - 1) a_s -A_{r \mu_r}. \\
		\end{aligned}
		\right.
	\end{footnotesize}
\end{equation}

We end this subsection by identifying sufficient conditions under which a closed formula for $\eta$ can be derived using the closed formula for $\eta^{L}$ in \eqref{etaL}.
\begin{itemize}
	\item [(i)] $S_0 = S_0^+$. 
	In this case, problem \eqref{prob:restriction1} reduces to the original lifting problem \eqref{prob:twophase1}, and thus  $\eta(\delta) = \eta^L(\delta)$ holds for all $\delta \in \R_-$, where $\eta^L$ is defined by \eqref{etaL}.
	\item [(ii)] $k = 1$. By the definition of $\zeta$ in \eqref{calc:zeta}, we have $\zeta(\delta) = g(\delta) - g(0)$ for $\delta \in \R_+$.
	Together with $\rho_s(1) = g(a_s)-g(0)$,
	we can simplify the lifting problem \eqref{prob:twophase1} to
	\begin{equation}\label{prob:k=10}
		\begin{aligned}
			\eta(\delta) = \max_{x_{S_0\cup s}} \quad 
			& g\left(\delta + \sum_{i \in S_0 \cup s}a_ix_i\right) - \sum_{i \in S_0 \cup s} \left( g(a_i) - g(0) \right) x_i - g(0) \\
			\text{s.t.} \quad
			& 0 \leq x_i \le \mu_i, \ \forall \ i \in S_0 \cup s, \\
			& x_i \in \Z, \ \forall \ i \in S_0 \cup s.
		\end{aligned}
	\end{equation}

	Letting $\bar{s} \in \argmin_i \{a_i \,:\, i \in S_0 \cup s\}$ and  $\bar{S}_0 := S_0 \cup s \backslash \bar{s}$, we can rewrite problem \eqref{prob:k=10} as 
	\begin{equation}\label{prob:k=1}
		\begin{aligned}
			\eta(\delta) = \max_{x_{\bar{S}_0 \cup \bar{s}}} \quad 
			& g\left(\delta + \sum_{i \in \bar{S}_0 \cup \bar{s}}a_ix_i\right) - \sum_{i \in \bar{S}_0 \cup \bar{s}} \left( g(a_i) - g(0) \right) x_i - g(0) \\
			\text{s.t.} \quad
			& 0 \leq x_i \le \mu_i, \ \forall \ i \in \bar{S}_0 \cup \bar{s}, \\
			& x_i \in \Z, \ \forall \ i \in \bar{S}_0 \cup \bar{s}.
		\end{aligned}
	\end{equation}

	In this problem,  $\bar{S}_0^+ = \{i \in \bar{S}_0 \,:\, a_i \geq k a_{\bar{s}}\} = \{i \in \bar{S}_0 \,:\, a_i \geq a_{\bar{s}}\} = \bar{S}_0$, and by \eqref{etaL}, 
		\begin{equation}\label{etaL-k=1}
		\begin{footnotesize}
			\bar{\eta}^L(\delta) = \left\{
			\begin{aligned}
				&g(\delta+A_{(i + 1) q}) - \sum_{j=1}^{i}\mu_j \zeta(a_j) 
				- q \zeta(a_{i+1}) - g(0), \\
				& \hspace{1cm} \text{if}~ - A_{(i+1) q} < \delta \leq -A_{(i+1) (q-1)},
				~ i = 0, \ldots, r \ \text{and} \ q = 1, \ldots, \mu_{i+1}, \\
				&g(\delta+A_{(r + 1) \mu_{r + 1}}) - \sum_{j=1}^{r + 1}\mu_j \zeta(a_j) - g(0), \\
				 & \hspace{1cm}
				 \text{if}~ \delta \leq - A_{(r + 1) \mu_{r + 1} },
			\end{aligned}
			\right.
		\end{footnotesize}
	\end{equation}
	where, by abuse of notation, we assume that $\bar{S}_0 \cup \bar{s} = [r + 1]$ (where $r \in \mathbb{Z}_{+}$) with $a_1 \geq a_2 \geq \cdots \geq a_{r + 1}$ and redefine $A_{i q} = \sum_{j = 1}^{i - 1} \mu_j a_j + q a_i$ for $i \in [r + 1]$ and $q \in \{0, 1, \ldots, \mu_i\}$.
	As a result, it follows from case (i) that $\eta(\delta) = \bar{\eta}^L(\delta)$ holds for all $\delta \in \R_-$. 

\end{itemize}
Summarizing the above discussion, we obtain that
\begin{proposition}\label{lem:eta}
	The following two statements hold.
	\begin{itemize}
		\item [(i)] If $S_0 = S_0^+$, then $\eta(\delta) = \eta^L(\delta)$ for $\delta \in \R_-$, where $\eta^L$ is defined in \eqref{etaL};
		\item [(ii)] If $k = 1$, then $\eta(\delta) = \bar{\eta}^L(\delta)$ for $\delta \in \R_-$, where $\bar{\eta}^L$ is defined in \eqref{etaL-k=1}.
	\end{itemize}
\end{proposition}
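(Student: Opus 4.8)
The plan is to establish statement (i) by a direct structural observation and then obtain statement (ii) by reducing it to (i) through a relabeling argument. For (i), I would note that the hypothesis $S_0 = S_0^+$ means there is no index $i \in S_0$ with $a_i < ka_s$. Consequently, the act of fixing $x_i = 0$ for all such indices---which is precisely how the restriction \eqref{prob:restriction1} defining $\eta^L$ is obtained from the full lifting problem \eqref{prob:twophase1}---is vacuous, so the two problems coincide and $\eta(\delta) = \eta^L(\delta)$ for every $\delta \in \R_-$. The explicit piecewise formula \eqref{etaL} for $\eta^L$ has already been derived from the closed-form optimal solution $[t]$ of \cref{thm:special-case-optsol} (followed by binarization of the general integer variables), so this immediately yields the claimed expression.

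For (ii), the first step is to exploit that $k=1$ trivializes $\zeta$ on the nonnegative axis. Reading off \eqref{calc:zeta} with $k=1$ and using $\rho_s(1) = g(a_s) - g(0)$, I would verify that $\zeta(\delta) = g(\delta) - g(0)$ holds for all $\delta \ge 0$; in particular $\zeta(a_i) = g(a_i) - g(0)$ for $i \in S_0$. Substituting this into the objective of \eqref{prob:twophase1} and collecting the terms attached to $x_s$ with the constant $(1 - x_s)(g(a_s) - g(0)) - g(a_s)$, the coefficient of $x_s$ becomes $-(g(a_s) - g(0))$, identical in form to the coefficients of the variables $\{x_i\}_{i\in S_0}$. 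This produces the fully symmetric reformulation \eqref{prob:k=10}, in which the distinguished seed index $s$ is indistinguishable from the members of $S_0$.

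The second step is the reduction proper. I would relabel by letting $\bar{s} \in \argmin\{a_i \,:\, i \in S_0 \cup s\}$ and $\bar{S}_0 := (S_0 \cup s)\setminus \bar{s}$, rewriting \eqref{prob:k=10} as \eqref{prob:k=1}. Because $\bar{s}$ attains the minimum coefficient and $k = 1$, every $i \in \bar{S}_0$ satisfies $a_i \ge a_{\bar{s}} = k a_{\bar{s}}$, so the analogue $\bar{S}_0^+$ for the relabeled problem equals $\bar{S}_0$ itself. Thus the relabeled problem falls exactly under the hypothesis of statement (i), and applying formula \eqref{etaL} to it---under the reindexing $\bar{S}_0 \cup \bar{s} = [r+1]$ with $A_{iq}$ redefined accordingly and with the $k=1$ simplification of the second-phase term $(k-\ell)\rho_s(k) - g(ka_s)$---collapses the three-branch formula to the two-branch formula \eqref{etaL-k=1}, giving $\eta(\delta) = \bar{\eta}^L(\delta)$ on $\R_-$.

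The conceptual heart of the argument is the symmetrization in the second step: recognizing that $k=1$ strips the seed index $s$ of its special role, because $\zeta$ reduces on $\R_+$ to the same $g(\cdot) - g(0)$ form carried by the $S_0$-coefficients, thereby allowing the minimum-coefficient element to be promoted to the new seed. Once this is seen, the remaining work is purely bookkeeping---matching the breakpoints and the partial-sum coefficients $A_{iq}$ after relabeling---which, while the most calculation-heavy portion, is routine and presents no genuine obstacle.
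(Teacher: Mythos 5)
Your proposal is correct and follows essentially the same route as the paper: for (i) you observe that the restriction defining $\eta^L$ is vacuous when $S_0 = S_0^+$, and for (ii) you use the $k=1$ simplification $\zeta(\delta)=g(\delta)-g(0)$ on $\R_+$ to symmetrize the objective, promote the minimum-coefficient index $\bar{s}$ to the new seed, note that the relabeled problem satisfies $\bar{S}_0^+=\bar{S}_0$, and invoke case (i). This matches the paper's argument step for step, including the final bookkeeping that collapses \eqref{etaL} to the two-branch formula \eqref{etaL-k=1}.
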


\subsubsection{An upper bound for $\eta$} \label{subsubsec:S_0-neq-S_0+}
Next, we consider the following relaxation of problem \eqref{prob:twophase1}:
\begin{equation}\label{prob:relaxation}
	\begin{aligned}
		\eta^U(\delta) : = \max\limits_{x_{S_0 \cup s}} \quad 
		& g\left(\delta + \sum_{i \in S_0 \cup s}a_ix_i\right)
		- \sum_{i \in S_0}\zeta(a_i)x_i + (k - x_s) \rho_s(k) - g(ka_s) \\
		\text{s.t.} \quad 
		& 0 \leq x_i \leq \mu_i, \ \forall \ i \in S_0, ~  0 \leq x_s, \\
		& x_i \in \Z, \ \forall \ i \in S_0 \cup s,
	\end{aligned}
\end{equation}
obtained by removing the upper bound constraint $x_s \leq \mu_s$ from \eqref{prob:twophase1}.
Solving this relaxation provides an upper bound $\eta^U(\delta)$ for problem \eqref{prob:twophase1}.
To identify a closed formula for $\eta^U$, we need the following result.
\begin{theorem}\label{thm:prob-simplify}
	The problem \eqref{prob:relaxation} has an optimal solution $x^*_{S_0 \cup s}$ for which $x^*_i = 0$ holds for all $i \in S_0 \backslash S^+_0$, where  $S_0^+ =\{ i \in S_0 \,:\, a_i \geq ka_s\}$.
\end{theorem}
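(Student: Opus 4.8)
My plan is to reduce the claim to a one–variable marginal statement and then verify it with the concavity machinery of \cref{lem:slope} and \cref{cor:sum-pariwise-comparison}. Since the upper bound on $x_s$ has been dropped in \eqref{prob:relaxation}, I would first eliminate $x_s$ by partial optimization: for fixed $x_{S_0}$ define
\[
\psi(t) := \max_{x_s \in \Z,\, x_s \ge 0}\bigl\{ g(t + a_s x_s) - \rho_s(k)(x_s - k) - g(ka_s)\bigr\},
\]
so that $\eta^U(\delta) = \max_{x_{S_0}}\bigl[\psi\bigl(\delta + \sum_{i \in S_0} a_i x_i\bigr) - \sum_{i \in S_0}\zeta(a_i) x_i\bigr]$ over the feasible box. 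By the unimodality behind \cref{lemma1}, $\psi$ has a closed form: the unconstrained optimizer of the inner problem is exactly the one defining $Z$ in \eqref{funZ}, so $\psi(t) = Z(t)$ whenever $t < ka_s$ (where that optimizer is nonnegative), whereas for $t \ge ka_s$ the bound $x_s \ge 0$ becomes active and forces $x_s = 0$, giving $\psi(t) = g(t) + k\rho_s(k) - g(ka_s)$.

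The heart of the argument is the monotonicity claim that for every $i \in S_0 \setminus S_0^+$ (so $0 \le a_i < ka_s$, which by \eqref{calc:zeta} and \eqref{funZ} gives $\zeta(a_i) = Z(a_i)$) and every $t \in \R$,
\[
\psi(t + a_i) - \psi(t) \le Z(a_i).
\]
Granting this, increasing any such $x_i$ by one unit never increases the reduced objective, so starting from any optimal solution and lowering every $x_i$ with $i \in S_0 \setminus S_0^+$ to $0$ one unit at a time (reading off a feasible $x_s \ge 0$ from the argmax defining $\psi$) yields an optimal solution of the required form, which is exactly \cref{thm:prob-simplify}.

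I would prove the marginal claim by splitting on the position of $t$ and $t + a_i$ relative to the threshold $ka_s$. When $t + a_i < ka_s$ both $\psi$-values equal the corresponding $Z$-values, so the inequality is precisely the subadditivity of $Z$ from \cref{lem:Z-subadditive-on-R}. When $t \ge ka_s$ both values use $x_s = 0$, the claim collapses to $g(t+a_i) - g(t) \le Z(a_i)$; after substituting the closed form of $Z(a_i)$ I would regroup the difference into a signed sum of $g$-increments, check the balance condition \eqref{ccondition}, and conclude by \cref{cor:sum-pariwise-comparison}, with the increments of the form $[g(\cdot)-g((k-1)a_s)]$ sitting below the increment $[g(t+a_i)-g(t)]$.

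The main obstacle is the boundary-crossing regime $t < ka_s \le t + a_i$, where $\psi$ switches formulas between the two points and subadditivity of $Z$ no longer applies directly. Here I would write $H := Z(t) + Z(a_i) - \psi(t+a_i)$ explicitly using $\ell_t := \lfloor t/a_s\rfloor$ and $\ell := \lfloor a_i/a_s\rfloor$, obtaining a combination of $g$ evaluated at $t + (k-\ell_t-1)a_s$, $a_i + (k-\ell-1)a_s$, $t+a_i$, $ka_s$, and $(k-1)a_s$, with integer coefficient $c = \ell_t + \ell + 2 - k$ on the $\rho_s(k)$ term. The delicate point is that the sign of $c$ governs the regrouping; fortunately the hypothesis $t + a_i \ge ka_s$ together with $t < (\ell_t+1)a_s$ and $a_i < (\ell+1)a_s$ forces $c \ge 1$, which is exactly what is needed to place the positive increments below the negative one and invoke \cref{cor:sum-pariwise-comparison}. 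Establishing \eqref{ccondition} and this sign bound, rather than any single inequality, is where the real work lies.
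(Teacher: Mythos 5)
Your proposal is correct, and it takes a genuinely different route from the paper. The paper first binarizes the $S_0$-variables, rewrites the relaxation as a maximization of $h(\Lambda,\gamma,\delta)$ over subsets $\Lambda\subseteq S_0$ with the paired optimizer $\gamma_{\Lambda,\delta}$ of $x_s$ tracked explicitly (its Facts~\ref{fact1}--\ref{fact2}), and then proves a removal lemma: dropping any $i\in\Lambda\setminus S_0^+$ does not decrease $h$, via a bound $\gamma_{\bar\Lambda,\delta}-\gamma_{\Lambda,\delta}\le\ell+1$ and a three-way case split on that difference, each case closed by \cref{lem:slope} or \cref{cor:sum-pariwise-comparison}. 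You instead project out $x_s$ into the value function $\psi$, observe that $\psi=Z$ below $ka_s$ and $\psi(t)=g(t)+k\rho_s(k)-g(ka_s)$ above, and reduce the theorem to the one-dimensional marginal bound $\psi(t+a_i)-\psi(t)\le \zeta(a_i)=Z(a_i)$. I checked all three regimes: below the threshold the bound is exactly the subadditivity of $Z$ from \cref{lem:Z-subadditive-on-R} (a result the paper proves but does not reuse here); above the threshold, and in the boundary-crossing regime, your regroupings satisfy \eqref{ccondition} and the ordering conditions of \cref{cor:sum-pariwise-comparison}, and your sign bound $c=\ell_t+\ell+2-k\ge 1$ does follow from $t+a_i\ge ka_s$, $t<(\ell_t+1)a_s$, $a_i<(\ell+1)a_s$ (one also needs $(k-\ell-1)a_s\le t$ and $(k-\ell_t-1)a_s\le a_i$ for the interval orderings, and both follow from the same three inequalities). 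What your approach buys is economy: no binarization, no explicit $\gamma_{\Lambda,\delta}$ bookkeeping, and the reuse of the already-proved subadditivity of $Z$ in the easiest regime; what the paper's approach buys is uniformity with the neighboring proofs of \cref{lem:remove-element,lem:alternate-element,lem:add-element}, which all live in the same $h(\Lambda,\gamma,\delta)$ framework.
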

The proof of \cref{thm:prob-simplify} is detailed further.
Using \cref{thm:prob-simplify}, we can set $x^*_i = 0$ for all $i \in S_0 \backslash S^+_0$ in problem \eqref{prob:relaxation}, thereby simplifying problem \eqref{prob:relaxation} as:
\begin{equation}\label{lift2}
\begin{aligned}
		\eta^U(\delta)  = \max\limits_{x_{S_0^+ \cup s}} \quad 
	& g\left(\delta + \sum_{i \in S_0^+ \cup s}a_ix_i\right) - \sum_{i \in S_0^+ }\zeta(a_i)x_i + (k - x_s) \rho_s(k) - g(ka_s) \\
	\text{s.t.} \quad 
	& 0 \leq x_i \leq \mu_i, \ \forall \ i \in S_0^+, ~0 \leq x_s, \\
	& x_i \in \Z, \ \forall \ i \in S_0^+ \cup s.
\end{aligned}
\end{equation}
Problem \eqref{lift2} is a special case of problem \eqref{prob:restriction1} in which $\mu_s = + \infty$.
Therefore, using the results in \cref{subsubsec:S0-equals-S0+}, we can derive 
the closed formula for $\eta^U$ from that for $\eta^L$ in \eqref{etaL} as follows:
\begin{equation}\label{etaU}
	\begin{footnotesize}
		\eta^U(\delta) = \left\{ \ 
		\begin{aligned}
			&g(\delta+A_{(i+1) q}) - \sum_{j=1}^{i}\mu_j \zeta(a_j) 
			- q \zeta(a_{i+1}) + k \rho_s(k) - g(ka_s), \\
			& \hspace{3cm}
			~\text{if}~ k a_s - A_{(i+1) q} < \delta \leq -A_{(i+1) (q-1)}, \\
			& \hspace{3cm} 
			~~ i = 0,\ldots,r-1 \ \text{and} \ q = 1, \ldots, \mu_{i+1}, \\
			&g(\delta+A_{(i+1) q} + \ell a_s) - \sum_{j=1}^{i}\mu_j \zeta(a_j) 
			- q \zeta(a_{i+1}) + (k - \ell) \rho_s(k) - g(ka_s), \\
			& \hspace{3cm}
			~\text{if}~ (k-\ell - 1)a_s - A_{(i+1) q} < \delta \leq 
			(k-\ell )a_s -A_{(i+1) q}, \\
			& \hspace{3cm}
			~~ i = 0,\ldots,r-1, \  q = 1, \ldots, \mu_{i+1}, \ \text{and} \ \ell = 0,\ldots, k-1,\\
			& \hspace{3cm}
			~~ \text{or} \ i = r-1, \  q = \mu_{r}, \ \text{and} \ \ell = k,k+1,\ldots,\\
		\end{aligned}
		\right.
	\end{footnotesize}
\end{equation}
where we recall that $S_0^+ = [r]$ with $a_1 \geq a_2 \geq \cdots \geq a_r$ (without loss of generality) and $A_{i q} = \sum_{j=1}^{i-1}\mu_j a_j + q a_i$ for $i \in [r]$ and $q \in \{0,1,\ldots,\mu_i\}$.

\begin{proof}[{Proof of \cref{thm:prob-simplify}}]
	It suffices to consider the case where $\mu_i = 1$ for all $i \in S_0$ (otherwise, we can apply the binarization technique in \cref{subsubsec:S0-equals-S0+} to transform problem \eqref{prob:relaxation} into a problem that satisfies this assumption).  
	Similar to problem \eqref{prob:restriction1}, we use $h(\Lambda, \gamma, \delta)$ to denote the objective value of problem \eqref{prob:relaxation} at point $x_{S_0 \cup s}$
	where $x_s = \gamma$, $x_i = 1$ for $i \in \Lambda \subseteq S_0$, 
	and $x_i = 0$ for $i \in S_0 \backslash \Lambda$.
	Then problem \eqref{prob:relaxation} with $\mu_i = 1$ for all $i \in S_0$ is equivalent to
	\begin{equation}\label{opt2}
		\max\limits_{\Lambda, \, \gamma} \left\{ h(\Lambda, \gamma, \delta) \,:\,
		\Lambda \subseteq S_0, \ \gamma \in \Z_+ \right\},
	\end{equation}
	where $h(\Lambda, \gamma, \delta)$ is defined in \eqref{hdef}.
	For a fixed $\Lambda \subseteq S_0$, using a similar argument as that in the proof of 
	\cref{lem:gamma-choose}, we obtain $\max_{\gamma \in \Z_+} h(\Lambda, \gamma, \delta) = h(\Lambda, \gamma_{\Lambda, \delta}, \delta)$, where
	\begin{equation}\label{def-gamma2}
		\gamma_{\Lambda, \delta} = \left\{
		\begin{aligned}
			& 0, && \text{ if } k a_s - a(\Lambda) < \delta, \\[5pt]
			& \ell, && \text{ if } (k - \ell - 1)a_s - a(\Lambda) < \delta \leq (k - \ell) a_s - a(\Lambda), \ \ell = 0, 1, \ldots.
		\end{aligned}
		\right.
		\end{equation}
	As a result, 	
	\begin{fact}\label{fact1}
		For any $\Lambda_1, \Lambda_2 \subseteq S_0$ with $a(\Lambda_1) \leq a(\Lambda_2)$, it follows that $\gamma_{\Lambda_1, \delta} \geq \gamma_{\Lambda_2, \delta}$.
	\end{fact}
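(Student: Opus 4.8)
The plan is to prove the monotonicity directly from the defining formula \eqref{def-gamma2}, exactly as \cref{obs:gamma} was obtained from \eqref{def-gamma}. The crucial observation is that $\gamma_{\Lambda,\delta}$ depends on the set $\Lambda$ only through the scalar $a(\Lambda)$, so it suffices to show that, for fixed $\delta \in \R_-$ and $a_s > 0$, the quantity $\gamma_{\Lambda,\delta}$ is a non-increasing function of $a(\Lambda)$; combined with $a(\Lambda_1) \le a(\Lambda_2)$, this immediately yields $\gamma_{\Lambda_1,\delta} \ge \gamma_{\Lambda_2,\delta}$.

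First I would rewrite the branches in \eqref{def-gamma2} in terms of the shifted quantity $u := \delta + a(\Lambda)$. Under this substitution, $\gamma_{\Lambda,\delta} = 0$ precisely when $u > k a_s$, and $\gamma_{\Lambda,\delta} = \ell$ precisely when $(k-\ell-1)a_s < u \le (k-\ell)a_s$ for $\ell = 0,1,\ldots$. Solving the latter inequality for $\ell$ (dividing by $a_s > 0$) gives the single closed form
\begin{equation*}
	\gamma_{\Lambda,\delta} = \max\left\{0,\; k - \left\lceil \frac{\delta + a(\Lambda)}{a_s} \right\rceil \right\},
\end{equation*}
valid for all $\Lambda \subseteq S_0$. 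Since $a_s > 0$, the map $a(\Lambda) \mapsto \lceil (\delta + a(\Lambda))/a_s \rceil$ is non-decreasing, hence $k - \lceil (\delta + a(\Lambda))/a_s \rceil$ is non-increasing, and taking the maximum with $0$ preserves this monotonicity. This delivers the claim.

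There is no substantive obstacle here; the result is a direct structural consequence of \eqref{def-gamma2}, which is why it is recorded as a \cref{fact1} rather than a proposition. The only point requiring care is the apparent overlap between the first branch of \eqref{def-gamma2} (which sets $\gamma = 0$ when $ka_s - a(\Lambda) < \delta$) and the $\ell = 0$ instance of the second branch (which sets $\gamma = 0$ on $(k-1)a_s - a(\Lambda) < \delta \le ka_s - a(\Lambda)$). I would verify that both branches agree at the shared boundary $u = k a_s$, so that the piecewise definition is consistent and the closed form above is unambiguous. Alternatively, one can bypass the closed form entirely and argue purely from the interval structure: increasing $a(\Lambda)$ shifts every threshold $(k-\ell)a_s - a(\Lambda)$ strictly to the left, so for fixed $\delta$ a larger $a(\Lambda)$ can only place $\delta$ into an interval indexed by a smaller or equal $\ell$, which is exactly the asserted monotonicity.
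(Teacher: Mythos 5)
Your proof is correct and coincides with the paper's own (implicit) justification: the paper states \cref{fact1} without proof as an immediate consequence of \eqref{def-gamma2}, and your closed form $\gamma_{\Lambda,\delta}=\max\left\{0,\;k-\ceil{(\delta+a(\Lambda))/a_s}\right\}$ merely makes explicit the monotone-threshold argument (the analogue of \cref{obs:gamma}) that the paper leaves unstated. One small correction: the first branch of \eqref{def-gamma2} (requiring $\delta+a(\Lambda)>ka_s$ strictly) and the $\ell=0$ branch (covering $(k-1)a_s<\delta+a(\Lambda)\le ka_s$) are disjoint rather than overlapping, so the boundary check you flag is unnecessary—though both branches assign the value $0$ near $\delta+a(\Lambda)=ka_s$ in any case, so your conclusion is unaffected.
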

	\begin{fact}\label{fact2}
		For $\delta \in \R_-$ and $\Lambda \subseteq S_0$, we have:
		(i) $\delta + a(\Lambda) + \gamma_{\Lambda, \delta} a_s > (k - 1) a_s$;
		and (ii) if $\gamma_{\Lambda, \delta} \geq 1$, then $\delta + a(\Lambda) + \gamma_{\Lambda, \delta} a_s \leq k a_s$.
	\end{fact}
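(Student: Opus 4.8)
The plan is to prove both statements by directly unfolding the two-clause closed-form definition of $\gamma_{\Lambda,\delta}$ in \eqref{def-gamma2}; no optimization argument is needed here, since $\gamma_{\Lambda,\delta}$ is already given explicitly (this is the same style of argument that yields \cref{obs:gamma-range-sum} for the capped case). The first thing I would record is that, for $\delta \in \R_-$, exactly one of two situations occurs: either $k a_s - a(\Lambda) < \delta$, in which case $\gamma_{\Lambda,\delta} = 0$; or there is a unique integer $\ell \ge 0$ with $(k-\ell-1)a_s - a(\Lambda) < \delta \le (k-\ell)a_s - a(\Lambda)$, in which case $\gamma_{\Lambda,\delta} = \ell$. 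These two clauses agree on their overlap (the first clause and the $\ell = 0$ instance of the second both give $\gamma_{\Lambda,\delta} = 0$) and together exhaust all $\delta$, so the case split is legitimate.

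For part (i) I would treat the two clauses separately. In the first clause, $\gamma_{\Lambda,\delta} = 0$ and $\delta + a(\Lambda) > k a_s$; since $a_s > 0$, this already exceeds $(k-1)a_s$, giving $\delta + a(\Lambda) + \gamma_{\Lambda,\delta} a_s > (k-1)a_s$. In the second clause, adding $a(\Lambda) + \ell a_s$ to both sides of the left strict inequality $(k-\ell-1)a_s - a(\Lambda) < \delta$ yields $\delta + a(\Lambda) + \ell a_s > (k-1)a_s$, which is exactly the claim with $\gamma_{\Lambda,\delta} = \ell$. Hence part (i) holds unconditionally.

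For part (ii) I would observe that the hypothesis $\gamma_{\Lambda,\delta} \ge 1$ rules out the first clause, so we are necessarily in the second clause with $\ell = \gamma_{\Lambda,\delta} \ge 1$. Adding $a(\Lambda) + \ell a_s$ to both sides of the right inequality $\delta \le (k-\ell)a_s - a(\Lambda)$ then gives $\delta + a(\Lambda) + \ell a_s \le k a_s$, which is the desired bound.

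The argument is entirely routine, being a pair of one-line rearrangements; the only point requiring care is the bookkeeping of the two overlapping clauses of \eqref{def-gamma2} and the explicit use of $a_s > 0$ in the first clause of part (i). It is worth contrasting this with \cref{obs:gamma-range-sum}: because the relaxed definition \eqref{def-gamma2} places no upper cap $\mu_s$ on $\gamma$, the value $\gamma_{\Lambda,\delta}$ is never pinned at an upper boundary, which is precisely why part (i) here holds with no side condition, whereas the corresponding bound in \cref{obs:gamma-range-sum}(ii) must assume $\gamma_{\Lambda,\delta} \le \mu_s - 1$.
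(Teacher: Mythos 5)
Your proof is correct and matches the paper's (implicit) argument: the paper states this fact as an immediate consequence of the closed form \eqref{def-gamma2}, and your case-by-case unfolding—left endpoint of the interval for (i), right endpoint plus the exclusion of the $\gamma_{\Lambda,\delta}=0$ clauses for (ii)—is exactly the verification being left to the reader, including the correct use of $a_s>0$ and the accurate contrast with \cref{obs:gamma-range-sum}, where the cap $\gamma_{\Lambda,\delta}=\mu_s$ is what forces the side condition. (The only cosmetic slip is calling the first clause and the $\ell=0$ range "overlapping"; they are disjoint adjacent intervals, but both assign the value $0$ consistently, so nothing is affected.)
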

	\noindent Using \eqref{def-gamma2}, we can equivalently rewrite problem \eqref{opt2} as
	\begin{equation}\label{setopt3}
		\max\limits_{\Lambda \subseteq S_0} h(\Lambda, \gamma_{\Lambda, \delta}, \delta).
	\end{equation}
	To prove the statement in \cref{thm:prob-simplify}, it suffices to show that for any given $\Lambda\subseteq S_0$ with $i \in \Lambda\backslash S_0^+$ and $\bar{\Lambda} := \Lambda \backslash i$, it follows 
	\begin{equation}
		\label{eq:cond-4.10}
		H := h(\bar{\Lambda}, \gamma_{\bar{\Lambda}, \delta}, \delta) - h(\Lambda, \gamma_{\Lambda, \delta}, \delta) \geq 0.
	\end{equation}	
	
	By $S_0^+ = \{i \in S_0 \,:\, a_i \geq ka_s\}$ and $i \in \Lambda \backslash S_0^+$, we have $0 \leq a_i < ka_s$, and thus there exists some
	$\ell \in \{0,\dots, k-1\}$ for which $\ell a_s \leq a_i < (\ell + 1)a_s$ holds.
	By the definition of $\zeta$ in \eqref{calc:zeta} and $\rho_s(k) = g(k a_s) - g((k - 1) a_s)$, we have 
	\begin{align*}
		\zeta(a_i) &= g(a_i + (k - \ell - 1)a_s) + (\ell + 1) \rho_s(k) - g(ka_s)  \\
		& = g(a_i + (k - \ell - 1)a_s) + (\ell + 1) (g(ka_s) - g((k-1)a_s)) - g(k a_s).
	\end{align*}
	Together with the definitions of $h$ and $H$ in \eqref{hdef} and \eqref{eq:cond-4.10}, respectively, we obtain
	\begin{equation}\label{obj-diff5}
		\begin{aligned}
			H 
			& = g(\delta + a(\bar{\Lambda}) + \gamma_{\bar{\Lambda}, \delta} a_s) 
			- g(\delta + a(\Lambda) + \gamma_{\Lambda, \delta} a_s) 
			+ \zeta(a_i) \\
			& \quad - (\gamma_{\bar{\Lambda}, \delta} - \gamma_{\Lambda, \delta}) 
			(g(ka_s) - g((k-1)a_s)) \\
			& = g(\delta + a(\bar{\Lambda}) + \gamma_{\bar{\Lambda}, \delta} a_s)
			- g(\delta + a(\Lambda) + \gamma_{\Lambda, \delta} a_s) \\
			& \quad 
			+ g(a_i + (k - \ell - 1)a_s) - g(ka_s) \\
			& \quad 
			+ (\ell + 1 - (\gamma_{\bar{\Lambda}, \delta} - \gamma_{\Lambda, \delta})) 
			(g(ka_s) - g((k-1)a_s)).
		\end{aligned}
	\end{equation}
	To prove $H \geq 0$, we first show that
	\begin{equation}\label{gamma-diff-condition}
		\gamma_{\bar{\Lambda}, \delta} - \gamma_{\Lambda, \delta} \leq \ell + 1.
	\end{equation}
	From $a(\bar{\Lambda}) = a(\Lambda) - a_i \leq a(\Lambda)$ and \cref{fact1}, we have $\gamma_{\bar{\Lambda}, \delta} \geq \gamma_{\Lambda, \delta}$.
	If $\gamma_{\bar{\Lambda}, \delta} = 0$, then by $\gamma_{\Lambda, \delta} \geq0$, it follows $\gamma_{\Lambda, \delta} = 0$, and thus
	\eqref{gamma-diff-condition} must hold.
	Otherwise,  $\gamma_{\bar{\Lambda}, \delta} \geq 1$,
	which together with \cref{fact2} (ii), implies that
	$\delta + a(\bar{\Lambda}) + \gamma_{\bar{\Lambda}, \delta} a_s \leq ka_s$.
	From \cref{fact2} (i), we have $\delta + a(\Lambda) + \gamma_{\Lambda, \delta} a_s > (k - 1)a_s$.
	As a result,
	\begin{align*}
		(k - 1)a_s < \delta + a(\Lambda) + \gamma_{\Lambda, \delta} a_s 
		& = (\delta + a(\bar{\Lambda}) + \gamma_{\bar{\Lambda}, \delta} a_s) + a_i + (\gamma_{\Lambda, \delta} - \gamma_{\bar{\Lambda}, \delta}) a_s \\
		& \leq ka_s + a_i + (\gamma_{\Lambda, \delta} - \gamma_{\bar{\Lambda}, \delta}) a_s.
	\end{align*}
	This, together with $a_i < (\ell + 1)a_s$,
	implies $(\gamma_{\bar{\Lambda}, \delta} - \gamma_{\Lambda, \delta}) a_s < a_s + a_i < (\ell + 2)a_s$.
	Combining it with $\gamma_{\bar{\Lambda}, \delta} - \gamma_{\Lambda, \delta} \in \mathbb{Z}$ yields \eqref{gamma-diff-condition}.
	We complete the proof of $H \geq 0$ by considering the following three cases (i) $\gamma_{\bar{\Lambda}, \delta} - \gamma_{\Lambda, \delta} = \ell + 1$, 
	(ii) $\gamma_{\bar{\Lambda}, \delta} - \gamma_{\Lambda, \delta} = \ell$,
	and (iii) $\gamma_{\bar{\Lambda}, \delta} - \gamma_{\Lambda, \delta} \leq \ell - 1$, separately.
	\begin{itemize}
		\item [(i)] $\gamma_{\bar{\Lambda}, \delta} - \gamma_{\Lambda, \delta} = \ell + 1$.
		Then $\gamma_{\bar{\Lambda}, \delta} \geq 1$
		and $\Delta := \delta + a(\bar{\Lambda}) + \gamma_{\bar{\Lambda}, \delta} a_s - (\delta + a(\Lambda) + \gamma_{\Lambda, \delta} a_s)
		= -a_i + (\ell + 1) a_s > 0$.
		Thus,
		\begin{equation*}
			\begin{aligned}
				H & = [g(\delta + a(\bar{\Lambda}) + \gamma_{\bar{\Lambda}, \delta} a_s)
				- g(\delta + a(\bar{\Lambda}) + \gamma_{\bar{\Lambda}, \delta} a_s - \Delta)] 
				- [g(k a_s) - g(k a_s - \Delta)]
				\stackrel{(a)}{\geq} 0,
			\end{aligned}
		\end{equation*}
		where (a) follows from $\delta + a(\bar{\Lambda}) + \gamma_{\bar{\Lambda}, \delta} a_s \leq ka_s$ (obtained by $\gamma_{\bar{\Lambda}, \delta} \geq 1$ and \cref{fact2} (ii)) and \cref{lem:slope}.
		\item [(ii)] $\gamma_{\bar{\Lambda}, \delta} - \gamma_{\Lambda, \delta} = \ell$.
		Then $\Delta^\prime := \delta + a(\Lambda) + \gamma_{\Lambda, \delta} a_s - (\delta + a(\bar{\Lambda}) + \gamma_{\bar{\Lambda}, \delta} a_s)
		= a_i - \ell a_s \geq 0$.
		Thus,
		\begin{equation*}
			\begin{aligned}
				H & = [g((k-1)a_s + \Delta^\prime) - g((k-1)a_s)] \\
				& \quad - [g(\delta + a(\bar{\Lambda}) + \gamma_{\bar{\Lambda}, \delta} a_s + \Delta^\prime) - g(\delta + a(\bar{\Lambda}) + \gamma_{\bar{\Lambda}, \delta} a_s)]
				\stackrel{(a)}{\geq} 0,
			\end{aligned}
		\end{equation*}
		where (a) follows from  $\delta + a(\bar{\Lambda}) + \gamma_{\bar{\Lambda}, \delta} a_s > (k-1)a_s$ (by \cref{fact2} (i)) and \cref{lem:slope}.
		\item [(iii)] $\gamma_{\bar{\Lambda}, \delta} - \gamma_{\Lambda, \delta} \leq \ell - 1$.
		We regroup \eqref{obj-diff5} as
		\begin{equation*}
			\small
			\begin{aligned}
				H & = [(\ell + \gamma_{\Lambda, \delta} - \gamma_{\bar{\Lambda}, \delta}) (g(ka_s) - g((k-1)a_s))]
				+ [g(a_i + (k - \ell - 1)a_s) - g((k-1)a_s)] \\
				& \quad
				- [g(\delta + a(\Lambda) + \gamma_{\Lambda, \delta} a_s) 
				- g(\delta + a(\bar{\Lambda}) + \gamma_{\bar{\Lambda}, \delta} a_s)].
			\end{aligned}
		\end{equation*}
		Below we use \cref{cor:sum-pariwise-comparison} to show $H \geq 0$.
		Observe that 
		\begin{small}
			\begin{equation*}
				(\ell + \gamma_{\Lambda, \delta} - \gamma_{\bar{\Lambda}, \delta}) a_s
				+ [ a_i + (k - \ell - 1)a_s - (k-1) a_s ]
				- [\delta + a(\Lambda) + \gamma_{\Lambda, \delta} a_s - (\delta + a(\bar{\Lambda}) + \gamma_{\bar{\Lambda}, \delta} a_s)]
				= 0,
			\end{equation*}
		\end{small}%
		and thus condition \eqref{ccondition} in \cref{cor:sum-pariwise-comparison} holds.
		By \cref{fact2} (i),
		we have
		\begin{equation} \label{case3-cond1} 
			\delta + a(\bar{\Lambda}) + \gamma_{\bar{\Lambda}, \delta} a_s > (k-1)a_s,
		\end{equation}
		which, together with
		\begin{equation}\label{case3-cond2}
			\delta + a(\Lambda) + \gamma_{\Lambda, \delta} a_s 
			- (\delta + a(\bar{\Lambda}) + \gamma_{\bar{\Lambda}, \delta} a_s )
			= a_i - (\gamma_{\bar{\Lambda}, \delta} - \gamma_{\Lambda, \delta}) a_s \geq \ell a_s - (\ell - 1)a_s = a_s,
		\end{equation}
		implies that
		\begin{equation}\label{case3-cond3}
			\delta + a(\Lambda) + \gamma_{\Lambda, \delta} a_s > ka_s.
		\end{equation}
		Combining $(k-1)a_s \leq a_i + (k - \ell - 1) a_s < ka_s$ (from $\ell a_s \leq a_i < (\ell + 1)a_s$), \eqref{case3-cond1}, \eqref{case3-cond2}, and \eqref{case3-cond3},
		conditions (i) and (ii) in \cref{cor:sum-pariwise-comparison} must hold.
		Therefore, $H \geq 0$. \qedhere
	\end{itemize}
\end{proof}

\subsubsection{Two-phase lifted inequalities}\label{subsubsec:2phase-lifted-ineq-1}
Next, we develop two families of strong valid inequalities for $\conv(\Fset)$, which are based on the subadditivity of $\eta^L$ and $\eta^U$ on $\mathbb{R}_-$.
\begin{proposition}\label{prop:sub-of-etaLU}
	Both $\eta^L$ and $\eta^U$ are subadditive on $\R_-$.
\end{proposition}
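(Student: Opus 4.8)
The plan is to prove the subadditivity of $\eta^L$ and $\eta^U$ on $\R_-$ simultaneously, organizing the argument around a superposition (solution-splitting) argument on the set-optimization form \eqref{setopt2}. First I would make two reductions. Since $\eta^U$ is exactly the value function of \eqref{lift2}, i.e.\ problem \eqref{prob:restriction1} with the bound $x_s\le\mu_s$ dropped, it is the special case of $\eta^L$ with $\mu_s=+\infty$ (removing the bound only frees the integer $x_s$, so every split constructed below remains feasible); hence it suffices to treat one value function $\eta^L$ with $\mu_s\in\Z_{++}\cup\{+\infty\}$. By the binarization already used to pass from \eqref{prob:restriction1} to \eqref{setopt2}, I may further assume $\mu_i=1$ for all $i\in S_0^+$, so that $\eta^L(\delta)=\max_{\Lambda\subseteq S_0^+,\,0\le\gamma\le\mu_s}h(\Lambda,\gamma,\delta)$ with $h$ as in \eqref{hdef} and $\gamma_{\Lambda,\delta}$ as in \eqref{def-gamma}.

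Next I would set up the superposition. Fix $\delta_1,\delta_2\in\R_-$, so that $\delta_1+\delta_2\in\R_-$, and let $(\Lambda^\ast,\gamma^\ast)$ attain $\eta^L(\delta_1+\delta_2)$; by \cref{thm:special-case-optsol} I may take $\Lambda^\ast=[t^\ast]$ and $\gamma^\ast=\gamma_{\Lambda^\ast,\delta_1+\delta_2}$. I then choose a partition $\Lambda^\ast=\Lambda_1\sqcup\Lambda_2$ and integers $\gamma_1+\gamma_2=\gamma^\ast$ with $0\le\gamma_j\le\mu_s$ (always possible, since $0\le\gamma^\ast\le\mu_s$). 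Because $\eta^L(\delta_j)\ge h(\Lambda_j,\gamma_j,\delta_j)$, it suffices to show $h(\Lambda_1,\gamma_1,\delta_1)+h(\Lambda_2,\gamma_2,\delta_2)\ge h(\Lambda^\ast,\gamma^\ast,\delta_1+\delta_2)$. Writing $u=\delta_1+a(\Lambda_1)+\gamma_1 a_s$ and $v=\delta_2+a(\Lambda_2)+\gamma_2 a_s$, the $\zeta$-coefficient sums and the linear $\gamma$-terms cancel (as $a(\Lambda_1)+a(\Lambda_2)=a(\Lambda^\ast)$ and $\gamma_1+\gamma_2=\gamma^\ast$), while $u+v=\delta_1+\delta_2+a(\Lambda^\ast)+\gamma^\ast a_s$ and the residual constant is $k\rho_s(k)-g(ka_s)$, independent of the split. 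Substituting $\rho_s(k)=g(ka_s)-g((k-1)a_s)$, the whole statement collapses to the single pointwise claim
\begin{equation*}
g(u)+g(v)-g(u+v)+(k-1)g(ka_s)-k\,g((k-1)a_s)\ \ge\ 0 .
\end{equation*}

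Finally I would discharge this pointwise claim with \cref{cor:sum-pariwise-comparison}, whose moment condition \eqref{ccondition} holds automatically, since $u+v-(u+v)+(k-1)(ka_s)-k((k-1)a_s)=0$. The genuine work, and the main obstacle, is that $u$ ranges only over $\delta_1$ plus subset-sums of $\{a_i\}_{i\in\Lambda^\ast}$ plus integer multiples of $a_s$, so I cannot place $u$ and $v$ at arbitrary values; a naive single reference point in \cref{cor:sum-pariwise-comparison} fails because \cref{obs:gamma-range-sum} confines $u+v$ either to the window $((k-1)a_s,ka_s]$ (when $\gamma^\ast\ge1$) or above $ka_s$ (when $\gamma^\ast=0$). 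I would therefore split into cases according to $\gamma^\ast$ (namely $\gamma^\ast=0$, the interior $1\le\gamma^\ast\le\mu_s-1$, and $\gamma^\ast=\mu_s$) and, in each case, route the available amount $a(\Lambda^\ast)+\gamma^\ast a_s$ greedily to one side, using \cref{obs:gamma}, \cref{lem:remove-element}, \cref{lem:alternate-element}, and \cref{lem:add-element} to control where $u$ lands relative to $(k-1)a_s$ and $ka_s$; the regrouping of the five terms into an $I^+$/$I^-$ decomposition (centered at $(k-1)a_s$ when $u+v\ge ka_s$ and at $ka_s$ otherwise, degenerating to \cref{lem:slope} when a group is a singleton) is then a valid instance of \cref{cor:sum-pariwise-comparison}. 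This is exactly the same case bookkeeping that drives \cref{thm:prob-simplify} and \cref{subsubsec:S0-equals-S0+}; the essential idea is the split plus the window bounds of \cref{obs:gamma-range-sum}, and the delicacy lies entirely in verifying the ordering hypotheses of \cref{cor:sum-pariwise-comparison} in each subcase rather than in any new inequality.
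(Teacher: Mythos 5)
Your two preliminary reductions (treating $\eta^U$ as $\eta^L$ with $\mu_s=+\infty$, and binarizing so that $\mu_i=1$ on $S_0^+$) are fine and coincide with what the paper does. The fatal problem is the superposition step. You lower-bound $\eta^L(\delta_1)+\eta^L(\delta_2)$ by $h(\Lambda_1,\gamma_1,\delta_1)+h(\Lambda_2,\gamma_2,\delta_2)$ where $(\Lambda_1,\gamma_1)$ and $(\Lambda_2,\gamma_2)$ \emph{partition} the optimal $(\Lambda^\ast,\gamma^\ast)$ for $\delta_1+\delta_2$, and then try to verify the resulting pointwise inequality. But there is no conservation law forcing the certificates for $\delta_1$ and $\delta_2$ to partition $(\Lambda^\ast,\gamma^\ast)$: both summands may independently reuse the \emph{entire} resource, and there are instances where every admissible partition fails. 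Concretely, take $k=\mu_s=1$, $a_s=1$, $S_0^+=\{1\}$ with $a_1=10$, $g(z)=-z^2$, and $\delta_1=\delta_2=-4$. Then $\Lambda^\ast=\{1\}$, $\gamma^\ast=0$, and $h(\{1\},0,-8)=g(2)-g(10)=96$. The only splits are $(\{1\},\varnothing)$ and $(\varnothing,\{1\})$ with $\gamma_1=\gamma_2=0$, each giving $h(\{1\},0,-4)+h(\varnothing,0,-4)=\bigl(g(6)-g(10)\bigr)+\bigl(g(-4)-g(0)\bigr)=64-16=48<96$; equivalently, your pointwise claim reads $g(6)+g(-4)-g(2)-g(0)=-48<0$. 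Subadditivity nevertheless holds here because $\eta^L(-4)=h(\{1\},0,-4)=64$ for \emph{both} summands ($128\ge 96$), i.e., the true certificates overlap on the item $1$ and cannot arise from any partition. So the sufficient condition your whole argument rests on is genuinely false, not merely hard to verify, and the subsequent case analysis with \cref{cor:sum-pariwise-comparison} cannot rescue it.

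This also explains why the paper takes a completely different route: it rewrites $\eta^L$ in the recursive form of \cref{bareta-rewrite1}, identifies it as an instance of the piecewise-concave family $\bar{\omega}$ in \eqref{baromega}, and proves subadditivity of that whole family in \cref{thm:chi-sub} by an induction on the parameter $\tau$ together with the translation/monotonicity lemmas of Appendix~C. If you want to salvage a value-function argument, you would have to allow the two certificates to overlap, but then the cancellation of the $\zeta$-sums and of the $(k-\gamma)\rho_s(k)$ terms — which is what collapses everything to a single concavity inequality — is lost, and you are back to comparing the closed formulas directly.
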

\noindent The way to prove \cref{prop:sub-of-etaLU} is to show that $\eta^L$ and $\eta^U$ are {special cases} of the subadditive functions $\bar{\omega}$ defined in \eqref{baromega} of \cref{subsec:sub-func}; see \cref{proof_4.12} for a complete proof.

Using the subadditivity of $\eta^L$ in \cref{prop:sub-of-etaLU} and the exact property of function $\eta^L$ in \cref{lem:eta} (under the condition $S_0^+ = S_0$ or $k = 1$), we can provide a closed formula for $\eta$ (i.e., \eqref{etaL} or \eqref{etaL-k=1}) and develop the first family of facet-defining inequalities for $\conv(\Fset)$.
\begin{theorem}\label{thm:facet1}
	If $S_0^+ = S_0$ or $k = 1$, then inequality
	\begin{equation}
		\label{facet-ineq1}
		w \leq \sum_{i \in S_0} \zeta(a_i) x_i + \sum_{i \in S_1}\eta(-a_i) (\mu_i - x_i) + \rho_s(k) (x_s - k) + g(k a_s)
	\end{equation}	
	is facet-defining for $\conv(\Fset)$.
\end{theorem}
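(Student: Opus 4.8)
The plan is to recognize \eqref{facet-ineq1} as precisely the inequality obtained by lifting the variables $\{x_i\}_{i\in S_1}$ into the first‑phase facet \eqref{eq:low-dim-facet-1-new} through the \emph{exact} second‑phase lifting function $\eta$, and then to invoke the sequence‑independent lifting machinery (in the spirit of \cref{thm:subadd} and \cite{Atamturk2004}) to conclude that the lifted inequality is facet‑defining. The enabling observation is that, under either hypothesis $S_0^+=S_0$ or $k=1$, \cref{lem:eta} identifies $\eta$ with the closed‑form function $\eta^L$ (respectively $\bar\eta^L$), while \cref{prop:sub-of-etaLU} guarantees that this function is subadditive on $\R_-$. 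Hence $\eta$ is simultaneously the exact lifting function for \eqref{eq:low-dim-facet-1-new} and a subadditive function on $\R_-$, which is exactly the configuration in which sequence‑independent lifting yields a facet.

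First I would record validity. Since each contribution $-a_i(\mu_i-x_i)$ with $i\in S_1$ and $0\le x_i\le\mu_i$ lies in $\R_-$, subadditivity of $\eta$ on $\R_-$ gives
\[
\eta\!\left(\sum_{i\in S_1} -a_i(\mu_i-x_i)\right)\ \le\ \sum_{i\in S_1}\eta(-a_i)(\mu_i-x_i).
\]
Because $\eta$ is by construction the exact lifting function associated with the first‑phase inequality, its left‑hand value bounds the amount by which \eqref{eq:low-dim-facet-1-new} can be violated once the $S_1$ variables are released from their upper bounds; combining this bound with the definition of $\eta$ and with the first‑phase inequality establishes that \eqref{facet-ineq1} is valid for $\conv(\Fset)$.

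For the facet claim I would count affinely independent points, since $\conv(\Fset)$ is full‑dimensional of dimension $n+1$. By \cref{prop:low-dim-facet-new}, the first‑phase inequality \eqref{eq:low-dim-facet-1-new} is facet‑defining for $\conv(\Fset_s(\varnothing,S_1))$, a polytope of dimension $|S_0|+2$; embedding its tight points into $\conv(\Fset)$ by setting $x_i=\mu_i$ for all $i\in S_1$ supplies $|S_0|+2$ affinely independent points on the face, all with the $S_1$‑coordinates at their upper bounds. It then remains to exhibit, for each $j\in S_1$, one further feasible point on the face at which $x_j<\mu_j$. Here exactness is decisive: as $\eta(-a_j)$ equals the true single‑variable lifting coefficient, an optimizer of the lifting problem defining $\eta(-a_j)$ (whose existence and closed form are supplied by \cref{thm:special-case-optsol} and formula \eqref{etaL}) produces a point of $\Fset$ meeting \eqref{facet-ineq1} at equality, with $x_j=\mu_j-1$ and the remaining $S_1$ variables at their upper bounds. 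Each such point activates a distinct coordinate $x_j$ strictly below its upper bound, while the base block keeps all $S_1$‑coordinates fixed; consequently the $|S_1|$ new points are affinely independent of the base block and of one another, yielding $|S_0|+2+|S_1|=n+1$ affinely independent points in total.

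The step I expect to be the main obstacle is justifying the sequence‑independence of the second phase, namely that assigning every $i\in S_1$ the coefficient $\eta(-a_i)$ \emph{simultaneously}—rather than lifting the variables one at a time in a fixed order—preserves both validity and tightness. This is exactly what subadditivity of $\eta$ on $\R_-$ purchases: it ensures the aggregated coefficients never over‑count, so the lifting order is immaterial and the single‑variable exactness propagates to the fully lifted inequality. The remaining work is bookkeeping—checking that the exhibited $n+1$ points are genuinely feasible (in particular that the optimizers of the $\eta(-a_j)$ subproblems respect $0\le x_i\le\mu_i$) and affinely independent—which follows from the closed‑form optimal solutions of \cref{thm:special-case-optsol}.
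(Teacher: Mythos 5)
Your proposal is correct and follows essentially the same route the paper intends: under $S_0^+=S_0$ or $k=1$, \cref{lem:eta} identifies the exact lifting function $\eta$ with the closed-form $\eta^L$ (resp.\ $\bar\eta^L$), whose subadditivity on $\R_-$ (\cref{prop:sub-of-etaLU}) makes the second-phase lifting sequence-independent and exact at every lifted point, so the facet property follows from the standard lifting theorem. Your explicit count of $|S_0|+2$ tight points from \cref{prop:low-dim-facet-new} plus one tight point per $j\in S_1$ from an optimizer of the $\eta(-a_j)$ subproblem is exactly the bookkeeping that the cited machinery of \cite{Atamturk2004} performs internally.
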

For the general case where neither $S_0^+ = S_0$ nor $k = 1$ holds, 
we instead use the upper approximation $\eta^U$ to derive strong valid inequalities for $conv(\Fset)$, and give sufficient conditions under which the derived inequalities are facet-defining for $\conv(\Fset)$.

\begin{theorem}\label{thm:facet1-cond}
	Inequality
	\begin{equation}\label{eq:lifted-ineq1}
		w \leq 
		\sum_{i \in S_0} \zeta(a_i)x_i
		+ \sum_{i \in S_1} \eta^U(-a_i)(\mu_i - x_i)
		+ \rho_s(k) (x_s - k) + g(ka_s) 
	\end{equation}
	is valid for $\Fset$.
	It is facet-defining for $\conv(\Fset)$ if $(k - \mu_s - 1)a_s - A_{r \mu_{r}} \leq -a_i \leq 0$ holds for all $i \in S_1$.
\end{theorem}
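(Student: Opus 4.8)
The plan is to read \eqref{eq:lifted-ineq1} as the outcome of a second lifting phase applied to the intermediate facet \eqref{eq:low-dim-facet-1-new}, and to handle validity and facetness separately. For validity, I would take an arbitrary $(w,x)\in\Fset$ and, using $g(z)=f\bigl(z+\sum_{i\in S_1}\mu_i a_i\bigr)$, rewrite the defining constraint as $w\le f(a^\top x)=g\bigl(\sum_{i\in S_0\cup s}a_ix_i+\delta\bigr)$ with $\delta:=-\sum_{i\in S_1}a_i(\mu_i-x_i)\le 0$. Then $(w,x_{S_0\cup s})$ is feasible for the problem defining $\eta(\delta)$ in \eqref{prob:twophase1}, so that
\[
w-\sum_{i\in S_0}\zeta(a_i)x_i+(k-x_s)\rho_s(k)-g(ka_s)\ \le\ \eta(\delta).
\]
Since \eqref{prob:relaxation} is a relaxation of \eqref{prob:twophase1}, we have $\eta(\delta)\le\eta^U(\delta)$, and \cref{prop:sub-of-etaLU} gives subadditivity of $\eta^U$ on $\R_-$. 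Writing $\delta=\sum_{i\in S_1}(\mu_i-x_i)(-a_i)$ as a sum of $\sum_{i\in S_1}(\mu_i-x_i)$ nonnegative copies of the numbers $-a_i\le 0$ and applying subadditivity repeatedly yields $\eta^U(\delta)\le\sum_{i\in S_1}\eta^U(-a_i)(\mu_i-x_i)$. Chaining the two bounds and rearranging gives exactly \eqref{eq:lifted-ineq1}; the empty-sum case $\delta=0$ is covered directly by $\eta(0)\le 0$, which is the validity of \eqref{eq:low-dim-facet-1-new}.

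For facetness, the crucial step is to show that the subadditive over-approximation is \emph{exact} at each lifted coordinate, i.e. $\eta^U(-a_i)=\eta(-a_i)$ for all $i\in S_1$. I would establish this by comparing the closed formulas \eqref{etaL} and \eqref{etaU}. They are given by identical expressions except in the final branch, where $\eta^U$ (corresponding to $\mu_s=+\infty$) permits $\ell\ge k$ without bound while $\eta^L$ caps $x_s$ at $\mu_s$. A direct inspection shows the two formulas coincide on the closed region $\{\delta:\delta\ge(k-\mu_s-1)a_s-A_{r\mu_r}\}$; in particular, at the boundary point the two tying optima $x_s=\mu_s$ and $x_s=\mu_s+1$ produce equal objective values, since the would-be gap cancels to $\rho_s(k)-\rho_s(k)=0$. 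The hypothesis $(k-\mu_s-1)a_s-A_{r\mu_r}\le -a_i$ places each $-a_i$ inside this region, so $\eta^L(-a_i)=\eta^U(-a_i)$; combined with the sandwich $\eta^L\le\eta\le\eta^U$ this forces $\eta(-a_i)=\eta^U(-a_i)$.

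With exactness established, I would conclude by invoking the sequence-independent lifting mechanism (the same principle as \cref{thm:subadd}) in a second phase. The base inequality \eqref{eq:low-dim-facet-1-new} is facet-defining for $\conv(\Fset_s(\varnothing,S_1))$ by \cref{prop:low-dim-facet-new}; the lifting function for the remaining variables $\{x_i\}_{i\in S_1}$ is $\eta$; and $\eta^U$ is a subadditive over-approximation on $\R_-$ (\cref{prop:sub-of-etaLU}) that agrees with $\eta$ at every lifted point $-a_i$. Hence lifting these variables with coefficients $\eta^U(-a_i)$ preserves the facet property, and \eqref{eq:lifted-ineq1} is facet-defining for $\conv(\Fset)$.

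The main obstacle is precisely the exactness verification $\eta^U(-a_i)=\eta(-a_i)$: one must identify exactly the region on which dropping the constraint $x_s\le\mu_s$ in \eqref{prob:relaxation} does not strictly improve the optimum of \eqref{prob:twophase1}, and confirm that the stated (nonstrict) condition does not fail at its boundary. Everything else reduces to the already-established subadditivity of $\eta^U$, the phase-one facet result \cref{prop:low-dim-facet-new}, and standard lifting theory.
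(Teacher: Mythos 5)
Your proposal is correct and follows essentially the same route as the paper: validity via $\eta\le\eta^U$ plus the subadditivity of $\eta^U$ on $\R_-$ (\cref{prop:sub-of-etaLU}), and facetness by observing that the closed formulas \eqref{etaL} and \eqref{etaU} coincide on $\{\delta : (k-\mu_s-1)a_s - A_{r\mu_r}\le\delta\le 0\}$, so the sandwich $\eta^L\le\eta\le\eta^U$ forces $\eta(-a_i)=\eta^U(-a_i)$ for each $i\in S_1$ and the sequence-independent lifting theorem applies. Your write-up merely spells out the subadditive telescoping and the exactness check in more detail than the paper's terse version; no substantive difference.
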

\begin{proof}
	The validity of inequality \eqref{eq:lifted-ineq1} follows from $\eta(\delta) \leq \eta^U(\delta)$ for all $\delta \in \R_-$
	and the subadditivity of $\eta^U$ in \cref{prop:sub-of-etaLU}.
	Using the closed formulas for $\eta^L$ and $\eta^U$ in \eqref{etaL} and \eqref{etaU}, respectively,
	we observe that  for $\delta \in \mathbb{R}_-$ with
	$(k - \mu_s - 1) a_s - A_{r \mu_r} \leq \delta \leq 0$, $\eta^L(\delta) = \eta^U(\delta)$ holds, which 
	together with $\eta^L(\delta) \leq \eta(\delta) \leq \eta^U(\delta)$,
	it must follow that $\eta^L(\delta) = \eta(\delta) = \eta^U(\delta)$. 
	Therefore, \eqref{eq:lifted-ineq1} is facet-defining for $\conv(\Fset)$ when
	$(k - \mu_s - 1) a_s - A_{r \mu_r} \leq - a_i \leq 0$ holds for all $i \in S_1$.
\end{proof}

Observe that when $S_0^+ = S_0$ or $k = 1$, inequality \eqref{facet-ineq1} dominates inequality \eqref{eq:lifted-ineq1} as $\eta(-a_i) \leq \eta^U(-a_i)$ holds for all $i \in S_1$.  
The following example further illustrates that this dominance can be strict. 

\begin{example}
	Consider 
	$\Fset = \{(w,x) \in \R \times \{0,1\}^4 \,:\,
	w \leq f(x_1+2x_2+2x_3+6x_4)\}$,
	where $f$ is a concave function. 
	Let $s = 1$, $S_0 = \{2, 3\}$, $S_1 = \{4\}$, and $k=1$. 
	It follows that $g(z) = f(z+6)$, $\rho_1(1) = g(1) - g(0)$,
	and $S_0^+ = S_0 = \{2, 3\}$.
	Therefore, $\eta(\delta) = \eta^L(\delta)$ for all $\delta \in \R_-$,
	with closed formula derived from \eqref{etaL}:
	\begin{equation*}
		\eta(\delta) = \eta^L(\delta) = \left\{
		\begin{aligned}
			& g(\delta + 2) - \zeta(2) - g(0)
			&& \text{if}~ - 2 < \delta \leq 0, \\
			& g(\delta + 4) - 2 \zeta(2) - g(0)
			&& \text{if}~ - 4 < \delta \leq -2, \\
			& g(\delta + 5) - 2\zeta(2) - g(1)
			&& \text{if}~ \delta \leq -4,
		\end{aligned}
		\right.
	\end{equation*}
	where $\zeta$ is the first-phase lifting function defined in \eqref{calc:zeta}.
	The subadditive approximation $\eta^U$ derived from    \eqref{etaU} reads
	\begin{equation*}
		\eta^U(\delta) = \left\{
		\begin{small}
			\begin{aligned}
				& g(\delta + 2) - \zeta(2) - g(0)
				&& \text{if}~ - 2 < \delta \leq 0, \\
				& g(\delta + 4) - 2 \zeta(2) - g(0)
				&& \text{if}~ - 4 < \delta \leq -2, \\
				& g(\delta + 4 + \ell) - 2\zeta(2) + (1 - \ell) \rho_1(1) - g(1)
				&& \text{if}~ -\ell - 4 < \delta \leq -\ell -3, \ \ell = 1, 2, \ldots. \\
			\end{aligned}
		\end{small}
		\right.
	\end{equation*}	
	The corresponding inequalities \eqref{facet-ineq1} and \eqref{eq:lifted-ineq1} read:
	\begin{align}
		& w \leq \zeta(2) (x_2 + x_3) + \eta(-6) (1 - x_4) + \rho_1(1) (x_1 - 1) + g(1), \label{tmeq1} \\
		& w \leq \zeta(2) (x_2 + x_3) + \eta^U(-6) (1 - x_4) + \rho_1(1) (x_1 - 1) + g(1) .\label{tmeq2}
	\end{align}
	Letting $f(z)= -e^{-(z-6)}$, then $g(z) = - e ^{-z}$ and 
	\begin{align*}
		\eta^U(-6) - \eta(-6) & = (g(1) - 2\zeta(2) - 2 \rho_1(1) - g(1)) - (g(-1) - 2 \zeta(2) - g(1)) \\
		& = 2 g(0) - g(1) -g(-1)=-2+e^{-1}+e > 0.
	\end{align*}
	Thus, inequality \eqref{tmeq1} strictly dominates inequality \eqref{tmeq2}.
\end{example}
\subsection{Two-phase lifted inequalities: Type II}\label{subsec:ineq-class-2}
Next, we derive the symmetric family of two-phase lifted inequalities obtained by lifting  the seed inequality \eqref{eq:seedInq} using a reversed lifting order, that is, first with variables $\{x_i\}_{i \in S_1}$ and then with $\{x_i\}_{i \in S_0}$. 
We can conduct this in a similar manner as in \cref{subsec:ineq-class-1}:
(i) use the exact lifting function $\zeta$ (whose subadditivity on $\R_-$ is established in \cref{liftzeta-new}) to perform sequence-independent lifting of the seed inequality \eqref{eq:seedInq} with variables $\{x_i\}_{i \in S_1}$; and
(ii) lift the resultant inequality with variables $\{x_i\}_{i \in S_0}$ using the similar approximation or exact lifting function (i.e., $\eta^U$ or $\eta$).
Rather than repeating the lifting-based derivation,
we adopt a simpler approach by considering the symmetric counterpart of $\Fset$ obtained by complementing $x$ using $y = \mu - x$:
\begin{equation}
	\FsetS : = \left\{ (w, y) \in \R \times \Z^n \,:\, w \leq f\left( a^\top (\mu - y) \right)=\bar{f}(a^\top y ), ~ 
	0 \leq y_i \leq \mu_i, ~ \forall ~ i \in [n] \right\},
\end{equation}
where $\bar{f}(z) := f(-z + a^\top \mu)$.
Note that as $f$ is concave on $\R$, $\bar{f}$ is concave on $\R$ as well. 
Thus, $\FsetS$ is a form of $\Fset$ and we can construct the two-phase lifted inequalities in \cref{subsubsec:2phase-lifted-ineq-1} for $\conv(\FsetS)$. 
In the following, we first establish a one-to-one correspondence between the faces of $\conv(\Fset)$ and $\conv(\FsetS)$, as well as between their lifting procedures.
Using these correspondences, we can map the two-phase lifted inequalities in \cref{subsubsec:2phase-lifted-ineq-1} for $\conv(\FsetS)$ back to obtain the symmetric two-phase lifted inequalities for $\conv(\Fset)$.

\subsubsection{The relations between $\conv(\Fset)$ and $\conv(\FsetS)$}
First, we note that there exists a one-to-one correspondence between the faces of $\conv(\Fset)$ and $\conv(\FsetS)$.
\begin{proposition}\label{prop:faces}
	Inequality $w \leq \alpha_0 + \sum_{i = 1}^{n} \alpha_i x_i $
	defines a $k$-dimensional face of $\conv(\Fset)$ if and only if
	$w \leq \alpha_0 + \sum_{i = 1}^{n} \alpha_i (\mu_i - y_i)$
	defines a $k$-dimensional face of $\conv(\FsetS)$.
\end{proposition}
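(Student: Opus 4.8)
The plan is to exhibit an explicit affine bijection between the two sets and argue that it carries faces to faces while preserving dimension. Define the map $T:\R\times\R^n \to \R\times\R^n$ by $T(w,x) = (w,\, \mu - x)$. This is an affine involution (indeed $T\circ T$ is the identity), and hence an affine bijection of $\R^{n+1}$. First I would verify that $T$ maps $\Fset$ bijectively onto $\FsetS$: under the substitution $y = \mu - x$ we have $a^\T x = a^\T(\mu - y)$, so the constraint $w \le f(a^\T x)$ becomes $w \le f(a^\T(\mu - y)) = \bar f(a^\T y)$, while the box $0 \le x \le \mu$ becomes $0 \le \mu - y \le \mu$, i.e.\ $0 \le y \le \mu$. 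Since $\bar f$ is concave (as already noted in the text), $\FsetS$ is itself of the form $\Fset$, which is what legitimizes reusing the Type-I results later. Because affine maps commute with taking convex hulls, $T(\conv(\Fset)) = \conv(T(\Fset)) = \conv(\FsetS)$.

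Next I would establish the inequality and face correspondence. Writing $\beta(x) := \alpha_0 + \sum_{i=1}^n \alpha_i x_i$, the substitution $x = \mu - y$ turns the affine functional $w - \beta(x)$ into $w - \bigl(\alpha_0 + \sum_{i=1}^n \alpha_i(\mu_i - y_i)\bigr)$. Thus $w \le \beta(x)$ is valid for $\conv(\Fset)$ if and only if $w \le \alpha_0 + \sum_{i=1}^n \alpha_i(\mu_i - y_i)$ is valid for $\conv(\FsetS)$, because $T$ is a bijection between the two polyhedra and it sends the first functional to the second. Moreover, $T$ maps the equality set $\{(w,x) \in \conv(\Fset) : w = \beta(x)\}$ exactly onto $\{(w,y) \in \conv(\FsetS) : w = \alpha_0 + \sum_{i=1}^n \alpha_i(\mu_i - y_i)\}$, so the face defined by one inequality is precisely the $T$-image of the face defined by the other.

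Finally, dimension is preserved because an affine bijection sends affinely independent points to affinely independent points, and conversely. Hence the affine hull of a face and of its image have the same dimension, so a $k$-dimensional face of $\conv(\Fset)$ corresponds to a $k$-dimensional face of $\conv(\FsetS)$; the reverse implication follows by applying the same argument to $T^{-1} = T$, which yields the stated ``if and only if.''

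The main obstacle is honestly slight: this is a routine ``change of variables preserves polyhedral structure'' argument, and the only point requiring care is making the equivalence genuinely two-sided. Specifically, I would state explicitly that $T$ restricted to a face is a bijection \emph{onto} the corresponding face (not merely a map into it), so as to avoid the trap of proving a single inclusion; this is immediate from $T$ being a global bijection but is worth spelling out. I would also take care that validity, the equality set, and affine dimension are each transported under $T$ simultaneously, so that the three ingredients of ``defines a $k$-dimensional face'' are all preserved at once.
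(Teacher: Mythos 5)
Your proposal is correct and follows essentially the same route as the paper: both arguments rest on the affine complementation $(w,x)\mapsto(w,\mu-x)$ being a bijection between $\Fset$ and $\FsetS$ that transports validity, the equality set, and affine independence (hence face dimension) simultaneously. Your write-up merely makes the map $T$ and the fact that affine bijections commute with convex hulls more explicit than the paper's three-fact justification, but no new idea is involved.
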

\begin{proof}
	The statement follows immediately from the facts that 
	(i) $(w, x) \in \Fset$ if and only if $(w, \mu - x) \in \FsetS$;
	(ii) inequality $w \leq \alpha_0 + \sum_{i = 1}^{n} \alpha_i x_i $ holds with equality at point $(w, x)$ if and only if inequality $w \leq \alpha_0 + \sum_{i = 1}^{n} \alpha_i (\mu_i - y_i) $ holds with equality at point $(w, \mu - x)$;
	and (iii) points $(w^1, x^1), \ldots, (w^{k+1}, x^{k+1})$ are affinely independent if and only if points 
	$(w^1, \mu - x^1), \ldots, (w^{k+1}, \mu - x^{k+1})$ are affinely independent.
\end{proof}
\noindent Second,  there also exists a one-to-one correspondence between the lifting procedures for $\conv(\Fset)$ and $\conv(\FsetS)$:
lifting an inequality $w \leq \alpha_0 + \sum_{i \in C} \alpha_i x_i$ (where $C \subseteq [n]$) with variable $x_j$ fixed at $0$ (respectively, fixed at $\mu_j$)  for $\conv(\Fset)$
is equivalent to lifting the corresponding inequality $w \leq \alpha_0 + \sum_{i \in C} \alpha_i (\mu_i - y_i)$ with variable $y_j$ fixed at $\mu_j$ (respectively, fixed at $0$) for $\conv(\FsetS)$.

The above two correspondences allow for deriving the symmetric two-phase lifted inequalities for $\conv(\Fset)$ (obtained by first lifting inequality \eqref{eq:seedInq} with variables $\{x_i\}_{i \in S_1}$ and then with $\{x_i\}_{i \in S_0}$) using the following two steps.
In the first step, we lift the corresponding seed inequality $w \leq \bar{\rho}_{s}(\bar{k}) (y_{s} - \bar{k}) + \bar{g}(\bar{k} a_{s})$ with variables $\{y_i\}_{i \in \bar{S}_0}$ and then with $\{y_i\}_{i \in \bar{S}_1}$, to obtain a two-phase lifted inequality (\eqref{facet-ineq1} or \eqref{eq:lifted-ineq1}) for $\conv(\FsetS)$.
Here, $\bar{S}_0 := S_1$, $\bar{S}_1 := S_0$,  $\bar{k} := \mu_s - k + 1$,
$\bar{g}(z) : = \bar{f}(z + \sum_{i \in \bar{S}_1} \mu_i a_i)$, and
$\bar{\rho}_{s}(\bar{k}) : = \bar{g}(\bar{k} a_{s}) - \bar{g}((\bar{k} - 1) a_{s})$.
In the second step, we use \cref{prop:faces} to obtain the symmetric two-phase lifted inequalities for $\conv(\Fset)$ from those for $\conv(\FsetS)$.
In the following, we will conduct the second step in detail.

\subsubsection{Symmetric two-phase lifted inequalities}
We first consider the two-phase lifted inequality \eqref{eq:lifted-ineq1} for $\conv(\FsetS)$:
\begin{equation}\label{eq:ineq-for-barFset}
	w \leq \sum_{i \in \bar{S}_0} \bar{\zeta}(a_i) y_i + \sum_{i \in \bar{S}_1} \bar{\eta}^U(-a_i) (\mu_i - y_i) 
	+ \bar{\rho}_{s}(\bar{k}) (y_{s} - \bar{k}) + \bar{g}(\bar{k} a_{s}),
\end{equation}
where $\bar{\zeta}$ and  $\bar{\eta}^U$ are defined in a similar manner to ${\zeta}$ and  ${\eta}^U$ in \eqref{calc:zeta} and \eqref{etaU}:
\begin{equation}\label{eq:bar-zeta}
	\begin{small}
		\bar{\zeta}(\delta) : = \left\{
		\begin{aligned}
			&\bar{g}(\delta+\mu_{s}a_{s})+(\bar{k}-\mu_{s}) \bar{\rho}_{s}(\bar{k})-\bar{g}(\bar{k}a_{s}), 
			&&\text{if}~\delta < (\bar{k}-\mu_{s}  - 1)a_{s},\\
			&\bar{g} \left(\delta+ (\bar{k} - \ell - 1) a_{s} \right) + (\ell + 1) \bar{\rho}_{s}(\bar{k})-\bar{g}(\bar{k}a_{s}), 
			&&\text{if}~\ell a_{s} \le \delta < (\ell + 1) a_{s}, \\
			& && ~ \ell = \bar{k} - \mu_{s}-1,\ldots, \bar{k} - 1, \\
			&\bar{g} \left(\delta\right) + \bar{k}\bar{\rho}_{s}(\bar{k})-\bar{g}(\bar{k}a_{s}), &&\text{if}~\delta \ge \bar{k}a_{s},
		\end{aligned}
		\right.
	\end{small}
\end{equation}
and
\begin{equation}\label{eq:bar-eta}
	\begin{small}
		\bar{\eta}^U(\delta) : = \left\{ 
		\begin{aligned}
			&\bar{g}(\delta+A_{(i+1) q}) - \sum_{j=1}^{i}\mu_j \bar{\zeta}(a_j) 
			- q \bar{\zeta}(a_{i+1}) + \bar{k} \bar{\rho}_{s}(\bar{k}) - \bar{g}(\bar{k} a_{s}), \\
			& \hspace{2cm}
			~\text{if}~ \bar{k} a_{s} - A_{(i+1) q} < \delta \leq -A_{(i+1) (q-1)}, \\
			& \hspace{2cm} 
			~~ i = 0,\ldots,d-1 \ \text{and} \ q = 1, \ldots, \mu_{i+1}, \\
			&\bar{g}(\delta+A_{(i+1) q} + \ell a_{s}) - \sum_{j=1}^{i}\mu_j \bar{\zeta}(a_j) 
			- q \bar{\zeta}(a_{i+1}) + (\bar{k} - \ell) \bar{\rho}_{s}(\bar{k}) - \bar{g}(\bar{k} a_{s}), \\
			& \hspace{2cm}
			~\text{if}~ (\bar{k}-\ell - 1)a_{s} - A_{(i+1) q} < \delta \leq 
			(\bar{k}-\ell )a_{s} -A_{(i+1) q}, \\
			& \hspace{2cm}
			~~ i = 0,\ldots,d-1, \  q = 1, \ldots, \mu_{i+1}, \ \text{and} \ \ell = 0,\ldots, \bar{k}-1,\\
			& \hspace{2cm}
			~~ \text{or} \ i = d-1, \  q = \mu_{d}, \ \text{and} \ \ell = \bar{k}, \bar{k}+1, \ldots.
		\end{aligned}
		\right. 
	\end{small}
\end{equation}
Here, we define $S_1^+ := \bar{S}_0^+ = \{i \in \bar{S}_0 \,:\, a_i \geq \bar{k} a_s\} = \{i \in S_1 \,:\, a_i \geq (\mu_s - k + 1) a_s\}$
and assume that $S_1^+ = [d]$ (where $d \in \Z_+$) with $a_1 \geq a_2 \geq \cdots \geq a_d$.
Moreover, we let $A_{iq} = \sum_{j = 1}^{i - 1} \mu_j a_j + q a_i$ for $i \in [d]$ and $q \in \{0, 1, \ldots, \mu_i\}$.

We next apply \cref{prop:faces} on inequality \eqref{eq:ineq-for-barFset} for $\conv(\FsetS)$ to obtain the symmetric two-phase lifted inequality for $\conv(\Fset)$.
To do this, we first simplify $\bar{g}$, $\bar{\rho}_s(\bar{k})$, $\bar{\zeta}$, and $\bar{\eta}^U$ using the previous notation $g$, $\rho_s(k)$, and $\zeta$.
By $\bar{g}(z) = \bar{f}(z+\sum_{i \in \bar{S}_1} \mu_i a_i)$, $\bar{f}(z) = f(-z+\mu^\top a)$, and $\bar{S}_1 = S_0$, we have
\begin{equation}\label{barg}
	\begin{aligned}
		\bar{g}(z) 
		= f\left(-z -\sum_{i \in S_0} \mu_i a_i +\mu^\top a\right) 
		= f\left(-z+\sum_{i \in S_1} \mu_i a_i + \mu_s a_s\right) = g(- z+\mu_s a_s).
	\end{aligned}
\end{equation}
Together with $\bar{\rho}_{s}(\bar{k}) = \bar{g}(\bar{k} a_{s}) - \bar{g}((\bar{k} - 1) a_{s})$
and $\bar{k} = \mu_s - k + 1$, we obtain
\begin{equation}\label{barg-relations}
	\begin{aligned}
		\bar{\rho}_{s}(\bar{k}) 
		= \bar{g}((\mu_s - k + 1) a_s) - \bar{g}((\mu_s - k) a_s) 
		= g((k - 1) a_s) - g(k a_s) = - \rho_s(k).
	\end{aligned}
\end{equation}
The following two lemmas further provide formulas for $\bar{\zeta}$ and $\bar{\eta}^U$.

\begin{lemma}\label{lem:barzeta}
	$\bar{\zeta}(\delta) = \zeta(-\delta)$ for $\delta \in \R$, where $\zeta$ is defined in \eqref{calc:zeta}.
\end{lemma}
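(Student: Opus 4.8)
The plan is to treat \cref{lem:barzeta} as a pure substitution identity: plug the relations $\bar g(z)=g(-z+\mu_s a_s)$ from \eqref{barg}, $\bar\rho_{s}(\bar k)=-\rho_s(k)$ from \eqref{barg-relations}, and $\bar k=\mu_s-k+1$ directly into the three-branch definition \eqref{eq:bar-zeta} of $\bar\zeta$, and then check branch-by-branch that the outcome coincides with $\zeta(-\delta)$ as written in \eqref{calc:zeta}. Since both $\zeta$ and $\bar\zeta$ are continuous on $\R$ (noted just after \eqref{calc:zeta}), it suffices to verify the match on the interiors of the defining regions; the half-open interval conventions, which get interchanged under $\delta\mapsto-\delta$, then reconcile automatically, so no boundary case needs separate attention.

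First I would translate the region conditions. The top branch $\delta<(\bar k-\mu_s-1)a_s$ becomes $\delta<-ka_s$, i.e.\ $-\delta>ka_s$, which is the bottom branch of $\zeta(-\delta)$; dually, the bottom branch $\delta\ge\bar k a_s$ becomes $-\delta\le(k-\mu_s-1)a_s$, the top branch of $\zeta(-\delta)$. The middle branches of $\bar\zeta$ are indexed by $\ell=\bar k-\mu_s-1,\dots,\bar k-1$, i.e.\ $\ell=-k,\dots,\mu_s-k$, and under the reindexing $m=-\ell-1$ the condition $\ell a_s\le\delta<(\ell+1)a_s$ turns into $m a_s<-\delta\le(m+1)a_s$, matching the middle branch of $\zeta(-\delta)$ for index $m$; this map sends $\ell\in\{-k,\dots,\mu_s-k\}$ bijectively onto $m\in\{k-\mu_s-1,\dots,k-1\}$, exactly the range appearing in \eqref{calc:zeta}.

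For the values I would repeatedly use $\bar g(\delta+c a_s)=g(-\delta+(\mu_s-c)a_s)$ together with the specialization $\bar g(\bar k a_s)=g((k-1)a_s)$. On the middle branch this rewrites $\bar g(\delta+(\bar k-\ell-1)a_s)$ as $g(-\delta+(k+\ell)a_s)$, which equals the leading term $g(-\delta+(k-m-1)a_s)$ of $\zeta(-\delta)$ once $m=-\ell-1$ is substituted; the constant terms then reduce to equality after invoking $\rho_s(k)=g(ka_s)-g((k-1)a_s)$, and the top and bottom branches collapse by the same one-line manipulations. The only genuinely error-prone step is the middle branch, where one must simultaneously transform the interval endpoints, perform the index flip $\ell\mapsto m=-\ell-1$, and reconcile $(\ell+1)\bar\rho_s(\bar k)-\bar g(\bar k a_s)$ with $(m+1)\rho_s(k)-g(ka_s)$; the unit shift in the coefficient of $\rho_s(k)$ introduced by the flip is absorbed precisely by the difference $g(ka_s)-g((k-1)a_s)=\rho_s(k)$. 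No concavity input is needed here, so once the bookkeeping of the reindexing is pinned down, each of the three cases reduces to an algebraic identity.
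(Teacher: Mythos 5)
Your proposal is correct and follows essentially the same route as the paper: substitute $\bar{g}(z)=g(-z+\mu_s a_s)$, $\bar{\rho}_s(\bar{k})=-\rho_s(k)$, and $\bar{k}=\mu_s-k+1$ into \eqref{eq:bar-zeta}, then match the three branches of the result to those of $\zeta(-\delta)$ via the index flip $\ell\mapsto -(\ell+1)$ and the identity $\rho_s(k)=g(ka_s)-g((k-1)a_s)$. The only cosmetic difference is that you invoke continuity to dispose of the half-open interval endpoints, which the paper handles implicitly; the bookkeeping you identify as the delicate step is exactly the computation the paper carries out.
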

\begin{proof}
	By the definition of $\bar{\zeta}$ in \eqref{eq:bar-zeta}, $\bar{k} = \mu_s - k + 1$, and \eqref{barg}--\eqref{barg-relations},
	we have
	\begin{equation*}
		\bar{\zeta}(\delta) = \left\{
		\begin{aligned}
			&g(- \delta)+k \rho_s(k) - g(k a_s), 
			&&\text{if}~\delta < - k a_s, \\
			&g\left( - \delta + (k + \ell) a_s \right) - \ell \rho_s(k) -g(k a_s), 
			&&\text{if}~\ell a_s \le \delta < (\ell + 1) a_s, \\
			& && ~ \ell = -k,\ldots, \mu_s - k, \\
			&g \left(- \delta + \mu_s a_s\right) + (k - \mu_s) \rho_s(k) - g(k a_s),
			&&\text{if}~\delta \geq (\mu_s - k + 1) a_s.
		\end{aligned}
		\right. 
	\end{equation*}
	We complete the proof by considering the following three cases.
	\begin{itemize}
		\item [(i)] $\delta < -k a_s$. Then $-\delta > k a_s$. 
		By the definition of $\zeta$ in \eqref{calc:zeta}, we have
		\begin{equation*}
			\zeta(-\delta) = g(- \delta)+k \rho_s(k) - g(k a_s) = \bar{\zeta}(\delta).
		\end{equation*}
		\item [(ii)] $\ell a_s \leq \delta < (\ell + 1) a_s$ for some $\ell \in \{-k, \ldots, \mu_s - k\}$.
		Then $ \bar{\ell} a_s < - \delta \leq (\bar{\ell} + 1) a_s$,
		where $\bar{\ell} := - (\ell + 1) \in \{k - \mu_s - 1, \ldots, k - 1\}$.
		By the definition of $\zeta$ in \eqref{calc:zeta}, we have
		\begin{align*}
			\zeta(-\delta) & = g(-\delta + (k - \bar{\ell} - 1) a_s) + (\bar{\ell} + 1) \rho_s(k) - g(k a_s) \\
			& = g\left(-\delta + (k + \ell) a_s \right) - \ell \rho_s(k) -g(k a_s)
			= \bar{\zeta}(\delta). 
		\end{align*}
		\item [(iii)] $\delta \geq (\mu_s - k + 1) a_s$. 
		Then $-\delta \leq (k - \mu_s- 1) a_s$. By the definition of $\zeta$ in \eqref{calc:zeta}, we have
		\begin{equation*}
			\zeta(-\delta) = g(-\delta +\mu_s a_s)+ (k - \mu_s) \rho_s(k) - g(k a_s) = \bar{\zeta}(\delta).\qedhere
		\end{equation*}
	\end{itemize}
\end{proof}

\begin{lemma}\label{lem:bareta}
	$\bar{\eta}^U(\delta) = \phi^U(-\delta)$ for $\delta \in \R_-$, where
	\begin{equation}\label{omegaU}
		\begin{footnotesize}
			\phi^U(\delta) : = \left\{
			\begin{aligned}
				& g(\delta + \mu_s a_s - A_{(i+1) q}) - \sum_{j = 1}^{i}\mu_j\zeta(-a_j) -q \zeta(-a_{i+1}) + (k - \mu_s) \rho_s(k) - g(ka_s), \\
				& \hspace{1.5cm} 
				\text{if}~ A_{(i+1) (q - 1)} \leq \delta < A_{(i+1) q} + (k - \mu_s - 1)a_s, \\
				& \hspace{1.5cm}
				\ i = 0,\ldots,d-1 \ \text{and} \ q = 1, \ldots, \mu_{i+1} , \\
				& g(\delta + (\mu_s - \ell) a_s - A_{(i+1) q}) - \sum_{j = 1}^{i}\mu_j\zeta(-a_j) - q \zeta(-a_{i+1})
				+ (k - \mu_s + \ell) \rho_s(k) - g(ka_s), \\
				& \hspace{1.5cm} \text{if}~ A_{(i+1) q} + (k - \mu_s + \ell - 1)a_s \leq \delta < A_{(i+1) q} + (k - \mu_s + \ell)a_s,\\
				& \hspace{1.5cm} 
				\ i = 0,\ldots,d-1, \ q = 1, \ldots, \mu_{i+1}, \ \text{and} \ \ell = 0,\ldots, \mu_s - k, \\
				& \hspace{1.5cm}
				~ \text{or} \ i = d-1, \  q = \mu_{d}, \ \text{and} \ \ell = \mu_s - k + 1, \mu_s - k + 2,\ldots.
			\end{aligned}
			\right.
		\end{footnotesize}
	\end{equation}
\end{lemma}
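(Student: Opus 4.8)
The plan is to prove the identity by direct substitution, mirroring the case analysis used for \cref{lem:barzeta}. The starting point is the closed formula \eqref{eq:bar-eta} for $\bar{\eta}^U$, into which I substitute the three reductions already established: $\bar{g}(z) = g(-z + \mu_s a_s)$ from \eqref{barg}, $\bar{\rho}_s(\bar{k}) = -\rho_s(k)$ from \eqref{barg-relations}, and $\bar{\zeta}(a_j) = \zeta(-a_j)$ from \cref{lem:barzeta}, together with $\bar{k} = \mu_s - k + 1$. After setting $\delta' := -\delta \ge 0$, each $\bar{g}$-term turns into $g(\delta' + (\cdots)a_s - A_{(i+1)q})$, so the nonlinear part immediately takes the shape appearing in \eqref{omegaU}; what then remains is to reconcile the affine pieces and the domains.

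For the affine and constant pieces, I will use $\rho_s(k) = g(ka_s) - g((k-1)a_s)$ repeatedly. On the first branch of \eqref{eq:bar-eta}, for instance, the coefficient $\bar{k}\,\bar{\rho}_s(\bar{k}) = (k - \mu_s - 1)\rho_s(k)$ and the constant $-\bar{g}(\bar{k} a_s) = -g((k-1)a_s)$ together equal $(k - \mu_s)\rho_s(k) - g(ka_s)$, which is exactly the affine part of the first branch of \eqref{omegaU}; the same single $\rho_s(k)$-adjustment resolves the discrepancy on every branch. The terms $-\sum_{j=1}^{i}\mu_j \bar{\zeta}(a_j) - q\bar{\zeta}(a_{i+1})$ become $-\sum_{j=1}^{i}\mu_j \zeta(-a_j) - q\zeta(-a_{i+1})$ verbatim by \cref{lem:barzeta}. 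Thus the value of $\bar{\eta}^U(\delta)$ agrees with the value of $\phi^U(\delta')$ branch-by-branch.

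It then remains to check that the domains transform correctly. Each defining inequality of \eqref{eq:bar-eta}, written in terms of $\delta$, becomes an inequality in $\delta' = -\delta$ after multiplying by $-1$ and flipping the inequality directions; substituting $\bar{k} = \mu_s - k + 1$ converts the first-branch condition $\bar{k} a_s - A_{(i+1)q} < \delta \le -A_{(i+1)(q-1)}$ into $A_{(i+1)(q-1)} \le \delta' < A_{(i+1)q} + (k - \mu_s - 1)a_s$, which is precisely the first-branch condition of \eqref{omegaU}. On the second branch the index $\ell$ maps to itself: the range $\ell = 0,\dots,\bar{k} - 1 = \mu_s - k$ matches $\ell = 0,\dots,\mu_s - k$ in \eqref{omegaU}, and the condition $(\bar{k} - \ell - 1)a_s - A_{(i+1)q} < \delta \le (\bar{k} - \ell)a_s - A_{(i+1)q}$ becomes $A_{(i+1)q} + (k - \mu_s + \ell - 1)a_s \le \delta' < A_{(i+1)q} + (k - \mu_s + \ell)a_s$, again matching \eqref{omegaU}; the unbounded tail case $i = d-1$, $q = \mu_d$ is handled identically.

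The main obstacle is purely bookkeeping: keeping the sign flips in the interval endpoints consistent with the reindexing $\bar{k} = \mu_s - k + 1$ and verifying that the half-open intervals line up so that no point of $\R_-$ is left uncovered or double-counted. Once the first two branches are matched, the remaining boundary and tail branches follow by the same substitution, so no separate argument is needed, and the identity $\bar{\eta}^U(\delta) = \phi^U(-\delta)$ holds for all $\delta \in \R_-$.
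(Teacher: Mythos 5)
Your proposal is correct and follows essentially the same route as the paper: substitute $\bar{g}(z) = g(-z+\mu_s a_s)$, $\bar{\rho}_s(\bar{k}) = -\rho_s(k)$, $\bar{\zeta}(\delta)=\zeta(-\delta)$, and $\bar{k}=\mu_s-k+1$ into \eqref{eq:bar-eta}, then match values and domains branch by branch under $\delta \mapsto -\delta$. Your explicit reconciliation of the affine parts via $-g((k-1)a_s) = \rho_s(k) - g(ka_s)$ is exactly the computation the paper leaves implicit.
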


\begin{proof}
	By the definition of $\bar{\eta}^U$ in \eqref{eq:bar-eta}, $\bar{k} = \mu_s - k + 1$, \eqref{barg}--\eqref{barg-relations}, and $\bar{\zeta}(\delta) = \zeta(-\delta)$, we have
	\begin{equation*}
		\begin{footnotesize}
			\bar{\eta}^U(\delta) = \left\{ 
			\begin{aligned}
				&g(- \delta + \mu_s a_s - A_{(i+1) q}) - \sum_{j=1}^{i}\mu_j \zeta(- a_j) 
				- q \zeta(- a_{i+1}) + (k - \mu_s) \rho_s(k) - g(k a_s), \\
				& \hspace{3cm}
				~\text{if}~ (\mu_s-  k + 1) a_s - A_{(i+1) q} < \delta \leq -A_{(i+1) (q-1)}, \\
				& \hspace{3cm} 
				~~ i = 0,\ldots,d-1 \ \text{and} \ q = 1, \ldots, \mu_{i+1}, \\
				&g(- \delta + (\mu_s - \ell) a_s - A_{(i+1) q}) - \sum_{j=1}^{i}\mu_j \zeta(-a_j) 
				- q \zeta(-a_{i+1}) + (k - \mu_s + \ell) \rho_s(k)- g(k a_s), \\
				& \hspace{3cm}
				~\text{if}~ (\mu_s - k -\ell)a_s - A_{(i+1) q} < \delta \leq 
				(\mu_s - k -\ell + 1)a_s -A_{(i+1) q}, \\
				& \hspace{3cm}
				~~ i = 0,\ldots,d-1, \  q = 1, \ldots, \mu_{i+1}, \ \text{and} \ \ell = 0,\ldots, \mu_s - k,\\
				& \hspace{3cm}
				~~ \text{or} \ i = d-1, \  q = \mu_{d}, \ \text{and} \ \ell = \mu_s-k+1,\mu_s-k+2,\ldots.
			\end{aligned}
			\right.
		\end{footnotesize}
	\end{equation*}
	It immediately follows that $\bar{\eta}^U(\delta) = \phi^U(-\delta)$ for $\delta \in \R_-$.
\end{proof}

Using the above representations for $\bar{g}$, $\bar{\rho}_s$, $\bar{\zeta}$, and $\bar{\eta}^U$, 
we can rewrite inequality \eqref{eq:ineq-for-barFset} for $\conv(\FsetS)$ as 
\begin{equation*}
	w \leq \sum_{i \in \bar{S}_0} \zeta(-a_i) y_i + \sum_{i \in \bar{S}_1} \phi^U(a_i) (\mu_i - y_i) 
	+ \rho_s(k) (\mu_s - y_s - k) + g(k a_s).
\end{equation*}
Together with $\bar{S}_0 = S_1$, $\bar{S}_1 = S_0$, $\bar{k} = \mu_{s} - k + 1$, \cref{thm:facet1-cond}, and \cref{prop:faces}, we obtain the symmetric family of two-phase lifted inequalities for $\conv(\Fset)$ and their facet-defining conditions,
as stated in the following theorem.
\begin{theorem}
	Inequality 
	\begin{equation}\label{eq:lifted-ineq2}
		w \leq \sum_{i \in S_1} \zeta(-a_i) (\mu_i - x_i) + \sum_{i \in S_0} \phi^U(a_i) x_i
		+ \rho_s(k) (x_s - k) + g(k a_s)
	\end{equation}
	is valid for $\Fset$.
	If $0 \leq a_i \leq A_{d \mu_d} + k a_s$ holds for all $i \in S_0$, then \eqref{eq:lifted-ineq2} is facet-defining for $\conv(\Fset)$.
\end{theorem}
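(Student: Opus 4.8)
The plan is to obtain both assertions by transferring the corresponding Type~I results for the complemented set $\conv(\FsetS)$ back to $\conv(\Fset)$ through the symmetry correspondence, so that no new lifting computation is needed. First I would note that $\FsetS$ is itself an instance of the family $\Fset$ (with the concave function $\bar{f}$, the same nonnegative $a$, and the same $\mu$), so \cref{thm:facet1-cond} applies to it verbatim. Concretely, lifting the seed inequality for $\conv(\FsetS)$ first with $\{y_i\}_{i\in\bar{S}_0}$ and then with $\{y_i\}_{i\in\bar{S}_1}$ produces inequality \eqref{eq:ineq-for-barFset}, which is the instance of \eqref{eq:lifted-ineq1} for $\FsetS$. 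By \cref{thm:facet1-cond}, applied with $\bar{S}_1=S_0$ and $\bar{S}_0^+=S_1^+=[d]$, this inequality is valid for $\FsetS$ and facet-defining for $\conv(\FsetS)$ whenever $(\bar{k}-\mu_s-1)a_s-A_{d\mu_d}\le -a_i\le 0$ for all $i\in S_0$.

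Next I would rewrite \eqref{eq:ineq-for-barFset} purely in the original data. Using $\bar{g}(z)=g(-z+\mu_s a_s)$ from \eqref{barg}, the identity $\bar{\rho}_s(\bar{k})=-\rho_s(k)$ from \eqref{barg-relations}, together with $\bar{\zeta}(\delta)=\zeta(-\delta)$ (\cref{lem:barzeta}) and $\bar{\eta}^U(\delta)=\phi^U(-\delta)$ (\cref{lem:bareta}), inequality \eqref{eq:ineq-for-barFset} becomes the display immediately preceding the theorem, namely $w\le\sum_{i\in\bar{S}_0}\zeta(-a_i)y_i+\sum_{i\in\bar{S}_1}\phi^U(a_i)(\mu_i-y_i)+\rho_s(k)(\mu_s-y_s-k)+g(ka_s)$. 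The care needed here is consistent bookkeeping of the reversed roles $\bar{S}_0=S_1$, $\bar{S}_1=S_0$ and the shifted parameter $\bar{k}=\mu_s-k+1$. In particular, I would verify that the seed term maps correctly, since $\bar{g}(\bar{k}a_s)=g((k-1)a_s)$ and $y_s-\bar{k}=-(x_s-k+1)$ combine, after complementation, to reproduce $\rho_s(k)(x_s-k)+g(ka_s)$.

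Then I would invoke \cref{prop:faces} with $x=\mu-y$. Collecting each term of the rewritten inequality into the canonical form $\beta_0+\sum_i\beta_i(\mu_i-y_i)$ (writing $\zeta(-a_i)y_i=\zeta(-a_i)\mu_i-\zeta(-a_i)(\mu_i-y_i)$ for $i\in S_1$, and $\rho_s(k)(\mu_s-y_s-k)=\rho_s(k)(\mu_s-y_s)-k\rho_s(k)$), \cref{prop:faces} permits replacing every $(\mu_i-y_i)$ by $x_i$ and concluding that the image inequality defines a face of $\conv(\Fset)$ of the same dimension. A short expansion shows that this image is exactly \eqref{eq:lifted-ineq2}; both validity and the facet property transfer because \cref{prop:faces} is dimension-preserving, so a facet of $\conv(\FsetS)$ maps to a facet of $\conv(\Fset)$.

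Finally I would translate the facet condition. Substituting $\bar{k}=\mu_s-k+1$ gives $\bar{k}-\mu_s-1=-k$, so the hypothesis $(\bar{k}-\mu_s-1)a_s-A_{d\mu_d}\le -a_i\le 0$ for $\FsetS$ reads $-ka_s-A_{d\mu_d}\le -a_i\le 0$, i.e.\ $0\le a_i\le A_{d\mu_d}+ka_s$ for all $i\in S_0$, which is precisely the stated condition. The main obstacle is clerical rather than conceptual: the reversed index sets, the shift by $\bar{k}$, and the constant terms acquired during complementation must be tracked exactly, and \cref{lem:barzeta} and \cref{lem:bareta} must be applied with the partial sums $A_{iq}$ indexed over $S_1^+=[d]$; this bookkeeping is where an error would most easily arise.
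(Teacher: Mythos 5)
Your proposal is correct and follows essentially the same route as the paper: the paper derives \eqref{eq:lifted-ineq2} by applying \cref{thm:facet1-cond} to $\conv(\FsetS)$, rewriting \eqref{eq:ineq-for-barFset} via \eqref{barg}, \eqref{barg-relations}, \cref{lem:barzeta}, and \cref{lem:bareta}, and then mapping back through \cref{prop:faces}, exactly as you describe. Your translation of the facet condition (using $\bar{k}-\mu_s-1=-k$ to turn $(\bar{k}-\mu_s-1)a_s-A_{d\mu_d}\le -a_i\le 0$ into $0\le a_i\le A_{d\mu_d}+ka_s$) and your check of the seed term under complementation both match the paper's bookkeeping.
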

Similarly, we can derive a strengthened version of inequality \eqref{eq:lifted-ineq2}
from the facet-defining inequality \eqref{facet-ineq1} for $\conv(\FsetS)$,
as stated in the following theorem.
The proof is similar and omitted for brevity.

\begin{theorem}\label{thm:facet2}
	If $S_1^+ = S_1$ or $k = \mu_s$, then inequality
	\begin{equation}\label{facet-ineq2}
		w \leq \sum_{i \in S_1}\zeta(-a_i) (\mu_i - x_i) + \sum_{i \in S_0} \phi(a_i) x_i + \rho_s(k) (x_s - k) + g(k a_s)
	\end{equation}	
	is facet-defining for $\conv(\Fset)$.
	Here, (i) if $S_1^+ = S_1$, then $\phi$ is given by:
	\begin{equation}\label{phiL}
		\begin{footnotesize}
			\phi(\delta) = \left\{
			\begin{aligned}
				& g(\delta + \mu_s a_s - A_{(i+1) q}) - \sum_{j = 1}^{i}\mu_j\zeta(-a_j) -q \zeta(-a_{i+1}) + (k - \mu_s) \rho_s(k) - g(ka_s), \\
				& \hspace{1cm} 
				\text{if}~ A_{(i+1) (q - 1)} \leq \delta < A_{(i+1) q} + (k - \mu_s-1)a_s, \\
				& \hspace{1cm}  \ i = 0,\ldots,d-1 \ \text{and} \ q = 1, \ldots, \mu_{i+1} , \\
				& g(\delta + (\mu_s - \ell) a_s - A_{(i+1) q}) - \sum_{j = 1}^{i}\mu_j\zeta(-a_j) - q \zeta(-a_{i+1})
				+ (k - \mu_s + \ell) \rho_s(k) - g(ka_s), \\
				& \hspace{1cm} \text{if}~ A_{(i+1) q} + (k - \mu_s + \ell - 1)a_s \leq \delta < A_{(i+1) q} + (k - \mu_s + \ell)a_s,\\
				& \hspace{1cm} \ i = 0,\ldots,d-1, \ q = 1, \ldots, \mu_{i+1}, \ \text{and} \ \ell = 0,\ldots, \mu_s - k, \\
				& \hspace{1cm} 
				\ \text{or} \ i = d-1, \ q = \mu_{d}, \ \text{and} \ \ell = \mu_s-k+1,\ldots, \mu_s, \\
				& g(\delta - A_{d \mu_d}) - \sum_{j = 1}^{d}\mu_j\zeta(-a_j) 
				+ k \rho_s(k) - g(ka_s), \\
				& \hspace{1cm} \text{if} \ \delta \geq A_{d \mu_d} + k a_s,
			\end{aligned}
			\right.
		\end{footnotesize}
	\end{equation}
	and (ii) if $k = \mu_s$, then $\phi$ is given by:
	\begin{equation}\label{eq:phi}
		\begin{footnotesize}
			\phi(\delta) = \left\{
			\begin{aligned}
				& g\left(\delta + \mu_s a_s - A_{(i+1) q} \right) - \sum_{j = 1}^{i}\mu_j\zeta(-a_j) -q \zeta(-a_{i+1}) - g\left( \mu_s a_s \right), \\
				& \hspace{2cm}
				\text{if}~ A_{(i+1) (q - 1)} \leq \delta < A_{(i+1) q}, 
				\ \ i = 0,\ldots,d \ \text{and} \ q = 1, \ldots, \mu_{i+1}, \\
				& g\left(\delta +  \mu_s a_s - A_{(d + 1) \mu_{d + 1}} \right) - \sum_{j = 1}^{d + 1}\mu_j\zeta(-a_j) - g\left( \mu_s a_s \right), \\
				& \hspace{2cm}
				\text{if} \ \delta \geq A_{(d + 1) \mu_{d + 1}},
			\end{aligned}
			\right.
		\end{footnotesize}
	\end{equation}	
	where, by abuse of notation, we assume that $S_1 \cup s = [d + 1]$ (where $d \in \Z_+$) 
	with $a_1 \geq a_2 \geq \cdots \geq a_{d + 1}$
	and redefine $A_{i q} = \sum_{j = 1}^{i - 1} \mu_j a_j + q a_i$ for $i \in [d + 1]$ and $q \in \{0, 1, \ldots, \mu_i\}$.
\end{theorem}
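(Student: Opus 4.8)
The plan is to mirror the derivation of inequality \eqref{eq:lifted-ineq2} given just above, but to start from the \emph{facet-defining} inequality \eqref{facet-ineq1} for $\conv(\FsetS)$ rather than from the weaker valid inequality \eqref{eq:lifted-ineq1}. Recall that $\FsetS$ is itself an instance of $\Fset$ (with the concave function $\bar f$ in place of $f$), that $\bar{S}_0 = S_1$, $\bar{S}_1 = S_0$, and $\bar k = \mu_s - k + 1$, and that the seed inequality for $\conv(\FsetS)$ is $w \le \bar\rho_s(\bar k)(y_s - \bar k) + \bar g(\bar k a_s)$. Applying \cref{thm:facet1} to $\conv(\FsetS)$ then yields that the bar-version of \eqref{facet-ineq1}, namely $w \le \sum_{i \in \bar{S}_0}\bar\zeta(a_i)y_i + \sum_{i \in \bar{S}_1}\bar\eta(-a_i)(\mu_i - y_i) + \bar\rho_s(\bar k)(y_s - \bar k) + \bar g(\bar k a_s)$, is facet-defining for $\conv(\FsetS)$ whenever $\bar{S}_0^+ = \bar{S}_0$ or $\bar k = 1$, where $\bar\eta$ is the exact lifting function for $\conv(\FsetS)$.

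First I would translate the hypothesis. By the definition $S_1^+ := \bar{S}_0^+$, the condition $\bar{S}_0^+ = \bar{S}_0$ is exactly $S_1^+ = S_1$; and $\bar k = \mu_s - k + 1 = 1$ is exactly $k = \mu_s$. Thus the two cases in \cref{thm:facet2} coincide precisely with the two cases in \cref{thm:facet1} for the complemented set, so \cref{thm:facet1} indeed applies under either hypothesis. In each case \cref{lem:eta} (in its bar form) guarantees that the exact function $\bar\eta$ admits the closed form $\bar\eta^L$, obtained from \eqref{etaL} (when $\bar{S}_0^+ = \bar{S}_0$) or from \eqref{etaL-k=1} (when $\bar k = 1$), rewritten in the bar notation.

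Next I would convert this bar-version inequality back to the $x$-variables. Using $\bar g(z) = g(-z + \mu_s a_s)$ from \eqref{barg}, $\bar\rho_s(\bar k) = -\rho_s(k)$ from \eqref{barg-relations}, and $\bar\zeta(a_i) = \zeta(-a_i)$ from \cref{lem:barzeta}, together with the substitution $y = \mu - x$, the seed term collapses to $\rho_s(k)(x_s - k) + g(k a_s)$ (since $\bar g(\bar k a_s) = g((k-1)a_s)$ and $y_s - \bar k = k - 1 - x_s$), the term $\sum_{i \in \bar{S}_0}\bar\zeta(a_i)y_i$ becomes $\sum_{i \in S_1}\zeta(-a_i)(\mu_i - x_i)$, and $\sum_{i \in \bar{S}_1}\bar\eta(-a_i)(\mu_i - y_i)$ becomes $\sum_{i \in S_0}\bar\eta(-a_i)x_i$. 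To identify the last sum with $\sum_{i \in S_0}\phi(a_i)x_i$, I would establish the exact-lifting analogue of \cref{lem:bareta}, namely $\bar\eta(\delta) = \phi(-\delta)$ for $\delta \in \R_-$, by substituting \eqref{barg}, \eqref{barg-relations}, and \cref{lem:barzeta} into the closed form of $\bar\eta^L$; the resulting expressions are exactly the formulas \eqref{phiL} (case $S_1^+ = S_1$) and \eqref{eq:phi} (case $k = \mu_s$). Once the bar-inequality is shown to be literally \eqref{facet-ineq2}, \cref{prop:faces} transfers the facet-defining property from $\conv(\FsetS)$ to $\conv(\Fset)$, completing the argument.

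The main obstacle is the bookkeeping in establishing $\bar\eta(\delta) = \phi(-\delta)$ in both regimes, in particular the reindexing in the $k = \mu_s$ case, where $\bar k = 1$ forces the convention $S_1 \cup s = [d+1]$ with $a_1 \ge \cdots \ge a_{d+1}$ and the redefinition of the partial sums $A_{iq}$ in the theorem statement; matching the interval boundaries of $\bar\eta^L$ against those of $\phi$ term by term is the only genuinely delicate step. Since this is entirely parallel to the already-proven \cref{lem:bareta} (with the exact formula $\bar\eta^L$ replacing the relaxed $\bar\eta^U$), I would present it briefly and note the remaining verification is routine, exactly as the ``similar and omitted'' treatment of the preceding theorem.
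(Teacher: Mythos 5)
Your proposal is correct and follows exactly the route the paper intends (and omits as ``similar''): apply \cref{thm:facet1} to the complemented set $\FsetS$, note that $\bar{S}_0^+=\bar{S}_0$ and $\bar{k}=1$ translate to $S_1^+=S_1$ and $k=\mu_s$ respectively, rewrite $\bar{g}$, $\bar{\rho}_s(\bar{k})$, $\bar{\zeta}$, and the exact $\bar{\eta}$ via \eqref{barg}--\eqref{barg-relations}, \cref{lem:barzeta}, and the exact-lifting analogue of \cref{lem:bareta}, and transfer the facet via \cref{prop:faces}. The substitutions you outline do reproduce \eqref{phiL} and \eqref{eq:phi} term by term, so the argument is complete.
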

\subsection{A class of subadditive functions}\label{subsec:sub-func}
The main goal of this subsection is to introduce two classes of subadditive functions that contain $\eta^L$ and $\eta^U$ in \eqref{eta-explicit} and \eqref{etaU} as special cases, thereby establishing their subadditivity.
Given 
\begin{equation}\label{def:b}
	\begin{aligned}
		& \epsilon \geq 0, \ \tau \in \Z_{++},  \\
		& b_i \geq 0, \ b_i \geq b_{i + 1}, 
		& \quad ~\text{for} ~i = 1, 2, \ldots \\
		& v_i \in \mathbb{R} ~\text{with}~ b_{i + 2} + v_{i + 1} \geq b_{i + 1} + v_{i}, 
		& ~\text{for} ~i = 0, 1, \ldots
	\end{aligned}
\end{equation}
let 
\begin{equation}\label{ABdef}
	\begin{aligned}
		&  a_i = b_i + \tau \epsilon , 
		&& ~\text{for} ~i = 1, 2, \ldots \\
		& A_0 = 0, \ A_i = \sum_{j=1}^{i}a_j, \ \text{and} \ B_i = A_{i - 1} + b_i,
		&& ~ \text{for} ~ i = 1,2,\ldots.
	\end{aligned}
\end{equation}
For a concave function  $g: \R \rightarrow \R$,  
we define a continuous piecewise concave function $\omega  : \R_- \rightarrow \R$ as follows:
\begin{equation}\label{omega}
	\begin{footnotesize}
		\omega (\delta) : = \left\{
		\begin{aligned}
			&0,	&&~\text{if}~ \delta = 0, \\
			&g(\delta + A_i  - v_i) + \omega (-A_i) - g(-v_i),
			&&~\text{if}~ -B_{i+1} \leq \delta < -A_{i}, \ i = 0, 1, \dots, \\
			& g(\delta + A_i  + \ell \epsilon - v_i) + \omega (-A_i ) + \ell \psi_{i} - g(- v_i), 
			&&~\text{if}~ - B_{i+1}  - (\ell + 1) \epsilon \leq \delta < - B_{i+1}  - \ell \epsilon, \\
			& && ~~i = 0, 1, \dots \ \text{and} \ \ell = 0,\ldots,\tau - 1, 
		\end{aligned}
		\right.
	\end{footnotesize}
\end{equation}
where $\psi_i : = g\left( - b_{i + 1} - v_{i} - \epsilon\right) - g( - b_{i + 1} - v_{i})$ for $i \in \Z_+$.
Given $\omega$, we provide another continuous piecewise concave function $\bar{\omega} \,:\, \R_- \rightarrow \R$ as follows:
\begin{equation}\label{baromega}
	\begin{footnotesize}
		\bar{\omega} (\delta) : = \left\{
		\begin{aligned}
			& \omega (\delta), 
			\hspace{3.5cm} ~\text{if}~ - B _{m} - (\gamma + 1) \epsilon \leq \delta \leq 0, \\
			& g(\delta + A _{m - 1} + \gamma \epsilon - v_{m - 1}) + \omega (-A _{m - 1}) + \gamma \psi_{m - 1} - g\left(-v_{m - 1} \right), \\
			& \hspace{4.2cm} ~\text{if}~ \delta < - B _{m} - (\gamma + 1) \epsilon,
		\end{aligned}
		\right.
	\end{footnotesize}
\end{equation}
where $m \in \Z_{++}$ and $\gamma \in \{0, \ldots, \tau - 1\}$.
\cref{fig:omega} depicts a realization of the functions $\omega$ and $\bar{\omega}$.

In addition to the above two functions, we also investigate their reflections $\chi(\delta) := \omega(-\delta)$ and $\bar{\chi}(\delta) := \bar{\omega}(-\delta)$ (where $\delta \in \mathbb{R}_+$), that is,
\begingroup
\footnotesize
\makeatletter
\let\orig@tagform@\tagform@
\def\tagform@#1{\maketag@@@{\normalsize(\ignorespaces#1\unskip\@@italiccorr)}}
\makeatother

\begin{align}
	& \chi(\delta) = \left\{
	\begin{aligned}
		&0, &&~\text{if}~ \delta = 0, \\
		&h(\delta - A_i  + v_i) + \chi (A_i ) - h(v_i),
		&&~\text{if}~ A_i  < \delta \leq B_{i+1}, \ i = 0, 1, \dots, \\
		& h(\delta - A_i  -\ell \epsilon + v_i) + \chi (A_i ) + \ell \psi_{i} - h(v_i), 
		&&~\text{if}~ B_{i+1}  + \ell \epsilon < \delta \leq B_{i+1}  + (\ell + 1) \epsilon, \\
		& && ~~i = 0, 1, \dots \ \text{and} \ \ell = 0,\ldots,\tau - 1, 
	\end{aligned}
	\right.
	\label{chi}\\[1mm]
	& \bar{\chi} (\delta) = \left\{
	\begin{aligned}
		& \chi(\delta), 
		\hspace{5.2cm} \text{if}~ 0 \leq \delta \leq B _{m} + (\gamma + 1) \epsilon, \\
		& h(\delta - A _{m - 1} - \gamma \epsilon + v_{m - 1}) + \chi (A _{m - 1}) + \gamma \psi_{m - 1} - h\left(v_{m - 1} \right), \\
		& \hspace{5.9cm} \text{if}~ \delta > B _{m} + (\gamma + 1) \epsilon,
	\end{aligned}
	\right.
	\label{barchi}
\end{align}
\makeatletter
\let\tagform@\orig@tagform@
\makeatother
\endgroup
where $h(z) := g(-z)$ is the reflection of $g$ and is therefore concave (as $g$ is concave).
Moreover, $\psi_i$ can be computed from $h$ as follows:
\begin{equation}\label{def-rho}
	\psi_i = h\left(b_{i + 1} + v_{i} + \epsilon\right) - h(b_{i + 1} + v_{i}),
	\quad ~\text{for} ~i = 0, 1, \ldots.
\end{equation}

The following theorem establishes the subadditivity of the above four functions.  
The proof can be found in \cref{proof_4.21}.
\begin{theorem} \label{thm:chi-sub}
	(i) Functions $\omega$ and $\bar{\omega}$ are subadditive on $\R_-$; and (ii)
	functions $\chi$ and $\bar{\chi}$ are subadditive on $\R_+$.
\end{theorem}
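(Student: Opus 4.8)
The plan is to reduce part~(ii) to part~(i) and then concentrate all the work on the subadditivity of $\omega$ (and $\bar\omega$) on $\R_-$. Since $h(z)=g(-z)$ is concave and, by \eqref{chi}--\eqref{barchi}, $\chi(\delta)=\omega(-\delta)$ and $\bar\chi(\delta)=\bar\omega(-\delta)$, for any $\delta_1,\delta_2\in\R_+$ we have $-\delta_1,-\delta_2,-(\delta_1+\delta_2)\in\R_-$ and hence $\chi(\delta_1)+\chi(\delta_2)=\omega(-\delta_1)+\omega(-\delta_2)\ge\omega(-(\delta_1+\delta_2))=\chi(\delta_1+\delta_2)$, the inequality coming from part~(i); the identical computation yields $\bar\chi$ from $\bar\omega$. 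Thus (ii) follows from (i) verbatim, and I only need $\omega,\bar\omega$ subadditive on $\R_-$. I note at the outset that this cannot be obtained from a cheap concavity argument: as already happens for $\zeta$ (cf.\ \cref{liftzeta-new}), the pieces of $\omega$ are vertically shifted copies of $g$ whose one-sided slopes jump \emph{upward} at the breakpoints $-A_i$ and $-B_{i+1}-\ell\epsilon$, so $\omega$ is only piecewise concave, not concave. The right tool is therefore \cref{cor:sum-pariwise-comparison}.

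Before the main estimate I would record the structural features of $\omega$. On every piece $P$, formula \eqref{omega} has the shape $\omega(\delta)=g(\delta+c_P)+d_P$ for constants $c_P,d_P$ read off from the piece, and the base values $\omega(-A_i)$ telescope: evaluating \eqref{omega} at the breakpoints gives a recursion expressing $\omega(-A_i)$ as a running sum of terms $g(-b_{j+1}-v_j-\epsilon)-g(-v_j)$ together with multiples of $\psi_j$. Using $\psi_i=g(-b_{i+1}-v_i-\epsilon)-g(-b_{i+1}-v_i)$ from its definition, every such base value, and hence every constant $d_P$, is a signed combination of values of $g$. This is what lets me turn the quantity whose sign I must control, namely $\omega(\delta_1)+\omega(\delta_2)-\omega(\delta_1+\delta_2)$, into a pure $g$-combination $H=\sum_i\tau_i g(c_i)$. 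The hypotheses in \eqref{def:b}, namely $b_i\ge b_{i+1}\ge 0$ and $b_{i+2}+v_{i+1}\ge b_{i+1}+v_i$, are exactly the monotonicity that, together with the concavity of $g$, will let me order the arguments appearing in $H$ so as to meet condition~(ii) of \cref{cor:sum-pariwise-comparison}.

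The core of the proof is then a direct estimate. Fixing $\delta_1,\delta_2\le 0$, I would locate each of $\delta_1$, $\delta_2$, and $\delta_1+\delta_2$ in its piece of \eqref{omega} (a $b$-piece or one of the $\tau$ $\epsilon$-pieces of some block), substitute the three shifted-$g$ expressions, expand the base values through the telescoping recursion, and collect everything into $H$. Because each piece is length-preserving, the weighted sum of arguments collapses, verifying the balance condition \eqref{ccondition}; the slope/monotonicity facts from the previous step then supply the ordering conditions of \cref{cor:sum-pariwise-comparison}, giving $H\ge 0$. For $\bar\omega$ the same scheme applies once I observe that, below its threshold $-B_m-(\gamma+1)\epsilon$, $\bar\omega$ is the single shifted-$g$ piece of block $m-1$ with $\ell=\gamma$ continued to $-\infty$, so the only new cases are those in which one or both arguments fall in this tail, and there the comparison is again handled by \cref{lem:slope}/\cref{cor:sum-pariwise-comparison} using the same slope ordering. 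The main obstacle is precisely the bookkeeping in this expansion: since the block widths $a_i=b_i+\tau\epsilon$ are unequal, the block index of $\delta_1+\delta_2$ is not determined by those of $\delta_1$ and $\delta_2$ (contrast the equal-width situation of \cref{lem:Z-subadditive-on-R}), so the case split must be organized around the possible index offsets, and in each case one must check that the telescoped $g$-terms regroup with coefficients and arguments satisfying both \eqref{ccondition} and the ordering hypotheses of \cref{cor:sum-pariwise-comparison}.
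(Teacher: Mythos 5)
Your reduction of part (ii) to part (i) via $\chi(\delta)=\omega(-\delta)$ and $\bar{\chi}(\delta)=\bar{\omega}(-\delta)$ is correct and is simply the mirror image of what the paper does (the paper reduces (i) to (ii) and works with $\chi,\bar{\chi}$ on $\R_+$), and your observation that \cref{cor:sum-pariwise-comparison} rather than plain concavity is the right tool is also on target. The gap is in the core step. You propose to expand the subadditivity defect into a single signed combination $H$ of values of $g$ (equivalently $h$) and to dispose of it by one application of \cref{cor:sum-pariwise-comparison}, asserting that the hypotheses in \eqref{def:b} "supply the ordering conditions." That assertion is precisely what needs proof, and it fails for the natural grouping. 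Condition (ii) of \cref{cor:sum-pariwise-comparison} requires \emph{every} interval in the positive group to lie to the left of \emph{every} interval in the negative group. Working on the $\chi$ side for concreteness, already in the simplest configuration $\tau=1$, $\delta_1,\delta_2\in(0,A_1]$, $\delta_1+\delta_2\in(A_1,A_2]$, the telescoped expression is
\[
H=[h(\delta_1+v_0)-h(v_0)]-[h(a_1+v_0)-h(\delta_2+v_0)]-[h(\delta_1+\delta_2-a_1+v_1)-h(v_1)],
\]
and the required ordering $\delta_1+v_0\le \delta_1+\delta_2-a_1+v_1$ forces $\delta_2\ge a_1-(v_1-v_0)$, which is violated for instance when $b_1=b_2$, $v_1=v_0$, $\delta_1=a_1$, and $0<\delta_2<a_1$. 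One must first split the positive difference at an interior point before pairing terms, and in general (many blocks, $\tau>1$, arguments landing in different $\epsilon$-subpieces) finding the right splitting is the entire difficulty. Moreover, organizing the cases "around the possible index offsets" does not give a finite case split: because the block widths $a_i=b_i+\tau\epsilon$ are unequal and nonincreasing, the block index of $\delta_1+\delta_2$ can exceed the sum of the indices of $\delta_1$ and $\delta_2$ by an arbitrarily large amount, unlike the two-case situation in \cref{lem:Z-subadditive-on-R}.

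What the paper actually proves instead is a family of increment-domination statements---the increment of $\chi$ over an interval of fixed length does not increase as the interval is translated rightward in a block-respecting way---established by nested inductions (on the translation index within a block, on $\tau$, and on the number of blocks separating the two intervals), each step being a \emph{local} two- or three-term application of \cref{lem:slope} or \cref{cor:sum-pariwise-comparison}; subadditivity then follows by telescoping $\chi(\delta_1)$ over blocks. For $\bar{\chi}$, the case where both arguments lie below the threshold $B_m+(\gamma+1)\epsilon$ but their sum lies above it is also not a routine slope comparison: the paper constructs an auxiliary instance of $\chi$ (with $b_i=0$ and $v_{i-1}=b_m+v_{m-1}$ for $i>m$) that agrees with $\bar{\chi}$ up to the threshold and dominates it beyond, and then invokes the already-proved subadditivity of $\chi$. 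Your sketch anticipates neither mechanism, and without them the argument does not go through.
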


It deserves to mention that \cref{thm:chi-sub} also provides a framework to construct subadditive functions or prove subadditivity of known functions 
(though our primary focus is to establish the subadditivity of $\eta^L$ and $\eta^U$).
In particular, as applications of  \cref{thm:chi-sub}, we show that several known subadditive functions in the literature \cite{Atamturk2003,Shi2022} can be seen as special cases of functions $\chi$ and $\bar{\chi}$ (i.e., the reflection of $\omega$ and $\bar{\omega}$).
\begin{figure}[t]
	\centering
	\begin{tikzpicture}[scale=\textwidth/10cm]

		\draw[line width=0.2mm] (-9.7,1.69) rectangle (0,-3.37);

		\draw[domain=-2.2:0, color=blue, line width=0.3mm]   plot (\x, {-( -\x-1.3)^2+1.69});
		\draw[domain=-2.7:-2.2, color=blue, line width=0.3mm] plot (\x, {-( -\x-1.8)^2+1.69 -0.65});
		\draw[domain=-3.2:-2.7, color=blue, line width=0.3mm] plot (\x, {-( -\x-0.5*2+1.7-3)^2+1.69 -0.65*2});
		
		\draw[domain=-5.2:-3.2, color=blue, line width=0.3mm] plot (\x, {-( -\x-3.2+1.7-3)^2-0.42+1.3^2});
		\draw[domain=-5.7:-5.2, color=blue, line width=0.3mm] plot (\x, {-( -\x-3.2+1.7-0.5-3)^2-0.42-0.45+1.3^2});
		\draw[domain=-6.2:-5.7, color=blue, line width=0.3mm] plot (\x, {-( -\x-3.2+1.7-0.5*2-3)^2-0.42-0.45*2+1.3^2});
		
		\draw[domain=-7.7:-6.2, color=blue, line width=0.3mm] plot (\x, {-( -\x-6.2+2-3)^2-0.12+1});
		\draw[domain=-8.2:-7.7, color=blue, line width=0.3mm] plot (\x, {-( -\x-6.2-0.5+2-3)^2-0.12-0.25+1});
		\draw[domain=-8.7:-8.2, color=blue, line width=0.3mm] plot (\x, {-( -\x-6.2-0.5*2+2-3)^2-0.12-0.25*2+1});
		
		\draw[domain=-9.7:-8.7, color=blue, line width=0.3mm] plot (\x, {-( -\x-8.7+2.5-3)^2+0.13+0.25});

		\draw[domain=-2.2:0, dashed, color=red, line width=0.3mm]   plot (\x, {-( -\x-1.3)^2+1.69});
		\draw[domain=-2.7:-2.2, dashed, color=red, line width=0.3mm] plot (\x, {-( -\x-1.8)^2+1.69 -0.65});
		\draw[domain=-3.2:-2.7, dashed, color=red, line width=0.3mm] plot (\x, {-( -\x-0.5*2+1.7-3)^2+1.69 -0.65*2});
		
		\draw[domain=-5.2:-3.2, dashed, color=red, line width=0.3mm] plot (\x, {-( -\x-3.2+1.7-3)^2-0.42+1.3^2});
		\draw[domain=-5.7:-5.2, dashed, color=red, line width=0.3mm] plot (\x, {-( -\x-3.2+1.7-0.5-3)^2-0.42-0.45+1.3^2});
		\draw[domain=-6.2:-5.7, dashed, color=red, line width=0.3mm] plot (\x, {-( -\x-3.2+1.7-0.5*2-3)^2-0.42-0.45*2+1.3^2});
		
		\draw[domain=-7.7:-6.2, dashed, color=red, line width=0.3mm] plot (\x, {-( -\x-6.2+2-3)^2-0.12+1});
		\draw[domain=-9.7:-7.7, dashed, color=red, line width=0.3mm] plot (\x, {-( -\x-6.2-0.5+2-3)^2-0.12-0.25+1});

		\draw[gray!50, dashed]      (-1.7,1.69) -- (-1.7,-3.37);
		\draw[gray!50, dashed,thin] (-2.2,1.69) -- (-2.2,-3.37);
		\draw[gray!50, dashed,thin] (-2.7,1.69) -- (-2.7,-3.37);
		\draw[gray!50, dashed,thin] (-3.2,1.69) -- (-3.2,-3.37);
		
		\draw[gray!50, dashed,thin] (-4.7,1.69) -- (-4.7,-3.37);
		\draw[gray!50, dashed,thin] (-5.2,1.69) -- (-5.2,-3.37);
		\draw[gray!50, dashed,thin] (-5.7,1.69) -- (-5.7,-3.37);
		\draw[gray!50, dashed,thin] (-6.2,1.69) -- (-6.2,-3.37);
		
		\draw[gray!50, dashed,thin] (-7.2,1.69) -- (-7.2,-3.37);
		\draw[gray!50, dashed,thin] (-7.7,1.69) -- (-7.7,-3.37);
		\draw[gray!50, dashed,thin] (-8.2,1.69) -- (-8.2,-3.37);
		\draw[gray!50, dashed,thin] (-8.7,1.69) -- (-8.7,-3.37);

		\node at(-0.4,-2) {\small $\omega(\delta)$};
		\draw[color=blue,line width=0.3mm] (-0.8,-2) -- (-1.3,-2);
		\node at(-0.4,-2.5) {\small $\bar{\omega}(\delta)$};
		\draw[color=red,dashed,line width=0.3mm] (-0.8,-2.5) -- (-1.3,-2.5);
		
		\node[below] at(0,-3.37) {\small $0$};
		
		\node[below] at(-1.7,-3.37) {\small $-B_1$};
		\draw[<->,>=stealth] (-1.75,-3.2) -- (-2.15,-3.2);
		\node[above] at(-1.95,-3.15) {\small $\epsilon$};
		\draw[<->,>=stealth] (-2.25,-3.2) -- (-2.65,-3.2);
		\node[above] at(-2.45,-3.15) {\small $\epsilon$};
		\draw[<->,>=stealth] (-2.75,-3.2) -- (-3.15,-3.2);
		\node[above] at(-2.95,-3.15) {\small $\epsilon$};
		\node[below] at(-3.2,-3.37) {\small $-A_1$};
		
		\node[below] at(-4.7,-3.37) {\small $-B_2$};
		\draw[<->,>=stealth] (-4.75,-3.2) -- (-5.15,-3.2);
		\node[above] at(-4.95,-3.15) {\small $\epsilon$};
		\draw[<->,>=stealth] (-5.25,-3.2) -- (-5.65,-3.2);
		\node[above] at(-5.45,-3.15) {\small $\epsilon$};
		\draw[<->,>=stealth] (-5.75,-3.2) -- (-6.15,-3.2);
		\node[above] at(-5.95,-3.15) {\small $\epsilon$};
		\node[below] at(-6.2,-3.37) {\small $-A_2$};
		
		\node[below] at(-7.2,-3.37) {\small $-B_3$};
		\draw[<->,>=stealth] (-7.25,-3.2) -- (-7.65,-3.2);
		\node[above] at(-7.45,-3.15) {\small $\epsilon$};
		\draw[<->,>=stealth] (-7.75,-3.2) -- (-8.15,-3.2);
		\node[above] at(-7.95,-3.15) {\small $\epsilon$};
		\draw[<->,>=stealth] (-8.25,-3.2) -- (-8.65,-3.2);
		\node[above] at(-8.45,-3.15) {\small $\epsilon$};
		\node[below] at(-8.7,-3.37) {\small $-A_3$};
		
		\node[below] at(-9.7,-3.37) {\small $-B_4$};
	\end{tikzpicture}
	\caption{Functions $\omega$ and $\bar{\omega}$ of the example: $g(z) = - (z - c)^2$, $\tau = 3$, $m = 3$, and $\gamma = 1$.}
	\label{fig:omega}
\end{figure}
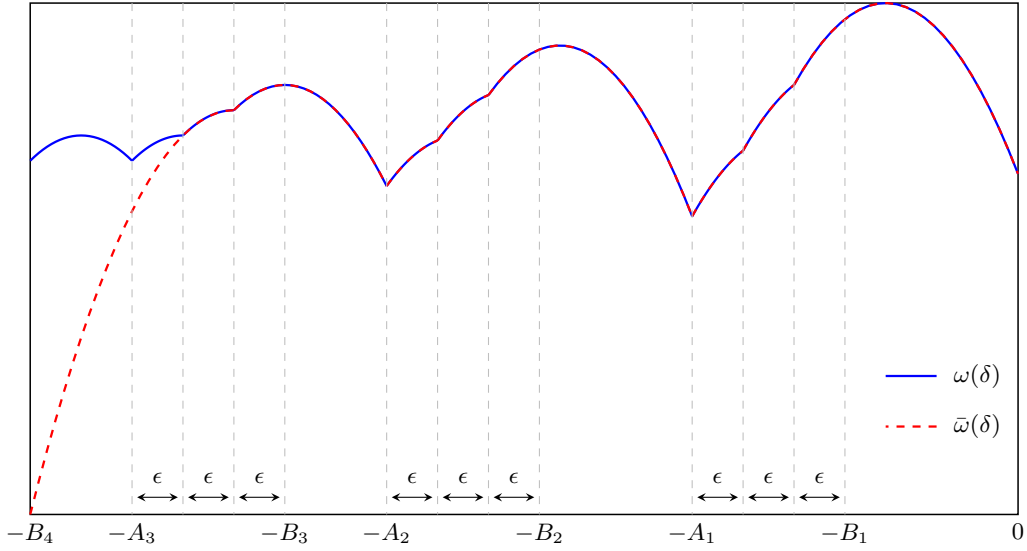

\begin{example}[Section 2.3 in \cite{Shi2022}]
	Suppose that $\tau = 1$.
	Then, $B_{i + 1} + \epsilon = A_i + b_{i + 1} + \epsilon =  A_i + a_{i + 1} =  A_{i + 1}$
	for $i \in \Z_+$.
	As a result, $\chi$ reduces to 
	\begin{equation*}
		\chi(\delta) = \left\{
		\begin{aligned}
			& 0, 
			&& ~\text{if}~ \delta = 0, \\
			& h(\delta - A_i + v_i) + \chi(A_i) - h(v_i),
			&& ~\text{if}~ A_i < \delta \leq A_{i+1}, \ i = 0,1, \ldots,
		\end{aligned}
		\right.
	\end{equation*}
	which is the subadditive function proposed in \cite[Eq. (11)]{Shi2022}.
	For $\bar{\chi}$, as $\gamma \in \{0, \ldots, \tau - 1\}$ and $\tau = 1$, $\gamma = 0$ must hold.
	Thus, $\bar{\chi}$ reduces to
	\begin{equation*}
		\begin{small}
			\bar{\chi}(\delta) = \left\{
			\begin{aligned}
				& 0, 
				&& ~\text{if}~ \delta = 0, \\
				& h(\delta - A_i + v_i) + \chi(A_i) - h(v_i),
				&& ~\text{if}~ A_i < \delta \leq A_{i+1}, \ i = 0, \ldots, m - 1, \\
				& h(\delta - A_{m - 1} + v_{m - 1}) + \chi(A_{m - 1}) - h(v_{m - 1}),
				&& ~\text{if}~ \delta > A_{m}, \\
			\end{aligned}
			\right.
		\end{small}
	\end{equation*}
	which matches another subadditive function proposed in \cite[Eq. (12)]{Shi2022}. \qedhere
\end{example}

Next, we show that $\chi$ and $\bar{\chi}$ generalize the subadditive functions in \cite{Atamturk2003}. We first compute $\chi(A_i)$:
\begin{equation*}
	\begin{small}
		\begin{aligned}
			\chi (A_i )  & \stackrel{(a)}{=} \chi (B_i  + \tau \epsilon) \\
			& = h(B_i  + \tau \epsilon - A_{i-1}  - (\tau - 1) \epsilon + v_{i - 1}) + \chi (A_{i-1} ) + (\tau - 1) \psi_{i - 1} - h(v_{i - 1}) \\
			& = h(b_i + v_{i-1} + \epsilon) + \chi (A_{i-1} ) + (\tau - 1) \psi_{i - 1} - h(v_{i - 1}) \\
			& = \chi (A_{i-1} ) + (\tau - 1) \psi_{i - 1} + (h(b_i + v_{i-1} + \epsilon) - h(b_i + v_{i-1})) + (h(b_i + v_{i-1}) - h(v_{i - 1})) \\
			& = \chi (A_{i-1} ) + \tau \psi_{i - 1} + (h(b_i + v_{i-1}) - h(v_{i - 1})),
		\end{aligned}
	\end{small}
\end{equation*}
where (a) follows from $A_i = A_{i - 1} + a_i = A_{i - 1} + b_i + \tau \epsilon = B_i  + \tau \epsilon$.
Together with $\chi(A_0) = \chi(0) = 0$, we have 
\begin{equation}\label{chi_Ai}
	\chi(A_i) = \tau \sum_{j = 1}^{i} \psi_{j - 1} + \sum_{j = 1}^{i} (h(b_j + v_{j - 1}) - h(v_{j - 1})).
\end{equation}

\begin{example}[Section 3.3 in \cite{Atamturk2003}]\label{example1}
	Suppose that $h(z) = \min\{0, -z\}$ and $b_{i+1} + v_{i}= -\sigma$ holds for all $i =0, 1,\ldots $, where $0 \leq \sigma \leq \epsilon$.
	By definition, for $i =0, 1,\ldots$, it follows that
	\begin{align*}
		& h(b_{i + 1} + v_{i}) = h(-\sigma) = 0, ~
		h(v_{i}) = h(-\sigma - b_{i + 1}) = 0, \\
		& \psi_{i} = h(b_{i + 1} + v_{i} + \epsilon) - h(b_{i + 1} + v_{i}) = h(\epsilon - \sigma) - h(-\sigma) = -(\epsilon - \sigma).
	\end{align*}
	Thus, from \eqref{chi_Ai}, we have $\chi (A_i ) = -i\tau(\epsilon - \sigma)$.\\[5pt]
	{\bf Claim 1} The function $\chi$ in \eqref{chi} reduces to
	\begin{equation*}
		\chi (\delta) = \left\{
		\begin{aligned}
			& 0, 
			&& ~\text{if}~\delta = 0, \\
			& -i\tau(\epsilon - \sigma), 
			&& ~\text{if}~A_i  < \delta \leq B_{i+1} ,~i=0, 1,\ldots,  \\
			& -(i \tau + \ell) (\epsilon - \sigma), 
			&& ~\text{if}~B_{i+1}  + \ell \epsilon < \delta \leq B_{i+1}  + \ell \epsilon + \sigma, \\
			&&& ~~ i = 0, 1, \dots \ \text{and} \ \ell = 0,\ldots,\tau - 1, \\ 
			& -i \tau(\epsilon - \sigma) - \delta + B_{i+1}  + (\ell + 1)\sigma,
			&& ~\text{if}~B_{i+1}  + \ell \epsilon + \sigma < \delta \leq B_{i+1}  + (\ell + 1) \epsilon,\\
			&&& ~~ i = 0, 1, \dots \ \text{and} \  \ell = 0,\ldots,\tau - 1,
		\end{aligned}
		\right.
	\end{equation*}
	which matches the subadditive function proposed in \cite[Section 3.3]{Atamturk2003}.
	\begin{proof}
		We prove the claim by considering the following two cases.
		\begin{itemize}
			\item [(i)] $A_i  < \delta \leq B_{i+1} $.
			Then $\delta - A_i + v_i \leq B_{i+1} - A_i + v_i 
			= b_{i+1} + v_i = -\sigma \leq 0$, and thus $h(\delta - A_i + v_i)= 0$.
			As a result,
			\begin{align*}
				\chi (\delta)  = h(\delta - A_i + v_i) + \chi (A_i) - h(v_i)
				= \chi (A_i) - h(v_i) \stackrel{(a)}{=} -i\tau(\epsilon - \sigma).
			\end{align*}
			where (a) follows from $\chi (A_i) = -i\tau(\epsilon - \sigma)$ and $h(v_i) = 0$.
			\item [(ii)] $B_{i+1}  + \ell \epsilon < \delta \leq B_{i+1}  + (\ell + 1) \epsilon$ for some $\ell \in \{0, \ldots, \tau - 1\}$.
			Then
			\begin{equation*}
				\begin{aligned}
					h(\delta - A_i  - \ell \epsilon + v_i) & = \min\{0, - (\delta - A_i  - \ell \epsilon + v_i)\} \\
					& \stackrel{(a)}{=} \min\{0, - (\delta - B_{i + 1}  - \ell \epsilon - \sigma)\} \\
					& = \left\{
					\begin{aligned}
						& 0, 
						&& ~\text{if}~ \delta \leq B_{i + 1}  + \ell \epsilon + \sigma, \\
						& -\delta + B_{i + 1}  + \ell \epsilon + \sigma,
						&& ~\text{if}~ \delta > B_{i + 1}  + \ell \epsilon + \sigma.
					\end{aligned}
					\right.
				\end{aligned}
			\end{equation*}
			where (a) follows from $v_i = - \sigma - b_{i + 1}$.
			This, together with $\chi (A_i ) = -i\tau(\epsilon - \sigma)$, $\psi_{i} = -(\epsilon - \sigma)$, and $h(v_i) = 0$, yields
			\begin{equation*}
				\small
				\begin{aligned}
					\chi (\delta) & = h(\delta - A_i  -\ell \epsilon + v_i) + \chi (A_i ) + \ell \psi_{i} - h(v_i) \\
					& = h(\delta - A_i  -\ell \epsilon + v_i) - (i \tau + \ell)(\epsilon - \sigma)  \\
					& = \left\{
					\begin{aligned}
						& - (i \tau + \ell)(\epsilon - \sigma) ,
						&& \text{ if } B_{i+1}  + \ell \epsilon < \delta \leq B_{i+1}  + \ell \epsilon + \sigma, \\
						& -i \tau(\epsilon - \sigma) - \delta + B_{i+1}  + (\ell + 1)\sigma,
						&& \text{ if } B_{i+1}  + \ell \epsilon + \sigma < \delta \leq B_{i+1}  + (\ell + 1) \epsilon. \qedhere
					\end{aligned}
					\right.
				\end{aligned}
			\end{equation*}
		\end{itemize}
	\end{proof}
	\noindent {\bf Claim 2} 
	Let $\gamma = \tau - 1$ in the definition of $\bar{\chi}$.
	Then the function $\bar{\chi}$ in \eqref{barchi} reduces to:
	\begin{equation*}
		\begin{aligned}
			\bar{\chi} (\delta) & = \left\{
			\begin{aligned}
				& \chi (\delta), && \text{if}~ 0 \leq \delta \leq A_{m }, \\
				& -\delta + A_m + \chi(A_m),
				&& \text{if}~ \delta > A_{m}.
			\end{aligned}
			\right. 
		\end{aligned}
	\end{equation*}
	which matches another subadditive function proposed in \cite[Section 3.3]{Atamturk2003}.
	\begin{proof}
		Observe that $B_m + (\gamma + 1) \epsilon = B_m + \tau \epsilon = A_{m - 1} + b_m + \tau \epsilon = A_{m - 1} + a_m = A_{m}$.
		Then for $\delta > A_m$, we have
		\begin{align*}
			\delta - A_{m - 1} - (\tau - 1)\epsilon + v_{m - 1} 
			& >  A_{m} - A_{m - 1} - (\tau - 1)\epsilon + v_{m - 1} \\
			& = a_m  - (\tau - 1)\epsilon + v_{m - 1} \\
			& \stackrel{(a)}{=} b_m + v_{m - 1} + \epsilon\\
			& \stackrel{(b)}{=} \epsilon - \sigma \geq 0,
		\end{align*}
		where (a)  and (b) follows from $a_m = b_m + \tau \epsilon$ and $b_m + v_{m - 1} = - \sigma$, respectively.
		As a result,
		\begin{align*}
			& \quad ~ h(\delta - A _{m - 1} - (\tau - 1) \epsilon + v_{m - 1}) + \chi (A _{m - 1}) + (\tau - 1) \psi_{m - 1} - h\left(v_{m - 1} \right) \\
			& = - \delta + A _{m - 1} + (\tau - 1) \epsilon - v_{m - 1} + \chi (A _{m - 1}) + (\tau - 1) \psi_{m - 1} - h\left(v_{m - 1} \right) \\
			& \stackrel{(a)}{=} - \delta + A _{m - 1} + (\tau - 1) \epsilon - v_{m - 1} + \chi (A _{m - 1}) - (\tau - 1) (\epsilon - \sigma) \\
			& \stackrel{(b)}{=} - \delta + A _{m} - a_m + (\tau - 1) \epsilon - (- \sigma - b_m) + \chi (A _{m - 1}) - (\tau - 1) (\epsilon - \sigma) \\
			& \stackrel{(c)}{=} - \delta + A _{m}  + \chi (A _{m - 1}) - \tau  (\epsilon - \sigma) \\
			& \stackrel{(d)}{=} - \delta + A_m + \chi(A_m),
		\end{align*}
		where (a) follows from $\psi_{m - 1} = - (\epsilon - \sigma)$ and $h(v_{m - 1}) = 0$,
		(b) follows from $A_m = A_{m - 1} + a_m$ and $v_{m - 1} = - \sigma - b_m$,
		(c) follows from $a_m - b_m = \tau \epsilon$,
		and (d) follows from $\chi(A_i) = -i \tau (\epsilon - \sigma)$ for any $i \in \mathbb{Z}_+$. \qedhere
	\end{proof}
\end{example}
\section{Applications}\label{sec:special-case}
In this section, we apply our polyhedral results on the mixed-integer nonlinear set $\Fset$ to several previously investigated sets, including the submodular maximization set $\FsetB$, the mixed-integer knapsack set $\Kset$, the mixed-integer polyhedral conic set $\Cset$, and the two-row mixed-integer set.
Our results show that some previous strong valid inequalities for these sets can be seen as special cases of our proposed single- and two-phase lifted inequalities in \eqref{eq:single-phase}, \eqref{facet-ineq1}, \eqref{eq:lifted-ineq1}, \eqref{eq:lifted-ineq2}, and \eqref{facet-ineq2}.
In addition, our results may also identify new strong valid inequalities for these sets.

\subsection{Strong valid inequalities for the submodular maximization set}\label{subsec:special-binary}

{Recall that when the integer variables $x$ in the mixed-integer nonlinear set $\Fset$ are binary (i.e., $\mu_i = 1$ for all $i \in [n]$), $\Fset$ reduces to the submodular maximization set
\begin{equation}
	\FsetB = \left\{(w,x) \in \R \times \{0,1\}^n \,:\, w \leq f(a^\top x)\right\}.
\end{equation} 
\citet{Ahmed2011} first established the submodularity of the function $f(a^\top x)$ and employed the sequence-independent lifting technique to strengthen the classical submodular inequalities of \citet{Nemhauser1988}.
\citet{Shi2022} further developed two families of valid inequalities which are facet-defining for $\conv(\FsetB)$ and stronger than those in \cite{Ahmed2011}.
In the following, we apply our polyhedral results for $\conv(\Fset)$ in \cref{sec:twosteplifting,sec:lift-one-phase} to $\conv(\FsetB)$. 
Our results show that for $\conv(\FsetB)$ (i) the proposed single-phase lifting procedure in \cref{sec:lift-one-phase} can identify new facet-defining  inequalities \eqref{eq:single-phase} for $\conv(\FsetB)$ and (ii) the proposed two-phase lifting procedure in \cref{sec:twosteplifting} can identify the same facet-defining inequalities as those of \citet{Shi2022}.
\\[0.5em]{\it \textbf{Single-phase lifted inequalities for conv$(\FsetB)$}}\\[0.5em]
\indent Observe that in this binary case, $k$ takes the value $1$ in inequality \eqref{eq:seedInq};
$g(z) = f(z + \sum_{i \in S_1} a_i \mu_i) = f(z + a(S_1))$, where $a(T) : = \sum_{i \in T} a_i$ for $T \subseteq [n]$  and 
$\rho_s(k) = f(a(S_1 \cup s)) - f(a(S_1))$. 
As a result, the subadditive approximation $Z$ in \eqref{funZ} reduces to
\begin{small}
	\begin{equation*}
		Z(\delta) = f(\delta + a(S_1) - \ell a_s) + \ell (f(a(S_1 \cup s)) - f(a(S_1))) - f(a(S_1)), 
		\ \text{if} \ \ell a_s \leq \delta < (\ell + 1) a_s, \ \ell \in \Z,
	\end{equation*}
\end{small}
and the single-phase lifted inequality \eqref{eq:single-phase} is given by
\begin{equation}\label{eq:newineq}
	w \leq f(a(S_1)) + \sum_{i \in S_0}Z(a_i)x_i + \sum_{ i \in S_1}Z(-a_i)(1 - x_i) + \rho_s(1) x_s.
\end{equation}
From \cref{cor:facet-cond}, if $0 \leq a_i \leq a_s$ holds for all $i \in [n]\backslash s$, then \eqref{eq:newineq} is facet-defining for $\conv(\FsetB)$.
However, even when this condition does not hold,  \eqref{eq:newineq} could still be facet-defining for $\conv(\FsetB)$, as shown in \cref{ex1}.
\\[0.5em]{\it \textbf{Two-phase lifted inequalities for conv$(\FsetB)$}}\\[0.5em]
\indent We now present the two-phase lifted inequalities and establish their connections with the inequalities of \citet{Shi2022}.
Note that function $\zeta$ in \eqref{calc:zeta} reduces to
\begin{equation*}
	\zeta(\delta) = \left\{
	\begin{aligned}
		& f(\delta + a(S_1 \cup s)) - f(a(S_1 \cup s)), 
		&& \text{if}~ \delta < 0, \\
		& f(\delta + a(S_1)) - f(a(S_1)),
		&& \text{if}~ \delta \geq 0.
	\end{aligned}
	\right.
\end{equation*}
We next present the two-phase lifted inequalities \eqref{facet-ineq1} and \eqref{facet-ineq2} for $\conv(\FsetB)$, separately.
\begin{itemize}
	\item [(1)]	By $k = 1$ and \cref{lem:eta} (ii), the closed formula for $\eta$ is given by:
	\begin{equation*}
		\begin{footnotesize}
			\eta(\delta) = \left\{
			\begin{aligned}
				& {f}(\delta + A_{i + 1} + a(S_1)) - \sum_{j = 1}^{i + 1}\zeta(a_j) - {f}(a(S_1)), 
				&& \text{if}~  - A_{i + 1} < \delta \leq - A_{i}, ~ i = 0,\ldots,r, \\
				& {f}(\delta + a([n])) - \sum_{j = 1}^{r+1}\zeta(a_j) - {f}(a(S_1)), 
				&& \text{if}~ \delta \leq  - A_{r + 1},
			\end{aligned}
			\right.
		\end{footnotesize}
	\end{equation*}
	where we assume $S_0 \cup s = [r + 1]$ (with $r \in \Z_+$) with $a_1 \geq a_2 \geq \cdots \geq a_{r + 1}$, and let $A_0 = 0$ and $A_i = \sum_{j = 1}^{i} a_i$ for $i \in [r + 1]$.
	By \cref{thm:facet1} and $k = 1$, we obtain the facet-defining inequality for $\conv(\FsetB)$:
	\begin{equation}\label{eq:twophase1}
		w \leq f\left(a(S_1)\right) + \sum_{i \in S_0 \cup s} \left[ f(a(S_1 \cup i)) - f(a(S_1)) \right] x_i 
		+ \sum_{i \in S_1}\eta(-a_i) (1 - x_i),
	\end{equation}
	which is indeed the inequality proposed by \citet[Eq. (19)]{Shi2022}.
	\item [(2)] By $k = \mu_s = 1$ and \cref{thm:facet2} (ii), the closed formula for $\phi$ is given by:
	\begin{equation*}
		\begin{footnotesize}
			\phi(\delta) = \left\{
			\begin{aligned}
				& {f}(\delta - A_{i+1} + A_{d + 1}) - \sum_{j = 1}^{i+1}\zeta(-a_j) - {f}(A_{d + 1}), 
				&& ~\text{if}~ A_{i} \leq \delta < A_{i+1}, ~ i = 0 ,\ldots,d, \\ 
				& {f}(\delta) - \sum_{j = 1}^{d+1}\zeta(-a_j) - f(A_{d + 1}), 
				&& ~\text{if}~ \delta \geq A_{d+1},
			\end{aligned}
			\right.
		\end{footnotesize}
	\end{equation*}
	where we assume $S_1 \cup s = [d + 1]$ (with $d \in \Z_{+}$) with $a_1 \geq a_2 \geq \cdots \geq a_{d + 1}$, and let $A_0 = 0$ and $A_i = \sum_{j = 1}^{i} a_i$ for $i \in [d + 1]$.	
	By \cref{thm:facet2} and $k = \mu_s = 1$, we obtain the facet-defining inequality for $\conv(\FsetB)$:
	\begin{equation}\label{eq:twophase2}
		w \leq f\left(a(S_1 \cup s)\right) 
		+ \sum_{i \in S_0}\phi(a_i) x_i
		+ \sum_{i \in S_1 \cup s} [ f(a(S_1\cup s) - a_i) - f(a(S_1\cup s)) ] (1 - x_i) ,
	\end{equation}
	which is equivalent to the inequality proposed by \citet[Eq. (16)]{Shi2022}.
\end{itemize}

Two remarks on inequalities \eqref{eq:newineq}--\eqref{eq:twophase2} are in order.
First, for an arbitrary 0-1 vector $x \in \{0, 1\}^n$, letting $S_1 = \{ i \in [n]  \,:\, x_i = 1 \}$, $s \in [n]\backslash S_1$, and 
$S_0 = [n] \backslash(S_1 \cup s) $,  
then the right-hand sides of inequalities \eqref{eq:newineq}--\eqref{eq:twophase2}
at the point $x$ are all equal to $f(a(S_1))= f(a^\top x)$.
As a result,  $\FsetB$ can be completely characterized by either of the classes of inequalities \eqref{eq:newineq}, \eqref{eq:twophase1}, or \eqref{eq:twophase2}, that is, 
\begin{equation}\label{tmpEq}
	\begin{aligned}
		\FsetB &= \{ (w,x) \in \mathbb{R}\times\{ 0,1 \}^n  :  \eqref{eq:newineq} \}\\
		& = \{ (w,x) \in \mathbb{R}\times\{ 0,1 \}^n  :  \eqref{eq:twophase1} \} \\
		& =\{ (w,x) \in \mathbb{R}\times\{ 0,1 \}^n :  \eqref{eq:twophase2} \}.
	\end{aligned}
\end{equation}
\noindent Second, although the two-phase lifting procedure guarantees to produce facet-defining inequalities \eqref{eq:twophase1} and \eqref{eq:twophase2}, the single-phase lifting procedure can construct facet-defining inequalities \eqref{eq:newineq} for $\conv(\FsetB)$, which are different from \eqref{eq:twophase1} and \eqref{eq:twophase2}; see the following example for an illustration.
\begin{example}\label{ex2}
	We continue with \cref{ex1}. 
	Recall that $\FsetB = \{(w,x) \in \R \times \{0,1\}^4 \,:\,
	w \leq f(x_1+2x_2+2x_3+3x_4)\}$ and that the single-phase lifting procedure yields the facet-defining inequality \eqref{eq:facet-ex} for $\conv(\FsetB)$, which can be rewritten as:
	\begin{equation}\label{eq:singlephase}
		w \leq f(3) + (f(4) - f(3)) (x_1 + 2 x_2 + 2 x_3 + 3 x_4 - 3).
	\end{equation}
	Letting $f(z) = - e^{- z}$, we demonstrate that for any selection of $s$, $S_0$, and $S_1$, the two-phase lifted inequality \eqref{eq:twophase1} is different from \eqref{eq:singlephase}:
	\begin{itemize}
		\item [(i)] $(S_0 \cup s) \cap \{2, 3, 4\} = \varnothing$.
		It follows that $s = 1$, $S_0 = \varnothing$, and $S_1 = \{2, 3, 4\}$. Then $f(a(S_1 \cup 1)) - f(a(S_1)) = f(8) - f(7) \neq f(4) - f(3)$, which implies that the coefficient of $x_1$ in \eqref{eq:twophase1} is different from  that in \eqref{eq:singlephase}.
		\item [(ii)] $(S_0 \cup s) \cap \{2, 3, 4\} \neq \varnothing$.
		Given any $i \in (S_0 \cup s) \cap \{2, 3, 4\}$ and $S_1 = [4] \backslash (S_0 \cup s)$, it can be easily verified that $f(a(S_1 \cup i)) - f(a(S_1)) \neq a_i (f(4) - f(3))$, and therefore, the coefficient of $x_i$ in \eqref{eq:twophase1} differs from that in \eqref{eq:singlephase}.
	\end{itemize}
	By a similar argument, we can show that for any selection of $s$, $S_0$, and $S_1$, the two-phase lifted inequality \eqref{eq:twophase2} is different from inequality \eqref{eq:singlephase}.
	This shows that the single-phase lifting procedure can identify facet-defining  inequalities \eqref{eq:newineq} for $\conv(\FsetB)$ that are different from the two-phase inequalities \eqref{eq:twophase1} and \eqref{eq:twophase2}.
\end{example}

\subsection{Strong valid inequalities for the mixed-integer knapsack, mixed-integer polyhedral conic, and two-row mixed-integer sets} \label{subsec:CMIR}
Consider the mixed-integer set defined by two linear inequalities and box constraints:
\begin{equation*}
	\Tset : = \left\{(v, x) \in \R \times \Z^n \,:\, v \leq b^1 - (a^1)^\top x, \ v \leq b^2 - (a^2)^\top x, \ 0 \leq x_i \leq \mu_i, \ \forall \ i \in [n] \right\},
\end{equation*}
where $a^1, a^2 \in \R^n$ and $b^1, b^2 \in \R$.
The set $\Tset$ has a wide range of applications.
In particular,
(i) when $a^2 = \mathbf{0}$ and $b^2 = 0$, $\Tset$ reduces to the \emph{mixed-integer knapsack set} $\Kset$ \cite{Atamturk2003, Atamturk2005,Marchand1999,Atamturk2010a};
(ii) when $a^2 = -a^1$ and $b^2 = -b^1$, 
the two linear inequalities can be written as $v \leq -|b^1- (a^1)^\top x|$ 
and $\Tset$ becomes the mixed-integer polyhedral conic set $\Cset$ \cite{Atamturk2010b}.
Moreover, $\Tset$ can be seen as a two-constraint relaxation/substructure of mixed-integer programming (\MIP) problems.
\citet{Nemhauser1990} derived a class of valid inequalities for $\Tset$ (known as \MIR inequalities) and 
\citet{Bodur2017} applied them to solve the \MIP reformulation of two-stage stochastic integer programming problems.

In this subsection, we first establish a theoretical relation between the polyhedral structures of $\conv(\Tset)$ and the mixed-integer knapsack polyhedron $\conv(\Kset)$; that is,
there exists a one-to-one correspondence between the faces of $\conv(\Tset)$ and those of $\conv(\Kset)$ with
$a= a^1 - a^2$ and $b = b^1 - b^2$.
This relation enables to develop strong valid inequalities for $\conv(\Tset)$ (and its special case $\conv(\Cset)$) from the strong valid inequalities for $\conv(\Kset)$. 
We then apply our single- and two-phase lifting procedures in \cref{sec:twosteplifting,sec:lift-one-phase} 
to derive strong valid inequalities for the mixed-integer knapsack polyhedron $\conv(\Kset)$. 
Our results show that (i) the proposed single-phase lifting procedure in \cref{sec:lift-one-phase} can identify the \MIR \cite{Nemhauser1990,Marchand2001}, conic \MIR \cite{Atamturk2010b}, two-row \MIR inequalities \cite{Bodur2017} for $\conv(\Kset)$, $\conv(\Cset)$, and $\conv(\Tset)$, respectively;
(ii) the proposed two-phase lifting procedure in \cref{sec:twosteplifting} can not only identify the mixed-integer  cover and pack inequalities \cite{Atamturk2003} for $\conv(\Kset)$ but also enable to develop new facet-defining  inequalities \eqref{eq:single-phase} for $\conv(\Cset)$ and $\conv(\Tset)$. 

\subsubsection{The relation between the faces of $\conv(\Tset)$ and $\conv(\Kset)$}
The following proposition characterizes that there exists a one-to-one correspondence between the faces of $\conv(\Tset)$ and $\conv(\Kset)$ with $a = a^1 - a^2$ and $b = b^1 - b^2$ (where we recall that $\Kset$ is defined in \eqref{Kset}).
\begin{proposition}\label{face-correspond}
	Inequality $\beta v \leq \pi^\top x - \pi_0$
	defines a $k$-dimensional face of $\conv(\Tset)$ if and only if $\beta w \leq (\beta a^2 + \pi)^\top x  - \beta b^2 - \pi_0$
	defines a $k$-dimensional face of $\conv(\Kset)$ with $a = a^1 - a^2$ and $b = b^1 - b^2$.
\end{proposition}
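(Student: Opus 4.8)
The plan is to exhibit an explicit affine bijection between $\Tset$ and $\Kset$ (with $a = a^1 - a^2$ and $b = b^1 - b^2$) and then transport the face correspondence through it, following the same template as the proof of \cref{prop:faces}. Concretely, I would define the affine map $\phi(v, x) := (v - b^2 + (a^2)^\T x, \, x)$, whose inverse is $\phi^{-1}(w, x) = (w + b^2 - (a^2)^\T x, \, x)$. Both maps are affine and invertible on $\R \times \R^n$ (the $x$-block is the identity and $v \mapsto w$ is a shear), so $\phi$ is an affine bijection of $\R^{n+1}$.

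The first step is to verify that $\phi(\Tset) = \Kset$. Writing $w = v - b^2 + (a^2)^\T x$, the constraint $v \le b^2 - (a^2)^\T x$ is equivalent to $w \le 0$, while $v \le b^1 - (a^1)^\T x$ becomes $w \le (b^1 - b^2) - (a^1 - a^2)^\T x = b - a^\T x$; the box constraints on $x$ are untouched since $\phi$ fixes the $x$-coordinates. Hence $(v, x) \in \Tset$ if and only if $\phi(v, x) \in \Kset$, so $\phi$ restricts to a bijection between the two sets. Since affine maps commute with taking convex hulls, this yields $\phi(\conv(\Tset)) = \conv(\Kset)$.

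The second step is the algebraic identity for the two inequalities. Substituting the exact relation $v = w + b^2 - (a^2)^\T x$ into $\beta v \le \pi^\T x - \pi_0$ and rearranging gives $\beta w \le (\pi + \beta a^2)^\T x - \beta b^2 - \pi_0$, which is precisely the claimed inequality. This is an exact equivalence valid for every $\beta$, $\pi$, and $\pi_0$ (no sign restriction on $\beta$ is needed, since we substitute a genuine equality), so one inequality is valid for $\conv(\Tset)$ if and only if its counterpart is valid for $\conv(\Kset)$.

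Finally, I would close the face correspondence by the same three-fact argument as in \cref{prop:faces}: (i) $\phi$ is a bijection between $\Tset$ and $\Kset$; (ii) a point satisfies $\beta v \le \pi^\T x - \pi_0$ with equality if and only if its image satisfies $\beta w \le (\pi + \beta a^2)^\T x - \beta b^2 - \pi_0$ with equality, which is immediate from the exact identity above; and (iii) affine bijections preserve affine independence, so points are affinely independent if and only if their images are. Together these show that $\phi$ maps the face of $\conv(\Tset)$ defined by the first inequality bijectively onto the face of $\conv(\Kset)$ defined by the second, with equal dimension. The argument is essentially bookkeeping; the only point requiring care, and the main thing to pin down, is the exact form of the shear $\phi$—in particular the sign of the $(a^2)^\T x$ term and the $-b^2$ offset—so that the two constraint pairs match and the coefficient vector transforms as $\pi \mapsto \pi + \beta a^2$.
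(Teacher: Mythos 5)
Your proposal is correct and follows essentially the same route as the paper: the shear $w = v + (a^2)^\top x - b^2$ you identify is exactly the affine transformation used in the paper's proof, and your closing three-fact argument (bijection between the sets, preservation of tightness, preservation of affine independence) mirrors the paper's verbatim. Your write-up is simply a more explicit version of the paper's terser argument.
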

\begin{proof}
	Observe that applying an affine transformation on $\Tset$ where $v$ is transformed by 
	\[
		w = v+(a^2)^\top x-b^2,
	\]
	we obtain the set $\Kset$ with $a = a^1 - a^2$ and $b = b^1 - b^2$.
	The statement then follows immediately from the facts that
	(i) $(v, x) \in \Tset$ holds if and only if $(v+(a^2)^\top x-b^2, x) \in \Kset$ holds; 
	(ii) inequality $\beta v \leq \pi^\top x - \pi_0$ holds with equality at point $(v, x)$ if and only if inequality $\beta w \leq (\beta a^2 + \pi)^\top x - \beta b^2 - \pi_0$ holds with equality at point $(v+(a^2)^\top x-b^2, x)$;
	and (iii) points $(v^1, x^1), \ldots, (v^{k+1}, x^{k+1}) $ are affinely independent 
	if and only if points $(v^1+(a^2)^\top x^1-b^2, x^1), \ldots, (v^{k+1}+(a^2)^\top x^{k+1}-b^2, x^{k+1})$ are affinely independent.
	\qedhere
\end{proof}
\cref{face-correspond} establishes a general method for deriving strong valid inequalities for $\conv(\Tset)$ (and also its special case $\conv(\Cset)$) from those for $\conv(\Kset)$.

\subsubsection{Single- and two-phase inequalities for $\conv(\Kset)$, $\conv(\Cset)$, and $\conv(\Tset)$}\label{subsubsec:MIR-Knap}
Let $f(z) = \min\{0, b - z\}$, which is concave on $\R$. 
Then we can write $\Kset$ in the form of $\Fset$ as in \eqref{Fset}:
\begin{equation*}
	\Kset = \left\{
	(w, x) \in \R \times \Z^n \,:\,  w \leq f\left(a^\top x \right), ~ 0 \leq x_i \leq \mu_i, ~ \forall ~ i \in [n]
	\right\}.
\end{equation*}
Without loss of generality, we assume that $a_i \geq 0$ holds for all $i \in [n]$, since any bounded variables can be complemented.
In the following, we will apply our single- and two-phase lifting procedures 
in  \cref{sec:lift-one-phase,sec:twosteplifting} to derive strong valid inequalities for $\conv(\Kset)$
(thereby for $\conv(\Tset)$) and establish their connections with known results in the literature.
To do this, let $s$, $S_0$, and $S_1$ be such that 
$\theta : = b - \sum_{i \in S_1} a_i \mu_i > 0$, $\sum_{i \in S_1 \cup s}a_i \mu_i - b = a_s \mu_s - \theta > 0$,
$\theta / a_s \notin \Z_+$, $k := \ceil{\theta / a_s}$, and $\sigma : = \theta - \floor{\theta / a_s} a_s$.
Then, it follows that 
$g(z) = f(z + \sum_{i \in S_1}a_i \mu_i) = f(z + b - \theta) = \min\{0, \theta - z\}$,
$k \in [\mu_s]$, $\sigma = \theta - (k - 1) a_s$, and
\begin{equation}\label{coef-compute}
	\begin{aligned}
		& g(k a_s) 
		= \min \left\{0, \theta - k a_s\right\} 
		= \theta - k a_s
		= \sigma - a_s, \\
		& g((k - 1) a_s) 
		= \min \left\{0, \theta - (k - 1) a_s\right\} 
		= 0,\\
		& \rho_s(k) = g(k a_s) - g((k - 1) a_s) = \sigma - a_s.
	\end{aligned}
\end{equation}
{\bf \textit{Single-phase lifted inequalities for $\conv(\Kset)$, $\conv(\Cset)$, and $\conv(\Tset)$}}\\[0.5em]
\noindent 
Letting $\ell \in \Z$ be such that $\ell a_s \leq \delta < (\ell + 1) a_s$, 
or equivalently, $\ell = \floor{\delta / a_s}$, then 
\begin{equation*}
	\begin{aligned}
		g(\delta + (k - \ell - 1) a_s)
		& = \min \left\{0, 
		\theta - (k - \ell - 1) a_s - \delta 
		\right\} \\
		& = \min \left\{0, 
		\theta - (k - 1) a_s + \floor{\delta / a_s} a_s - {\delta}
		\right\} \\
		&= \min \left\{0, 
		\sigma - \sigma_\delta
		\right\},
	\end{aligned}
\end{equation*}
where we define $\sigma_\delta := \delta - \floor{\delta / a_s} a_s$.
Together with \eqref{coef-compute}, the subadditive  approximation $Z$ in \eqref{funZ} reduces to
\begin{equation*}
	\begin{aligned}
		Z(\delta) & =
		\min \left\{0, 
		\sigma - \sigma_\delta
		\right\} + \floor{\delta / a_s} (\sigma - a_s)
		\\
		& = 
		\min \left\{0, 
		\sigma - \sigma_\delta
		\right\} + \floor{\delta / a_s} \sigma + (\delta - \floor{\delta / a_s} a_s) - \delta
		\\
		& = -\delta + \min \left\{ \sigma, \sigma_\delta \right\} +  \floor{ {\delta} / {a_s} } \sigma.
	\end{aligned}
\end{equation*}
The single-phase lifted inequality \eqref{eq:single-phase} is given by
\begin{equation}\label{eq:single-Kset}
		\begin{aligned}
			{w} &\leq \sum_{i \in S_0 \cup s} \left( - a_i + \min \left\{ \sigma, \sigma_{a_i} \right\} +  \floor{ {a_i} / {a_s} } \sigma \right) x_i 
			\\
			& \qquad 
			+ \sum_{i \in S_1} \left( a_i + \min \left\{ \sigma, \sigma_{-a_i} \right\} +  \floor{ {-a_i} / {a_s} } \sigma \right) (\mu_i - x_i) - (k - 1) (\sigma - a_s),
		\end{aligned}
\end{equation}
which is valid for $\conv(\Kset)$ and equivalent to the well-known MIR inequality \cite{Nemhauser1990,Marchand2001}.
\begin{remark}\label{rem:single-phase}
	Applying \cref{face-correspond} to inequality \eqref{eq:single-Kset} yields the single-phase lifted inequality for $\conv(\Tset)$, which is equivalent to the inequality proposed by \citet[Eq. (16)]{Bodur2017}.
	Moreover, when $\conv(\Tset)$ is a from of $\conv(\Cset)$ where $\Cset$ is defined in \eqref{Cset}, then the single-phase lifted inequality specializes to the conic \MIR inequality of \citet{Atamturk2010b}.
\end{remark}

\noindent{\bf \textit{Two-phase lifted inequalities for $\conv(\Kset)$, $\conv(\Cset)$, and $\conv(\Tset)$}}\\[0.5em]
\noindent
Using a derivation analogous to that for $Z$, the first-phase lifting function $\zeta$ in \eqref{calc:zeta} can be given by
\begin{equation*}
	\zeta(\delta) = \left\{
	\begin{aligned}
		& (k - \mu_s - 1) (\sigma - a_s),
		&& \text{if}~ \floor{\delta / a_s} < k - \mu_s - 1, \\
		& -\delta + \min \left\{ \sigma, \sigma_\delta \right\} +  \floor{ {\delta} / {a_s} } \sigma,
		&& \text{if}~ k - \mu_s - 1 \leq \floor{\delta / a_s} < k, \\
		& -\delta + k \sigma,
		&& \text{if}~ \floor{\delta / a_s} \geq k,
	\end{aligned}
	\right.
\end{equation*}
{We next present the two-phase lifted inequalities \eqref{facet-ineq1}--\eqref{eq:lifted-ineq1} and \eqref{eq:lifted-ineq2}--\eqref{facet-ineq2} for $\conv(\Kset)$, separately.}
\begin{itemize}
	\item [(1)] Recall that $S_0^+ = \{i \in S_0 \,:\, a_i \geq k a_s\} = [r]$ with  
	$a_1 \geq \cdots \geq a_{r}$.
	Then, $\zeta(a_j) = - a_j + k \sigma$ for $j \in [r]$.
	\begin{claim}
		The function $\eta^U$ in \eqref{etaU} reduces to
		{\footnotesize
			\begin{equation*}
				\eta^U(\delta)= \left\{
				\begin{aligned}
					& -\delta-\mu_{(i+1)(q-1)}k\sigma, \\
					& \hspace{1cm} \text{if} \ \theta-A_{(i+1)q}<\delta\le -A_{(i+1)(q-1)}, \\
					& \hspace{1cm} \ i=0,\ldots,r-1 \ \text{and} \ q=1,\ldots,\mu_{i+1}, \\
					& A_{(i+1) q}-\mu_{(i+1) (q-1)}k\sigma-\theta, \\
					& \hspace{1cm} \text{if} \ k a_s-A_{(i+1) q}<\delta\le \theta-A_{(i+1) q}, \\
					& \hspace{1cm} \ {i=0,\ldots,r-1 \ \text{and} \ q=1,\ldots,\mu_{i+1}},\\
					& -\delta-(\mu_{(i+1)(q-1)}k+\ell)\sigma, \\
					& \hspace{1cm} \text{if} \ \theta-A_{(i+1) q}-\ell a_s<\delta\le (k-\ell)a_s-A_{(i+1) q},\\
					& \hspace{1cm} \ {i=0,\ldots,r-1, \ q=1,\ldots,\mu_{i+1}, \ \text{and} \ \ell=0,\ldots,k-1},\\
					& \hspace{1cm} \ \text{or} \ {i=r-1, \ q=\mu_{r}, \ \text{and} \ \ell=k, k+1, \ldots},\\
					& A_{(i+1) q}-\mu_{(i+1) (q-1)}k\sigma-\theta-\ell(\sigma-a_s), \\
					& \hspace{1cm} \text{if} \ (k-\ell-1)a_s-A_{(i+1) q}<\delta\le \theta-A_{(i+1) q}-\ell a_s,\\
					& \hspace{1cm} \ {i=0,\ldots,r-1, \ q=1,\ldots,\mu_{i+1}, \ \text{and} \ \ell=0,\ldots,k-1},\\
					& \hspace{1cm} \ \text{or} \ {i=r-1, \ q=\mu_{r}, \ \text{and} \ \ell=k, k+1, \ldots},
				\end{aligned}
				\right.
			\end{equation*}
		}%
		where 
		{$\mu_{i q} = \sum_{j = 1}^{i - 1} \mu_j + q$, $A_{i q} = \sum_{j = 1}^{i - 1} \mu_j a_j + q a_i$ 
		for $i \in [r]$ and $q \in \{0, 1, \ldots, \mu_i\}$}.
	\end{claim}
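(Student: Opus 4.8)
The plan is to obtain the claimed closed form by substituting the knapsack-specific data into the general formula \eqref{etaU} and then resolving the piecewise $\min$ in $g$. First I would record the three identities that drive every simplification: from the displayed formula for $\zeta$ just above the claim, $\zeta(a_j) = -a_j + k\sigma$ for $j \in S_0^+ = [r]$ (since $a_j \geq k a_s$ forces $\floor{a_j/a_s} \geq k$), together with $\rho_s(k) = g(k a_s) = \sigma - a_s$ from \eqref{coef-compute}. Feeding these into the coefficient block of \eqref{etaU}, and using $A_{(i+1)q} = \sum_{j=1}^{i}\mu_j a_j + q a_{i+1}$ and $\mu_{(i+1)q} = \sum_{j=1}^{i}\mu_j + q$, gives the reductions
\begin{equation*}
	\sum_{j=1}^{i}\mu_j\zeta(a_j) + q\zeta(a_{i+1}) = -A_{(i+1)q} + \mu_{(i+1)q}\,k\sigma, \qquad (k-\ell)\rho_s(k) - g(ka_s) = (k-\ell-1)(\sigma - a_s),
\end{equation*}
so that after substitution every branch of \eqref{etaU} depends on $\delta$ only through the single term $g(\delta + A_{(i+1)q} + \ell a_s)$ plus an explicit constant.

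Next I would resolve $g(\delta + A_{(i+1)q} + \ell a_s) = \min\{0,\,\theta - \delta - A_{(i+1)q} - \ell a_s\}$. The key observation is that on the $\ell$-th interval of \eqref{etaU} the argument of $g$ ranges over $((k-1)a_s, k a_s]$, and since $\theta/a_s \notin \Z_+$ we have $(k-1)a_s < \theta < k a_s$; hence $g$ switches from the flat value $0$ to the strictly negative affine value $\theta - \delta - A_{(i+1)q} - \ell a_s$ exactly at the breakpoint $\delta = \theta - A_{(i+1)q} - \ell a_s$. Each interval of \eqref{etaU} therefore subdivides into a flat piece ($g = 0$) and a linear piece ($g < 0$). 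On the linear pieces, collapsing the constants with $\theta = (k-1)a_s + \sigma$ and $\mu_{(i+1)q} = \mu_{(i+1)(q-1)} + 1$ yields $-\delta - (\mu_{(i+1)(q-1)}k + \ell)\sigma$ (the third listed branch, whose $\ell=0$ instance merges with the original first branch of \eqref{etaU} to give the first listed branch); on the flat pieces the same two identities give $A_{(i+1)q} - \mu_{(i+1)(q-1)}k\sigma - \theta - \ell(\sigma - a_s)$ (the fourth listed branch, whose $\ell=0$ instance is the second listed branch).

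Finally I would check that the resulting subintervals tile $\R_-$ with no gaps or double counting and that the two formulas agree at each shared breakpoint, which also certifies continuity of $\eta^U$. The only ordering subtlety is that the $g$-induced breakpoint $\theta - A_{(i+1)q} - \ell a_s$ interleaves correctly with the endpoints $(k-\ell-1)a_s - A_{(i+1)q}$ and $(k-\ell)a_s - A_{(i+1)q}$ of \eqref{etaU}, together with $k a_s - A_{(i+1)q} \leq -A_{(i+1)(q-1)}$; both follow from $\sigma = \theta - (k-1)a_s \in (0, a_s)$ and $a_{i+1} \geq k a_s$. I expect the main obstacle to be purely bookkeeping: tracking the doubling of branches caused by the $\min$ in $g$ and keeping the index ranges of $i$, $q$, and $\ell$ aligned, including the tail case $i = r-1$, $q = \mu_r$, $\ell \geq k$ inherited from \eqref{etaU}. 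No new inequality is required—the entire argument is an exact evaluation of \eqref{etaU} specialized to $g(z) = \min\{0,\theta - z\}$.
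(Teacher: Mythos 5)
Your proposal is correct and takes essentially the same route as the paper: substitute $\zeta(a_j)=-a_j+k\sigma$ for $j\in S_0^+$ and $\rho_s(k)=g(ka_s)=\sigma-a_s$ into \eqref{etaU}, collapse the constant block to $A_{(i+1)q}-\mu_{(i+1)(q-1)}k\sigma-\theta-\ell(\sigma-a_s)$, and then split each interval at the point where $g(\delta+A_{(i+1)q}+\ell a_s)=\min\{0,\theta-\delta-A_{(i+1)q}-\ell a_s\}$ changes regime, i.e.\ at $\delta=\theta-A_{(i+1)q}-\ell a_s$ (which is interior because $\sigma\in(0,a_s)$). The only presentational difference is that you treat the second case of \eqref{etaU} and the breakpoint interleaving explicitly, whereas the paper computes only the first case and declares the rest "similar and omitted"; the underlying computation is identical.
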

	\begin{proof}
		For $i \in \{0, \ldots, r-1\}$ and $q \in [\mu_{i+1}]$, we have
		\begin{align*}
			& - \sum_{j = 1}^{i} \mu_j \zeta(a_j) - q \zeta(a_{i+1}) + k \rho_s(k) - g(k a_s)\\
			& \qquad = - \sum_{j = 1}^{i} \mu_j (-a_j + k \sigma) - q (-a_{i+1} + k \sigma) + k  (\sigma - a_s)-  (\sigma - a_s)\\
			& \qquad= \left( \sum_{j = 1}^{i}\mu_j a_j + q a_{i+1} \right)
			- k \sigma \left( \sum_{j = 1}^{i }\mu_j + q - 1 \right) -(\sigma + (k - 1) a_s) \\
			& \qquad= A_{(i+1) q} - \mu_{(i+1) (q - 1)} k \sigma - \theta.
		\end{align*}
		If $k a_s - A_{(i+1) q} < \delta \leq - A_{(i+1) (q - 1)}$ holds for some $i \in \{0, \ldots, r - 1\}$ and $q \in [\mu_{i+1}]$ (i.e., the first case in \eqref{etaU}),
		then we have
		\begin{equation*}
			\begin{aligned}
				\eta^U(\delta) & = g(\delta + A_{(i+1) q}) - \sum_{j = 1}^{i} \mu_j \zeta(a_j) - q \zeta(a_{i+1}) + k \rho_s(k) - g(k a_s) \\
				& = \min \left\{ 0, \theta - \delta - A_{(i+1) q} \right\} + A_{(i+1) q} - \mu_{(i+1) (q - 1)} k \sigma - \theta \\
				& = \left\{
				\begin{aligned}
					& - \delta- \mu_{(i+1) (q - 1)} k \sigma, 
					&& ~\text{if}~ \theta - A_{(i+1) q} < \delta \leq - A_{(i+1) (q-1)}, \\
					& A_{(i+1) q} - \mu_{(i+1) (q - 1)} k \sigma - \theta, 
					&& ~\text{if}~ k a_s - A_{(i+1) q} < \delta \leq \theta - A_{(i+1) q}. 
				\end{aligned}
				\right.
			\end{aligned}
		\end{equation*}
		For the second case of \eqref{etaU}, the proof is similar and omitted for brevity.
	\end{proof}
	Using the upper subadditive approximation $\eta^U$, we obtain the following two-phase lifted inequality: 
	\begin{equation}
		\label{eq:two1-Kset}
		\begin{aligned}
			w & \leq 
			\sum_{i \in S_0 \backslash S_0^+} \left( -a_i 
			+ \min\left\{\sigma, \sigma_{a_i} \right\} + \floor{a_i / a_s} \sigma  \right) x_i 
			+ \sum_{i \in S_0^+} (-a_i + k \sigma) x_i\\
			& \qquad
			+ \sum_{i \in S_1} \eta^U(-a_i)(\mu_i - x_i)
			+ (\sigma - a_s) x_s - (k - 1) (\sigma - a_s),
		\end{aligned}
	\end{equation}
	which is valid for $\conv(\Kset)$ and facet-defining under the mild condition in \cref{thm:facet1-cond}.
	If $S_0^+ = S_0$ or $k = 1$, then the exact lifting function $\eta$ admits a closed formula derived from \eqref{etaL} or \eqref{etaL-k=1} (omitted here for brevity). 
	This enables to obtain a strengthened version of \eqref{eq:two1-Kset}:
	\begin{equation}\label{eq:two1.2-Kset}
			\begin{aligned}
				w & \leq 
				\sum_{i \in S_0 \backslash S_0^+} \left( -a_i 
				+ \min\left\{\sigma, \sigma_{a_i} \right\} + \floor{a_i / a_s} \sigma  \right) x_i 
				+ \sum_{i \in S_0^+} (-a_i + k \sigma) x_i\\
				& \qquad
				+ \sum_{i \in S_1} \eta(-a_i)(\mu_i - x_i)
				+ (\sigma - a_s) x_s - (k - 1) (\sigma - a_s),
			\end{aligned}
	\end{equation}
	which is facet-defining for $\conv(\Kset)$.
	Both \eqref{eq:two1-Kset} and its strengthened version \eqref{eq:two1.2-Kset} are equivalent to the mixed-integer pack inequalities of \citet{Atamturk2003}.	
	\item [(2)] Recall that $S_1^+ = \{i \in S_1 \,:\, a_i \geq (\mu_s - k + 1) a_s \} = [d]$ with $a_1 \geq \cdots \geq a_{d}$.
	Thus, $\zeta(-a_j) = (k - \mu_s - 1) (\sigma - a_s)$ for $j \in [d]$.
	\begin{claim}
		The function $\phi^U$ in \eqref{omegaU} reduces to
		\begin{footnotesize}
			\begin{equation*}
				\phi^U(\delta) = \left\{ \ 
				\begin{aligned}
					& (\mu_s - k + 1) (\sigma - a_s) \mu_{(i+1) (q - 1)}, \\
					& \qquad ~\text{if}~ A_{(i+1) (q - 1)} \leq \delta < A_{(i+1) q} + \theta - \mu_s a_s, \\
					& \qquad ~~ i = 0, \ldots, d - 1 \ \text{and} \ q = 1, \ldots, \mu_{i+1}, \\
					& - \delta + \theta - \mu_s a_s + A_{(i+1) q} + (\mu_s - k + 1) (\sigma - a_s) \mu_{(i+1) (q - 1)}, \\
					& \qquad ~\text{if}~ A_{(i+1) q} + \theta - \mu_s a_s \leq \delta < A_{(i+1) q} + (k - \mu_s - 1) a_s, \\
					& \qquad ~~ i = 0, \ldots, d - 1 \ \text{and} \ q = 1, \ldots, \mu_{i+1}, \\
					& ((\mu_s - k + 1) \mu_{(i+1) (q - 1)} + \ell) (\sigma - a_s), \\
					& \qquad ~\text{if}~ A_{(i+1) q} + (k - \mu_s + \ell - 1) a_s \leq \delta < A_{(i+1) q} + \theta - (\mu_s - \ell) a_s, \\
					& \qquad ~~ i = 0, \ldots, d-1, ~ q = 1, \ldots, \mu_{i+1}, \ \text{and} \ \ell = 0, \ldots, \mu_s - k, \\
					& \qquad ~~ \text{or} \ i = d-1, ~ q = \mu_{d}, \ \text{and} \ \ell = \mu_s - k + 1, \mu_s - k + 2, \\
					& -\delta + \theta - (\mu_s - \ell) a_s + A_{(i+1) q} + ((\mu_s - k + 1) \mu_{(i+1) (q - 1)} + \ell) (\sigma - a_s), \\
					& \qquad ~\text{if}~  A_{(i+1) q} + \theta - (\mu_s - \ell) a_s \leq \delta < A_{(i+1) q} + (k - \mu_s + \ell) a_s, \\
					& \qquad ~~ i = 0, \ldots, d - 1, ~ q = 1, \ldots, \mu_{i+1}, \ \text{and} \ \ell = 1, \ldots, \mu_s - k, \\
					& \qquad ~~  \text{or} \ i = d-1, ~ q = \mu_{d}, \ \text{and} \ \ell = \mu_s - k + 1, \mu_s - k + 2, \\
				\end{aligned}
				\right.
			\end{equation*}
		\end{footnotesize}%
		where 
		{$\mu_{i q} = \sum_{j = 1}^{i-1} \mu_j + q$, $A_{i q} = \sum_{j = 1}^{i-1} \mu_j a_j + q a_i$ }
		for $i \in [d]$ and $q \in \{0, 1, \ldots, \mu_i \}$.
	\end{claim}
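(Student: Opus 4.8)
The plan is to mirror the computation that established the closed form of $\eta^U$ in the preceding claim, now substituting the knapsack data into the general expression \eqref{omegaU} for $\phi^U$. Concretely, I would insert $g(z)=\min\{0,\theta-z\}$, the constants $\rho_s(k)=\sigma-a_s$ and $g(ka_s)=\sigma-a_s$ from \eqref{coef-compute}, and the value $\zeta(-a_j)=(k-\mu_s-1)(\sigma-a_s)$ recorded just before the claim, which holds for every $j\in S_1^+=[d]$ and hence for all indices $1,\dots,i+1$ appearing in \eqref{omegaU}. Throughout I would use $\theta=\sigma+(k-1)a_s$ with $0<\sigma<a_s$ to convert the thresholds written in terms of $\theta$ into the $a_s$-scaled interval endpoints of the claim.

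First I would collect the $\delta$-independent terms. In the first branch of \eqref{omegaU}, the quantity $-\sum_{j=1}^{i}\mu_j\zeta(-a_j)-q\zeta(-a_{i+1})$ equals $(\mu_s-k+1)(\sigma-a_s)\,\mu_{(i+1)q}$, since every $\zeta(-a_j)$ takes the common value $(k-\mu_s-1)(\sigma-a_s)$ and $\mu_{(i+1)q}=\sum_{j=1}^{i}\mu_j+q$. The crucial identity is $(k-\mu_s)\rho_s(k)-g(ka_s)=(k-\mu_s-1)(\sigma-a_s)$: adding it cancels exactly one copy of $(\mu_s-k+1)(\sigma-a_s)$ and collapses $\mu_{(i+1)q}$ to $\mu_{(i+1)(q-1)}$, yielding the constant $(\mu_s-k+1)(\sigma-a_s)\,\mu_{(i+1)(q-1)}$ of the claim. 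The identical bookkeeping on the second branch, with $(k-\mu_s+\ell)\rho_s(k)$ in place of $(k-\mu_s)\rho_s(k)$, produces the constant $\big((\mu_s-k+1)\mu_{(i+1)(q-1)}+\ell\big)(\sigma-a_s)$.

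Next I would resolve the nonlinear term $g(\delta+\cdot)=\min\{0,\theta-\delta-\cdot\}$ by splitting on the sign of its argument, so that each branch of \eqref{omegaU} breaks into a flat piece (argument $\ge 0$, $g=0$) and a linear piece (argument $<0$, contributing $\theta-\delta-\cdot$ and producing the $-\delta$ term). The first branch then yields the first two cases of the claimed formula and the second branch the last two. The main obstacle, and the only place real care is needed, is the interval arithmetic: one must locate the sign-change threshold $A_{(i+1)q}+\theta-(\mu_s-\ell)a_s=A_{(i+1)q}+(k-\mu_s+\ell-1)a_s+\sigma$ relative to the branch endpoints. Using $0<\sigma<a_s$, this threshold falls strictly inside each $\ell$-subinterval $[A_{(i+1)q}+(k-\mu_s+\ell-1)a_s,\,A_{(i+1)q}+(k-\mu_s+\ell)a_s)$ of the second branch (so the split there is genuine), whereas on the first branch it exceeds the right endpoint, so $g\equiv 0$ and the associated linear subcase is vacuous. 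I would finally verify that the surviving pieces tile $\R_+$ consistently across successive $(i,q)$ blocks and into the trailing range $i=d-1,\ q=\mu_d$; since this is precisely the interval analysis already carried out for $\eta^U$, I would reuse it rather than repeat it.
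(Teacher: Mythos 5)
Your proposal is correct and follows essentially the same route as the paper: substitute $\zeta(-a_j)=(k-\mu_s-1)(\sigma-a_s)$ and $(k-\mu_s+\ell)\rho_s(k)-g(ka_s)$ to collapse the $\delta$-independent terms into $\big((\mu_s-k+1)\mu_{(i+1)(q-1)}+\ell\big)(\sigma-a_s)$, then split the $\min$ in $g$ on the sign of its argument and track the resulting thresholds (the paper works the first branch of \eqref{omegaU} explicitly and omits the second as analogous). Your side observation that the linear subcase of the first branch is vacuous (since its threshold $A_{(i+1)q}+\theta-\mu_s a_s$ exceeds the branch endpoint by $\sigma>0$, with that interval instead covered consistently by the $\ell=0$ piece of the second branch) is accurate and harmless.
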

	\begin{proof}
		For $i \in \{0, \ldots, d - 1\}$ and $q \in [\mu_{i+1}]$, we have
			\begin{align*}
				& - \sum_{j = 1}^{i} \mu_j \zeta(- a_j) - q \zeta(-a_{i+1}) + (k - \mu_s) \rho_s(k) - g(k a_s) \\
				& \qquad = - \sum_{j = 1}^{i} \mu_j (k - \mu_s - 1) (\sigma - a_s) - q (k - \mu_s - 1) (\sigma - a_s) + (k - \mu_s - 1) (\sigma - a_s) \\
				& \qquad = (\mu_s - k + 1) (\sigma - a_s) \left(
				\sum_{j = 1}^{i} \mu_j + q - 1
				\right)
				= (\mu_s - k + 1) (\sigma - a_s) \mu_{(i+1) (q - 1)} .
			\end{align*}
		If $A_{(i+1) (q - 1)} \leq \delta < A_{(i+1) q} + (k - \mu_s - 1) a_s$ holds for some $i \in \{0,\ldots,d-1\}$ and $q \in [\mu_{i+1}]$ (i.e., the first case in \eqref{omegaU}), then 
		\begin{equation*}
			\begin{aligned}
				\phi^U(\delta) & = g(\delta + \mu_s a_s - A_{(i+1) q}) - \sum_{j = 1}^{i} \mu_j \zeta(-a_j) - q \zeta(-a_{i+1}) + (k - \mu_s) \rho_s(k) - g(k a_s) \\
				& = \min \left\{ 0, \theta - \delta - \mu_s a_s + A_{(i+1) q}\right\} 
				+ (\mu_s - k + 1) (\sigma - a_s) \mu_{(i+1) (q - 1)} \\
				& = \left\{
				\begin{aligned}
					& (\mu_s - k + 1) (\sigma - a_s) \mu_{(i+1) (q - 1)}, \\
					& \hspace{3cm}
					~\text{if}~ A_{(i+1) (q - 1)} \leq \delta < A_{(i+1) q} + \theta - \mu_s a_s, \\
					& -\delta + \theta - \mu_s a_s + A_{(i+1) q} + (\mu_s - k + 1) (\sigma - a_s) \mu_{(i+1) (q - 1)}, \\
					& \hspace{3cm} ~\text{if}~ A_{(i+1) q} + \theta - \mu_s a_s \leq \delta < A_{(i+1) q} + (k - \mu_s - 1) a_s. 
				\end{aligned}
				\right.
			\end{aligned}
		\end{equation*}
	For the second case of \eqref{omegaU}, the proof is similar and omitted for brevity.
	\end{proof}
	
	Using the upper subadditive approximation $\phi^U$, we obtain the following two-phase lifted inequality: 
	\begin{equation}\label{eq:two2-Kset}
			\begin{aligned}
				w & \leq \sum_{i \in S_1 \backslash S_1^+} \left( a_i + \min\left\{ \sigma, \sigma_{-a_i} \right\} + \floor{-a_i / a_s} \sigma \right) (\mu_i - x_i) \\
				& \qquad
				+ \sum_{i \in S_1^+} (k - \mu_s - 1) (\sigma - a_s) (\mu_i - x_i) \\
				& \qquad
				+ \sum_{i \in S_0} \phi^U(a_i) x_i 
				+ (\sigma - a_s) x_s - (k - 1) (\sigma - a_s),
			\end{aligned}
	\end{equation}
	which is valid for $\conv(\Kset)$ and facet-defining under mild conditions.
	If $S_1^+ = S_1$ or $k = \mu_s$, then the exact lifting function $\phi$ admits a closed formula derived from \eqref{phiL} or \eqref{eq:phi} (omitted here for brevity). 
	This leads to the strengthened version of \eqref{eq:two2-Kset}:
	\begin{equation}\label{eq:two2.2-Kset}
		\begin{aligned}
			w & \leq \sum_{i \in S_1 \backslash S_1^+} \left( a_i + \min\left\{ \sigma, \sigma_{-a_i} \right\} + \floor{-a_i / a_s} \sigma \right) (\mu_i - x_i) \\
			& \qquad
			+ \sum_{i \in S_1^+} (k - \mu_s - 1) (\sigma - a_s) (\mu_i - x_i) \\
			& \qquad
			+ \sum_{i \in S_0} \phi(a_i) x_i 
			+ (\sigma - a_s) x_s - (k - 1) (\sigma - a_s),
		\end{aligned}
	\end{equation}
	which is facet-defining for $\conv(\Kset)$.
	Both \eqref{eq:two2-Kset} and its strengthened version \eqref{eq:two2.2-Kset} are equivalent to the mixed-integer cover inequality of Atamt\"urk \cite{Atamturk2003, Atamturk2005}.
\end{itemize}
\begin{remark}
	Applying \cref{face-correspond} to inequalities \eqref{eq:two1-Kset}--\eqref{eq:two2.2-Kset} yields two-phase lifted inequalities for $\conv(\Tset)$ and its specialization $\conv(\Cset)$.
	The resultant inequalities are strong valid inequalities for both polyhedra and therefore may applied to improve the computational performance of branch-and-cut algorithms for solving related problems in which $\Tset$ or $\Cset$ arises as a substructure.
\end{remark}

\section{Computational experiments}\label{sec:computational-result}
In this section, we evaluate the effectiveness of our proposed single- and two-phase lifted inequalities by using them as cutting planes to solve two classes of practical problems: 
the expected utility maximization \cite{Ahmed2011,Yu2017,Shi2022}
and weapon target assignment problems \cite{Manne1958,Ahuja2007,Andersen2022,Lu20211,Bertsimas2025}.
Our computational experiments are performed on a cluster of Intel(R) Xeon (R) Gold 6230R CPU @ 2.10GHz computers using CPLEX 20.1.0 as the solver.
We implement our proposed inequalities as cuts using the callback function of CPLEX.
CPLEX is set to run in a single-threaded mode, with a time limit of $3600$ seconds and 
a relative gap tolerance of $0\%$.
Unless otherwise stated, all other CPLEX's parameters are set to their default values.

In the following, we first detail how the proposed lifted inequalities
are dynamically separated within a branch-and-cut algorithm, and then report computational results using the proposed lifted inequalities to solve the two problems.
Computational instances used in this paper are available at \url{https://doi.org/10.5281/zenodo.18296729}.

\subsection{Separation}\label{subsec:separation}
Given a point $(\bar{w}, \bar{x}) \in \R \times \R^n$ with $0 \leq \bar{x}_i \leq \mu_i$ for all $i \in [n]$ (encountered in the branch-and-cut algorithm), 
the separation problem of the single-phase lifted inequalities \eqref{eq:single-phase}
or the two-phase lifted inequalities \eqref{eq:lifted-ineq1} and \eqref{eq:lifted-ineq2} 
(or their strengthened versions \eqref{facet-ineq1} and \eqref{facet-ineq2}), respectively, 
attempts to find corresponding inequalities violated by $(\bar{w}, \bar{x})$ or prove that none exists.
In this subsection, we will detail the procedure to separate these inequalities.
For simplicity, we will focus on the separation of the single-phase lifted inequalities \eqref{eq:single-phase};
the separation of the two-phase lifted inequalities \eqref{eq:lifted-ineq1} and \eqref{eq:lifted-ineq2} can be described in a similar manner.

The separation problem of the single-phase lifted inequalities \eqref{eq:single-phase} is equivalent to determining parameters $s$, $k$, $S_0$, and $S_1$ 
that minimize the right-hand side of \eqref{eq:single-phase}.
We begin by discussing the following two cases in which exact separation of the single-phase lifted inequalities \eqref{eq:single-phase} can be achieved.
\begin{itemize}
	\item [(i)] If all components of $\bar{x}$ are at their bounds, i.e., 
	$\bar{x}_i \in \{0, \mu_i\}$ for all $i \in [n]$, 
	then we let $s \in \argmax_{i\in [n]} a_i$, 
	$k = 1$ if $\bar{x}_s = 0$ and $k = \mu_s$ otherwise,
	$S_0 = \left\{ i \in [n] \backslash s \,:\, \bar{x}_i = 0 \right\}$, 
	and $S_1 = \left\{ i \in [n] \backslash s \,:\, \bar{x}_i = \mu_i \right\}$.
	\item [(ii)] If there exists exactly one index $s_0 \in [n]$ such that $\bar{x}_{s_0} \in \{1, \ldots, \mu_{s_0} - 1\}$
	and $\bar{x}_i \in \{0, \mu_i\}$ for all $i \in [n]\backslash s_0$,
	then we set $s=s_0$, $k = \bar{x}_{s_0}$, $S_0 = \left\{ i \in [n] \,:\, \bar{x}_i = 0 \right\}$, 
	and $S_1 = \left\{ i \in [n] \,:\, \bar{x}_i = \mu_i \right\}$.
\end{itemize}
For both cases, the right-hand side of the single-phase lifted inequality \eqref{eq:single-phase} at point $\bar{x}$ reduces to $f(a^\top \bar{x})$.
This, together with the fact that if $\bar{x}_i \in \Z \cap [0, \mu_i]$ for all $i \in [n]$,
then $(\bar{w}, \bar{x}) \notin \conv(\Fset)$ if and only if $\bar{w} > f(a^\top \bar{x})$, 
implies that if $\bar{w} > f(a^\top \bar{x})$, 
then the corresponding single-phase lifted inequality \eqref{eq:single-phase} is violated by $(\bar{w}, \bar{x})$; otherwise, no violated one exists.
Moreover, under case (i), the selection of $s$ and $k$ can yield facet-defining single-phase lifted inequalities \eqref{eq:single-phase} for $\conv(\Fset)$; see \cref{cor:facet-cond}.

In general, for a point $(\bar{w}, \bar{x}) \in \R \times \R^n$ with $0 \leq \bar{x}_i \leq \mu_i$ for all $i \in [n]$,
determining the most violated single-phase lifted inequalities, however, is challenging.
To the best of our knowledge, even for the special case $f(a^\top x)= \min\{0, b-a^\top x\}$ in \cref{subsubsec:MIR-Knap}, 
it is still unclear whether the separation of  the single-phase lifted inequalities \eqref{eq:single-phase} (i.e., the \MIR inequalities for the mixed-integer knapsack set) can be conducted in polynomial time.
To address this, we extend the heuristic algorithm of \cite{Marchand2001} for the separation of \MIR inequalities to our case. 
Specifically, we first construct the candidate set $C=\{i \in [n] \,:\, 0 < \bar{x}_i < \mu_i \}$ for parameter $s$.
Then, for each $s\in C$, we let $k = \ceil{\bar{x}_s}$,
{$S_1 = \left\{ i \in [n] \backslash s \,:\,\bar{x}_i \geq \mu_i / 2 \right\}$},
and $S_0 = [n] \backslash (S_1 \cup s)$ and check whether the corresponding single-phase lifted inequality \eqref{eq:single-phase} is violated by $(\bar{w}, \bar{x})$.
In the end, if some violated inequalities are identified, the most violated one will be added to the formulation.

\subsection{Expected utility maximization problem}\label{subsec:result-EUM}
The first data set for our computational study comes from an expected utility maximization problem,
which is the benchmark used in \cite{Ahmed2011,Yu2017,Shi2022} 
to evaluate the effectiveness of the (approximate or exact) lifted submodular inequalities for $\FsetB$ or its variants.
The expected utility maximization problem involves a set $[n]$ of investment options with the capital requirement $a_i \in \R_+$ for each option $i \in [n]$,
a set $[m]$ of scenarios with a probability $\pi_j \in [0, 1]$
and a future value of investment $v_j \in \R^n_+$ for each scenario $j \in [m]$.
Let  $f(z) = 1 - \exp(-z / \lambda)$ be the utility function representing an investor's risk preferences where $\lambda$ is the risk tolerance parameter.
The problem attempts to maximize the expected utility subject to an available budget (normalized as 1) and can be formulated as follows:
\begin{equation}\label{prob:EUM}
	\max_{w, \, x}\left\{
	\sum_{j = 1}^{m} \pi_j w_j \,:\, w_j \leq 1 - \exp\left( - \frac{v_j^\top x}{\lambda} \right), \ \forall \ j \in [m], \ a^\top x \leq 1, \ x \in \{0, 1\}^n
	\right\}.
\end{equation}

In our experiments, we adopt instance generation settings similar to those described in \cite{Ahmed2011,Yu2017,Shi2022}.
Specifically, the capital requirements $a_i$ are uniformly generated from $[0.1, 0.15]$
and the probability of each scenario is $\pi_j = \frac{1}{m}$.
The value of investment $i$ under scenario $j$ is given by: 
\begin{equation*}
	v_{j i} = p_i \cdot \exp\left( \alpha_i + \beta_i \ln f_{j} + \epsilon_{j i} \right),
\end{equation*}
where $p_i$, $\alpha_i$, and $\beta_i $ are uniformly generated from $[0, 0.2]$,  $[0.05, 0.1]$, and  $[0, 1]$, respectively, 
and $\ln f_i$ and  $\epsilon_{j i}$ are uniformly chosen from the normal distributions $N(0.05, 0.0025)$ and $N(0, 0.0025)$, respectively.

Note that as shown in \cref{subsec:special-binary}, for the 0-1 case (i.e., $\mu_i = 1$ for all $i \in [n]$),
the single- 
and two-phase lifted inequalities 
reduce to \eqref{eq:newineq} and \eqref{eq:twophase1}--\eqref{eq:twophase2}, respectively.
Also note that by \eqref{tmpEq} and the fact that \eqref{eq:newineq}--\eqref{eq:twophase2} can be separated exactly for integral points $x \in \{0, 1\}^n$ (see \cref{subsec:separation}),
our proposed branch-and-cut algorithm based on single- or two-phase lifted inequalities will terminate with a correct solution for the expected utility maximization problem \eqref{prob:EUM}.

\cref{tab:EUM} presents the computational results using the single-phase lifted inequalities \eqref{eq:newineq}, 
the two-phase lifted inequalities \eqref{eq:twophase1}--\eqref{eq:twophase2}, 
and the combination of the two classes of inequalities to solve the expected utility maximization problem \eqref{prob:EUM}.
For each fixed parameter $(n,m,\lambda)$, 
we randomly generate 10 instances.
The first three columns of the table specify these problem parameters.
Under each setting,
we report the following statistics, averaged across the 10 trials:
the total number of cuts added (C), 
the number of explored branch-and-cut nodes (N), 
the solution time in seconds (T),
the time spent in separating cuts in seconds (ST),
and the linear programming (\LP) relaxation gap at the root node (Rgap), 
computed as $(z_{\text{R}} - z_{\text{OPT}}) / \vert z_{\text{OPT}} \vert \times 100\%$.
Here, $z_{\text{OPT}}$ denotes the objective value of the best feasible solution found across all three branch-and-cut implementations,
and $z_{\text{R}}$ denotes the LP relaxation bound obtained at the root node.
For cases that some instances are not solved to optimality within the time limit of $3600$ seconds,
we also report the average end gap (Egap), computed by $(z_{\text{UB}} - z_{\text{OPT}}) / \vert z_{\text{OPT}} \vert \times 100\%$,
where $z_{\text{UB}}$ denotes the best known upper bound achieved by the corresponding setting at termination.
This average end gap is computed across all 10 trials, 
including the instances solved to optimality (for which the end gap is $0\%$).
A superscript next to the ``Egap'' value indicates the number of instances (out of 10) that cannot be solved to optimality under the corresponding setting.
At the end of the table,
we report the average results for instances that can be solved by all three settings.

\begin{sidewaystable}[htbp]
	\centering
	\caption{Computational results of the single-phase lifted inequalities \eqref{eq:newineq},
		two-phase lifted inequalities \eqref{eq:twophase1}--\eqref{eq:twophase2}, 
		and the combination of the two classes of inequalities for the expected utility maximization problem \eqref{prob:EUM}.}
	\label{tab:EUM}
	\fontsize{7.5pt}{9}\selectfont
	\renewcommand{\arraystretch}{1.5}
	\addtolength{\tabcolsep}{-2pt}
	\begin{tabular}{ccc
			ccccc
			cccccc
			ccccc}
		\toprule
		\multicolumn{1}{c}{$n$} & 
		\multicolumn{1}{c}{$m$} & 
		\multicolumn{1}{c}{$\lambda$} & 
		\multicolumn{5}{c}{\textbf{Single-phase lifted ineqs. \eqref{eq:newineq}}} &
		\multicolumn{6}{c}{
			\begin{tabular}{@{}c@{}}
				\textbf{Two-phase lifted ineqs. \eqref{eq:twophase1}--\eqref{eq:twophase2}} 
				(Ineqs. from \cite{Shi2022})
			\end{tabular}
		} &
		\multicolumn{5}{c}{\textbf{Combination}} \\
		\cmidrule(lr){4-8}\cmidrule(lr){9-14}\cmidrule(lr){15-19}
		& & &
		\multicolumn{1}{c}{C} & 
		\multicolumn{1}{c}{N} & 
		\multicolumn{1}{c}{T} & 
		\multicolumn{1}{c}{ST} & 
		\multicolumn{1}{c}{Rgap (\%)} &
		\multicolumn{1}{c}{C} & 
		\multicolumn{1}{c}{N} & 
		\multicolumn{1}{c}{T} & 
		\multicolumn{1}{c}{ST} & 
		\multicolumn{1}{c}{Rgap (\%)} &
		\multicolumn{1}{c}{Egap (\%)} &
		\multicolumn{1}{c}{C} & 
		\multicolumn{1}{c}{N} & 
		\multicolumn{1}{c}{T} & 
		\multicolumn{1}{c}{ST} & 
		\multicolumn{1}{c}{Rgap (\%)} \\
		\midrule 
		
		1000 & 500 & 0.4 & 10802 & 26 & 7.98 & 2.59 & 0.23 & 53072 & 40494 & 1350.37 & 13.78 & 12.44 & 3.06$^{3}$ & 12408 & 30 & 12.26 & 5.74 & 0.32 \\ 
		& & 0.6 & 9391 & 11 & 6.92 & 2.33 & 0.13 & 41940 & 477 & 72.22 & 19.62 & 0.68 &  & 10046 & 22 & 9.31 & 4.65 & 0.14 \\ 
		& & 0.8 & 7053 & 12 & 4.59 & 1.53 & 0.03 & 21115 & 91 & 18.50 & 9.06 & 0.28 &  & 8705 & 9 & 7.25 & 3.79 & 0.04 \\ 
		& & 1.0 & 6175 & 9 & 4.35 & 1.40 & 0.03 & 17908 & 88 & 15.90 & 7.73 & 0.18 &  & 7120 & 8 & 6.03 & 3.02 & 0.04 \\ 
		
		1000 & 1000 & 0.4 & 20584 & 293 & 34.05 & 4.62 & 2.43 & 95716 & 1585 & 1117.26 & 30.92 & 5.66 & 0.34$^{1}$ & 19917 & 16 & 20.89 & 8.55 & 0.15 \\ 
		& & 0.6 & 14335 & 4 & 11.61 & 3.38 & 0.04 & 58192 & 125 & 58.27 & 27.70 & 0.35 &  & 16862 & 4 & 15.18 & 6.88 & 0.03 \\ 
		& & 0.8 & 12586 & 5 & 9.41 & 2.75 & 0.07 & 50843 & 56 & 43.25 & 27.31 & 0.22 &  & 13862 & 4 & 12.23 & 5.60 & 0.04 \\ 
		& & 1.0 & 16818 & 14 & 12.38 & 4.08 & 0.06 & 49781 & 86 & 47.25 & 24.21 & 0.16 &  & 18469 & 19 & 16.52 & 8.38 & 0.06 \\ 
		
		2000 & 500 & 0.4 & 12750 & 144 & 20.16 & 5.74 & 0.42 & 100063 & 3274 & 1332.21 & 48.62 & 2.03 & 0.31$^{3}$ & 14489 & 113 & 27.93 & 12.54 & 0.35 \\ 
		& & 0.6 & 9052 & 32 & 11.25 & 4.03 & 0.09 & 51471 & 1369 & 386.99 & 36.08 & 0.61 &  & 12323 & 48 & 18.94 & 10.32 & 0.08 \\ 
		& & 0.8 & 7389 & 9 & 10.49 & 3.77 & 0.04 & 27415 & 121 & 48.36 & 27.61 & 0.20 &  & 9011 & 11 & 15.68 & 8.31 & 0.05 \\ 
		& & 1.0 & 6460 & 4 & 8.48 & 2.91 & 0.02 & 22890 & 58 & 31.28 & 21.76 & 0.09 &  & 7632 & 3 & 12.70 & 6.98 & 0.02 \\ 
		
		2000 & 1000 & 0.4 & 22144 & 40 & 38.05 & 10.30 & 0.26 & 241443 & 2900 & 2397.31 & 108.45 & 2.33 & 0.46$^{4}$ & 26893 & 30 & 53.75 & 24.67 & 0.25 \\ 
		& & 0.6 & 26150 & 274 & 50.68 & 12.54 & 2.23 & 107615 & 1478 & 669.43 & 80.96 & 2.61 &  & 24878 & 267 & 54.10 & 19.89 & 2.23 \\ 
		& & 0.8 & 14955 & 16 & 20.56 & 7.00 & 0.05 & 47413 & 104 & 77.54 & 43.16 & 0.25 &  & 18906 & 27 & 30.43 & 16.04 & 0.06 \\ 
		& & 1.0 & 14627 & 12 & 22.02 & 6.64 & 0.05 & 41359 & 112 & 79.54 & 39.66 & 0.15 &  & 18123 & 11 & 35.51 & 17.83 & 0.05 \\ 
		
		3000 & 500 & 0.4 & 10056 & 64 & 20.88 & 6.80 & 0.18 & 108720 & 3062 & 1241.24 & 73.30 & 1.40 & 0.20$^{2}$ & 11187 & 38 & 31.50 & 16.60 & 0.14 \\ 
		& & 0.6 & 7095 & 12 & 15.68 & 4.78 & 0.06 & 25618 & 272 & 65.28 & 33.35 & 0.30 &  & 8419 & 7 & 22.46 & 10.86 & 0.05 \\ 
		& & 0.8 & 7232 & 15 & 16.10 & 4.93 & 0.05 & 25077 & 265 & 73.04 & 33.58 & 0.20 &  & 9015 & 24 & 25.79 & 12.17 & 0.05 \\ 
		& & 1.0 & 7767 & 34 & 14.03 & 5.07 & 0.04 & 21674 & 235 & 57.45 & 24.58 & 0.10 &  & 8666 & 34 & 20.39 & 11.08 & 0.04 \\ 
		
		3000 & 1000 & 0.4 & 16679 & 42 & 47.41 & 12.04 & 0.19 & 127064 & 1537 & 1174.58 & 100.64 & 1.21 & 0.26$^{2}$ & 23120 & 57 & 74.88 & 31.87 & 0.15 \\ 
		& & 0.6 & 14844 & 16 & 38.62 & 10.29 & 0.05 & 58724 & 506 & 370.05 & 80.85 & 0.29 &  & 17957 & 14 & 56.23 & 25.58 & 0.06 \\ 
		& & 0.8 & 14328 & 11 & 38.63 & 10.01 & 0.03 & 50275 & 303 & 156.46 & 63.58 & 0.21 &  & 20097 & 11 & 62.65 & 29.15 & 0.05 \\ 
		& & 1.0 & 14482 & 21 & 33.32 & 11.14 & 0.06 & 50468 & 251 & 167.38 & 69.02 & 0.16 &  & 18810 & 20 & 51.58 & 27.46 & 0.05 \\ 
		
		\multicolumn{3}{l}{All solved}  & 12181 & 41 & 19.87 & 5.61 & 0.28 & 49743 & 774 & 251.15 & 39.04 & 1.04 &  & 14375 & 29 & 27.77 & 13.14 & 0.17 \\ 
		
		\bottomrule
	\end{tabular}
\end{sidewaystable}

We first compare the single-phase lifted inequalities \eqref{eq:newineq} with the two-phase lifted inequalities \eqref{eq:twophase1}--\eqref{eq:twophase2} (which are the inequalities developed by \citet{Shi2022}).
From \cref{tab:EUM}, we observe that the setting using the single-phase lifted inequalities \eqref{eq:newineq}
generates  much  fewer cuts than that using the two-phase lifted inequalities \eqref{eq:twophase1}--\eqref{eq:twophase2}
($12181$ vs $49743$).
However, compared to that using the two-phase lifted inequalities \eqref{eq:twophase1}--\eqref{eq:twophase2},
the setting using the single-phase lifted inequalities \eqref{eq:newineq} yields a much tighter \LP relaxation gap at the root node ($0.28\%$ vs $1.04\%$).
In particular, for instances with $n = 1000$, $m = 500$, and $\lambda = 0.4$,
the setting using the single-phase lifted inequalities \eqref{eq:newineq} can yield a gap of $0.23\%$ while that using the two-phase lifted inequalities \eqref{eq:twophase1}--\eqref{eq:twophase2} yields a gap of $12.44\%$.
These results indicate that the (newly proposed) single-phase lifted inequalities \eqref{eq:newineq} are more effective than the two-phase lifted inequalities \eqref{eq:twophase1}--\eqref{eq:twophase2}
in terms of strengthening the \LP relaxation of the expected utility maximization problem \eqref{prob:EUM}.
This advantage directly translates into superior computational efficiency of the setting using the single-phase lifted inequalities \eqref{eq:newineq}.
In particular, using the single-phase lifted inequalities \eqref{eq:newineq}, we can successfully solve all $240$ instances to optimality.
For instances that can be solved by all three settings,
this method requires an average CPU time of $19.87$ seconds and an average number of explored nodes of $41$.
In contrast, using the two-phase lifted inequalities \eqref{eq:twophase1}--\eqref{eq:twophase2},  $15$ out of $240$ instances cannot be solved to optimality within the time limit of $3600$ seconds, 
and the average CPU time  and number of explored nodes are 
$251.15$ seconds and $774$, respectively. 

Next, we evaluate the performance effect of the combination of the single- and two-phase inequalities.
As shown in \cref{tab:EUM}, the combination outperforms the setting using the single-phase lifted inequalities \eqref{eq:newineq}
in terms of achieving a smaller \LP relaxation gap at the root node  ($0.17\%$ vs $0.28\%$) 
and exploring fewer nodes ($29$ vs $41$).
However, this advantage cannot compensate for the 
additional overhead spent in separating  cuts 
and the overhead led by adding more cuts.
Overall, for the expected utility maximization problem \eqref{prob:EUM}, 
the combination is slightly outperformed by the setting using the single-phase lifted inequalities \eqref{eq:newineq}.

\subsection{Weapon target assignment problem}\label{subsec:result-WTA}
The second data set for our computational study consists of the weapon target assignment problem,
which is one of the most popular classes of nonlinear assignment problems \cite{Manne1958,Ahuja2007,Andersen2022,Lu20211,Bertsimas2025}.
Let $n$ denote the number of weapon types with $\mu_i \in \Z_{++}$ representing the available quantity for each $i \in [n]$,
and $m$ be the number of targets with $V_j > 0$ representing the importance value for each  $j \in [m]$.
The probability of using a weapon of type $i$ to destroy target $j$ is denoted as $p_{ij} \in (0,1)$.
The weapon target assignment problem attempts to determine the number of weapons of type $i$ to be assigned to target $j$, 
denoted by $x_{i j}$, 
to maximize the expected value of damage across targets
respect the resource constraints on 
 the numbers of available weapons.
It can be formulated as the following nonlinear integer programming problem:
\begin{equation}\label{prob:WTA}
	\max_{x} \left\{ 
	\sum_{j = 1}^{m} V_j \left( 1 - \prod_{i = 1}^{n} (1 - p_{i j})^{x_{i j}} \right)
	\,:\,
	\sum_{j = 1}^{m} x_{i j} \leq \mu_i, \ \forall \ i \in [n], \
	x \in \Z_+^{n \times m}
	\right\}.
\end{equation}

In our experiments, we use the same scheme in \cite{Andersen2022,Bertsimas2025} to construct the instances.
Specifically, each $p_{i j}$ is independently drawn from the uniform distribution on $(0,1)$
and each $V_j$ is independently drawn from an integer uniform distribution on $[1, 100]$.
For the available quantity $\{\mu_i\}_{i \in [n]}$,  we set $\mu_i = 1$ with probability $1 - \rho$ and $\mu_i = 2$ with probability $\rho$.
In our computational experiment, $\rho$ is taken from $\{0.3, 0.4, 0.5\}$.

Letting $f(z) := 1 - \exp(-z)$ and $a_{i j} : = - \ln (1 - p_{i j})$ for $i \in [n]$ and $j \in [m]$, 
then $f$ is concave on $\R$ and $a_{i j} > 0$.
Therefore, the nonlinear term in the objective function can be rewritten as:
\begin{align*}
	1 - \prod_{i = 1}^{n} (1 - p_{i j})^{x_{i j}}
	& =  1 - \exp \left( \sum_{i = 1}^{n} x_{i j} \ln (1 - p_{i j}) \right) 
	= f \left(\sum_{i = 1}^{n} a_{i j} x_{i j}\right).
\end{align*}
Introducing a continuous variable $w_j \in \R$ for each target $j \in [m]$, 
we can rewrite the problem equivalently as
\begin{equation}\label{prob:WTA-refor}
	\begin{small}
		\begin{aligned}
			\max_{w, \, x} \left\{ 
			\sum_{j = 1}^{m} V_j w_j
			\,:\,
			w_j \leq f\left( \sum_{i = 1}^{n} a_{i j} x_{i j} \right), \ \forall \ j \in [m], \
			\sum_{j = 1}^{m} x_{i j} \leq \mu_i, \ \forall \ i \in [n], \
			x \in \Z_+^{n \times m}
			\right\}.
		\end{aligned}
	\end{small}
\end{equation}
Problem \eqref{prob:WTA-refor} is a convex \MINLP problem
and can therefore be solved by a branch-and-cut algorithm based on the  outer approximation (OA) cuts \cite{Duran1986,Fletcher1994,Bonami2008}:
\begin{equation}\label{eq:OA}
	w_j \leq f \left( \sum_{i = 1}^{n} a_{i j} \bar{x}_{i j} \right) 
	+ \sum_{i = 1}^{n} f' \left( \sum_{i = 1}^{n} a_{i j} \bar{x}_{i j} \right) a_{i j} (x_{i j} - \bar{x}_{i j}), \ \forall \ \bar{x} \in \Z_+^{n \times m},~\forall~j \in [m],
\end{equation}
where $f'(z) = \exp(-z)$.
Specifically, letting $(w^*, x^*)$ be a solution of the current \LP relaxation encountered during the branch-and-cut process, 
if $x^*$ is integral, then we add the corresponding OA cuts \eqref{eq:OA} into the formulation 
if $w_j^* > f \left( \sum_{i = 1}^{n} a_{i j} x^*_{i j} \right)$;
otherwise, we first round the fractional vector $x^*$ to its nearest integral vector $\bar{x}$, and then add the OA cuts \eqref{eq:OA} defined by $\bar{x}$ into the formulation if it is violated by $(w^*, x^*)$.
In addition, since problem \eqref{prob:WTA-refor} contains substructures in a form of $\Fset$
(i.e., $\{(w_j, x_{\cdot j}) \in \R \times \Z^n \, : \, 	w_j \leq f\left( \sum_{i = 1}^{n} a_{i j} x_{i j} \right),~0 \leq x_{ij} \leq \mu_i , \ \forall \ i \in [n] \}$ for $j \in [m]$), 
we can apply the proposed single-phase lifted inequalities \eqref{eq:single-phase} and two-phase lifted inequalities \eqref{eq:lifted-ineq1} and \eqref{eq:lifted-ineq2} (or their strengthened versions \eqref{facet-ineq1} and \eqref{facet-ineq2}) to strengthen the \LP relaxation of problem \eqref{prob:WTA-refor}. 

To evaluate the strength of the proposed single- or/and two-phase inequalities,  we compare the performance of the following four settings, 
\begin{itemize}
	\item[$\bullet$] \settingO: solve the weapon target assignment problem \eqref{prob:WTA-refor} by the branch-and-cut algorithm based on OA cuts \eqref{eq:OA};
	\item[$\bullet$] \settingS: \settingO with the single-phase lifted inequalities \eqref{eq:single-phase};
	\item[$\bullet$] \settingT: \settingO with the two-phase lifted inequalities \eqref{eq:lifted-ineq1} and \eqref{eq:lifted-ineq2} (or their strengthened versions \eqref{facet-ineq1} and \eqref{facet-ineq2});
	\item[$\bullet$] \settingC:  \settingO with the combination of the single- and two-phase lifted inequalities.
\end{itemize}

\cref{tab:WTA} summarizes the computational results.
We first observe that the formulation using the OA cuts \eqref{eq:OA} yields an extremely poor \LP relaxation bound of the weapon target assignment problem \eqref{prob:WTA-refor};
the \LP relaxation gap at the root node of the \settingO ranges from $79.98\%$ to $96.58\%$.
Consequently, none of the instances can be solved by \settingO within the time limit of 3600 seconds.
In sharp contrast, 
the single- and two-phase lifted inequalities enable to return a much tighter \LP relaxation gap at the root node (ranging from $2.11\%$ to $14.37\%$ for \settingS, and from $0.34\%$ to $3.13\%$ for \settingT),
which indicates that our single- and two-phase lifted inequalities 
can indeed significantly strengthen the \LP relaxation of the weapon target assignment problem \eqref{prob:WTA-refor}.
This result directly translates to superior solution efficiency for settings using our proposed lifted inequalities.
In particular, \settingS and \settingT can solve $97$ and $104$ out of the $120$ instances, respectively, to optimality within the time limit of $3600$ seconds.
Moreover, for instances that cannot be solved  to optimality, 
\settingS and \settingT can still provide a very small optimality gap on the best solution obtained (ranging from $0.02\%$ to $1.02\%$ for \settingS, and from $0.02\%$ to $0.77\%$ for \settingT).

Next, we compare the performance of \settingS and \settingT.
Recall that, for the expected utility maximization problem \eqref{prob:EUM}, the single-phase lifted inequalities enable the branch-and-cut algorithm to achieve much better performance than the two-phase lifted inequalities.
This is, however, not the case for the weapon target assignment problem \eqref{prob:WTA-refor}.
Indeed, \settingT outperforms \settingS for this problem, as shown in \cref{tab:WTA}.
More specifically, \settingT yields a tighter \LP relaxation gap at the root node ($1.28\%$ vs $6.35\%$) while generating fewer cuts ($33457$ vs $58873$).
Consequently, \settingT can solve five more instances than \settingS within the time limit of $3600$ seconds;
for the instances that cannot be solved to optimality, 
\settingT always returns a smaller end gap than \settingS.
For instances that can be solved to optimality by at least one setting,
\settingT can return a smaller CPU time and number of explored nodes than those returned by \settingS.

Finally, we observe from \cref{tab:WTA} that the combination of the single- and two-phase lifted inequalities can even achieve a better \LP relaxation bound for the weapon target assignment problem \eqref{prob:WTA-refor},
as demonstrated in columns RGap $(\%)$ under  \settingS, \settingT, and \settingC.
Therefore, 
compared with \settingS and \settingT,
\settingC achieves a better performance in terms of smaller CPU time and number of explored nodes.

\begin{sidewaystable}[htbp]
	\centering
	\caption{Computational results of the four settings:  
		\settingO, {\settingS}, {\settingT}, and {\settingC}
		for the weapon target assignment problem \eqref{prob:WTA-refor}.}
	\label{tab:WTA}
	\fontsize{7.5pt}{9}\selectfont
	\renewcommand{\arraystretch}{1.5}
	\addtolength{\tabcolsep}{-4.5pt}

		\begin{tabular}{ccc
				cc
				cccccc
				cccccc
				cccccc}
			\toprule
			\multicolumn{1}{c}{$n$} & 
			\multicolumn{1}{c}{$m$} & 
			\multicolumn{1}{c}{$\rho$} & 
			\multicolumn{2}{c}{\settingO} &
			\multicolumn{6}{c}{\settingS} &
			\multicolumn{6}{c}{
				\begin{tabular}{@{}c@{}}
					\settingT \\
				\end{tabular}
			} & 
			\multicolumn{6}{c}{\settingC} \\
			\cmidrule(lr){4-5}\cmidrule(lr){6-11}\cmidrule(lr){12-17}\cmidrule(lr){18-23}
			& & &

			\multicolumn{1}{c}{Rgap (\%)} &
			\multicolumn{1}{c}{Egap (\%)} &

			\multicolumn{1}{c}{C} & 
			\multicolumn{1}{c}{N} & 
			\multicolumn{1}{c}{T} & 
			\multicolumn{1}{c}{ST} & 
			\multicolumn{1}{c}{Rgap (\%)} &
			\multicolumn{1}{c}{Egap (\%)} &

			\multicolumn{1}{c}{C} & 
			\multicolumn{1}{c}{N} & 
			\multicolumn{1}{c}{T} & 
			\multicolumn{1}{c}{ST} & 
			\multicolumn{1}{c}{Rgap (\%)} &
			\multicolumn{1}{c}{Egap (\%)} &

			\multicolumn{1}{c}{C} & 
			\multicolumn{1}{c}{N} & 
			\multicolumn{1}{c}{T} &
			\multicolumn{1}{c}{ST} & 
			\multicolumn{1}{c}{Rgap (\%)} &
			\multicolumn{1}{c}{Egap (\%)} \\
			\midrule 
			
			75 & 100 & 0.3 & 83.03 & 7.61$^{10}$ & 2937 & 597 & 2.81 & 0.08 & 2.82  & & 1512 & 78 & 1.28 & 0.13 & 0.58 &  & 1346 & 58 & 1.07 & 0.12 & 0.72 &  \\ 
			& & 0.4 & 80.40 & 12.11$^{10}$ & 6101 & 823 & 4.96 & 0.14 & 6.47  & & 2471 & 242 & 3.05 & 0.34 & 1.33 &  & 1888 & 159 & 2.24 & 0.27 & 1.12 &  \\ 
			& & 0.5 & 79.98 & 14.89$^{10}$ & 9541 & 1501 & 9.36 & 0.26 & 8.54  & & 3211 & 496 & 4.47 & 0.61 & 1.91 &  & 2822 & 353 & 3.54 & 0.51 & 1.53 &  \\ 
			
			150 & 200 & 0.3 & 87.92 & 10.56$^{10}$ & 10075 & 1584 & 29.76 & 0.49 & 3.29  & & 5894 & 965 & 21.45 & 1.88 & 1.11 &  & 3868 & 552 & 12.67 & 1.23 & 0.69 &  \\ 
			& & 0.4 & 89.55 & 19.21$^{10}$ & 39566 & 6180 & 94.14 & 1.77 & 7.78  & & 31237 & 4629 & 76.84 & 9.12 & 1.78 &  & 16025 & 2588 & 51.25 & 6.04 & 1.71 &  \\ 
			& & 0.5 & 90.58 & 26.25$^{10}$ & 102025 & 14842 & 232.58 & 4.93 & 13.48  & & 59660 & 7819 & 130.68 & 19.93 & 1.60 &  & 51922 & 7431 & 132.79 & 22.87 & 1.71 &  \\ 
			
			225 & 300 & 0.3 & 92.09 & 12.65$^{10}$ & 12956 & 1342 & 71.60 & 0.88 & 2.52  & & 6093 & 352 & 29.95 & 1.85 & 0.44 &  & 4819 & 467 & 32.37 & 2.00 & 0.41 &  \\ 
			& & 0.4 & 92.67 & 24.94$^{10}$ & 279861 & 46023 & 1171.45 & 22.25 & 7.91  & 0.02$^{1}$& 160278 & 27910 & 1122.76 & 109.10 & 2.14 & 0.02$^{1}$ & 145529 & 26252 & 938.26 & 106.31 & 1.93 & 0.06$^{1}$ \\ 
			& & 0.5 & 93.86 & 29.73$^{10}$ & 853841 & 99567 & 3109.72 & 59.94 & 13.79  & 0.22$^{8}$& 458346 & 58529 & 2172.74 & 248.11 & 2.33 & 0.10$^{3}$ & 533846 & 73202 & 2472.48 & 328.40 & 2.51 & 0.10$^{4}$ \\ 
			
			300 & 400 & 0.3 & 96.14 & 9.37$^{10}$ & 13175 & 1127 & 171.13 & 1.39 & 2.11  & & 6099 & 344 & 48.59 & 2.35 & 0.34 &  & 6329 & 360 & 58.66 & 3.13 & 0.34 &  \\ 
			& & 0.4 & 95.65 & 27.34$^{10}$ & 276971 & 43432 & 2957.10 & 35.50 & 8.42  & 0.12$^{4}$& 148096 & 24759 & 1973.39 & 158.40 & 1.82 & 0.03$^{2}$ & 131849 & 24041 & 1602.42 & 145.18 & 1.85 &  \\ 
			& & 0.5 & 96.58 & 41.23$^{10}$ & 436329 & 28502 & 3600.00 & 32.47 & 14.37  & 1.02$^{10}$& 398056 & 28708 & 3600.00 & 256.11 & 3.13 & 0.77$^{10}$ & 419419 & 34371 & 3600.00 & 303.50 & 2.88 & 0.65$^{10}$ \\ 
			
			\multicolumn{5}{l}{All solved}  & 58873 & 9542 & 295.93 & 4.40 & 6.35 &  & 33457 & 5344 & 210.25 & 19.11 & 1.28 &  & 29104 & 4837 & 182.42 & 18.99 & 1.17 & \\ 
	
			\bottomrule
		\end{tabular}
\end{sidewaystable}
\section{Conclusion}\label{sec:conslusion}
In this paper, we have conducted the first comprehensive polyhedral investigation on the mixed-integer nonlinear set with box constraints: $\Fset= \left\{ (w,x) \in \R \times \Z^n \,:\, w \leq f(a^\top x), \right.$ $\left. 0 \leq x_i \leq \mu_i, ~ \forall ~ i \in [n] \right\}$ that
arises as an important substructure in many \MINLP problems.
We proposed a class of seed inequalities for a two-dimensional restriction of $\Fset$ (obtained by fixing all but one of the integer variables to their bounds), and developed two lifting procedures to derive strong valid inequalities for $\conv(\Fset)$.
In the first lifting procedure, all fixed variables are simultaneously lifted, based on a subadditive approximation for the exact lifting function of the seed inequalities. 
In the second lifting procedure, the variables fixed at their lower bounds (respectively, at their upper bounds) are lifted first using the exact  lifting function, and then the variables fixed at their upper bounds (respectively, at their lower bounds) are lifted using some subadditive approximations.
The resultant inequalities, the single- and two-phase lifted inequalities, can be efficiently computed and are able to define facets for $\conv(\Fset)$ under mild conditions.
These two key features make them particularly suitable to be embedded in a branch-and-cut framework to accelerate the computational performance of solving related \MINLP problems.
Moreover, for the special case where $\Fset$ is the submodular maximization set \cite{Nemhauser1988,Ahmed2011,Shi2022}, 
the mixed-integer knapsack set \cite{Nemhauser1990,Marchand1999,Atamturk2003}, 
the mixed-integer polyhedral conic set \cite{Atamturk2010b}, 
or the two-row mixed-integer set \cite{Wolsey1977,Bodur2017},
we demonstrate that the proposed  inequalities either unify existing inequalities in the literature or yield new facet-defining inequalities for the convex hull of the related set.
Finally, computational results on expected utility maximization  and weapon target assignment problems demonstrate the 
superiority of the proposed  single- and two-phase lifted inequalities in terms of strengthening the continuous relaxations and  improving the overall performance of the branch-and-cut algorithm.
\appendix
\section{Proof of \cref{liftzeta-new}}\label{proof_4.1}
Letting $\Delta(\delta_1, \delta_2) = \zeta(\delta_1) + \zeta(\delta_2) - \zeta(\delta_1 + \delta_2)$ for $\delta_1, \delta_2 \in \R$,
then it suffices to prove that $\Delta(\delta_1, \delta_2) \geq 0$ for (i) $\delta_1, \delta_2 \in \R_+$ and (ii) $\delta_1, \delta_2 \in \R_-$.
If $\delta_1 = 0$ or $\delta_2 = 0$, then $\Delta(\delta_1, \delta_2) = 0$ trivially holds.
Therefore, we only consider the case $\delta_1 \neq 0$ and $\delta_2 \neq 0$ in the following.

We first show that $\Delta(\delta_1, \delta_2) \geq  0$ for $\delta_1, \delta_2 \in \R_{++}$. 
By the definition of $\zeta$ in \eqref{calc:zeta}, it follows that for $\delta \in \R_{++}$,
\begin{equation*}\small
	\zeta(\delta) =  \left\{
	\begin{aligned}
		&g \left(\delta+ (k - \ell - 1) a_s\right) + (\ell + 1) \rho_s(k)-g(ka_s), &&\text{if}~\ell a_s \le \delta < (\ell + 1) a_s, \\
		& && ~ \ell = 0, \ldots, k - 1, \\
		&g \left(\delta\right) + k\rho_s(k)-g(ka_s), &&\text{if}~\delta \ge ka_s.
	\end{aligned}
	\right.
\end{equation*}
Without loss of generality, we assume that $\delta_1\le \delta_2$.
Suppose that $\ell_i a_s \leq \delta_i < (\ell_i + 1)a_s$ holds for some  $\ell_i \in \Z_+$, $i = 1,2$.
We consider the following three cases.
\begin{enumerate}
	\item[(i)] {$0 < \delta_1 \le \delta_2 < ka_s$}. 
	From $ \zeta(\delta)\leq Z(\delta)$ for $\delta\in \mathbb{R}$, $ \zeta(\delta)=Z(\delta)$ for $0 \leq \delta\leq k a_s$, and the subadditivity of $Z$ established in \cref{lem:Z-subadditive-on-R}, we have $\Delta(\delta_1, \delta_2)= Z(\delta_1)+ Z(\delta_2)- \zeta(\delta_1+\delta_2) \geq Z(\delta_1)+ Z(\delta_2)- Z(\delta_1+\delta_2) \geq  0$.
	\item [(ii)] {$0 < \delta_1 < ka_s \le \delta_2 < \delta_1 + \delta_2$}. 
	We have 
	\begin{equation*}
		\begin{aligned}
			\Delta(\delta_1, \delta_2) 
			& = [(\ell_1 + 1)(g(ka_s) - g((k-1)a_s))]
			- \left[
			g(\delta_1 + \delta_2)
			- g(\delta_2) 
			\right] \\
			& \quad 
			- \left[
			g(ka_s) 
			- g(\delta_1 - (\ell_1+1)a_s + ka_s)
			\right],
		\end{aligned}
	\end{equation*}
	and 
	\begin{equation*}
		\begin{aligned}
			& (\ell_1 + 1)a_s - \delta_1 - (-\delta_1 + (\ell_1 + 1)a_s)=0, \\
			& ka_s  \le
			\min\left\{
			\delta_1 + \delta_2,
			ka_s
			\right\}, \\
			& (k-1)a_s  \le
			\min\left\{
			\delta_2,
			\delta_1 - (\ell_1 + 1)a_s + ka_s
			\right\}.
		\end{aligned}
	\end{equation*}
	Applying \cref{cor:sum-pariwise-comparison},
	we obtain $\Delta(\delta_1, \delta_2) \ge 0$.
	\item [(iii)] {$ka_s \le \delta_1 \le \delta_2 < \delta_1 + \delta_2$}. We have 
	\begin{equation*}
		\begin{aligned}
			\Delta(\delta_1, \delta_2) 
			& = k[g(ka_s) - g((k-1)a_s)]
			+ [g(\delta_1)
			- g(ka_s)] \\
			& \quad 
			- \left[
			g(\delta_1 + \delta_2) 
			-g(\delta_2)
			\right],
		\end{aligned}
	\end{equation*}
	and 
	\begin{equation*}
		\begin{aligned}
			& ka_s + (\delta_1 - ka_s)- \delta_1 = 0, \\
			& \max\left\{
			ka_s,
			\delta_1
			\right\}
			\le
			\delta_1 + \delta_2, \\
			& \max\left\{
			(k-1)a_s,
			ka_s
			\right\}
			\le
			\delta_2.
		\end{aligned}
	\end{equation*}
	Again applying \cref{cor:sum-pariwise-comparison}, 
	we obtain $\Delta(\delta_1, \delta_2) \geq 0$.
\end{enumerate}

Next, we prove $\Delta(\delta_1, \delta_2) \geq  0$ for $\delta_1, \delta_2 \in \R_{--}$. 
By the definition of $\zeta$ in \eqref{calc:zeta}, it follows that for $\delta \in \R_{--}$,
\begin{equation*}\small
	\zeta(\delta) =  \left\{
	\begin{aligned}
		&g(\delta+\mu_sa_s)+(k-\mu_s)\rho_s(k)-g(ka_s), &&\text{if}~\delta < (k-\mu_s  - 1)a_s,\\
		&g \left(\delta+ (k - \ell - 1) a_s\right) + (\ell + 1) \rho_s(k)-g(ka_s), &&\text{if}~\ell a_s \le \delta < (\ell + 1) a_s, \\
		& && ~ \ell = k - \mu_s-1,\ldots, - 1. \\
	\end{aligned}
	\right.
\end{equation*}
Without loss of generality, we assume that $\delta_1\geq \delta_2$.
Suppose that $\ell_i a_s \leq \delta_i < (\ell_i + 1)a_s$ holds for some $\ell_i \in \Z_{--}, \ i = 1,2$.
We consider the following three cases.
\begin{enumerate}
	\item[(i)] {$ 0 > \delta_1 \ge \delta_2 \geq  (k - \mu_s - 1) a_s$}. 
	From $ \zeta(\delta)\leq Z(\delta)$ for $\delta\in \mathbb{R}$, $ \zeta(\delta)=Z(\delta)$ for $(k-\mu_s -1)a_s \leq \delta<0$, and the subadditivity of $Z$ established in   \cref{lem:Z-subadditive-on-R}, we have $\Delta(\delta_1, \delta_2)= Z(\delta_1)+ Z(\delta_2)- \zeta(\delta_1+\delta_2) \geq Z(\delta_1)+ Z(\delta_2)- Z(\delta_1+\delta_2) \geq  0$.
	\item [(ii)] {$ 0 > \delta_1  \ge (k - \mu_s - 1) a_s >
		\delta_2 > \delta_1 + \delta_2$}. 
	We have
	\begin{equation*}
		\begin{aligned}
			\Delta(\delta_1, \delta_2) 
			& = 
			[g(\delta_2+\mu_s a_s)
			- g(\delta_1 + \delta_2+\mu_s a_s) ]
			+ [ (\ell_1 + 1 ) (g(ka_s) - g((k-1)a_s))] \\
			& \quad 
			- \left[
			g(ka_s) 
			- g(\delta_1 - (\ell_1+1)a_s + ka_s)
			\right],
		\end{aligned}
	\end{equation*}
	and 
	\begin{equation*}
		\begin{aligned}
			& -\delta_1
			+ (\ell_1 + 1 ) a_s 
			- (- \delta_1 + (\ell_1 + 1)a_s)=0, \\
			&\delta_2 + \mu_s a_s \le ka_s, \\
			&
			\delta_1 + \delta_2 +\mu_s a_s \le \min\{(k-1)a_s,  \delta_1 - (\ell_1 + 1) a_s + ka_s\}.
		\end{aligned}
	\end{equation*}
	Applying \cref{cor:sum-pariwise-comparison},
	we obtain $\Delta(\delta_1, \delta_2) \ge 0$.
	\item [(iii)] {$ 0 > (k - \mu_s - 1) a_s >  \delta_1  \ge
		\delta_2 > \delta_1 + \delta_2$}.  We have 
	\begin{equation*}
		\begin{aligned}
			\Delta(\delta_1, \delta_2) 
			& = [g(\delta_2+\mu_s a_s)
			- g(\delta_1 + \delta_2+\mu_s a_s)] \\
			& \quad 
			- [ (\mu_s-k) (g(ka_s) - g((k-1)a_s))]
			- \left[g(ka_s)- g(\delta_1+\mu_s a_s)\right],
		\end{aligned}
	\end{equation*}
	and 
	\begin{equation*}
		\begin{aligned}
			&- \delta_1 - (\mu_s-k) a_s - ((k - \mu_s) a_s - \delta_1) = 0, \\
			& \delta_2 + \mu_s a_s \le ka_s, \\
			& \delta_1 + \delta_2 + \mu_s a_s
			\le
			\min\left\{ (k-1)a_s, \delta_1 + \mu_s a_s\right\}.
		\end{aligned}
	\end{equation*}
	Again applying \cref{cor:sum-pariwise-comparison}, 
	we obtain $\Delta(\delta_1, \delta_2)  \geq 0$.\qedhere
\end{enumerate}

\section{Proof of \cref{prop:sub-of-etaLU}}\label{proof_4.12}

	We only prove the subadditivity of $\eta^L$ in \eqref{eta-explicit} on $\R_-$ as
	 (i) $\eta^L$ in \eqref{etaL} is a special case of \eqref{eta-explicit} with parameters $\underbrace{a_1, \ldots , a_1}_{\mu_1}, \underbrace{a_2, \ldots , a_2}_{\mu_2},\ldots, \underbrace{a_r, \ldots , a_r}_{\mu_r}$,
	and (ii) $\eta^U$ in \eqref{etaU} is a special case of $\eta^L$ in \eqref{etaL} with $\mu_s = +\infty$.
	The proof is divided into two steps.
	First, we transform $\eta^L$ into an iterative form in \cref{bareta-rewrite1}.
	Then,
	we show that the new iterative form of $\eta^L$ is a special case of the subadditive function $\bar{\omega}$ defined in \eqref{baromega}. 
	
	Let $\bar{r} := \ceil{(\mu_s - k + 1) / k}$, $r' := r + \bar{r}$, 
	and $\kappa := \mu_s - k + 1 - (\bar{r} - 1) k$.
	Then $\bar{r} \in [\mu_s]$ and $\kappa \in [k]$ as $k \in [\mu_s]$.
	For $i = r + 1, \ldots, r'$, let $a_i : = k a_s$ and $A_i : = \sum_{j = 1}^{i} a_j$.
	It follows that $A_i = A_r + (i - r) k a_s$ for $i = r+1, \ldots, r'$.
	Using these notations, we can transform $\eta^L (\delta)$ into an iterative form.
	\begin{claim}\label{bareta-rewrite1}
		The function $\eta^L(\delta)$ in \eqref{eta-explicit} admits the following iterative form:
		\begin{equation}
			\label{etabar-recursive}
			\begin{footnotesize}
				\eta^L(\delta) = \left\{
				\begin{aligned}
					& 0, 
					\hspace{3.55cm} ~\text{if}~ \delta = 0, \\
					& g(\delta + A_{i + 1}) + \eta^L(-A_{i}) - g(a_{i + 1}), \\
					& \hspace{3.85cm} ~\text{if}~ k a_s - A_{i + 1} \leq \delta < -A_i, \\
					& \hspace{3.85cm} ~~ i =  0, \ldots, r' - 1, \\
					& g(\delta + A_{i + 1} + \ell a_s) + \eta^L(-A_{i}) - \ell \rho_s(k) - g(a_{i + 1}), \\
					& \hspace{3.85cm} ~\text{if}~ k a_s - (\ell + 1) a_s - A_{i + 1} \leq \delta < k a_s - \ell a_s - A_{i + 1}, \\
					& \hspace{3.85cm} ~~ i =  0,\ldots,r' - 2 \ \text{and} \ \ell = 0,\ldots, k-1, \\
					& \hspace{3.85cm} ~~ \text{or} \ i =  r' - 1 \ \text{and} \ \ell = 0,\ldots, \kappa - 1, \\
					& g(\delta + A_{r'} + (\kappa - 1) a_s) + \eta^L(-A_{r' - 1}) - (\kappa - 1) \rho_s(k) - g(a_{r'}), \\
					& \hspace{3.9cm} ~\text{if}~ \delta < k a_s - \kappa a_s - A_{r'},
				\end{aligned}
				\right.
			\end{footnotesize}
		\end{equation}
	\end{claim}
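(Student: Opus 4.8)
The plan is to show that the two piecewise formulas \eqref{eta-explicit} and \eqref{etabar-recursive} describe one and the same function on $\R_-$. The engine of the whole argument is a single anchor identity,
\begin{equation*}
	\eta^L(-A_i) = -\sum_{j=1}^{i} \zeta(a_j), \qquad i = 0, 1, \ldots, r'-1,
\end{equation*}
where, under the convention $a_j = k a_s$ for $r < j \le r'$, one has $\zeta(a_j) = \zeta(k a_s) = k \rho_s(k)$ (immediate from the last branch of \eqref{calc:zeta} since $k a_s \ge k a_s$). The other fact I would record up front is $g(a_{i+1}) + k\rho_s(k) - g(ka_s) = \zeta(a_{i+1})$ whenever $a_{i+1} \ge k a_s$, which again comes straight from the last branch of \eqref{calc:zeta}. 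Once the anchor identity is available, the recursion in \eqref{etabar-recursive} simply unrolls into the explicit sums appearing in \eqref{eta-explicit}.

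To establish the anchor identity I would evaluate \eqref{eta-explicit} directly at $\delta = -A_i$. For $0 \le i \le r-1$, the point $-A_i$ is the right endpoint of the first branch's interval $(ka_s - A_{i+1}, -A_i]$, so $\eta^L(-A_i) = g(a_{i+1}) - \sum_{j=1}^{i+1}\zeta(a_j) + k\rho_s(k) - g(ka_s)$, and recognising $g(a_{i+1}) + k\rho_s(k) - g(ka_s) = \zeta(a_{i+1})$ gives $\eta^L(-A_i) = -\sum_{j=1}^i \zeta(a_j)$. For $r \le i \le r'-1$, the point $-A_i = -A_r - (i-r)ka_s$ is the right endpoint of the second branch's interval for the pair $(r-1,\ell')$ with $\ell' := (i-r+1)k$, which lies in the admissible range because $k \le \ell' \le \bar r k \le \mu_s$ (the last inequality using $\mu_s = \bar r k + \kappa - 1 \ge \bar r k$). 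Since then $-A_i + A_r + \ell' a_s = k a_s$, the $g$-terms cancel and $\eta^L(-A_i) = -\sum_{j=1}^r \zeta(a_j) - (i-r)k\rho_s(k) = -\sum_{j=1}^i \zeta(a_j)$ under the virtual convention.

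With the identity in hand, the verification proceeds branch by branch. For the branches of \eqref{etabar-recursive} with $i+1 \le r$ (non-virtual), substituting the anchor identity together with $g(a_{i+1}) = \zeta(a_{i+1}) - k\rho_s(k) + g(ka_s)$ reproduces the first and second branches of \eqref{eta-explicit} verbatim. The delicate part, which I expect to be the main obstacle, is the virtual range $i \ge r$. Here the first branch of \eqref{etabar-recursive} is vacuous, since $A_{i+1} = A_i + k a_s$ forces its interval $[ka_s - A_{i+1}, -A_i)$ to degenerate to a point; and the second and third branches must be matched to the second and third branches of \eqref{eta-explicit} through the index correspondence $\ell' = (i+1-r)k + \ell$. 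Checking that the $g$-argument, the coefficient of $\rho_s(k)$, and the defining interval all transform correctly under this correspondence is a bookkeeping computation that hinges on the calibration $\bar r k + \kappa - 1 = \mu_s$ baked into the definitions of $\bar r$ and $\kappa$: in particular, I would show that the last branch of \eqref{etabar-recursive} has $g$-argument $\delta + A_r + \mu_s a_s$ and $\rho_s(k)$-coefficient $k - \mu_s$, that its threshold $ka_s - \kappa a_s - A_{r'}$ equals $(k-\mu_s-1)a_s - A_r$, and hence that it coincides with the third branch of \eqref{eta-explicit}. Finally, since \eqref{eta-explicit} and \eqref{etabar-recursive} employ the closed and half-open interval conventions respectively but both define continuous functions that agree on the interior of each common subinterval, continuity forces agreement at the shared breakpoints as well, completing the proof.
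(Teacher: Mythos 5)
Your proposal is correct and follows essentially the same route as the paper's proof: establish the anchor identity $\eta^L(-A_i) = -\sum_{j=1}^{i}\zeta(a_j)$ by evaluating \eqref{eta-explicit} at $\delta = -A_i$ (splitting $i\le r-1$ from the virtual range $i\ge r$ with $\zeta(ka_s)=k\rho_s(k)$), then unroll the recursion branch by branch using the index correspondence $\ell = (i+1-r)k+\ell'$ and the calibration $\bar r k + \kappa - 1 = \mu_s$, with continuity reconciling the endpoint conventions. The only cosmetic difference is that you observe $\bar r k\le\mu_s$ outright, which renders vacuous the extra case $\bar\ell=\mu_s+1$ that the paper treats separately.
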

	\begin{proof}
		From the definition in \eqref{eta-explicit}, $\eta^L (\delta)$ is a continuous function on $\R_{-}$ with $\eta^L(0)=0$.
		We first show that 
		\begin{equation}\label{eta-Ai}
			\eta^L(-A_i) = -\sum_{j = 1}^{i}\zeta(a_j), \ \forall \ i = 1, \ldots, r' - 1.
		\end{equation}
		Let $i_0\in [r'-1]$.
		If $i_0 \leq r-1$, then the first case in \eqref{eta-explicit} applies and it follows
		\begin{align*}
			\eta^L(- A_{i_0}) 
			& = g(- A_{i_0} + A_{i_0 + 1}) - \sum_{j = 1}^{i_0  + 1} \zeta (a_j) + k \rho_s(k) - g(k a_s) \\
			& = - \sum_{j = 1}^{i_0} \zeta(a_j) - \zeta(a_{i_0 + 1}) + g(a_{i_0+1}) + k \rho_s(k) - g(k a_s)
			\stackrel{(a)}{=} - \sum_{j = 1}^{i_0} \zeta(a_j),
		\end{align*}
		where (a) follows from $\zeta(a_{i_0 + 1}) = g(a_{i_0+1}) + k \rho_s(k) - g(k a_s)$ (by ${i_0 + 1} \in [r] = S_0^+$ and \eqref{calc:zeta}).
		Otherwise, $i_0 \in \{r, r + 1, \ldots, r' - 1\}$. 
		Then $- A_{i_0} = - (i_0 - r) k a_s - A_r = (k - \bar{\ell} )a_s - A_r$,
		where $\bar{\ell} := (1 +i_0 - r) k$.
		By $r \leq i_0 \leq r'-1$ and $\bar{r}=r' - r$, 
		it follows $k \leq \bar{\ell} \leq\bar{r} k = \ceil{(\mu_s - k + 1) / k} k \leq \mu_s + 1$.
		If $\bar{\ell} \leq \mu_s$, then 
		the second case  in \eqref{eta-explicit} (with $i=r-1$ and ${\ell} =\bar{\ell}$)  applies and it follows
		\begin{equation*}
			\begin{aligned}
				\eta^L(-A_{i_0})
				& = g((k - \bar{\ell}) a_s - A_r + A_r + \bar{\ell} a_s) - \sum_{j = 1}^{r} \zeta(a_j) + (k - \bar{\ell}) \rho_s(k) - g(k a_s) \\
				& \stackrel{(a)}{=} - \sum_{j = 1}^{r} \zeta(a_j) - (i_0 - r) \zeta(ka_s)  
				\stackrel{(b)}{=} - \sum_{j = 1}^{i_0} \zeta(a_j),
			\end{aligned}
		\end{equation*}
		where the (a) follows from $k - \bar{\ell} = -(i_0-r)k$ and $\zeta(ka_s) = g(ka_s) + k \rho_s(k) - g(k a_s)=k \rho_s(k)$,
		and (b) follows from $a_j = ka_s$ for $j \in \{r+1, \ldots, i_0\}$.
		Otherwise, $\bar{\ell} = \mu_s + 1$ must hold, then $-A_{i_0} = (k - \bar{\ell})a_s - A_r = (k - \mu_s - 1) a_s - A_r$.
		Thus the last case in \eqref{eta-explicit} applies and it follows
		\begin{small}
			\begin{align*}
				\eta^L(-A_{i_0}) 
				& = g((k - \mu_s - 1) a_s - A_r + A_r + \mu_s a_s) - \sum_{j = 1}^{r}\zeta(a_j) + (k - \mu_s) \rho_s(k) - g(ka_s) \\
				& = - \sum_{j = 1}^{r}\zeta(a_j) + (k - \mu_s - 1) \rho_s(k) 
				\stackrel{(a)}{=} - \sum_{j = 1}^{r}\zeta(a_j) - (i_0 - r) \zeta(k a_s)
				\stackrel{(b)}{=} - \sum_{j = 1}^{i_0}\zeta(a_j),
			\end{align*}
		\end{small}%
		where (a) follows from $\bar{\ell} = \mu_s + 1 = (1 + i_0 - r)k$
		and $\zeta(ka_s) = g(ka_s) + k \rho_s(k) - g(k a_s)=k \rho_s(k)$,
		and (b) follows from $a_j = ka_s$ for $j \in \{r+1, \ldots, i_0\}$.
		
		Let $\tilde{\eta}^L(\delta)$ denote the right-hand-side of \eqref{etabar-recursive}.
		If $-A_r < \delta \leq 0$, 
		then $\tilde{\eta}^L(\delta)={\eta}^L(\delta)$ follows from \eqref{eta-Ai}
		and $\zeta(a_{i + 1}) = g(a_{i + 1}) + k \rho_s(k) - g(k a_s)$.
		Now consider the case $\delta \leq - A_r$. 
		From the definition of $\kappa$,
		we have $\mu_s + 1= \bar{r} k + \kappa$, 
		and thus 
		\begin{equation}\label{eq:Ar-r2}
			(k - \mu_s - 1) a_s - A_r
			= - \bar{r} ka_s +  ka_s - \kappa a_s - A_r=  k a_s - \kappa a_s - A_{r'}.
		\end{equation}
		 We further consider the following two cases.
		\begin{itemize}
			\item [(i)] $(k - \mu_s - 1) a_s - A_r < \delta \leq - A_r$, i.e., the second case in \eqref{eta-explicit} with $i = r - 1$.
			Letting $\ell \in \{k, \ldots, \mu_s\}$ be such that  $\delta \in ((k - \ell - 1) a_s - A_r, (k - \ell) a_s - A_r]$, then 
			\begin{equation}\small\label{tmp}
				\eta^L(\delta)  =g(\delta + A_r + \ell a_s) - \sum_{j = 1}^{r} \zeta(a_j) + (k - \ell) \rho_s(k) - g(k a_s) .
			\end{equation}		
			Let $i' : = \floor{\ell / k} + r - 1$ and $\ell' : = \ell - \floor{\ell / k} k$.
			Then it follows that $i' \in \{r, \ldots, r' - 1\}$ (as $\floor{\ell / k}  \leq \ell/k  \leq \mu_s/k <  \lceil (\mu_s +1)/ k\rceil= \bar{r}+1= r'-r+1$), $\ell' \in \{0, \ldots, k - 1\}$,
			and
			\begin{footnotesize}
				\begin{align*}
					((k - \ell - 1) a_s - A_r, (k - \ell) a_s - A_r]
					& =(-(i'-r)ka_s - (\ell' + 1) a_s-A_r,  -(i'-r)ka_s -\ell' a_s-A_r] \\
					& = (k a_s - (\ell' + 1) a_s - A_{i' + 1}, k a_s - \ell'a_s - A_{i' + 1}].
				\end{align*}
			\end{footnotesize}%
			Together with \eqref{eq:Ar-r2} and $(k - \mu_s - 1) a_s - A_r < \delta \leq -A_r$, $\delta$ must belong to the third case in \eqref{etabar-recursive}.
			Therefore, 
			\begin{align*}\small
				\tilde{\eta}^L(\delta)& = g(\delta + A_{i' + 1} + \ell' a_s) + \eta^L(-A_{i'}) - \ell' \rho_s(k) - g(a_{i'+1})\\
				& = g(\delta + A_{i' + 1} + \ell' a_s) - \sum_{j = 1}^{r} \zeta(a_j) 
				- (i' - r)\zeta(k a_s) - \ell' \rho_s(k) - g(a_{i' + 1}) \\
				& \overset{(a)}{=}g(\delta + A_r + \ell a_s) - \sum_{j = 1}^{r} \zeta(a_j) + (k - \ell) \rho_s(k) - g(k a_s)\overset{(b)}=	\eta^L(\delta)
			\end{align*}
			where (a) follows from $\ell = (i' + 1- r) k + \ell'$,
			$ A_{i' + 1} + \ell' a_s =A_r + \ell a_s$,
			$ - (i' - r)\zeta(k a_s) - \ell' \rho_s(k)=(-(i' - r) k - \ell') \rho_s(k) = (k - \ell) \rho_s(k)$, 
			and $a_{i' + 1} = ka_s$, 
			and (b) follows from \eqref{tmp}.
			\item [(ii)] $\delta \leq (k - \mu_s - 1) a_s - A_r$,  i.e.,  the third case in \eqref{eta-explicit}.
			From \eqref{eq:Ar-r2}, $\delta \leq k a_s - \kappa a_s - A_{r'}$ belongs the last case in \eqref{etabar-recursive} or the third case in \eqref{etabar-recursive} with $\delta = k a_s - \kappa a_s - A_{r'}$.
			As $\tilde{\eta}^{L}(\delta)$ is a continuous function, it follows from \eqref{eta-Ai} that
			\begin{align*}
				\tilde{\eta}^L(\delta)
				& = g(\delta + A_{r'} + (\kappa - 1) a_s) + \eta^L(-A_{r' - 1}) - (\kappa - 1) \rho_s(k) - g(a_{r'})\\
				& = g(\delta + A_{r'} + (\kappa - 1) a_s)  - \sum_{j = 1}^{r}\zeta(a_j) - (\bar{r} - 1) \zeta(k a_s) 
				- (\kappa - 1) \rho_s(k) - g(k a_s) \\
				& \overset{(a)}= g(\delta + A_r + \mu_s a_s) - \sum_{j = 1}^{r}\zeta(a_j) + (k - \mu_s) \rho_s(k) - g(ka_s)=\eta^L(\delta),
			\end{align*}
			where (a) follows from 
			$A_{r'} + (\kappa - 1) a_s= (A_r + \bar{r} k a_s) + (\kappa - 1) a_s
			=  A_r + \mu_s a_s $ 
			and $- (\bar{r} - 1) \zeta(k a_s) - (\kappa - 1) \rho_s(k) = -((\bar{r} - 1) k + \kappa - 1) \rho_s(k) 
			= (k - \mu_s) \rho_s(k) $.
			\qedhere
		\end{itemize}
	\end{proof}
	\begin{claim}\label{bareta-rewrite2}
		The function $\eta^L(\delta)$ in \eqref{etabar-recursive} is special case of $\bar{\omega}(\delta)$ in \eqref{baromega}. 
	\end{claim}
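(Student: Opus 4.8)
The plan is to exhibit an explicit choice of the abstract parameters $\epsilon,\tau,\{b_i\},\{v_i\}$ (together with the concave function $g$ and the truncation indices $m,\gamma$) for which the function $\bar{\omega}$ in \eqref{baromega} coincides, case by case, with the iterative form of $\eta^L$ in \eqref{etabar-recursive}. Because the abstract framework and $\eta^L$ both use the symbol $a_i$ (the former for the derived quantity $a_i=b_i+\tau\epsilon$ of \eqref{ABdef}, the latter for the concrete data $a_1\ge\cdots\ge a_{r'}=ka_s$), the first thing I would do is fix the dictionary and keep the two meanings rigorously apart.

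Concretely, I would set $\epsilon:=a_s$, $\tau:=k$, the same concave $g$, and
\[
b_i:=a_i-ka_s,\qquad v_i:=-a_{i+1}\qquad (i\le r'),
\]
extended harmlessly for $i>r'$ by $a_j:=ka_s$ (so $b_j:=0$, $v_j:=-ka_s$), and choose $m:=r'$ and $\gamma:=\kappa-1$. Then I would verify the standing hypotheses \eqref{def:b}: $\epsilon\ge0$ and $\tau\in\Z_{++}$ are immediate; $b_i\ge0$ and $b_i\ge b_{i+1}$ follow from $a_i\ge ka_s$ and $a_1\ge\cdots\ge a_{r'}=ka_s$; and the coupling inequality $b_{i+2}+v_{i+1}\ge b_{i+1}+v_i$ in fact holds with equality, since both sides equal $-ka_s$. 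I would also record that $\gamma=\kappa-1\in\{0,\ldots,\tau-1\}$ (as $\kappa\in[k]$) and $m=r'\in\Z_{++}$, so that $\bar{\omega}$ is well-defined.

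Next I would compute the derived quantities and check that they reproduce exactly the coefficients in \eqref{etabar-recursive}. With the above choice, $b_i+\tau\epsilon=(a_i-ka_s)+ka_s=a_i$ recovers the concrete $a_i$ (so the partial sums $A_i$ agree), $B_i=A_{i-1}+b_i=A_i-ka_s$, hence $-B_{i+1}=ka_s-A_{i+1}$, and $\psi_i=g(-b_{i+1}-v_i-\epsilon)-g(-b_{i+1}-v_i)=g((k-1)a_s)-g(ka_s)=-\rho_s(k)$. Substituting into \eqref{omega}--\eqref{baromega} and using $A_i-v_i=A_{i+1}$ and $-g(-v_i)=-g(a_{i+1})$, the three pieces of $\omega$ map termwise onto the second and third pieces of $\eta^L$: the interval $-B_{i+1}\le\delta<-A_i$ becomes $ka_s-A_{i+1}\le\delta<-A_i$, the interval $-B_{i+1}-(\ell+1)\epsilon\le\delta<-B_{i+1}-\ell\epsilon$ becomes $ka_s-(\ell+1)a_s-A_{i+1}\le\delta<ka_s-\ell a_s-A_{i+1}$, and $\ell\psi_i=-\ell\rho_s(k)$. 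The second (frozen) branch of $\bar{\omega}$, active for $\delta<-B_m-(\gamma+1)\epsilon=ka_s-\kappa a_s-A_{r'}$, matches the last piece of $\eta^L$ after substituting $m-1=r'-1$, $\gamma=\kappa-1$, and $A_{r'-1}+a_{r'}=A_{r'}$.

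The point that will require genuine care — and that I expect to be the main obstacle — is the bookkeeping of the index ranges, which is exactly what the truncation in $\bar{\omega}$ encodes. In $\omega$ the inner index runs over the full range $\ell=0,\ldots,\tau-1=k-1$ for every $i$, whereas \eqref{etabar-recursive} allows $\ell=0,\ldots,k-1$ only for $i\le r'-2$ and restricts to $\ell=0,\ldots,\kappa-1$ for $i=r'-1$. I would resolve this by observing that the first branch of $\bar{\omega}$ (namely $\omega(\delta)$ on $-B_m-(\gamma+1)\epsilon\le\delta\le0$) retains precisely the pieces with $i=r'-1$ and $0\le\ell\le\kappa-1$ — since $-B_{r'}-(\ell+1)\epsilon\ge -B_{r'}-\kappa\epsilon$ iff $\ell\le\kappa-1$ — while the remaining tail is absorbed into the linear second branch. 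Finally, since $-A_{r'-1}$ lies in the overlap region and both $\omega$ and $\eta^L$ obey the same recursion in the values $\eta^L(-A_i)$ with the common base value $0$ at $\delta=0$, the occurrences of $\omega(-A_i)$ and $\eta^L(-A_i)$ are consistent throughout, which completes the identification and hence the claim.
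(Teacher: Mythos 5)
Your proposal is correct and takes essentially the same route as the paper: the paper's proof of this claim chooses exactly the parameters $\epsilon=a_s$, $\tau=k$, $b_i=a_i-ka_s$, $v_{i-1}=-a_i$, $m=r'$, $\gamma=\kappa-1$, computes $B_i=A_i-ka_s$ and $\psi_i=-\rho_s(k)$, and concludes $\bar{\omega}=\eta^L$. Your write-up simply carries out the case-by-case verification (hypotheses of \eqref{def:b}, interval matching, and the truncation/index bookkeeping at $i=r'-1$) that the paper leaves implicit.
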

	\begin{proof}
		We choose parameters in \eqref{def:b} as
		$\epsilon = a_s$, $\tau = k$, 
		$b_i = a_i - k a_s$, and $v_{i - 1} = - a_i$ for $i \in [r']$, 
		and parameters that define $\bar{\omega}$ as $m=r'$ and $\gamma= \kappa -1$.
		Then it follows
		$B_i = A_{i - 1} +b_i = A_i - k a_s$
		and $\psi_i = g(-b_{i+1} - v_i - \epsilon) - g(-b_{i + 1} - v_i) 
		= g((k -1) a_s) - g(k a_s) = -\rho_s(k)$
		for $i \in [r']$.  
		Under these parameters, $\bar{\omega}(\delta) = \eta^L(\delta)$ must hold.
	\end{proof}

\section{Proof of \cref{thm:chi-sub}}\label{proof_4.21}
Observe that for any $\delta_1, \delta_2 \in \R_+$, it follows
\begin{align*}
	& \chi(\delta_1) + \chi(\delta_2) - \chi(\delta_1 + \delta_2)
	= \omega(- \delta_1) + \omega(-\delta_2) - \omega(-\delta_1 - \delta_2), \\
	& \bar{\chi}(\delta_1) + \bar{\chi}(\delta_2) - \bar{\chi}(\delta_1 + \delta_2)
	= \bar{\omega}(- \delta_1) + \bar{\omega}(-\delta_2) - \bar{\omega}(-\delta_1 - \delta_2).
\end{align*}
As a result, to prove \cref{thm:chi-sub}, it suffices to show that
\begin{align}
	\chi(\delta_1) + \chi(\delta_2) - \chi(\delta_1 + \delta_2) \geq 0, \label{goal1} \\
	\bar{\chi}(\delta_1) + \bar{\chi}(\delta_2) - \bar{\chi}(\delta_1 + \delta_2) \geq 0, \label{goal2}
\end{align}
holds for any $\delta_1, \delta_2 \in \R_+$. 
We proceed with the following four lemmas.

\begin{lemma}\label{properties}
	The following statements hold: (i) $a_i \geq a_{i+1}$; (ii) $B_{i + 1} = B_i + a_{i + 1}$; 
	(iii) $- a_{i+1} + v_{i-1} \leq - a_i + v_i $; (iv) $\psi_{i} \leq \psi_{i - 1}$; 
	and (v) $\chi (A_i ) = \tau \sum_{j = 1}^{i} \psi_{j - 1} + \sum_{j = 1}^{i}(h(b_j + v_{j-1}) - h(v_{j - 1}))$.
\end{lemma}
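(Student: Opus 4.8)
The plan is to verify the five claims of \cref{properties} in the order listed, since the later parts build on the earlier ones, and all of them reduce to the defining relations in \eqref{def:b} and \eqref{ABdef} together with the concavity slope bound \cref{lem:slope}. Parts (i) and (ii) are immediate algebra. For (i), substituting $a_i = b_i + \tau\epsilon$ and using $b_i \geq b_{i+1}$ gives $a_i = b_i + \tau\epsilon \geq b_{i+1} + \tau\epsilon = a_{i+1}$. For (ii), I would compute $B_{i+1} - B_i = (A_i + b_{i+1}) - (A_{i-1} + b_i) = a_i + b_{i+1} - b_i$ and then rewrite $a_i - b_i = \tau\epsilon$ to obtain $B_{i+1} - B_i = b_{i+1} + \tau\epsilon = a_{i+1}$.

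For (iii) I would substitute $a_{i+1} = b_{i+1} + \tau\epsilon$ and $a_i = b_i + \tau\epsilon$, so that after cancelling $-\tau\epsilon$ on both sides the inequality $-a_{i+1} + v_{i-1} \leq -a_i + v_i$ is equivalent to $b_i + v_{i-1} \leq b_{i+1} + v_i$. This is precisely the monotonicity condition $b_{i+2} + v_{i+1} \geq b_{i+1} + v_i$ imposed on the sequence $\{v_i\}$ in \eqref{def:b}, read with the index shifted down by one.

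Part (iv) is where concavity enters. Writing $c_i := b_{i+1} + v_i$, the definition in \eqref{def-rho} reads $\psi_i = h(c_i + \epsilon) - h(c_i)$, and part (iii) is exactly the statement $c_{i-1} \leq c_i$, i.e.\ the base points are non-decreasing. I would then apply \cref{lem:slope} to the four points $c_{i-1} \leq c_{i-1} + \epsilon$ and $c_i \leq c_i + \epsilon$: the two intervals have equal length $\epsilon$ and $c_{i-1} \leq c_i$, so the ``in particular'' clause of \cref{lem:slope} yields $h(c_{i-1} + \epsilon) - h(c_{i-1}) \geq h(c_i + \epsilon) - h(c_i)$, that is, $\psi_{i-1} \geq \psi_i$.

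Finally, for (v) I would argue by induction on $i$ using the recursive definition \eqref{chi}. The key observation is $A_i = A_{i-1} + a_i = A_{i-1} + b_i + \tau\epsilon = B_i + \tau\epsilon$, so $\delta = A_i$ sits at the right endpoint of $(B_i + (\tau-1)\epsilon,\, B_i + \tau\epsilon]$, which is the third branch of \eqref{chi} with base index $i-1$ and $\ell = \tau - 1$. Evaluating that branch gives $\chi(A_i) = h(A_i - A_{i-1} - (\tau-1)\epsilon + v_{i-1}) + \chi(A_{i-1}) + (\tau-1)\psi_{i-1} - h(v_{i-1})$; since $A_i - A_{i-1} - (\tau-1)\epsilon = b_i + \epsilon$, the first term equals $h(b_i + v_{i-1} + \epsilon) = \psi_{i-1} + h(b_i + v_{i-1})$ by the definition of $\psi_{i-1}$. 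Combining the two $\psi_{i-1}$ contributions produces the recursion $\chi(A_i) = \chi(A_{i-1}) + \tau\psi_{i-1} + (h(b_i + v_{i-1}) - h(v_{i-1}))$, and unrolling from $\chi(A_0) = \chi(0) = 0$ telescopes to the asserted closed form. All five steps are short; the only place that needs care is the index bookkeeping in (v)---matching $\delta = A_i$ to the branch of \eqref{chi} indexed by $i-1$ with $\ell = \tau-1$---and, in (iv), checking that the equal-length hypothesis of \cref{lem:slope} is in place before invoking it.
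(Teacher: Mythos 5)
Your proposal is correct and follows essentially the same route as the paper: parts (i)--(iii) are the same direct substitutions from \eqref{def:b} and \eqref{ABdef}, part (iv) is the same application of \cref{lem:slope} to the equal-length increments in \eqref{def-rho}, and your part (v) reproduces verbatim the telescoping recursion $\chi(A_i)=\chi(A_{i-1})+\tau\psi_{i-1}+(h(b_i+v_{i-1})-h(v_{i-1}))$ that the paper derives in the display preceding \eqref{chi_Ai}. The only difference is that you write out the details the paper leaves implicit; nothing is missing.
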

\begin{proof}
	Statements (i)-(iii) follow from the definitions in \eqref{def:b} and \eqref{ABdef}. 
	Statement (iv) follows from \eqref{def-rho} and \cref{lem:slope}.
	Finally, statement (v) follows from \eqref{chi_Ai}.
	\qedhere
\end{proof}

\begin{lemma}\label{lem:translation}
	Let $\delta, \Delta \geq 0$ be such that $A_i < \delta \leq \delta + \Delta \leq A_{i + 1}$ for some $i \in \Z_{+}$.
	Then, 
	\begin{equation}\label{ineq1}
		\chi (\delta + \Delta) - \chi (\delta) 
		\leq \chi \left( \delta - \sum_{j = t}^{i+1}a_j  + \Delta \right)
		- \chi \left( \delta - \sum_{j = t}^{i+1}a_j  \right), \ \forall \ t = 2, \ldots, i+1.
	\end{equation}
\end{lemma}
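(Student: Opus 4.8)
The plan is to prove \eqref{ineq1} by telescoping it into single‑shift inequalities and to settle each single shift by subdividing the relevant intervals along the $\epsilon$‑grid. Writing $S_m := \sum_{j=m}^{i+1} a_j$ with $S_{i+2} := 0$, so that $S_m = S_{m+1} + a_m$, the left‑hand side of \eqref{ineq1} is $\chi(\delta - S_{i+2} + \Delta) - \chi(\delta - S_{i+2})$ and the right‑hand side for a given $t$ is $\chi(\delta - S_t + \Delta) - \chi(\delta - S_t)$. Hence it suffices to prove, for $\eta := \delta - S_{m+1}$ and each $m = i+1, \ldots, t$, the single‑shift estimate
\[
\chi(\eta + \Delta) - \chi(\eta) \;\le\; \chi(\eta - a_m + \Delta) - \chi(\eta - a_m),
\]
and then chain these inequalities down from $m=i+1$ to $m=t$. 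First I would record the interval bookkeeping: since $a_j \ge a_{j+1}$ by \cref{properties}(i) and $A_i < \delta \le \delta + \Delta \le A_{i+1}$, one checks $\eta,\eta+\Delta \in (A_{m-1}, A_m]$ while $\eta - a_m,\ \eta - a_m + \Delta \in (A_{m-2}, A_{m-1}]$; the hypothesis $t \ge 2$ guarantees $m-2 \ge 0$, so all indices occurring in \eqref{chi} are admissible.

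To analyze the single shift I would use the explicit formula \eqref{chi}. Setting $p := \eta - A_{m-1} \in (0, a_m]$ and $q := (\eta - a_m) - A_{m-2} \in (0, a_{m-1}]$, the identity $a_{m-1} - a_m = b_{m-1} - b_m$ (from $a_j = b_j + \tau\epsilon$ in \eqref{ABdef}) gives $q = p + (b_{m-1} - b_m)$. The additive constants $\chi(A_{m-1})$ and $\chi(A_{m-2})$ cancel in the two double differences, so it remains to compare the increments of $F_{m-1}(\cdot) := \chi(A_{m-1} + \cdot) - \chi(A_{m-1})$ over $[p, p+\Delta]$ with those of $F_{m-2}(\cdot) := \chi(A_{m-2} + \cdot) - \chi(A_{m-2})$ over $[q, q+\Delta]$; the domain bounds $p+\Delta \le a_m$ and $q+\Delta \le a_{m-1}$ are checked using $p+\Delta \le a_m$ and $a_{m-1}-a_m = b_{m-1}-b_m$. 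The key structural observation is that the breakpoints of $F_{m-1}$ sit at $b_m + \ell\epsilon$ and those of $F_{m-2}$ at $b_{m-1} + \ell\epsilon$; since $q - p = b_{m-1} - b_m$ is exactly the grid offset, $p$ lies in the $\ell$-th piece of $F_{m-1}$ if and only if $q$ lies in the $\ell$-th piece of $F_{m-2}$.

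Exploiting this alignment, I would subdivide $[p, p+\Delta]$ at the grid points $b_m + \ell\epsilon$ and correspondingly subdivide $[q, q+\Delta]$ at $b_{m-1} + \ell\epsilon$, obtaining matched sub‑intervals each contained in a single piece with a common index $\ell$ on both sides. On such a sub‑interval the $\ell\psi$‑terms in \eqref{chi} are constant and cancel, so the increment of $F_{m-1}$ is a pure difference $h(X') - h(X)$ and that of $F_{m-2}$ is $h(X'+C) - h(X+C)$, where $X \le X'$ are the reduced arguments ($\cdot - \ell\epsilon + v_{m-1}$, monotone within a piece) and $C := (b_{m-1}-b_m) + (v_{m-2}-v_{m-1})$. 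Applying \cref{properties}(iii) with index $m-1$ together with $a_{m-1}-a_m = b_{m-1}-b_m$ yields $C \le 0$, and since $h$ is concave with $X \le X'$, \cref{lem:slope} (equal interval lengths) gives $h(X'+C) - h(X+C) \ge h(X') - h(X)$. Summing the matched sub‑interval inequalities and telescoping (by continuity of $F_{m-1}$ and $F_{m-2}$) proves the single shift, and chaining over $m$ gives \eqref{ineq1}.

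The main obstacle is precisely that $\chi$ is only piecewise concave: it has upward slope jumps at every grid point $b_{i+1} + \ell\epsilon$, so a naive global‑concavity comparison of increments fails as soon as $[p, p+\Delta]$ crosses a breakpoint—there the reduced arguments need not be monotone and an isolated $\psi$‑term intervenes. The decomposition along the \emph{aligned} $\epsilon$‑grids is what circumvents this: it confines every comparison to a single concave piece, where the leftward shift $C \le 0$ lets \cref{lem:slope} do the work, while the $\psi$‑jumps cancel in matched pairs rather than having to be estimated. Getting the grid alignment and the domain containments exactly right (again via \cref{properties}(i)) is the bookkeeping that makes the argument rigorous.
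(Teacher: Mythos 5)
Your proposal is correct and follows essentially the same route as the paper: it reduces \eqref{ineq1} to single shifts by $a_m$ (the paper's induction on $t$ with base case $t=i+1$), and settles each shift by comparing increments of the concave $h$ over intervals offset by $C=(b_{m-1}-b_m)+(v_{m-2}-v_{m-1})\le 0$ via \cref{properties}~(iii) and \cref{lem:slope}. Your aligned-$\epsilon$-grid subdivision is a more uniform packaging of the paper's explicit case analysis (its cases (i), (ii), (iii.1), (iii.2)), with the $\psi$-monotonicity of \cref{properties}~(iv) recovered as the special case of a full grid cell, so no new idea or gap is involved.
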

\begin{proof}
	We show the statement by induction on $t$.	
	We first consider the base case $t = i+1$. 
	In this case, \eqref{ineq1} reduces to 
	\begin{equation}\label{tmeq}
		\chi (\delta + \Delta) - \chi (\delta) \leq  \chi (\delta - a_{i+1}  + \Delta) - \chi (\delta - a_{i+1} ).
	\end{equation}
	We consider the following three cases.
	\begin{itemize}
		\item [(i)] $A_i  < \delta \leq \delta + \Delta \leq B_{i+1} $.
		Then, by the definition of $\chi$ in \eqref{chi}, we have 
		\begin{align*}
			\chi (\delta + \Delta) - \chi (\delta) & = 
			h(\delta - A_i  + v_i + \Delta) - h(\delta - A_i  + v_i) \\
			& = h(\delta - A_{i - 1} - a_i  + v_i + \Delta) - h(\delta - A_{i - 1} - a_i  + v_i) \\ 
			&  \stackrel{(a)}{\leq}  h(\delta - A_{i - 1} - a_{i + 1} + v_{i - 1} + \Delta) - h(\delta - A_{i - 1} - a_{i + 1} + v_{i - 1}) \\
			&  \stackrel{(b)}{=}  \chi (\delta - a_{i+1}  + \Delta) - \chi (\delta - a_{i+1} ),
		\end{align*}
		where (a) follows from \cref{properties} (iii), $\Delta \geq 0$, and \cref{lem:slope},
		and (b) follows from $A_{i-1}  < \delta - a_{i+1}  \leq \delta - a_{i+1}  + \Delta \leq B_{i} $ (which can be derived from $A_i  < \delta \leq \delta + \Delta \leq B_{i+1} $ and \cref{properties} (i)--(ii)).
		\item [(ii)] $A_i  < \delta \leq B_{i+1}$ 
		and $B_{i+1}  + \ell \epsilon < \delta + \Delta \leq B_{i+1}  + (\ell + 1) \epsilon$ for some $\ell \in \{0, \ldots, \tau - 1\}$. 
		Then $\Delta - \ell \epsilon \geq 0$, and by \cref{properties} (i)-(ii), it follows
		\begin{equation}\label{tmpeq1}
			A_{i-1}  < \delta - a_{i+1}  \leq B_{i}~\text{and}~ 
			B_{i}  + \ell \epsilon < \delta - a_{i+1}  + \Delta \leq B_{i}  + (\ell + 1) \epsilon.
		\end{equation}
		By the definition of $\chi$ in \eqref{chi}, 
		we have
		\begin{footnotesize}
			\begin{align*}
				\chi (\delta + \Delta) - \chi (\delta) 
				& =	h(\delta - A_i  + v_i + \Delta - \ell \epsilon) - h(\delta - A_i  + v_i) + \ell \psi_{i} \\
				& =	h(\delta - A_{i - 1} - a_i  + v_i  + \Delta - \ell \epsilon) - h(\delta - A_{i - 1} - a_i  + v_i) + \ell \psi_{i} \\
				& \stackrel{(a)}{\leq} h(\delta - A_{i - 1}  - a_{i+1} + v_{i-1}  + \Delta - \ell \epsilon) - h(\delta - A_{i - 1}  - a_{i+1} + v_{i-1}) + \ell \psi_{i - 1} \\
				& \stackrel{(b)}{=} \chi (\delta - a_{i+1}  + \Delta) - \chi (\delta - a_{i+1} ),
			\end{align*}
		\end{footnotesize}%
		where (a) follows from \cref{properties} (iii)-(iv), $\Delta - \ell \epsilon \geq 0$, and \cref{lem:slope}, and (b) follows from \eqref{tmpeq1}.
		\item [(iii)] $B_{i+1}  + \ell_1 \epsilon < \delta \leq B_{i+1}  + (\ell_1 + 1) \epsilon$ 
		and $B_{i+1}  + \ell_2 \epsilon < \delta + \Delta \leq B_{i+1}  + (\ell_2 + 1) \epsilon$ for some $\ell_1, \ell_2 \in \{0, \ldots, \tau - 1\}$. 
		Then $\ell_2 \geq \ell_1$, and by \cref{properties} (ii), it follows
		\begin{equation}\label{tmpeq2}
			B_{i}  + \ell_1 \epsilon < \delta - a_{i+1}  \leq B_{i}  + (\ell_1 + 1) \epsilon~\text{and}~ 
			B_{i}  + \ell_2 \epsilon < \delta - a_{i+1}  + \Delta \leq B_{i}  + (\ell_2 + 1) \epsilon.
		\end{equation}
		We further consider the following two subcases.
		\begin{itemize}
			\item [(iii.1)] $\ell_2 = \ell_1$. By the definition of $\chi$ in \eqref{chi},
			we have
			\begin{align*}
				& \chi (\delta + \Delta) - \chi (\delta) \\
				& \qquad =	h(\delta - A_i  + v_i - \ell_1 \epsilon + \Delta) - h(\delta - A_i  + v_i - \ell_1 \epsilon) \\
				& \qquad =	h(\delta - A_{i - 1} - a_i  + v_i - \ell_1 \epsilon + \Delta) - h(\delta - A_{i - 1} - a_i  + v_i - \ell_1 \epsilon) \\
				& \qquad \stackrel{(a)}{\leq} h(\delta - A_{i - 1} - a_{i + 1} + v_{i-1} - \ell_1 \epsilon + \Delta) - h(\delta - A_{i - 1} - a_{i + 1} + v_{i-1} - \ell_1 \epsilon) \\
				& \qquad \stackrel{(b)}{=} \chi (\delta - a_{i+1}  + \Delta) - \chi (\delta - a_{i+1} ).
			\end{align*}
			where (a) follows from \cref{properties} (iii), $\Delta \geq 0$, and \cref{lem:slope},
			and (b) follows from $\ell_2 = \ell_1$ and \eqref{tmpeq2}.
			\item [(iii.2)] $\ell_2 > \ell_1$.
			Let 
			\begin{equation}
				\begin{aligned}\label{def-Del}
					\Delta_1 := & (\delta + \Delta - A_i   - \ell_2 \epsilon + v_i) - (b_{i+1} + v_i), \\
					\Delta_2 := & (b_{i+1} + v_i + \epsilon) - (\delta - A_i  - \ell_1 \epsilon + v_i). 
				\end{aligned}
			\end{equation}
			Then 
			\begin{equation*}
				\begin{aligned}
					& \Delta_1= {(\delta + \Delta) - (A_i  + b_{i+1} + \ell_2 \epsilon)} = (\delta + \Delta) - (B_{i+1}  + \ell_2 \epsilon) \geq 0,\\
					& \Delta_2 = {(b_{i+1} + A_i  + (\ell_1 + 1) \epsilon) - \delta}
					= (B_{i+1}  + (\ell_1 + 1) \epsilon) - \delta \geq 0,
				\end{aligned}
			\end{equation*}
			and by $a_{i+1}-b_{i+1}= a_i - b_i = \tau \epsilon$, it follows
			\begin{equation}\label{lem30:cond}
				\begin{aligned}
					& b_i + v_{i - 1} + \Delta_1 = \delta - a_{i+1}  + \Delta - A_{i-1}  - \ell_2 \epsilon + v_{i - 1}, \\
					& 	b_i + v_{i - 1} + \epsilon - \Delta_2 = \delta - a_{i+1}  - A_{i - 1}  - \ell_1 \epsilon + v_{i - 1}.
				\end{aligned}
			\end{equation}
			By the definition of $\chi$ in \eqref{chi}, we have
			\begin{equation*}
				\label{lem30-cond}
				\begin{aligned}
					& \chi (\delta + \Delta) - \chi (\delta) \\
					& \qquad 
					=	h(\delta + \Delta - A_i   - \ell_2 \epsilon + v_i) - h(\delta - A_i  - \ell_1 \epsilon + v_i) + (\ell_2 - \ell_1) \psi_{i} \\
					& \qquad
					\stackrel{(a)}{=} [h(b_{i+1} + v_i + \Delta_1) - h(b_{i+1} + v_i)] \\
					& \qquad \quad
					+ [h(b_{i+1} + v_i + \epsilon) - h(b_{i+1} + v_i + \epsilon - \Delta_2)] 
					+ [(\ell_2 - \ell_1 - 1)\psi_{i}] \\
					& \qquad 
					\stackrel{(b)}{\leq} [h(b_{i} + v_{i-1} + \Delta_1) - h(b_{i} + v_{i-1})] \\
					& \qquad \quad
					+ [h(b_{i} + v_{i-1} + \epsilon) - h(b_{i} + v_{i-1} + \epsilon - \Delta_2)] 
					+ [(\ell_2 - \ell_1 - 1)\psi_{i - 1}] \\
					& \qquad 
					= h(b_{i} + v_{i-1} + \Delta_1) - h(b_{i} + v_{i-1} + \epsilon - \Delta_2) + (\ell_2 - \ell_1) \psi_{i - 1} \\ 
					& \qquad
					\stackrel{(c)}{=}
					h(\delta - a_{i+1}  + \Delta - A_{i-1}  - \ell_2 \epsilon + v_{i - 1}) - h(\delta - a_{i+1}  - A_{i - 1}  - \ell_1 \epsilon + v_{i - 1}) \\
					& \qquad \quad + (\ell_2 - \ell_1) \psi_{i - 1} \\
					& \qquad
					\stackrel{(d)}{=} \chi (\delta - a_{i+1}  + \Delta) - \chi (\delta - a_{i+1} ).
				\end{aligned}
			\end{equation*} 
			where (a) follows from \eqref{def-Del} and $\psi_{i} = h(b_{i+1} + v_i + \epsilon) - h(b_{i+1} + v_i)$,
			(b) follows from $b_i + v_{i - 1} \leq b_{i + 1} + v_i$, $\Delta_1 \geq 0$, $\Delta_2 \geq 0$, \cref{lem:slope}, and \cref{properties} (iv),
			(c) follows from \eqref{lem30:cond},
			and (d) follows from \eqref{tmpeq2} and the definition of $\chi $ in \eqref{chi}.
		\end{itemize}
	\end{itemize}
	
	Now suppose that \eqref{ineq1} holds for $t = t_0 + 1, \ldots, i+1$.
	Then,
	\begin{equation*}
		\begin{aligned}
			\chi (\delta + \Delta) - \chi (\delta) 
			& \stackrel{(a)}{\leq} 
			\chi \left( \underbrace{\delta - \sum_{j = t_0 + 1}^{i+1}a_j}_{\delta'}  + \Delta \right) 
			- \chi \left( \underbrace{\delta - \sum_{j = t_0 + 1}^{i+1}a_j}_{\delta'} \right) \\
			& \stackrel{(b)}{\leq} 
			\chi \left(\delta - \sum_{j = t_0 + 1}^{i+1}a_j - a_{t_0}  + \Delta\right) - \chi \left(\delta - \sum_{j = t_0 + 1}^{i+1}a_j - a_{t_0} \right) \\
			& = \chi \left( \delta - \sum_{j = t_0}^{i+1}a_j  + \Delta \right) 
			- \chi \left( \delta - \sum_{j = t_0}^{i+1}a_j  \right),
		\end{aligned}
	\end{equation*}
	where (a) follows from the induction hypothesis,
	and (b) follows from $A_{t_0-1} < \delta - \sum_{j = t_0 + 1}^{i+1}a_j \leq \delta - \sum_{j = t_0 + 1}^{i+1}a_j + \Delta \leq A_{t_0}$ (by $A_i  < \delta \leq \delta + \Delta \leq A_{i+1} $ and $a_{i+1} \leq a_i \leq \cdots \leq a_{t_0}$)
	and the {base case} in \eqref{tmeq} (with $i= t_0-1$). 
	Thus, \eqref{ineq1} also holds for $t = t_0$, which completes the proof.
	\qedhere
\end{proof}

\begin{lemma}\label{lem:same-segment1}
	Let $\delta, \Delta \geq 0$ be such that $A_i < \delta \leq \delta + \Delta \leq A_{i + 1}$ for some $i \in \Z_{+}$.
	Then, the following statements hold:
	(i) $\chi (\delta +\Delta) - \chi (\delta) \geq \chi (A_{i+1} ) - \chi (A_{i+1}  - \Delta)$;
	and (ii) $\chi(A_i + \Delta) - \chi(A_i) \geq \chi (\delta +\Delta) - \chi (\delta)$.
\end{lemma}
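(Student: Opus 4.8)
The plan is to read both inequalities as a single monotonicity statement. Writing $\phi(p) := \chi(p+\Delta) - \chi(p)$ for the increment of $\chi$ over a window of length $\Delta$ anchored at $p$, statements (i) and (ii) assert precisely that, over the admissible range $p \in [A_i,\, A_{i+1}-\Delta]$ (those $p$ for which the window stays inside the segment $(A_i, A_{i+1}]$), the increment $\phi$ attains its maximum at the left endpoint $p = A_i$, which is (ii), and its minimum at the right endpoint $p = A_{i+1}-\Delta$, which is (i). I would prove (ii) in detail and obtain (i) by the mirror-image argument.

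First I would record two structural facts read directly off the definition \eqref{chi} of $\chi$ on $(A_i, A_{i+1}]$, recalling $A_{i+1} = B_{i+1} + \tau\epsilon$. \textbf{(a)} Since the block $\ell = 0$ carries neither an $\ell\epsilon$ shift nor a $\psi_i$ term, the first piece and block $0$ splice into a single concave arc: $\chi(\delta) = \chi(A_i) - h(v_i) + h(\delta - A_i + v_i)$ for all $\delta \in (A_i,\, B_{i+1}+\epsilon]$, the argument of $h$ ranging over $(v_i,\, b_{i+1}+\epsilon+v_i]$. \textbf{(b)} For every $\delta$ with $B_{i+1} \le \delta$ and $\delta + \epsilon \le A_{i+1}$ one has the exact translation identity $\chi(\delta+\epsilon) = \chi(\delta) + \psi_i$, immediate from the block formula and the definition $\psi_i = h(b_{i+1}+v_i+\epsilon) - h(b_{i+1}+v_i)$. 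Consequently $\phi$ is $\epsilon$-periodic on the block region, i.e.\ $\phi(p+\epsilon) = \phi(p)$ whenever $p \ge B_{i+1}$ and $p + \epsilon + \Delta \le A_{i+1}$ (the two $\psi_i$ offsets cancel).

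Next, to prove (ii), I would use (b) to slide any window whose left endpoint exceeds $B_{i+1}$ leftward by the appropriate multiple of $\epsilon$ until its left endpoint lands in block $0 \subseteq (A_i,\, B_{i+1}+\epsilon]$, without changing $\phi$; thus it suffices to bound $\phi(p)$ for $p \in (A_i,\, B_{i+1}+\epsilon]$ by $\phi(A_i)$. When the whole window $[p, p+\Delta]$ lies inside the single concave arc of (a), this is exactly \cref{lem:slope} applied to the two equal-length argument windows $[v_i,\, v_i+\Delta]$ and $[p-A_i+v_i,\, p-A_i+v_i+\Delta]$ of $h$. When the window spills past $B_{i+1}+\epsilon$ into later blocks, I would split $\Delta$ at the successive breakpoints, express $\phi(A_i) - \phi(p)$ as a signed sum of differences of $h$, and invoke \cref{cor:sum-pariwise-comparison} after regrouping so that its two ordering hypotheses hold; here the cross-segment estimate \cref{properties}(iv) is not needed since everything stays within segment $i$, and the concavity of $h$ alone furnishes the required slope ordering. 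Statement (i) follows by the symmetric argument, now sliding windows rightward into the last block and comparing against $[A_{i+1}-\Delta,\, A_{i+1}]$; the single concave arc sits at the left end of the segment, so the right-anchored window sees the smallest slopes of $h$ and hence the smallest increment.

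The main obstacle is the bookkeeping for windows of length $\Delta$ exceeding $\epsilon$ (equivalently $\Delta > b_{i+1}+\epsilon$), which simultaneously straddle the arc/block junction at $B_{i+1}+\epsilon$ and several of the $\epsilon$-blocks. Decomposing such a window and matching each resulting $h$-difference against the corresponding piece of the boundary-anchored window, while keeping all arguments ordered so that \cref{lem:slope} and \cref{cor:sum-pariwise-comparison} apply, is the delicate part. I would organize it as a short case analysis (window inside one piece; window crossing the junction; window spanning several blocks) paralleling the structure already used in the proof of \cref{lem:translation}, with the periodicity identity (b) doing most of the collapsing.
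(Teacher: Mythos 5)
Your two structural facts are correct and your reduction is valid as far as it goes: the first piece and the $\ell=0$ block of \eqref{chi} do splice into a single concave arc on $(A_i,\,B_{i+1}+\epsilon]$, the translation identity $\chi(q+\epsilon)=\chi(q)+\psi_i$ holds for $B_{i+1}\le q\le A_{i+1}-\epsilon$, and the resulting $\epsilon$-periodicity of $\phi(p)=\chi(p+\Delta)-\chi(p)$ legitimately reduces (ii) to comparing $\phi(A_i)$ with $\phi(p)$ for $p\in(A_i,\,B_{i+1}+\epsilon]$. This is a genuinely different route from the paper, which instead inducts on $\tau$, peeling off one $\epsilon$-block per segment at each level (via \cref{properties2}) so that each step only ever has to cross a single new block. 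The gap is in your terminal comparison when the $p$-anchored window straddles the junction at $B_{i+1}+\epsilon$: there the inequality is \emph{not} a single application of \cref{cor:sum-pariwise-comparison}, and concavity of $h$ does not supply the corollary's ordering hypotheses. Concretely, take $\tau=2$, $\epsilon=1$, $b_{i+1}=0.1$, $v_i=0$, $\Delta=0.8$, $p=A_i+0.5$; then $A_i+\Delta$ stays on the initial arc while $p+\Delta$ lands in block $\ell=1$, and
\begin{equation*}
\phi(A_i)-\phi(p)=\bigl[h(0.8)-h(0)\bigr]+\bigl[h(0.5)-h(0.3)\bigr]-\bigl[h(1.1)-h(0.1)\bigr].
\end{equation*}
The positive increment over $(0.3,0.5)$ is strictly nested inside the negative increment over $(0.1,1.1)$ (with net multiplicity zero on $(0.5,0.8)$), so condition (ii) of \cref{cor:sum-pariwise-comparison} — $\max_{i\in I^+}a_i\le\min_{i\in I^-}a_i$ and $\max_{i\in I^+}b_i\le\min_{i\in I^-}b_i$ — fails for this decomposition and cannot be restored by subdividing the intervals. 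The inequality is nevertheless true (e.g.\ chain $h(0.8)-h(0)\ge h(0.9)-h(0.1)$ and $h(0.5)-h(0.3)\ge h(1.1)-h(0.9)$, both from \cref{lem:slope}, and add), but this requires a bespoke matching of sub-windows that your sketch does not provide, and the matching becomes genuinely intricate once the straddling window crosses several blocks, so that several uncancelled copies of $\psi_i$ remain, or once $b_{i+1}$ is large relative to $\epsilon$, so that the block indices of the two windows differ by more than one.

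This residual cross-junction comparison is precisely the work that the paper's induction on $\tau$ performs for you: at each level only one new $\epsilon$-block appears at the right end of the segment, and \cref{properties2}~(ii) together with concavity on that final arc settles it in three short cases. The mirror argument you propose for (i) inherits the same problem — the segment is not symmetric (an arc of length $b_{i+1}+\epsilon$ on the left versus $\epsilon$-blocks on the right), so the right-anchored comparison also reduces to a multi-block straddling estimate that one application of \cref{cor:sum-pariwise-comparison} does not cover. As written, the proposal therefore does not close the key case of either statement; to salvage it you would need to make the sub-window pairing explicit (effectively re-deriving an induction over the blocks crossed), at which point you are reconstructing the paper's argument in different clothing.
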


Notice that the parameters $a_i$, $A_i$, and $B_i$ in \eqref{ABdef} and function $\chi$ in \eqref{chi} all depend on $\tau$.
For clarity in the subsequent proof of \cref{lem:same-segment1}, we explicitly denote this dependence by writing them as $a_i^\tau$, $A_i^\tau$, $B_i^\tau$, and $\chi^\tau$.
As a result, the statements in \cref{lem:same-segment1} reduce to 
\begin{align}
	& \chi^\tau(\delta +\Delta) - \chi^\tau (\delta) \geq \chi^\tau (A^\tau_{i+1} ) - \chi^\tau (A^\tau_{i+1}  - \Delta),\label{tmp1}\\
	& \chi^\tau(A^\tau_i + \Delta) - \chi^\tau(A^\tau_i) \geq \chi^\tau (\delta +\Delta) - \chi^\tau (\delta)\label{tmp2} , 
\end{align}
where $\delta, \Delta \geq 0$ satisfying $A^\tau_i < \delta \leq \delta + \Delta \leq A^\tau_{i + 1}$ for some $i \in \Z_{+}$.
To prove \cref{lem:same-segment1}, or equivalently, \eqref{tmp1}--\eqref{tmp2}, we need the following results.

\begin{claim}\label{properties2}
	For $\tau \geq 2$ and $i \geq 0$, it follows that
	\begin{itemize}
		\item [(i)] $A_i^\tau = A_i^{\tau - 1} + i \epsilon$ and $B_{i+1}^{\tau} = B_{i+1}^{\tau - 1} + i \epsilon$;
		\item [(ii)] $\chi^{\tau}(\delta') - \chi^{\tau}(\delta'') = \chi^{\tau}(\delta' - \epsilon) - \chi^{\tau}(\delta'' - \epsilon),  ~ \forall~
		\delta', \delta'' \in [A_{i+1}^{\tau} - \epsilon, A_{i+1}^{\tau}]$;
		\item [(iii)] $\chi^{\tau - 1}(\delta) + \sum_{j = 1}^{i} \psi_{j - 1} = \chi^\tau(\delta + i \epsilon), 
		~ \forall ~ \delta \in [A_i^{\tau - 1}, A_{i+1}^{\tau - 1}]$;
		\item  [(iv)] $ \chi^{\tau - 1}(\delta') - \chi^{\tau - 1}(\delta'')
		= \chi^\tau(\delta' + i \epsilon) - \chi^\tau(\delta'' + i \epsilon), 
		~ \forall ~ \delta', \delta'' \in [A_i^{\tau - 1}, A_{i+1}^{\tau - 1}]$.
	\end{itemize}
\end{claim}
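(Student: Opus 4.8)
The plan is to prove the four identities in the order listed, exploiting that (iv) is an immediate consequence of (iii), and that (ii)--(iii) both rest on the segment structure of $\chi^\tau$ that is made explicit by (i).

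First I would settle (i) by direct substitution. Expanding $a_i^\tau = b_i + \tau \epsilon$ gives $A_i^\tau = \sum_{j=1}^{i} b_j + i\tau\epsilon$, whence $A_i^\tau - A_i^{\tau-1} = i\epsilon$; since $B_{i+1}^\tau = A_i^\tau + b_{i+1}$ and $b_{i+1}$ does not depend on $\tau$, the same shift $B_{i+1}^\tau - B_{i+1}^{\tau-1} = i\epsilon$ follows. I would also record the consequence $A_{i+1}^\tau = B_{i+1}^\tau + \tau\epsilon$ (from $a_{i+1} = b_{i+1} + \tau\epsilon$), which identifies the interval $[A_{i+1}^\tau - \epsilon, A_{i+1}^\tau]$ of (ii) with the last $\epsilon$-segment $(B_{i+1}^\tau + (\tau-1)\epsilon, B_{i+1}^\tau + \tau\epsilon]$ of the $i$-th block.

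For (ii) I would prove the stronger fact that $\chi^\tau(\delta) - \chi^\tau(\delta - \epsilon) = \psi_i$ is \emph{constant} on that last segment. On it (index $\ell = \tau - 1$) and on the preceding segment (index $\ell = \tau - 2$), the defining formula in \eqref{chi} carries the identical $h$-argument $h(\delta - A_i^\tau - (\tau-1)\epsilon + v_i)$, so the $h$-terms cancel and only $[(\tau-1)-(\tau-2)]\psi_i = \psi_i$ survives; the left endpoint is covered by continuity of $\chi^\tau$. Subtracting this identity written at $\delta'$ and at $\delta''$ then yields (ii) at once. The restriction $\tau \geq 2$ is exactly what guarantees the preceding segment $\ell = \tau-2 \geq 0$ exists.

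The main work is (iii), which I would establish by comparing $\chi^{\tau-1}$ on one block $[A_i^{\tau-1}, A_{i+1}^{\tau-1}]$ with $\chi^\tau$ on the shifted range $[A_i^\tau, A_{i+1}^\tau - \epsilon]$. Splitting the block into its concave part $(A_i^{\tau-1}, B_{i+1}^{\tau-1}]$ and its $\epsilon$-segments $\ell = 0, \ldots, \tau - 2$, part (i) shows that the map $\delta \mapsto \delta + i\epsilon$ carries each piece onto the corresponding piece of the $i$-th block of $\chi^\tau$ (the concave part $(A_i^\tau, B_{i+1}^\tau]$, respectively the segments $\ell = 0, \ldots, \tau-2$, all admissible for $\chi^\tau$ since $\tau - 2 \leq \tau - 1$). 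In each piece the $h$-argument and the $\ell\psi_i$-term coincide, so $\chi^\tau(\delta + i\epsilon) - \chi^{\tau-1}(\delta)$ collapses to the constant $\chi^\tau(A_i^\tau) - \chi^{\tau-1}(A_i^{\tau-1})$. The crux is evaluating this constant: applying the closed form \cref{properties}(v) (equivalently \eqref{chi_Ai}) to $\chi^\tau(A_i^\tau)$ and to $\chi^{\tau-1}(A_i^{\tau-1})$, the $\sum_{j=1}^{i}(h(b_j + v_{j-1}) - h(v_{j-1}))$ contributions are identical, while the coefficients multiplying $\sum_{j=1}^{i}\psi_{j-1}$ are $\tau$ and $\tau-1$ respectively, giving $\chi^\tau(A_i^\tau) - \chi^{\tau-1}(A_i^{\tau-1}) = \sum_{j=1}^{i}\psi_{j-1}$, which is precisely the additive constant in (iii). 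Finally, (iv) follows by writing it as the difference of (iii) at $\delta'$ and at $\delta''$, so that the $\sum_{j=1}^{i}\psi_{j-1}$ terms cancel.

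I expect the only delicate point to be the bookkeeping of which segment index $\ell$ is admissible for each of $\chi^{\tau-1}$ and $\chi^\tau$: one must confirm that $\chi^{\tau-1}$ uses segments only up to $\ell = \tau - 2$ whereas $\chi^\tau$ possesses one additional segment $\ell = \tau - 1$, and that it is exactly this extra segment that the restriction $\delta \leq A_{i+1}^{\tau-1}$ (equivalently $\delta + i\epsilon \leq A_{i+1}^\tau - \epsilon$) excludes. Everything else is substitution into \eqref{chi} together with the closed form \eqref{chi_Ai}.
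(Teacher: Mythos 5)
Your proposal is correct and follows essentially the same route as the paper's own (much terser) proof: parts (i) and (ii) by direct substitution into \eqref{ABdef} and \eqref{chi}, part (iii) by observing that the shift $\delta \mapsto \delta + i\epsilon$ maps each piece of the $(\tau-1)$-block onto the corresponding piece of the $\tau$-block so that the difference collapses to the constant $\chi^\tau(A_i^\tau) - \chi^{\tau-1}(A_i^{\tau-1}) = \sum_{j=1}^{i}\psi_{j-1}$ computed via \cref{properties}~(v), and part (iv) as an immediate consequence of (iii). Your additional bookkeeping — identifying $[A_{i+1}^\tau-\epsilon, A_{i+1}^\tau]$ with the $\ell=\tau-1$ segment, noting that $\tau\ge 2$ is what makes the $\ell=\tau-2$ segment available, and handling the segment endpoints by continuity — is all sound and simply fills in details the paper leaves implicit.
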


\begin{proof}
	Statements (i) and (ii) follow directly from \eqref{ABdef} and \eqref{chi}, respectively.
	By statement (i), $\delta \in [A_i^{\tau - 1}, B_{i+1}^{\tau - 1}]$ holds if and only if 
	$\delta + i \epsilon \in [A_i^{\tau}, B_{i+1}^{\tau}]$ holds, and 
	$\delta \in [B_{i+1}^{\tau - 1} + \ell \epsilon, B_{i+1}^{\tau - 1} + (\ell + 1) \epsilon]$ holds for some $\ell \in \{0,\ldots, \tau-1\}$ if
	and only if
	$\delta + i \epsilon \in [B_{i+1}^{\tau} + \ell \epsilon, B_{i+1}^{\tau} + (\ell + 1) \epsilon]$ holds.
	From \cref{properties} (v), 
	we have $\chi^{\tau}(A_i^{\tau}) = \tau \sum_{j = 1}^{i}\psi_{j - 1} + \sum_{j = 1}^{i}(h(b_j + v_{j - 1}) - h(v_{j - 1}))
	= \chi^{\tau - 1}(A_i^{\tau-1}) + \sum_{j = 1}^{i}\psi_{j - 1}$.
	Thus, from the definition of $\chi^\tau$ in \eqref{chi} and statement (i), statement (iii) also holds.
	Finally, statement (iv) follows directly from statement (iii).
\end{proof}

\begin{proof}[Proof of \cref{lem:same-segment1}]
	We shall prove \eqref{tmp1} and \eqref{tmp2} by induction on $\tau$.
	If $\tau = 1$, then $\ell$ must equal to zero in the third case of \eqref{chi}. 
	In this case, we have 
	$\chi^\tau(\delta) = h(\delta - A_i^\tau + v_i) + \chi^\tau(A_i^\tau) - h(v_i)$ for $\delta \in [A_i^\tau, A_{i+1}^\tau]$, which, together with the concavity of function $h$ and \cref{lem:slope}, implies
	\begin{equation*}
		\chi^\tau(A_i^\tau + \Delta) - \chi^\tau(A_i^\tau) \geq 
		\chi^\tau(\delta + \Delta) - \chi^\tau(\delta)
		\geq \chi^\tau(A_{i+1}^\tau) - \chi^\tau(A_{i+1}^\tau - \Delta).
	\end{equation*}
	Suppose that  \eqref{tmp1} and \eqref{tmp2}  hold for $\tau = \tau_0 - 1 \geq 1$.
	Then, for any $\delta', \Delta' \geq 0$ with
	$A_{i}^{\tau_0 - 1} < \delta' \leq \delta' + \Delta' \leq A_{i+1}^{\tau_0 - 1}$, it follows
	\begin{equation*}
		\begin{aligned}
			& \chi^{\tau_0 - 1}(\delta' + \Delta') - \chi^{\tau_0 - 1}(\delta')
			\geq 
			\chi^{\tau_0 - 1}(A_{i+1}^{\tau_0 - 1}) - \chi^{\tau_0 - 1}(A_{i+1}^{\tau_0 - 1} - \Delta'), \\
			& \chi^{\tau_0 - 1}(A_i^{\tau_0 - 1} + \Delta') - \chi^{\tau_0 - 1}(A_i^{\tau_0 - 1}) 
			\geq \chi^{\tau_0 - 1}(\delta '+ \Delta') - \chi^{\tau_0 - 1}(\delta').
		\end{aligned}
	\end{equation*}
	Letting $\delta'' := \delta' + i \varepsilon$, then by \cref{properties2} (i), 
	it follows $A_{i}^{\tau_0} < \delta'' \leq \delta'' + \Delta' \leq A_{i+1}^{\tau_0} - \epsilon$, and by \cref{properties2} (iv), it follows
	\begin{align}
		& \chi^{\tau_0}(\delta'' + \Delta') - \chi^{\tau_0}(\delta'')
		\geq 
		\chi^{\tau_0}(A_{i+1}^{\tau_0} - \epsilon) - \chi^{\tau_0}(A_{i+1}^{\tau_0} - \epsilon - \Delta'), \label{induction-hypothesis1} \\
		& \chi^{\tau_0}(A_i^{\tau_0} + \Delta') - \chi^{\tau_0}(A_i^{\tau_0}) 
		\geq \chi^{\tau_0}(\delta'' + \Delta') - \chi^{\tau_0}(\delta''). \label{induction-hypothesis2}
	\end{align}
	To prove  that \eqref{tmp1} and \eqref{tmp2}  hold for $\tau = \tau_0 \geq 2$, we  consider the following three cases.
	\begin{itemize}
		\item [(i)] $A_i^{\tau_0} < \delta \leq \delta + \Delta \leq B_{i+1}^{\tau_0} + (\tau_0 - 1)\epsilon = A_{i+1}^{\tau_0} - \epsilon$. 
		Then, \eqref{tmp2} follows from \eqref{induction-hypothesis2} with $\Delta' = \Delta$ and $\delta'' = \delta$,
		and by \eqref{induction-hypothesis1}, we have
		\begin{equation*}
			\chi^{\tau_0}(\delta + \Delta) - \chi^{\tau_0}(\delta)
			\geq \chi^{\tau_0}(A_{i+1}^{\tau_0} - \epsilon) - \chi^{\tau_0}(A_{i+1}^{\tau_0} - \epsilon - \Delta).
		\end{equation*}
		To show \eqref{tmp1},
		it suffices to prove
		\begin{equation*}
			\chi^{\tau_0}(A_{i+1}^{\tau_0} - \epsilon) - \chi^{\tau_0}(A_{i+1}^{\tau_0} - \epsilon - \Delta) 
			\geq
			\chi^{\tau_0}(A_{i+1}^{\tau_0}) - \chi^{\tau_0}(A_{i+1}^{\tau_0} - \Delta).
		\end{equation*}
		If $\Delta \leq \epsilon$,
		then the above inequality is implied by \cref{properties2} (ii).
		Otherwise, $\Delta > \epsilon$, and let $\Delta_1 := \Delta - \epsilon > 0$.
		Then, we obtain
			\begin{align*}
				& \chi^{\tau_0}(A_{i+1}^{\tau_0} - \epsilon) - \chi^{\tau_0}(A_{i+1}^{\tau_0} - \epsilon - \Delta) \\
				& \qquad =
				[\chi^{\tau_0}(A_{i+1}^{\tau_0} - \epsilon) - \chi^{\tau_0}(A_{i+1}^{\tau_0} - 2 \epsilon)] 
				+ [\chi^{\tau_0}(A_{i+1}^{\tau_0} - 2 \epsilon) - \chi^{\tau_0}(A_{i+1}^{\tau_0} - 2 \epsilon - \Delta_1)] \\ 
				& \qquad \stackrel{(a)}{\geq}
				[\chi^{\tau_0}(A_{i+1}^{\tau_0} - \epsilon) - \chi^{\tau_0}(A_{i+1}^{\tau_0} - 2 \epsilon)] 
				+ [\chi^{\tau_0}(A_{i+1}^{\tau_0} - \epsilon) - \chi^{\tau_0}(A_{i+1}^{\tau_0} - \epsilon - \Delta_1)] \\
				& \qquad \stackrel{(b)}{=}
				[\chi^{\tau_0}(A_{i+1}^{\tau_0}) - \chi^{\tau_0}(A_{i+1}^{\tau_0} - \epsilon)] 
				+ [\chi^{\tau_0}(A_{i+1}^{\tau_0} - \epsilon) - \chi^{\tau_0}(A_{i+1}^{\tau_0} - \epsilon - \Delta_1)] \\
				& 
				\qquad = \chi^{\tau_0}(A_{i+1}^{\tau_0}) - \chi^{\tau_0}(A_{i+1}^{\tau_0} - \Delta),
			\end{align*}
		where (a) follows from \eqref{induction-hypothesis1}
		and $A_{i+1}^{\tau_0} - 2 \epsilon - \Delta_1 = A_{i+1}^{\tau_0} - \epsilon - \Delta \geq \delta > A_i^{\tau_0}$ 
		(as $A_i^{\tau_0} < \delta \leq \delta + \Delta \leq A_{i+1}^{\tau_0} - \epsilon$),
		and (b) follows from \cref{properties2} (ii).
		\item [(ii)] $A_{i+1}^{\tau_0} - \epsilon < \delta \leq \delta + \Delta \leq A_{i+1}^{\tau_0}$.
		Then, \eqref{tmp1} follows directly from the concavity of $\chi^{\tau_0}(\delta)$ on $[A_{i+1}^{\tau_0} - \epsilon, A_{i+1}^{\tau_0}]$ (as $\chi^{\tau_0}$ is continuous piecewise concave)
		and \cref{lem:slope}.
		\eqref{tmp2} also holds since
		\begin{align*}
			\chi^{\tau_0}(\delta + \Delta) - \chi^{\tau_0}(\delta) 
			& \stackrel{(a)}{=} \chi^{\tau_0}(\delta + \Delta - \epsilon) - \chi^{\tau_0}(\delta - \epsilon) \\
			& \stackrel{(b)}{\leq} \chi^{\tau_0}(A_i^{\tau_0} + \Delta) - \chi^{\tau_0}(A_i^{\tau_0}),
		\end{align*}
		where (a) follows from \cref{properties2} (ii),
		and (b) follows from \eqref{induction-hypothesis2} 
		and $A_{i}^{\tau_0} \leq A_{i+1}^{\tau_0} - 2 \epsilon 
		< \delta - \epsilon \leq \delta + \Delta - \epsilon \leq A_{i+1}^{\tau_0} - \epsilon$ 
		(as $\tau_0 \geq 2$ and $A_{i+1}^{\tau_0} - \epsilon < \delta \leq \delta + \Delta \leq A_{i+1}^{\tau_0}$).
		\item [(iii)] $A_i^{\tau_0} < \delta \leq A_{i+1}^{\tau_0} - \epsilon < \delta + \Delta \leq A_{i+1}^{\tau_0}$.
		We show \eqref{tmp1} and \eqref{tmp2} (with $\tau=\tau_0$) hold in (iii.1) and (iii.2), respectively. 
		\begin{itemize}
			\item [(iii.1)] 
			Let $\Delta_2 : = A_{i+1}^{\tau_0} - (\delta + \Delta)$.
			Then it must follow that $0 \leq \Delta_2 < \epsilon$.
			If $\Delta \geq \epsilon$, or equivalently, $\delta \leq A_{i+1}^{\tau_0} - \epsilon - \Delta_2$,
			then we have 
			$A_{i}^{\tau_0} < \delta < \delta + \Delta_2 \leq A_{i+1}^{\tau_0} - \epsilon$, and thus
			\begin{align*}
				\chi^{\tau_0}(\delta + \Delta_2) - \chi^{\tau_0}(\delta) 
				& \stackrel{(a)}{\geq} \chi^{\tau_0}(A_{i+1}^{\tau_0} - \epsilon) - \chi^{\tau_0}(A_{i+1}^{\tau_0} - \epsilon - \Delta_2) \\
				& \stackrel{(b)}{=}
				\chi^{\tau_0}(A_{i+1}^{\tau_0}) - \chi^{\tau_0}(A_{i+1}^{\tau_0} - \Delta_2) \\
				& = \chi^{\tau_0}(A_{i+1}^{\tau_0}) - \chi^{\tau_0}(\delta + \Delta),
			\end{align*}
			where (a) follows from \eqref{induction-hypothesis1},
			and (b) follows from \cref{properties2} (ii) and $\Delta_2 < \epsilon$. 
			Therefore, \eqref{tmp1} holds since 			
			\begin{align*}
				\chi^{\tau_0}(\delta + \Delta) - \chi^{\tau_0}(\delta) & =
				[\chi^{\tau_0}(\delta + \Delta) - \chi^{\tau_0}(\delta + \Delta_2) ]
				+ [\chi^{\tau_0}(\delta + \Delta_2) - \chi^{\tau_0}(\delta)] \\
				& \geq 
				[\chi^{\tau_0}(\delta + \Delta) - \chi^{\tau_0}(\delta + \Delta_2) ]
				+ [\chi^{\tau_0}(A_{i+1}^{\tau_0}) - \chi^{\tau_0}(\delta + \Delta)] \\
				& = \chi^{\tau_0}(A_{i+1}^{\tau_0}) - \chi^{\tau_0}(A_{i+1}^{\tau_0} - \Delta).
			\end{align*}
			We now consider the case $\Delta < \epsilon$, or equivalently, $\delta > A_{i+1}^{\tau_0} - \epsilon - \Delta_2 $.
			Observe that
			\begin{equation*}
				\begin{aligned}
					\chi^{\tau_0}(\delta + \Delta) - \chi^{\tau_0}(A_{i+1}^{\tau_0} - \epsilon) 
					& =
					\chi^{\tau_0}(\delta + \Delta) - \chi^{\tau_0}(\delta + \Delta - (\epsilon - \Delta_2)) \\
					& \stackrel{(a)}{\geq} 
					\chi^{\tau_0}(\delta + \epsilon) - \chi^{\tau_0}(\delta + \epsilon - (\epsilon - \Delta_2)) \\
					& = \chi^{\tau_0}(\delta + \epsilon) - \chi^{\tau_0}(A_{i+1}^{\tau_0} - \Delta),
				\end{aligned}
			\end{equation*}
			where (a) follows from $\epsilon - \Delta_2 > 0$, $A_{i+1}^{\tau_0}-\epsilon =\delta + \Delta - (\epsilon - \Delta_2)< \delta + \Delta < \delta + \epsilon \leq A_{i+1}^{\tau_0}$ (as $\delta \leq A_{i+1}^{\tau_0} - \epsilon$), $A_{i+1}^{\tau_0} - \epsilon< \delta + \epsilon - (\epsilon - \Delta_2)= A_{i+1}^{\tau_0} - \Delta \leq A_{i+1}^{\tau_0} $, the concavity of 
			$\chi^{\tau_0}(\delta)$ on $[A_{i+1}^{\tau_0} - \epsilon, A_{i+1}^{\tau_0}]$, and \cref{lem:slope}.
			As a result, 
			\begin{align*}
				\chi^{\tau_0}(\delta + \Delta) - \chi^{\tau_0}(\delta) & =
				[	\chi^{\tau_0}(\delta + \Delta) - \chi^{\tau_0}(A_{i+1}^{\tau_0} - \epsilon) ]
				+ [\chi^{\tau_0}(A_{i+1}^{\tau_0} - \epsilon) - \chi^{\tau_0}(\delta)] \\
				& \geq 
				[ \chi^{\tau_0}(\delta + \epsilon) - \chi^{\tau_0}(A_{i+1}^{\tau_0} - \Delta)]
				+  [\chi^{\tau_0}(A_{i+1}^{\tau_0} - \epsilon) - \chi^{\tau_0}(\delta)]\\
				& \stackrel{(a)}{=} 	[ \chi^{\tau_0}(\delta + \epsilon) - \chi^{\tau_0}(A_{i+1}^{\tau_0} - \Delta)]
				+  [ \chi^{\tau_0}(A_{i+1}^{\tau_0}) - \chi^{\tau_0}(\delta + \epsilon)]\\
				& = \chi^{\tau_0}(A_{i+1}^{\tau_0}) -\chi^{\tau_0}(A_{i+1}^{\tau_0} - \Delta),
			\end{align*}
			where (a) follows from  $A_{i+1}^{\tau_0} - \epsilon < \delta + \Delta < \delta + \epsilon \leq A_{i+1}^{\tau_0}$
			and \cref{properties2} (ii).
			\item [(iii.2)] 
			We first consider the case $A_i^{\tau_0} + \Delta \geq A_{i+1}^{\tau_0} - \epsilon$.
			Letting $\Delta_3 := \delta - A_i^{\tau_0}$, then by $\delta > A_{i}^{\tau_0}$, $\Delta_3 > 0$ must hold.
			Observe that
			\begin{align*}
				\chi^{\tau_0}(\delta) - \chi^{\tau_0}(A_i^{\tau_0}) 
				& = \chi^{\tau_0}(A_i^{\tau_0} + \Delta_3) - \chi^{\tau_0}(A_i^{\tau_0}) \\
				& \stackrel{(a)}{\geq} \chi^{\tau_0}(A_i^{\tau_0} + \Delta - \epsilon + \Delta_3) - \chi^{\tau_0}(A_i^{\tau_0} + \Delta - \epsilon) \\
				& = \chi^{\tau_0}(\delta + \Delta - \epsilon) - \chi^{\tau_0}(A_i^{\tau_0} + \Delta - \epsilon) \\
				& \stackrel{(b)}{=} \chi^{\tau_0}(\delta + \Delta) - \chi^{\tau_0}(A_i^{\tau_0} + \Delta),
			\end{align*}
			where (a) follows from \eqref{induction-hypothesis2}, $\Delta_3 > 0$, and
			$A_i^{\tau_0} \leq A_{i+1}^{\tau_0} - 2 \epsilon \leq A_i^{\tau_0} + \Delta - \epsilon 
			< A_i^{\tau_0} + \Delta - \epsilon + \Delta_3
			\leq A_{i + 1}^{\tau_0} - \epsilon$ 
			(as $\tau_0 \geq 2$, $A_{i + 1}^{\tau_0} - \epsilon \leq A_i^{\tau_0} + \Delta$,
			and $A_i^{\tau_0} + \Delta_3 + \Delta = \delta + \Delta \leq A_{i + 1}^{\tau_0}$),
			and (b) follows from \cref{properties2} (ii).
			Therefore, \eqref{tmp2} holds since
			\begin{align*}
				\chi^{\tau_0}(A_i^{\tau_0} + \Delta) - \chi^{\tau_0}(A_i^{\tau_0})
				& =
				[\chi^{\tau_0}(A_i^{\tau_0} + \Delta) - \chi^{\tau_0}(\delta)] 
				+ [\chi^{\tau_0}(\delta) - \chi^{\tau_0}(A_i^{\tau_0})]\\
				& \geq 
				[\chi^{\tau_0}(A_i^{\tau_0} + \Delta) - \chi^{\tau_0}(\delta)] 
				+ [\chi^{\tau_0}(\delta + \Delta) - \chi^{\tau_0}(A_i^{\tau_0} + \Delta)]\\
				& = \chi^{\tau_0}(\delta + \Delta) - \chi^{\tau_0}(\delta).
			\end{align*}
			Now we consider the case $A_i^{\tau_0} + \Delta < A_{i+1}^{\tau_0} - \epsilon$.
			Let $\Delta_4 := \delta + \Delta - (A_{i+1}^{\tau_0} - \epsilon)$.
			Then by $\delta \leq A_{i+1}^{\tau_0} - \epsilon < \delta + \Delta \leq A_{i+1}^{\tau_0}$, $0 < \Delta_4 < \epsilon$ and $\Delta \geq \Delta_4$ must hold.
			Observe that 
			\begin{equation*}
				\begin{aligned}
					\chi^{\tau_0}(A_i^{\tau_0} + \Delta_4) - \chi^{\tau_0}(A_i^{\tau_0})
					& \stackrel{(a)}{\geq} 
					\chi^{\tau_0}(A_{i+1}^{\tau_0} - 2 \epsilon + \Delta_4) - \chi^{\tau_0}(A_{i+1}^{\tau_0} - 2 \epsilon ) \\
					& \stackrel{(b)}{=} 
					\chi^{\tau_0}(A_{i+1}^{\tau_0} - \epsilon + \Delta_4) - \chi^{\tau_0}(A_{i+1}^{\tau_0} - \epsilon) \\
					& = \chi^{\tau_0}(\delta + \Delta) - \chi^{\tau_0}(A_{i+1}^{\tau_0} - \epsilon),
				\end{aligned}
			\end{equation*}
			where (a) follows from $A_i^{\tau_0} \leq A_{i + 1}^{\tau_0} - 2 \epsilon < A_{i + 1}^{\tau_0} - 2 \epsilon + \Delta_4 < A_{i + 1}^{\tau_0} - \epsilon$ 
			(as $\tau_0 \geq 2$ and $0 < \Delta_4 < \epsilon$)
			and \eqref{induction-hypothesis2},
			and (b) follows from \cref{properties2} (ii). 
			Therefore,
			\begin{equation*}
				\small
				\begin{aligned}
					& \chi^{\tau_0}(A_i^{\tau_0} + \Delta) - \chi^{\tau_0}(A_i^{\tau_0}) \\
					& \qquad= [\chi^{\tau_0}(A_i^{\tau_0} + \Delta) - \chi^{\tau_0}(A_i^{\tau_0} + \Delta_4)]
					+ [\chi^{\tau_0}(A_i^{\tau_0} + \Delta_4) - \chi^{\tau_0}(A_i^{\tau_0})] \\
					& \qquad
					\geq [\chi^{\tau_0}(A_i^{\tau_0} + \Delta) - \chi^{\tau_0}(A_i^{\tau_0} + \Delta_4)]
					+ [\chi^{\tau_0}(\delta + \Delta) - \chi^{\tau_0}(A_{i+1}^{\tau_0} - \epsilon)] \\
					&  \qquad 
					\stackrel{(a)}{\geq} 
					[\chi^{\tau_0}(A_{i + 1}^{\tau_0} - \epsilon) - \chi^{\tau_0}(A_{i + 1}^{\tau_0} - \epsilon - (\Delta - \Delta_4))]
					+ [\chi^{\tau_0}(\delta + \Delta) - \chi^{\tau_0}(A_{i+1}^{\tau_0} - \epsilon)] \\
					& \qquad =
					[\chi^{\tau_0}(A_{i + 1}^{\tau_0} - \epsilon) - \chi^{\tau_0}(\delta)]
					+ [\chi^{\tau_0}(\delta + \Delta) - \chi^{\tau_0}(A_{i+1}^{\tau_0} - \epsilon)] \\
					& \qquad = 
					\chi^{\tau_0}(\delta + \Delta) - \chi^{\tau_0}(\delta),
				\end{aligned}
			\end{equation*}
			where (a) follows from $A_i^{\tau_0} < A_i^{\tau_0} + \Delta_4 \leq A_i^{\tau_0} + \Delta 
			< A_{i+1}^{\tau_0} - \epsilon$ and \eqref{induction-hypothesis1}. \qedhere
		\end{itemize}
	\end{itemize}
\end{proof}

\begin{lemma}\label{lem:translation2}
	Let $\Delta \in [0, a_{i+1}]$ for some integer $i \in \Z_+$.
	Then for any $\delta \geq A_i $, we have
	\begin{equation}\label{3.37-result}
		\chi (A_i  + \Delta) - \chi (A_i ) \geq 
		\chi (\delta + \Delta) - \chi (\delta).
	\end{equation}
\end{lemma}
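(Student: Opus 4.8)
The plan is to show that among all left endpoints $\delta \ge A_i$, the increment $\chi(\delta+\Delta)-\chi(\delta)$ of a fixed window length $\Delta \le a_{i+1}$ is maximized at $\delta = A_i$. First I would record the monotonicity of \emph{segment-aligned} increments: for every $j \ge 1$ and every $\Delta \le a_{j+1}$,
\begin{equation*}
	\chi(A_j + \Delta) - \chi(A_j) \le \chi(A_{j-1} + \Delta) - \chi(A_{j-1}).
\end{equation*}
This follows from a single application of \cref{lem:translation} (shift the window $[A_j, A_j+\Delta]$, which lies in segment $j$, left by $a_{j+1}$; since $\Delta \le a_{j+1} \le a_j$ by \cref{properties}~(i), the shifted window lies in segment $j-1$) followed by \cref{lem:same-segment1}~(ii) to re-align the shifted window to the segment start $A_{j-1}$. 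Iterating gives $\chi(A_{i'}+\Delta)-\chi(A_{i'}) \le \chi(A_i+\Delta)-\chi(A_i)$ for all $i' \ge i$ whenever $\Delta \le a_{i'+1}$; moving the window leftward is safe because the lengths $a_j$ are non-increasing in $j$, so no new breakpoint is crossed. The boundary case in which \cref{lem:translation} is invoked with the window's left endpoint exactly at a breakpoint is covered by continuity of $\chi$.

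With this in hand, the \emph{easy} case is when $[\delta, \delta+\Delta]$ lies inside a single segment $[A_{i'}, A_{i'+1}]$ with $i' \ge i$. Then $\Delta \le a_{i'+1}$, so \cref{lem:same-segment1}~(ii) gives $\chi(\delta+\Delta)-\chi(\delta) \le \chi(A_{i'}+\Delta)-\chi(A_{i'})$, and the segment-start monotonicity above reduces $A_{i'}$ to $A_i$, completing this case. The remaining, genuinely harder, case is a \emph{spilling} window: $\delta$ lies in some segment $[A_{i'}, A_{i'+1})$ but $\delta + \Delta$ exceeds $A_{i'+1}$, which can happen precisely because $\Delta \le a_{i+1}$ may still exceed the lengths $a_{j+1}$ of later (shorter) segments. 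Here my plan is to translate the \emph{entire} window one segment to the left at a time: shifting $[\delta,\delta+\Delta]$ left by $a_{i'+1}$ produces a window that still crosses a single breakpoint (now $A_{i'}$) in the same position relative to the breakpoint pattern, so an argument in the spirit of \cref{lem:translation}, but for a window straddling one breakpoint rather than contained in one segment, should show the increment does not decrease under this shift. Iterating drives the left endpoint into the first segment $[A_i, A_{i+1})$, leaving a single window crossing only $A_{i+1}$, which I would then bound directly against $\chi(A_i+\Delta)-\chi(A_i)$.

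I expect this straddling-window shift (and the final first-segment spilling bound) to be the main obstacle. The tempting shortcut — splitting the spilling window at the crossed breakpoint and bounding each piece separately by \cref{lem:same-segment1} — fails, because \cref{lem:same-segment1}~(ii) makes the left-aligned piece the \emph{largest} increment in its segment, whereas the decomposition forces one to charge the spilled portion against the \emph{rightmost} (hence smallest) part of the $A_i$-aligned window, so the slack runs in the wrong direction. I would therefore keep the window intact and prove the straddling shift by a direct concavity computation, writing the increment difference as a signed sum $\sum_i \tau_i\, h(c_i)$ with $\sum_i \tau_i c_i = 0$ and verifying the grouping hypotheses of \cref{cor:sum-pariwise-comparison} (equivalently, repeated use of \cref{lem:slope}), exactly as in the proofs of \cref{lem:translation} and \cref{lem:same-segment1}, but now tracking the two distinct concave pieces on either side of the breakpoint $A_{i'+1}$, with the parameter relations from \eqref{def:b} and \eqref{ABdef} controlling the signs.
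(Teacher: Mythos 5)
Your easy case (window contained in a single segment $[A_{i'},A_{i'+1}]$ with $i'\ge i$) is sound and coincides with the paper's base case: translate into segment $i$ via \cref{lem:translation}, then left-align via \cref{lem:same-segment1}~(ii). The gap is in the spilling case, which is the heart of the lemma. First, your reduction assumes the window straddles a single breakpoint; but since the segment lengths $a_j$ are non-increasing, a window of length $\Delta\le a_{i+1}$ can cross several of the shorter later breakpoints, so the number of crossed breakpoints can be arbitrarily large and your one-breakpoint shift does not cover the general situation. Second, the two steps you yourself identify as the main obstacle --- the leftward shift of an intact straddling window by $a_{i'+1}$, and the final comparison of a window crossing $A_{i+1}$ against the left-aligned window $[A_i,A_i+\Delta]$ --- are only asserted, not proved. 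Near a breakpoint $A_{i'+1}$ the function $\chi$ is assembled from the $\tau$ trailing $\epsilon$-pieces of segment $i'$ followed by the pieces of segment $i'+1$, so a straddling window can overlap many distinct concave pieces; the promised ``direct concavity computation'' via \cref{cor:sum-pariwise-comparison} is substantially more than a routine variant of \cref{lem:translation}, and as written the decisive step of the proof is missing.

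Moreover, your reason for rejecting the decomposition route is mistaken, and that route is exactly what the paper uses. Splitting $[\delta,\delta+\Delta]$ at the first crossed breakpoint $A_{i_1+1}$, the head $[\delta,A_{i_1+1}]$ is the \emph{right-aligned} window of length $\Delta_1=A_{i_1+1}-\delta$ in segment $i_1$; by \cref{lem:translation} its increment only grows when translated back to segment $i$, and by \cref{lem:same-segment1}~(i) the right-aligned increment is the smallest one in a segment, so $\chi(A_{i_1+1})-\chi(\delta)\le\chi(A_{i+1})-\chi(A_{i+1}-\Delta_1)\le\chi(A_i+\Delta)-\chi(A_i+\Delta-\Delta_1)$. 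Charging the head against the rightmost (smallest) part of the target window therefore runs in the \emph{correct} direction, precisely because the head is even smaller than that smallest part. The tail $[A_{i_1+1},\delta+\Delta]$ is not bounded by \cref{lem:same-segment1} at all, but by an induction on the number of crossed breakpoints: it starts at a breakpoint $A_{i_1+1}\ge A_i$ and has length $\Delta-\Delta_1\in[0,a_{i+1}]$, so the induction hypothesis charges it against the leftmost part $[A_i,A_i+\Delta-\Delta_1]$ of the target window. Summing the two bounds yields \eqref{3.37-result}. You should either carry out this split-plus-induction argument or actually prove your straddling-shift claim; in its current form the proposal does not establish the lemma.
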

\begin{proof}
	Suppose that $\delta \in [A_{i_1} , A_{i_1 + 1} ]$ 
	and $\delta + \Delta \in [A_{i_2} , A_{i_2 + 1} ]$, where $i \leq i_1 \leq i_2$.
	We shall establish \eqref{3.37-result} by induction on $i_2 - i_1$.
	If $i_2 - i_1 = 0$, then
	\begin{align*}
		\chi (\delta + \Delta) - \chi (\delta) 
		& \stackrel{(a)}{\leq} 
		\chi \left(\underbrace{\delta - \sum_{j = i + 2}^{i_1 + 1}a_j}_{\delta'} + \Delta\right) 
		- \chi \left( \underbrace{\delta - \sum_{j = i + 2}^{i_1 + 1}a_j}_{\delta'} \right) \\
		& \stackrel{(b)}{\leq} \chi (A_i  + \Delta) - \chi (A_i ),
	\end{align*}
	where (a) follows from \cref{lem:translation},
	and (b) follows from $A_i  \leq \delta - \sum_{j = i + 2}^{i_1 + 1}a_j  \leq \delta - \sum_{j = i + 2}^{i_1 + 1}a_j  + \Delta \leq A_{i+1} $ (as $\delta, \delta + \Delta \in [A_{i_1}, A_{i_1 + 1}]$ and $a_{i +1} \geq \cdots \geq a_{i_1 + 1}$)
	and \cref{lem:same-segment1} (ii).
	Now suppose that \eqref{3.37-result} holds for $i_2 - i_1 = m \geq 0$.
	Consider the case $i_2 - i_1 = m + 1 \geq 1$.
	Let $\Delta_1 = A_{i_1 + 1}  - \delta$. 
	By $A_{i_1 + 1} \leq A_{i_2} \leq \delta + \Delta$, $\Delta \geq \Delta_1$ must hold.
	Observe that
	\begin{equation*}
		\begin{aligned}
			\chi (A_{i_1+1} ) - \chi (\delta)
			& = \chi (A_{i_1+1} ) - \chi (A_{i_1 + 1} - \Delta_1) \\
			& \stackrel{(a)}{\leq}
			\chi \left(A_{i_1 + 1} - \sum_{j = i+2}^{i_1 + 1}a_j  \right) 
			- \chi \left(A_{i_1 + 1} - \sum_{j = i+2}^{i_1 + 1}a_j  - \Delta_1 \right) \\
			& = \chi (A_{i+1} ) - \chi (A_{i+1}  - \Delta_1) \\
			& \stackrel{(b)}{\leq} 
			\chi (A_i  + \Delta) - \chi (A_i  + \Delta - \Delta_1),
		\end{aligned}
	\end{equation*}
	where (a) follows from \cref{lem:translation},
	and (b) follows from $A_i \leq A_i  + \Delta - \Delta_1 \leq A_i  + \Delta \leq A_{i+1} $ (as  $\Delta \geq \Delta_1$ and $\Delta \in [0, a_{i+1} ]$)
	and \cref{lem:same-segment1} (i).
	Therefore, 
	\begin{equation*}
		\begin{aligned}
			\chi(\delta + \Delta) - \chi(\delta) & = 
			[\chi(\delta + \Delta) - \chi(A_{i_1 + 1})]
			+ [\chi(A_{i_1 + 1}) - \chi(\delta)] \\
			& \leq [\chi(\delta + \Delta) - \chi(A_{i_1 + 1})]
			+ [\chi (A_i  + \Delta) - \chi (A_i  + \Delta - \Delta_1)] \\
			& = [\chi(A_{i_1 + 1} + \Delta - \Delta_1) - \chi(A_{i_1 + 1})]
			+ [\chi (A_i  + \Delta) - \chi (A_i  + \Delta - \Delta_1)] \\
			& \stackrel{(a)}{\leq} [\chi (A_i  + \Delta - \Delta_1) - \chi (A_i )]
			+ [\chi (A_i  + \Delta) - \chi (A_i  + \Delta - \Delta_1)] \\
			& = \chi (A_i  + \Delta) - \chi (A_i),
		\end{aligned}
	\end{equation*}
	where (a) follows from $A_{i_1 + 1} \in [A_{i_1 + 1}, A_{i_1 + 2}]$, 
	$A_{i_1 + 1} + \Delta - \Delta_1 = \delta + \Delta \in [A_{i_2}, A_{i_2 + 1}]$, 
	$i_2 - (i_1 + 1) = m$, $\Delta -\Delta_1 \in [0,a_{i+1}]$,
	and the induction hypothesis.
\end{proof}
We are now ready to prove \eqref{goal1}--\eqref{goal2}.
\begin{proof}[Proof of \eqref{goal1}]
	Given any $j \in \Z_+$, observe that
	\begin{equation}\label{eq:cond1}
		\begin{small}
			\begin{aligned}
				\chi (A_{j+1} ) - \chi (A_j) & = \chi (A_{j} + a_{j + 1} ) - \chi (A_j) \\
				& \stackrel{(a)}{\geq} \chi (A_{j}  + \delta_2 + a_{j + 1}) - \chi (A_j  + \delta_2) 
				= \chi (A_{j+1}  + \delta_2) - \chi (A_j  + \delta_2),
			\end{aligned}
		\end{small}
	\end{equation}
	where (a) follows from $a_{j + 1} \in [0, a_{j + 1}]$, $A_j + \delta_2 \geq A_j$, and \cref{lem:translation2}.
	Suppose that $\delta_1 \in [A_i , A_{i+1} ]$ holds for some $i \in \Z_+$.
	Letting $\Delta : = \delta_1 - A_i \in [0, a_{i + 1}]$, then we have
	\begin{align*}
		\chi(\delta_1) & \stackrel{(a)}{=}
		\chi(A_i + \Delta) - \chi(A_i) + \sum_{j = 0}^{i - 1}[\chi(A_{j + 1}) - \chi(A_j)] \\
		& \stackrel{(b)}{\geq}
		\chi(A_i + \Delta) - \chi(A_i) + \sum_{j = 0}^{i - 1}[\chi (A_{j+1}  + \delta_2) - \chi (A_j  + \delta_2)] \\
		& = \chi(A_i + \Delta) - \chi(A_i) + \chi(A_i + \delta_2) - \chi(\delta_2)\\
		& \stackrel{(c)}{\geq} 
		\chi(A_i + \Delta + \delta_2) - \chi(A_i + \delta_2) + \chi(A_i + \delta_2) - \chi(\delta_2) \\
		& =\chi(\delta_1 + \delta_2) - \chi(\delta_2),
	\end{align*}
	where (a) follows from $\chi(A_0)= \chi(0)=0$,
	(b) follows from \eqref{eq:cond1},
	and (c) follows from $\delta_2 \geq 0$ and \cref{lem:translation2}.
	This completes the proof.
\end{proof}	
\begin{proof}[Proof of \eqref{goal2}]
	By $\bar{\chi}(\delta) = \chi(\delta)$ for $\delta \in [0, B_{m}  + (\gamma + 1)\epsilon]$ and \eqref{goal1}, it suffices to show that \eqref{goal2} holds
	for all $\delta_1, \delta_2 \in \R_+$ with $\delta_1 + \delta_2 > B_m  + (\gamma + 1) \epsilon$.
	We consider the following three cases.
	\begin{itemize}
		\item [(i)] $B_m  + (\gamma + 1) \epsilon < \delta_1 \leq \delta_2$.
		Observe that 
		\begin{equation*}
			\begin{small}
				\begin{aligned}
					& \bar{\chi}(\delta_1) + \bar{\chi}(\delta_2) - \bar{\chi}(\delta_1 + \delta_2) \\
					& \qquad = h(\delta_1 - A_{m-1}  - \gamma \epsilon + v_{m - 1}) 
					+ \chi (A_{m-1} )  
					+ \gamma \psi_{m - 1} - h(v_{m - 1}) \\
					& \qquad \quad
					+ h(\delta_2 - A_{m-1}  - \gamma \epsilon + v_{m - 1})
					- h(\delta_1 + \delta_2 - A_{m-1}  - \gamma \epsilon + v_{m - 1})  \\
					& \qquad  \stackrel{(a)}{=} 
					[h(\delta_1 - A_{m-1}  - \gamma \epsilon + v_{m - 1})  - h(v_{m - 1})]
					+ \tau \sum_{j = 1}^{m - 1}[h(b_j + v_{j - 1} + \epsilon) - h(b_ j + v_{j-1})] \\
					& \qquad \quad
					+ \sum_{j = 1}^{m - 1}[h(b_j + v_{j - 1}) - h(v_{j-1})]  
					+ \gamma [h(b_m + v_{m - 1} + \epsilon) - h(b_m + v_{m - 1})] \\
					& \qquad \quad
					- [h(\delta_1 + \delta_2 - A_{m-1}  - \gamma \epsilon + v_{m - 1}) - h(\delta_2 - A_{m-1}  - \gamma \epsilon + v_{m - 1})],
				\end{aligned}
			\end{small}
		\end{equation*}
		where (a) follows from \cref{properties} (v) and the definition of $\psi_{m - 1}$ in \eqref{def-rho}.
		Below we use \cref{cor:sum-pariwise-comparison} to show \eqref{goal2}.
		By 
		\begin{equation*}
			(\delta_1 - A_{m-1}  - \gamma \epsilon )
			+ \tau \sum_{j = 1}^{m - 1} \epsilon 
			+ \sum_{j = 1}^{m - 1} b_j 
			+ \gamma \epsilon 
			- \delta_1 = 0,
		\end{equation*}
		the condition \eqref{ccondition} in \cref{cor:sum-pariwise-comparison} holds.
		From $B_m  + (\gamma + 1) \epsilon < \delta_1 \leq \delta_2$ and $B_m = A_{m-1}+b_m$,
		we have
		\begin{small}
			\begin{equation*}
				b_m + v_{m - 1} + \epsilon
				< \delta_1 - A_{m-1}  - \gamma \epsilon + v_{m - 1}
				\leq \delta_2 - A_{m-1}  - \gamma \epsilon + v_{m - 1}
				\leq \delta_1 + \delta_2 - A_{m-1}  - \gamma \epsilon + v_{m - 1}.
			\end{equation*}
		\end{small}%
		Thus conditions (i) and (ii) in \cref{cor:sum-pariwise-comparison} hold and \eqref{goal2} follows.
		\item [(ii)] $0 < \delta_1 \leq B_m  + (\gamma + 1) \epsilon < \delta_2$.
		Suppose that $A_i  < \delta_1 \leq A_{i+1} $ for some $i \in \{0, \ldots, m - 1\}$,
		and let
		\begin{equation}\label{ell-def}
			\ell = \left\{
			\begin{aligned}
				& 0, && ~\text{if}~ A_i  < \delta_1 \leq B_{i+1} , \\
				& t, && ~\text{if}~ B_{i+1}  + t \epsilon < \delta_1 \leq B_{i+1}  + (t + 1) \epsilon, \ t = 0, \ldots, \tau - 1.
			\end{aligned}
			\right.
		\end{equation}
		Observe that
		\begin{equation*}
			\begin{aligned}
				& \bar{\chi}(\delta_1) + \bar{\chi}(\delta_2) - \bar{\chi}(\delta_1 + \delta_2) \\
				& \qquad = 
				h(\delta_1 - A_i  - \ell \epsilon + v_i)  + \chi (A_i ) + \ell \psi_{i} - h(v_i) \\
				& \qquad \quad
				+ h(\delta_2 - A_{m - 1}  - \gamma \epsilon + v_{m - 1}) - h(\delta_1 + \delta_2 - A_{m - 1}  - \gamma \epsilon + v_{m - 1}) \\
				& \qquad \stackrel{(a)}{=} 
				[h(\delta_1 - A_i  - \ell \epsilon + v_i) - h(v_i)] 
				+ \tau \sum_{j = 1}^{i}[h(b_j + v_{j - 1} + \epsilon) - h(b_j + v_{j - 1})] \\
				& \qquad \quad
				+ \sum_{j = 1}^{i}[h(b_j + v_{j - 1}) - h(v_{j - 1})] 
				+ \ell [h(b_{i + 1} + v_{i} + \epsilon) - h(b_{i + 1} + v_{i})] \\
				& \qquad \quad
				- [h(\delta_1 + \delta_2 - A_{m - 1}  - \gamma \epsilon + v_{m - 1}) - h(\delta_2 - A_{m - 1}  - \gamma \epsilon + v_{m - 1})],
			\end{aligned}
		\end{equation*}
		where (a) follows from \cref{properties} (v) and the definition of $\psi_{i}$ in \eqref{def-rho}.
		Below we use \cref{cor:sum-pariwise-comparison} to show \eqref{goal2}.
		By
		\begin{equation*}
			(\delta_1 - A_i  - \ell \epsilon)
			+ \tau \sum_{j = 1}^{i} \epsilon
			+ \sum_{j = 1}^{i} b_j 
			+ \ell \epsilon 
			- \delta_1 
			= 0,
		\end{equation*}
		the condition \eqref{ccondition} in \cref{cor:sum-pariwise-comparison} holds.
		By \eqref{ell-def},
		we have
		\begin{equation*}
			v_i \leq \delta_1 - A_i  - \ell \epsilon + v_i 
			\leq b_{i+1} + v_i + \epsilon.
		\end{equation*}
		Using the fact that $B_m + (\gamma + 1)\epsilon < \delta_2 < \delta_1 + \delta_2$
		and $b_{i + 1} + v_i \leq b_m + v_{m - 1}$ for any $i \in \{0, \ldots, m - 1\}$,
		it follows that
		\begin{equation*}
			b_{i + 1} + v_i + \epsilon \leq b_{m} + v_{m-1} + \epsilon
			< \delta_2 - A_{m - 1}  - \gamma \epsilon + v_{m - 1}
			< 
			\delta_1 + \delta_2 - A_{m - 1}  - \gamma \epsilon + v_{m - 1}.
		\end{equation*}
		Thus conditions (i) and (ii) in \cref{cor:sum-pariwise-comparison} hold
		and \eqref{goal2} follows.
		\item [(iii)] 
		$0 \leq \delta_1 \leq \delta_2 \leq B_{m}  + (\gamma + 1) \epsilon$
		and $\delta_1 + \delta_2 > B_{m}  + (\gamma + 1)\epsilon$.
		Observe that different from function $\chi(\delta)$, which depends on $b_i$ and $v_{i-1}$ for all $i =1, 2, \ldots$, function $\bar{\chi} (\delta)$ only depends on $b_i$ and $v_{i-1}$ for $i = 1, \ldots, m$.
		Consider a special case  of $\chi(\delta)$ where we enforce 
		\begin{equation}\label{def-b-v}
			b_i := 0, ~ v_{i - 1} := b_{m} + v_{m - 1}  ~\text{for}~ i = m + 1, m + 2, \ldots. 
		\end{equation}
		To prove the desired result, it suffices to show 
		\begin{equation}\label{chi-barchi}
			\bar{\chi} (\delta) \leq \chi (\delta), ~ \forall ~ \delta > B_{m}+(\gamma+1) \epsilon.
		\end{equation}
		Indeed, 
		\begin{equation*}
			\bar{\chi} (\delta_1) + \bar{\chi} (\delta_2) - \bar{\chi} (\delta_1 + \delta_2)
			\stackrel{(a)}{=}  \chi (\delta_1) + \chi (\delta_2) - \bar{\chi} (\delta_1 + \delta_2)
			\stackrel{(b)}{\geq}  \chi (\delta_1) + \chi (\delta_2) - \chi (\delta_1 + \delta_2) \geq 0, 
		\end{equation*}
		where (a) follows from $\chi(\delta) = \bar{\chi}(\delta)$ for $\delta \leq B_{m}+(\gamma+1) \epsilon$,
		and (b) follows from \eqref{chi-barchi}.
		
		Next, we prove \eqref{chi-barchi}.
		Let $\delta > B_{m}+(\gamma+1) \epsilon$.
		Since $B_{i + 1} = A_i + b_{i + 1} = A_i$ for $i = m, m + 1, \ldots$,
		$\delta$ falls into the third case of \eqref{chi}, and thus, 
		\begin{equation*}
			\chi (\delta) = h(\delta - A_i  -\ell \epsilon + v_i) + \chi (A_i ) + \ell \psi_{i} - h(v_i),
		\end{equation*}
		where $\delta \in (B_{i+1} + \ell \epsilon, B_{i+1} + (\ell + 1) \epsilon]$ 
		for some $i \in \{m - 1, m, \ldots\}$ and $\ell \in \{0, \ldots, \tau - 1\}$.
		Using the definition of $\bar{\chi}$ in \eqref{barchi}, we have
		\begin{equation}\label{tmp3}
			\begin{aligned}
				\chi(\delta) - \bar{\chi}(\delta) 
				& = [ h(\delta - A_i - \ell \epsilon + v_i)
				- h(\delta - A_{m - 1} - \gamma \epsilon + v_{m - 1}) ] \\
				& \quad
				+ [
				\chi(A_i) + \ell \psi_{i} - h(v_i)
				- (\chi(A_{m - 1}) + \gamma \psi_{m - 1} - h(v_{m - 1}))
				].
			\end{aligned}
		\end{equation}
		Note that
		\begin{equation*}
			\begin{aligned}
				& \chi(A_i) + \ell \psi_{i} - h(v_i)
				- (\chi(A_{m - 1}) + \gamma \psi_{m - 1} - h(v_{m - 1})) \\
				& \qquad 
				= [\chi(A_i) - \chi(A_{m - 1})] + \ell \psi_{i} - \gamma \psi_{m - 1}
				- h(v_i) + h(v_{m - 1}) \\
				& \qquad
				\stackrel{(a)}{=} 
				\tau \sum_{j = m}^{i} \psi_{j - 1}
				+ \sum_{j = m}^{i}(h(b_j + v_{j- 1}) - h(v_{j - 1})) 
				+ \ell\psi_{i} - \gamma \psi_{m - 1}
				- h(v_i) + h(v_{m - 1}) \\
				& \qquad
				\stackrel{(b)}{=} 	\tau \sum_{j = m}^{i} \psi_{j - 1} +\ell\psi_{i} - \gamma \psi_{m - 1} \\
				& \qquad
				\stackrel{(c)}{=}(\tau (i - m + 1) + \ell - \gamma)[ h(b_m + v_{m- 1} + \epsilon) - h(b_m + v_{m - 1})]
			\end{aligned}
		\end{equation*}
		where (a) follows from \cref{properties} (v),
		(b) holds trivially if $i = m - 1$, and follows from \eqref{def-b-v} otherwise,
		and (c) follows from $	\psi_{j - 1} = h(b_j + v_{j- 1} + \epsilon) - h(b_j + v_{j - 1})
		= h(b_m + v_{m- 1} + \epsilon) - h(b_m + v_{m - 1}) $ for $j \geq m$.
		Together with \eqref{tmp3}, we have
		\begin{align*}
			\chi (\delta) - \bar{\chi} (\delta)
			& = (\tau(i - m + 1) + \ell - \gamma) [h(b_m + v_{m - 1} + \epsilon) - h(b_m + v_{m - 1})] \\
			& \quad
			- [h(\delta - A_{m - 1}  - \gamma \epsilon + v_{m - 1}) - h(\delta - A_i  -\ell \epsilon + v_i)].
		\end{align*}
		Below we use \cref{cor:sum-pariwise-comparison} to show $\chi(\delta) - \bar{\chi}(\delta) \geq 0$.
		First, we have 
		\begin{equation*}
			(\tau(i - m + 1) + \ell - \gamma) \epsilon
			- [- A_{m - 1}  - \gamma \epsilon + v_{m - 1} - (- A_i  -\ell \epsilon + v_i)] = 0.
		\end{equation*}
		Indeed, if $i = m - 1$, then the above equality is trivial;
		otherwise, the above equality follows from
		$A_i - A_{m - 1} + v_{m - 1} - v_i = \sum_{j = m}^{i} a_j + v_{m - 1} - (b_m + v_{m - 1})
		= (a_m - b_m) + \sum_{j = m + 1}^{i} (b_j + \tau \epsilon) = \tau (i - m + 1) \epsilon$.
		Thus the condition \eqref{ccondition} in \cref{cor:sum-pariwise-comparison} holds.
		By $\delta \in (B_{i+1} + \ell \epsilon, B_{i+1} + (\ell + 1) \epsilon]$ and  $B_m + (\gamma + 1) \epsilon < \delta$, either $i \geq m$ or $\ell \geq \gamma +1$ must hold, which together with $\gamma \leq \tau -1$, implies
		\begin{equation*}
			\tau(i - m + 1) + \ell - \gamma > 0.
		\end{equation*}
		Finally, by $\delta \in (B_{i+1} + \ell \epsilon, B_{i+1} + (\ell + 1) \epsilon]$,
		it follows
		\begin{equation*}
			b_{i + 1} + v_{i}
			< \delta - A_i  -\ell \epsilon + v_i 
			\leq b_{i + 1} + v_{i} + \epsilon.
		\end{equation*}
		Combining it with $i \geq m - 1$, \eqref{def-b-v},
		and $B_m + (\gamma + 1) \epsilon < \delta$ yields
		\begin{equation*}
			b_m + v_{m - 1}
			< \delta - A_i  -\ell \epsilon + v_i 
			\leq b_m + v_{m - 1} + \epsilon 
			< \delta - A_{m - 1}  - \gamma \epsilon + v_{m - 1}.
		\end{equation*}
		Thus, conditions (i) and (ii) in \cref{cor:sum-pariwise-comparison} hold and the result follows. \qedhere
	\end{itemize}
\end{proof}

\renewcommand{\refname}{\normalsize References}\small
\bibliographystyle{apalike}

\bibliography{shorttitles,wta}

\begin{thebibliography}{}

\bibitem[Aboolian et~al., 2007a]{Aboolian2007B}
Aboolian, R., Berman, O., and Krass, D. (2007a).
\newblock Competitive facility location and design problem.
\newblock {\em Eur. J. Oper. Res.}, 182:40--62.

\bibitem[Aboolian et~al., 2007b]{Aboolian2007}
Aboolian, R., Berman, O., and Krass, D. (2007b).
\newblock Competitive facility location model with concave demand.
\newblock {\em Eur. J. Oper. Res.}, 181:598--619.

\bibitem[Agra and Constantino, 2007]{Agra2007}
Agra, A. and Constantino, M. (2007).
\newblock Lifting two-integer knapsack inequalities.
\newblock {\em Math. Program.}, 109:115--154.

\bibitem[Ahmed and Atamt{\"ur}k, 2011]{Ahmed2011}
Ahmed, S. and Atamt{\"ur}k, A. (2011).
\newblock Maximizing a class of submodular utility functions.
\newblock {\em Math. Program.}, 128:149--169.

\bibitem[Ahuja et~al., 2007]{Ahuja2007}
Ahuja, R.~K., Kumar, A., Jha, K.~C., and Orlin, J.~B. (2007).
\newblock Exact and heuristic algorithms for the weapon-target assignment problem.
\newblock {\em Oper. Res.}, 55:1136--1146.

\bibitem[Andersen et~al., 2022]{Andersen2022}
Andersen, A.~C., Pavlikov, K., and Toffolo, T. A.~M. (2022).
\newblock Weapon-target assignment problem: exact and approximate solution algorithms.
\newblock {\em Ann. Oper. Res.}, 312:581--606.

\bibitem[Atamt{\"u}rk, 2003]{Atamturk2003}
Atamt{\"u}rk, A. (2003).
\newblock On the facets of the mixed--integer knapsack polyhedron.
\newblock {\em Math. Program.}, 98:145--175.

\bibitem[Atamt\"{u}rk, 2004]{Atamturk2004}
Atamt\"{u}rk, A. (2004).
\newblock Sequence independent lifting for mixed-integer programming.
\newblock {\em Oper. Res.}, 52:487--490.

\bibitem[Atamt\"urk, 2005]{Atamturk2005}
Atamt\"urk, A. (2005).
\newblock Cover and pack inequalities for (mixed) integer programming.
\newblock {\em Ann. Oper. Res.}, 139:21--38.

\bibitem[Atamt{\"u}rk and G{\"u}nl{\"u}k, 2010]{Atamturk2010a}
Atamt{\"u}rk, A. and G{\"u}nl{\"u}k, O. (2010).
\newblock Mingling: mixed-integer rounding with bounds.
\newblock {\em Math. Program.}, 123:315--338.

\bibitem[Atamt\"urk and Kianfar, 2012]{Atamturk2012}
Atamt\"urk, A. and Kianfar, K. (2012).
\newblock $n$-step mingling inequalities: new facets for the mixed-integer knapsack set.
\newblock {\em Math. Program.}, 132:79--98.

\bibitem[Atamt\"urk and Narayanan, 2009]{Atamturk2009}
Atamt\"urk, A. and Narayanan, V. (2009).
\newblock The submodular knapsack polytope.
\newblock {\em Discret. Optim.}, 6:333--344.

\bibitem[Atamt\"urk and Narayanan, 2010]{Atamturk2010b}
Atamt\"urk, A. and Narayanan, V. (2010).
\newblock Conic mixed-integer rounding cuts.
\newblock {\em Math. Program.}, 122:1--20.

\bibitem[Berman et~al., 2019]{Berman2019}
Berman, O., Drezner, Z., and Krass, D. (2019).
\newblock The multiple gradual cover location problem.
\newblock {\em J. Oper. Res. Soc.}, 70:931--940.

\bibitem[Berman and Krass, 1998]{Berman1998}
Berman, O. and Krass, D. (1998).
\newblock Flow intercepting spatial interaction model: a new approach to optimal location of competitive facilities.
\newblock {\em Locat. Sci.}, 6:41--65.

\bibitem[Bertsimas and Paskov, 2025]{Bertsimas2025}
Bertsimas, D. and Paskov, A. (2025).
\newblock Solving large-scale weapon target assignment problems in seconds using branch-price-and-cut.
\newblock {\em Naval Res. Logist.}, 72:735--749.

\bibitem[Bodur and Luedtke, 2017]{Bodur2017}
Bodur, M. and Luedtke, J.~R. (2017).
\newblock Mixed-integer rounding enhanced {Benders} decomposition for multiclass service-system staffing and scheduling with arrival rate uncertainty.
\newblock {\em Manag. Sci.}, 63:2073--2091.

\bibitem[Bonami et~al., 2008]{Bonami2008}
Bonami, P., Biegler, L.~T., Conn, A.~R., Cornuéjols, G., Grossmann, I.~E., Laird, C.~D., Lee, J., Lodi, A., Margot, F., Sawaya, N., and Wächter, A. (2008).
\newblock An algorithmic framework for convex mixed integer nonlinear programs.
\newblock {\em Discret. Optim.}, 5:186--204.

\bibitem[Chen and Li, 2021]{Chen2021}
Chen, X. and Li, M. (2021).
\newblock ${M}^\natural$-convexity and its applications in operations.
\newblock {\em Oper. Res.}, 69:1396--1408.

\bibitem[Dash et~al., 2010]{Dash2010}
Dash, S., Goycoolea, M., and G\"unl\"uk, O. (2010).
\newblock Two-step {MIR} inequalities for mixed integer programs.
\newblock {\em INFORMS J. Comput.}, 22:236--249.

\bibitem[Dash and Günlük, 2006]{Dash2006}
Dash, S. and Günlük, O. (2006).
\newblock Valid inequalities based on simple mixed-integer sets.
\newblock {\em Math. Program.}, 105:29--53.

\bibitem[Dolhansky and Bilmes, 2016]{Dolhansky2016}
Dolhansky, B.~W. and Bilmes, J.~A. (2016).
\newblock Deep submodular functions: definitions and learning.
\newblock In {\em Advances in Neural Information Processing Systems}, pages 3404--3412.

\bibitem[Duran and Grossmann, 1986]{Duran1986}
Duran, M.~A. and Grossmann, I.~E. (1986).
\newblock An outer-approximation algorithm for a class of mixed-integer nonlinear programs.
\newblock {\em Math. Program.}, 36:307--339.

\bibitem[Edmonds, 1970]{Edmonds2003}
Edmonds, J. (1970).
\newblock Submodular functions, matroids, and certain polyhedra.
\newblock In Guy, R., Hanani, H., Sauer, N., and Sch{\"o}nheim, J., editors, {\em Combinatorial Structures and Their Applications}, pages 69--87. Gordon and Breach, New York.

\bibitem[El-Arini et~al., 2009]{El-Arini2009}
El-Arini, K., Veda, G., Shahaf, D., and Guestrin, C. (2009).
\newblock Turning down the noise in the blogosphere.
\newblock In {\em Proceedings of the 15th ACM SIGKDD International Conference on Knowledge Discovery and Data Mining}, pages 289--298.

\bibitem[Feige et~al., 2011]{Feige2011}
Feige, U., Mirrokni, V.~S., and Vondr\'{a}k, J. (2011).
\newblock Maximizing non-monotone submodular functions.
\newblock {\em SIAM J. Comput.}, 40:1133--1153.

\bibitem[Fletcher and Leyffer, 1994]{Fletcher1994}
Fletcher, R. and Leyffer, S. (1994).
\newblock Solving mixed integer nonlinear programs by outer approximation.
\newblock {\em Math. Program.}, 66:327--349.

\bibitem[Fujishige et~al., 2015]{Fujishige2015}
Fujishige, S., Goemans, M., Harks, T., Peis, B., and Zenklusen, R. (2015).
\newblock Congestion games viewed from ${M}$-convexity.
\newblock {\em Oper. Res. Lett.}, 43:329--333.

\bibitem[Fukasawa and Goycoolea, 2011]{Fukasawa2011}
Fukasawa, R. and Goycoolea, M. (2011).
\newblock On the exact separation of mixed integer knapsack cuts.
\newblock {\em Math. Program.}, 128:19--41.

\bibitem[Kaparis and Letchford, 2010]{Kaparis2010}
Kaparis, K. and Letchford, A.~N. (2010).
\newblock Separation algorithms for 0--1 knapsack polytopes.
\newblock {\em Math. Program.}, 124:69--91.

\bibitem[Kianfar and Fathi, 2009]{Kianfar2009}
Kianfar, K. and Fathi, Y. (2009).
\newblock Generalized mixed integer rounding inequalities: facets for infinite group polyhedra.
\newblock {\em Math. Program.}, 120:313--346.

\bibitem[Klastorin, 1990]{Klastorin1990}
Klastorin, T. (1990).
\newblock On a discrete nonlinear and nonseparable knapsack problem.
\newblock {\em Oper. Res. Lett.}, 9:233--237.

\bibitem[Lehmann et~al., 2006]{Lehmann2006}
Lehmann, B., Lehmann, D., and Nisan, N. (2006).
\newblock Combinatorial auctions with decreasing marginal utilities.
\newblock {\em Games Econ. Behav.}, 55:270--296.

\bibitem[Li and Deshpande, 2019]{Li2019}
Li, J. and Deshpande, A. (2019).
\newblock Maximizing expected utility for stochastic combinatorial optimization problems.
\newblock {\em Math. Oper. Res.}, 44:354--375.

\bibitem[Ljubić and Moreno, 2018]{Ljubic2018}
Ljubić, I. and Moreno, E. (2018).
\newblock Outer approximation and submodular cuts for maximum capture facility location problems with random utilities.
\newblock {\em Eur. J. Oper. Res.}, 266:46--56.

\bibitem[Lu and Chen, 2021]{Lu20211}
Lu, Y. and Chen, D.~Z. (2021).
\newblock A new exact algorithm for the weapon-target assignment problem.
\newblock {\em Omega}, 98:102138.

\bibitem[Manne, 1958]{Manne1958}
Manne, A.~S. (1958).
\newblock A target-assignment problem.
\newblock {\em Oper. Res.}, 6:346--351.

\bibitem[Marchand and Wolsey, 1999]{Marchand1999}
Marchand, H. and Wolsey, L.~A. (1999).
\newblock The 0--1 knapsack problem with a single continuous variable.
\newblock {\em Math. Program.}, 85:15--33.

\bibitem[Marchand and Wolsey, 2001]{Marchand2001}
Marchand, H. and Wolsey, L.~A. (2001).
\newblock {Aggregation and mixed integer rounding to solve MIPs}.
\newblock {\em Oper. Res.}, 49:363--371.

\bibitem[Moriguchi et~al., 2011]{Moriguchi2011}
Moriguchi, S., Shioura, A., and Tsuchimura, N. (2011).
\newblock {$M$}-convex function minimization by continuous relaxation approach: proximity theorem and algorithm.
\newblock {\em SIAM J. Optim.}, 21:633--668.

\bibitem[Nemhauser and Wolsey, 1988]{Nemhauser1988}
Nemhauser, G.~L. and Wolsey, L.~A. (1988).
\newblock {\em Integer and Combinatorial Optimization}.
\newblock Wiley, New York.

\bibitem[Nemhauser and Wolsey, 1990]{Nemhauser1990}
Nemhauser, G.~L. and Wolsey, L.~A. (1990).
\newblock {A recursive procedure to generate all cuts for 0--1 mixed integer programs}.
\newblock {\em Math. Program.}, 46:379--390.

\bibitem[Niculescu and Persson, 2025]{Niculescu2025}
Niculescu, C.~P. and Persson, L.-E. (2025).
\newblock {\em Convex Functions and Their Applications: A Contemporary Approach}.
\newblock Springer, Cham.

\bibitem[Richard, 2011]{Richard2011}
Richard, J.-P.~P. (2011).
\newblock Lifting techniques for mixed integer programming.
\newblock In {\em Wiley Encyclopedia of Operations Research and Management Science}.

\bibitem[Sanjeevi and Kianfar, 2012]{Sanjeevi2012}
Sanjeevi, S. and Kianfar, K. (2012).
\newblock Mixed $n$-step {MIR} inequalities: facets for the $n$-mixing set.
\newblock {\em Discret. Optim.}, 9:216--235.

\bibitem[Schoemaker, 1982]{Paul1982}
Schoemaker, P. J.~H. (1982).
\newblock The expected utility model: its variants, purposes, evidence and limitations.
\newblock {\em J. Econ. Lit.}, 20:529--563.

\bibitem[Sharkey et~al., 2011]{Sharkey2011}
Sharkey, T.~C., Romeijn, H.~E., and Geunes, J. (2011).
\newblock A class of nonlinear nonseparable continuous knapsack and multiple-choice knapsack problems.
\newblock {\em Math. Program.}, 126:69--96.

\bibitem[Shi et~al., 2022]{Shi2022}
Shi, X., Prokopyev, O.~A., and Zeng, B. (2022).
\newblock Sequence independent lifting for a set of submodular maximization problems.
\newblock {\em Math. Program.}, 196:69--114.

\bibitem[Tschiatschek et~al., 2014]{Tschiatschek2014}
Tschiatschek, S., Iyer, R., Wei, H., and Bilmes, J. (2014).
\newblock Learning mixtures of submodular functions for image collection summarization.
\newblock In {\em Advances in Neural Information Processing Systems}, pages 1413--1421.

\bibitem[Wei et~al., 2015]{Kai2015}
Wei, K., Iyer, R., and Bilmes, J. (2015).
\newblock Submodularity in data subset selection and active learning.
\newblock In {\em International Conference on Machine Learning}, pages 1954--1963.

\bibitem[Wolsey, 1977]{Wolsey1977}
Wolsey, L.~A. (1977).
\newblock Valid inequalities and superadditivity for 0--1 integer programs.
\newblock {\em Math. Oper. Res.}, 2:66--77.

\bibitem[Yu and Ahmed, 2017a]{Yu2017}
Yu, J. and Ahmed, S. (2017a).
\newblock Maximizing a class of submodular utility functions with constraints.
\newblock {\em Math. Program.}, 162:145--164.

\bibitem[Yu and Ahmed, 2017b]{Yu2017B}
Yu, J. and Ahmed, S. (2017b).
\newblock Polyhedral results for a class of cardinality constrained submodular minimization problems.
\newblock {\em Discret. Optim.}, 24:87--102.

\bibitem[Yu and K\"u\c{c}\"ukyavuz, 2023]{Yu2023}
Yu, Q. and K\"u\c{c}\"ukyavuz, S. (2023).
\newblock Strong valid inequalities for a class of concave submodular minimization problems under cardinality constraints.
\newblock {\em Math. Program.}, 201:803--861.

\end{thebibliography}
\appendix

\end{document}